\newtheorem{definition}{Definition}[section]
\newtheorem{proposition}{Proposition}[section]
\newtheorem{remark}{Remark}[section]
\newtheorem{example}{Example}[section]
\numberwithin{equation}{section}
\numberwithin{figure}{section}
\numberwithin{table}{section}
\newcommand\bbR{\mathbb{R}}
\newcommand\bx{\bm{x}}
\newcommand\bV{\bm{V}}
\newcommand\bU{\bm{U}}
\newcommand\bF{\bm{F}}
\newcommand\bT{\bm{T}}
\newcommand\pd[2]{\dfrac{\partial {#1}}{\partial {#2}}}
\newcommand\mean[1]{\{\!\{ #1 \}\!\}}
\newcommand\jump[1]{\llbracket #1 \rrbracket}
\newcommand\jumpangle[1]{\langle\!\langle #1 \rangle\!\rangle}
\journal{Journal of Computational Physics}
\begin{document}

\begin{frontmatter}



\title{High-order accurate well-balanced energy stable adaptive moving mesh finite difference schemes for the shallow water equations with non-flat bottom topography}


\author[inst1]{Zhihao Zhang}
\affiliation[inst1]{organization={Center for Applied Physics and Technology, HEDPS and LMAM,
	School of Mathematical Sciences, Peking University},
            city={Beijing},
            postcode={100871},
            country={P.R. China}}
\ead{zhihaozhang@pku.edu.cn}

\author[inst2]{Junming Duan\corref{cor1}}
\affiliation[inst2]{organization={Chair of Computational Mathematics and Simulation Science, \'Ecole Polytechnique F\'ed\'erale de Lausanne},
            city={Lausanne},
            postcode={1015},
            country={Switzerland}}
\cortext[cor1]{Corresponding author}
\ead{junming.duan@epfl.ch}

\author[inst3,inst1]{Huazhong Tang}
\affiliation[inst3]{organization={Nanchang Hangkong University},
            city={Nanchang},
            postcode={330000},
            state={Jiangxi Province},
            country={P.R. China}}
\ead{hztang@math.pku.edu.cn}

\begin{abstract}
This paper proposes high-order accurate well-balanced (WB) energy stable (ES) adaptive moving mesh finite difference schemes for the shallow water equations (SWEs) with non-flat bottom topography.
To enable the construction of the ES schemes on moving meshes, a reformulation of the SWEs is introduced, with the bottom topography as an additional conservative variable that evolves in time.
The corresponding energy inequality is derived based on a modified energy function,
then the reformulated SWEs and energy inequality are transformed into curvilinear coordinates.
A two-point energy conservative (EC) flux is constructed,
and high-order EC schemes based on such a flux are proved to be WB that they preserve the lake at rest.
Then high-order ES schemes are derived by adding suitable dissipation terms to the EC schemes,
which are newly designed to maintain the WB and ES properties simultaneously.
The adaptive moving mesh strategy is performed by iteratively solving the Euler-Lagrangian equations of a mesh adaptation functional.
The fully-discrete schemes are obtained by using the explicit strong-stability preserving third-order Runge-Kutta method.
Several numerical tests are conducted to validate the accuracy, WB and ES properties, shock-capturing ability, and high efficiency of the schemes.

\end{abstract}

\begin{keyword}
shallow water equations \sep energy stability \sep high-order accuracy \sep well-balance \sep adaptive moving mesh \sep high efficiency
\end{keyword}

\end{frontmatter}


\section{Introduction}\label{section:Intro}
Shallow water equations (SWEs) describe a thin layer of free surface flow under the influence of gravity and bottom topology,
which have been widely used in the studies of atmospheric, river, and coastal flows, tsunamis, etc.
The two-dimensional (2D) SWEs with non-flat bottom topography defined in a time-space physical domain $(t,\bx)\in\bbR^+\times\Omega_p,~\Omega_p\subset \bbR^2,~\bx=(x_1,x_2)$ can be written as the following hyperbolic balance laws
\begin{equation}\label{eq:SWE}
  \left\{
    \begin{aligned}
      &~\pd{h}{t}+\pd{\left(hv_1\right)}{x_1}+\pd{\left(hv_2\right)}{x_2}=0, \\
      &\pd{\left(hv_1\right)}{t}+\pd{\left(h v_1^2+\frac{1}{2} g h^2\right)}{x_1}+\pd{\left(h v_1 v_2\right)}{x_2}=-g h \pd{b}{x_1}, \\
      &\pd{\left(hv_2\right)}{t}+\pd{\left(h v_1 v_2\right)}{x_1}+\pd{\left(h v_2^2+\frac{1}{2} g h^2\right)}{x_2}=-g h \pd{b}{x_2},
    \end{aligned}
  \right.
\end{equation}
where $h(t,\bx)$ is the water depth, $v_\ell(t,\bx)$ is the $x_\ell$-component of the fluid velocity, $\ell=1,2$,
and $g$ is the gravitational acceleration constant.
The source terms in \eqref{eq:SWE} involve the time-independent bottom topography $b(\bx)$.
Numerical simulation is important in the applications of SWEs,
and their numerical methods have been extensively studied in the literature,
e.g. \cite{AuDusse2004fast,Bermudez1994Upwind,Kuang2017Runge,Noelle2007High,Tang2004Solution,Tang2004Gas,Wu2016Newton,Xing2017Numerical,Xing2005High,Zhao2022Well}
and the references therein.

The SWEs \eqref{eq:SWE} admit nontrivial steady states due to the appearance of the source terms,
e.g. the lake at rest, where the flux gradients are balanced by the source terms.
Many physical phenomena such as waves on a lake or tsunamis in the deep ocean can be seen as small perturbations of the steady states,
thus it is of great significance to capture those small perturbations in the numerical simulations.
This raises the task to design the so-called well-balanced (WB) schemes
that can preserve the steady states in the discrete sense,
which then allows capturing the small perturbations well, even on coarse meshes.
Standard numerical methods are generally not WB,
so that one needs to adopt an extremely fine mesh to capture the perturbations accurately,
leading to prohibitive computational costs.
The WB property or the ``C-property'' was first illustrated in \cite{Bermudez1994Upwind}.
After that, various WB numerical methods for the SWEs were studied,
e.g. finite difference methods \cite{Li2020High,Vukovic2002ENO,Xing2005High}, finite volume methods \cite{AuDusse2004fast,Li2012Hybrid,Noelle2006Well,Noelle2007High}, and discontinuous Galerkin (DG) methods \cite{Xing2014Exactly,Xing2013Positivity,Zhang2021High}.
The readers are also referred to the review article \cite{Kurganov2018Finite} and the references therein.



The localized interesting structures in the solutions to the SWEs, such as shocks and sharp transitions,
usually need fine meshes to resolve.
In such cases, adaptive moving mesh methods become an effective way to improve the efficiency and quality of numerical solutions,
and have played an important role in solving partial differential equations, e.g.
\cite{Brackbill1993An,Brackbill1982Adaptive,CAO1999221,CENICEROS2001609,Davis1982,Miller1981,Ren2000An,Stockie2001,Tang2003Adaptive,Wang2004A,Winslow1967Numerical}.
For the numerical simulations of the SWEs,
the adaptive moving mesh kinetic flux-vector splitting method was proposed in \cite{Tang2004Solution}.
The work \cite{Lamby2005Solution} presented a fully adaptive multiscale finite volume method for the SWEs with source terms,
where the mesh generation is combined with B-spline methods.
A high-order indirect positivity-preserving WB adaptive moving mesh DG method was given in \cite{Zhang2021High},
in which the flow variables and bottom topography were interpolated from the old mesh to the new mesh at each time step using the same scheme,
and a direct moving mesh WB DG method based on hydrostatic reconstruction technique was studied in \cite{Zhang2022AWell}.

In the numerical simulations,
it is usually reasonable to require that the numerical schemes satisfy semi-discrete or discrete stability conditions,
often analogue to the stability of the solutions at the continuous level,
which helps to improve the stability of the schemes.
One important class of such methods is the energy stable (ES) numerical methods,
which have been extensively studied for elliptic and parabolic equations.
For the SWEs, the second-order WB ES schemes were constructed in \cite{Fjordholm2011Well} satisfying a semi-discrete energy inequality,
where the energy is also an entropy function of the system.

This paper focuses on the construction of high-order WB ES schemes for the SWEs on moving meshes,
which is based on the techniques in the construction of entropy stable schemes in curvilinear coordinates \cite{DUAN2021109949,Duan2022High},
with extra efforts to incorporate the WB property.
As this work focuses on the schemes on moving meshes,
the SWEs are considered to be transformed into curvilinear coordinates,
and the bottom topography should be updated due to mesh movement.
We choose to evolve the bottom topography as another conservative variable in time,
so that the two-point energy conservative (EC) flux can be constructed similarly to the entropy conservative schemes in \cite{Duan2021SWMHD,Duan2022High}.
Based on those considerations,
a reformulation of the SWEs is first introduced by adding the bottom topography as an additional conservative variable,
and corresponding energy inequality is derived based on a modified energy function.
For the modified system,
the specific expressions of a two-point EC flux are derived.
Furthermore, the high-order EC fluxes are constructed by using the linear combinations of the two-point case,
and the schemes are proved to be WB that they preserve the lake at rest.
With the help of the WENO reconstruction \cite{Borges2008An}, the high-order ES schemes are obtained by adding suitable dissipation terms,
consisting of two parts.
One is based on the entropy variables of the original SWEs \eqref{eq:SWE},
but with an additional zero as the last component.
It can be proved to achieve the WB and ES properties simultaneously,
but oscillations may appear when the bottom topography is discontinuous,
since the bottom topography is evolved by a transport equation
but no dissipation is added for the lake at rest.
Thus a second part is proposed to suppress the oscillations in such cases,
and also preserves the WB and ES properties at the same time.
The mesh adaptation is performed by iteratively solving the Euler-Lagrangian equations of a mesh adaptation functional \cite{Duan2022High,Li2022High},
and the monitor function is chosen to concentrate the mesh points near those interesting features.
Finally, the explicit strong-stability preserving (SSP) third-order Runge-Kutta (RK3) method is used to obtain the fully-discrete schemes.

The outline of this paper is as follows.
Section \ref{section:EntropyCondition}
presents a reformulation of the SWEs and corresponding energy inequality both in the Cartesian and curvilinear coordinates.
In Section \ref{section:2DHOScheme}, the high-order WB EC schemes are first constructed based on the two-point EC fluxes, then proved to be WB.
Further, the high-order WB ES schemes are developed by adding suitable dissipation terms to the EC schemes.
Some numerical tests are conducted in Section \ref{section:Result} to verify the high-order accuracy, WB and ES properties, shock-capturing ability, and efficiency of our schemes.
The conclusions are given in Section \ref{section:Conc}.

\section{Energy inequality for the SWEs}\label{section:EntropyCondition}
In this paper, the water depth $h$ is always assumed to be positive, i.e., dry area is not considered.

\subsection{Reformulation of the SWEs and corresponding energy inequality}
This paper focuses on the ES schemes on moving meshes,
so that the bottom topography $b$ should be updated in the time discretization due to mesh movement.
Generally speaking, there are two ways to do that:
$b$ is obtained by using the analytical expression,
or can be viewed as a time-dependent variable to be evolved simultaneously with the original SWEs.
Based on the first way, it is challenging to design a consistent two-point EC flux as one will see in Remark \ref{rmk:original_phi}.
That is why we propose to reformulate the SWEs by adding the bottom topography $b$ as an additional component in the conservative variables.

The SWEs \eqref{eq:SWE} can be cast in the equivalent form as
\begin{equation}\label{eq:SWE1}
	\frac{\partial \bm{U}}{\partial t} + \sum_{\ell=1}^2\frac{\partial \bm{F}_\ell(\bm{U})}{\partial{x}_\ell}
    = -gh\sum_{\ell=1}^{2}\frac{\partial \bm{B}_\ell}{\partial x_\ell}.
\end{equation}
Here the new conservative variables, physical fluxes, and source terms are
\begin{equation*}
    \begin{aligned}
	\bm{U} &= (h,hv_1,hv_2,b)^{\mathrm{T}},\\
	\bm{F}_1 &= (hv_1,hv_1^2+\frac{g}{2}h^2, hv_1v_2,0)^{\mathrm{T}},\\
	\bm{F}_2 &= (hv_2,hv_1v_2,hv_2^2+\frac{g}{2}h^2,0)^{\mathrm{T}},\\
	\bm{B}_1 &= (0,b,0,0)^{\mathrm{T}},\\
	\bm{B}_2 &= (0,0,b,0)^{\mathrm{T}},
    \end{aligned}
\end{equation*}
where the extra components are zeros for the physical fluxes and source terms.
For the system \eqref{eq:SWE1}, define the function pair $(\eta, q_\ell)$ as
\begin{equation}\label{eq:EntropyPair}
    \begin{aligned}
	\eta(\bU)&:=\frac{1}{2} h (v_1^2+v_2^2) + \frac{1}{2} g h^2 + ghb + \gamma gb^2,\\
	q_\ell(\bU)&:=\frac{1}{2}hv_\ell\left(v_1^2+v_2^2\right) + gh^2 v_\ell + ghbv_\ell,\quad \ell=1,2,
    \end{aligned}
\end{equation}
and
\begin{equation}\label{eq:EnergyVar}
    \bm{V}(\bU):= \left(\frac{\partial \eta(\bU)} {\partial \bm{U}}\right)^\mathrm{T}
    = \left(g(h+b) - \frac{v_1^2+v_2^2}{2}, v_1,v_2, gh+2\gamma gb\right)^{\mathrm{T}}.
\end{equation}
One can verify that
\begin{equation}\label{eq:energy_consistent}
     \pd{q_{\ell}}{\bU} - \bV^{\mathrm{T}} \pd{\bF_{\ell}}{\bU} = (0,0,0,ghv_\ell), ~ \ell=1,2.
\end{equation}
For smooth solutions, The energy identity can be obtained by taking the dot product of the system \eqref{eq:SWE1} and $\bV$, i.e.
\begin{equation*}
    \bV^{\mathrm{T}}\frac{\partial \bm{U}}{\partial t} + \sum_{\ell=1}^2\bV^{\mathrm{T}}\frac{\partial \bm{F}_\ell(\bm{U})}{\partial{x}_\ell}
    = -gh\sum_{\ell=1}^{2}\bV^{\mathrm{T}}\frac{\partial \bm{B}_\ell}{\partial x_\ell},
\end{equation*}
which is simplified by using \eqref{eq:energy_consistent} as
\begin{equation*}
   \pd{\eta}{t}+\sum_{\ell=1}^{2}\left(\pd{q_{\ell}}{\bU}-(0,0,0,ghv_\ell)\right)\pd{\bU}{x_{\ell}} = -\sum_{\ell=1}^{2} ghv_{\ell}\pd{b}{x_{\ell}}.
\end{equation*}
Due to the fact $(0,0,0,ghv_\ell)\pd{\bU}{x_{\ell}} = ghv_{\ell}\pd{b}{x_{\ell}}$,
it reduces to
\begin{equation*}
    \pd{\eta}{t} + \sum_{\ell=1}^2 \pd{q_\ell}{x_\ell} = 0.
\end{equation*}
When the solutions contain discontinuities,
the identity becomes the following energy inequality
\begin{equation}\label{eq:EntropyIneq2D}
    \pd{\eta}{t} + \sum_{\ell=1}^2 \pd{q_\ell}{x_\ell} \leqslant 0,
\end{equation}
in the sense of distribution.
The energy inequality endows stability in the system \eqref{eq:SWE1}.

\begin{remark}\rm
    To ensure the transformation between $\bU$ and $\bV$ is bijective,
    the Jacobian matrix $\partial\bV/\partial\bU$ should be invertible.
    In other words, the Hessian matrix
    \begin{equation}\label{eq:Hessian}
        \dfrac{\partial^2\eta(\bU)}{\partial\bU^2} = \dfrac{1}{h}\begin{bmatrix}
	   {gh+v_1^2+v_2^2}& {-v_1} & {-v_2} & gh \\
	   {-v_1} &  {1} &  0 & 0\\
	   {-v_2} & 0 & {1}  & 0\\
          gh & 0&0 & 2 \gamma gh
        \end{bmatrix}
    \end{equation}
    is invertible,
    i.e., $\gamma \not= 1/2$,
\end{remark}

\begin{remark}\rm\label{rmk:EntropyForOriSWEs}
    This work is concerned with the ES schemes for the system \eqref{eq:SWE1},
    but one may notice that the function pair $(\eta, q_\ell)$ in \eqref{eq:EntropyPair} is closely related to the entropy pair for the original SWEs \eqref{eq:SWE}.
    Indeed, express the original SWEs \eqref{eq:SWE} as
    \begin{equation}\label{eq:SWE0}
        \frac{\partial\widehat{\bm{U}}}{\partial t}+\sum_{\ell=1}^2 \frac{\partial \widehat{\bm{F}}_{\ell}(\widehat{\bm{U}})}{\partial x_{\ell}}
       =-g h\sum_{\ell=1}^2  \frac{\partial \widehat{\bm{B}}_{\ell}}{\partial x_{\ell}},
    \end{equation}
    where $\widehat{\bm{U}}, \widehat{\bm{F}}_\ell, \widehat{\bm{B}}_\ell$ are the first three components of $\bU, \bF_\ell, \bm{B}_\ell$,
    then an entropy pair is
    \begin{equation}\label{eq:Ori_EntropyPair}
       \begin{aligned}
    	\widehat{\eta}(\widehat{\bm{U}})&:=\frac{1}{2} h (v_1^2+v_2^2) + \frac{1}{2} g h^2 + ghb,\\
    	\widehat{q}_\ell(\widehat{\bm{U}})&:=\frac{1}{2}hv_\ell\left(v_1^2+v_2^2\right) + gh^2 v_\ell + ghbv_\ell,\quad \ell=1,2,
      \end{aligned}
    \end{equation}
    where $\widehat{\eta}(\widehat{\bm{U}})$ is the convex entropy function or the total energy,
    with the entropy variables
    \begin{equation}\label{eq:original_V}
     \widehat{\bV} =    (\partial\widehat{\eta}/\partial\widehat{\bm{U}})^{\mathrm{T}} 
      = \left(g(h+b) - \dfrac{v_1^2+v_2^2}{2}, v_1, v_2\right)^{\mathrm{T}},
    \end{equation}
    as the first three components of $\bV$.
    The dissipation terms in the high-order ES schemes in Section \ref{section:ES} will be constructed with the help of the entropy pair \eqref{eq:Ori_EntropyPair}.
    It is seen that $\eta$ in \eqref{eq:EntropyPair} differs from $\widehat{\eta}$ by $\gamma gb^2$,
    thus $\eta$ is a modified energy function in this paper.
\end{remark}

\begin{remark}\rm
    The construction of the schemes in this paper is based on the entropy stable schemes in the literature,
    but the function pair in \eqref{eq:EntropyPair} is not an entropy pair for the reformulated SWEs \eqref{eq:SWE1}.
    To be specific, although $\eta(\bU)$ is convex as the Hessian matrix \eqref{eq:Hessian}
    is positive-definite for $\gamma > 1/2$,
    $(\eta, q_\ell)$ does not satisfy the consistent condition due to \eqref{eq:energy_consistent}.
    And the system \eqref{eq:SWE1} under the change of variables $\bU\rightarrow\bV$
    \begin{equation*}
        \pd{\bU}{\bV}\pd{\bV}{t}
        + \left(\pd{\bF_1}{\bV}
        + (0,gh,0,0)^\mathrm{T}\pd{b}{\bV} \right)\pd{\bV}{x_1}
        + \left(\pd{\bF_2}{\bV}
        + (0,0,gh,0)^\mathrm{T}\pd{b}{\bV} \right)\pd{\bV}{x_2} = 0,
    \end{equation*}
    is not symmetric,
    since
    \begin{equation*}
    \begin{aligned}
       \pd{\bF_1}{\bV}
       + &(0,gh,0,0)^\mathrm{T}\pd{b}{\bV}
       =
       \\
       &\dfrac{\gamma}{g(2\gamma-1)}
       \begin{bmatrix}
    	2 v_1&  \frac{2\gamma v_1^2 - g h + 2\gamma g h}{\gamma}& 2 v_1 v_2 & -\frac{v_1}{\gamma}
    \\
	\frac{2\gamma v_1^2 - g h + 2\gamma g h}{\gamma}
    &  \frac{v_1(2\gamma v_1^2 - 3g h + 6 \gamma g h)}{\gamma}
    & \frac{v_2(2\gamma v_1^2 - g h + 2 \gamma g h)}{\gamma}
    & -\frac{v_1^2}{\gamma}
    \\
    	2 v_1 v_2
    & \frac{v_2(2\gamma v_1^2 - g h + 2 \gamma g h)}{\gamma}
    &  \frac{v_1(2\gamma v_2^2 - g h + 2 \gamma g h)}{\gamma}  &    -\frac{v_1 v_2}{\gamma}\\
           0& 0&0 &0 \\
        \end{bmatrix}
        \end{aligned}
    \end{equation*}
    is not symmetric.
\end{remark}

\begin{remark}\rm
    The system \eqref{eq:SWE1} cannot be symmetrized by adding suitable source terms similar to the MHD \cite{Godunov1972,Powell1994} or RMHD \cite{Wu2020Entropy,Duan2020RMHD} cases,
    because the source terms added there vanish when the divergence-free conditions are satisfied.
\end{remark}

Two auxiliary variables $\phi$ and $\psi_\ell$ are introduced as
\begin{equation}\label{eq:EntropyPotential}
    \begin{aligned}
	\phi &:= \bm{V}^{\mathrm{T}}\bm{U} - \eta(\bm{U}) = \frac{1}{2}gh^2 + ghb+\gamma gb^2,\\
     \psi_\ell &:= \bm{V}^{T}\bm{F}_\ell(\bm{U}) - q_\ell(\bm{U}) + ghv_\ell b = \frac{1}{2}gh^2v_\ell + ghv_\ell b,\quad \ell = 1,2,
\end{aligned}
\end{equation}
which are essential in constructing the two-point EC flux for our schemes, see Definition \ref{def:sufficient_condition}.

\subsection{Reformulated SWEs and the energy inequality in curvilinear coordinates}
To derive the energy inequality in curvilinear coordinates,
define $\Omega_c$ as the computational domain, which will also be used in the mesh redistribution.
Assume that there is a time-dependent, differentiable coordinate transformation $t = \tau, \bx = \bx(\tau, \bm{\xi})$ from the computational domain $\Omega_{c}$ to the physical domain $\Omega_{p}$,
then the adaptive mesh can be generated from the reference mesh in $\Omega_c$ based on such a transformation.
The determinant of the Jacobian matrix is
\begin{equation}
    J = \det\left(\frac{\partial (t,\bx)}{\partial (\tau,\bm{\xi})}\right) =
    \left|\begin{matrix}
		1 & 0 &0 \\
		\dfrac{\partial x_1}{\partial \tau} & \dfrac{\partial x_1}{\partial \xi_1} &  \dfrac{\partial x_1}{\partial \xi_2}\\
		\dfrac{\partial x_2}{\partial \tau} & \dfrac{\partial x_2}{\partial \xi_1} &  \dfrac{\partial x_2}{\partial \xi_2}\\
	\end{matrix}\right|,\nonumber
\end{equation}
and the mesh metrics introduced by the transformation satisfy the geometric conservation laws (GCLs) consisting of the volume conservation law (VCL) and the surface conservation laws (SCLs)
\begin{equation}\label{eq:GCL_2D}
    \begin{aligned}
	&\text { VCL: } \quad \frac{\partial J}{\partial \tau}+ \sum_{\ell=1}^{2}\frac{\partial}{\partial \xi_\ell}\left(J \frac{\partial \xi_\ell}{\partial t}\right)=0,\\
    &\text { SCLs: } \quad \sum_{\ell = 1}^2\frac{\partial}{\partial \xi_\ell}\left(J \frac{\partial \xi_\ell}{\partial x_k}\right)=0,~k = 1,2.
    \end{aligned}
\end{equation}
With the help of the GCLs, the SWEs \eqref{eq:SWE1} and the energy inequality \eqref{eq:EntropyIneq2D} can be written in curvilinear coordinates as follows
\begin{align}
&\pd{\bm{\mathcal{U}}}{\tau}
+\sum_{\ell=1}^2\pd{\bm{\mathcal{F}}_{\ell}}{\xi_\ell}
=-gh
\sum_{\ell=1}^2\pd{\bm{\mathcal{B}}_{\ell}}{\xi_\ell},\label{eq:SWECurv}\\
&\dfrac{\partial\mathcal{E}}{\partial \tau}
+\sum_{\ell=1}^2\dfrac{\partial \mathcal{Q}_{\ell}}{\partial \xi_\ell}\leqslant0,\nonumber
\end{align}
where the energy identity only holds for the smooth solutions,
and the notations are defined as
\begin{align*}
    &\bm{\mathcal{U}}=J \bm{U},~
    \bm{\mathcal{F}}_{\ell}
    =\left(J \dfrac{\partial \xi_\ell}{\partial t} \right)\bm{U}
    +\sum_{k=1}^2\left(J \dfrac{\partial \xi_\ell}{\partial x_k}\right) \bm{F}_{k}, ~\bm{\mathcal{B}}_{\ell}=\sum_{k=1}^2\left(J \dfrac{\partial \xi_\ell}{\partial x_k}\right) \bm{B}_{k},\\
    &\mathcal{E}=J\eta,~
    \mathcal{Q}_{\ell}=\left(J \dfrac{\partial \xi_\ell}{\partial t} \right)\eta
    +\sum_{k=1}^2\left(J \dfrac{\partial \xi_\ell}{\partial x_k}\right) q_{k},~\ell=1,2.
\end{align*}

\section{Numerical schemes}\label{section:2DHOScheme}
This section constructs the high-order accurate WB EC and ES schemes for the 2D SWEs in curvilinear coordinates \eqref{eq:SWECurv},
based on the entropy conservative and entropy stable schemes \cite{Duan2022High}.
The specific expressions of the two-point EC fluxes in curvilinear coordinates are derived
and the high-order accurate EC schemes built on such fluxes are proved to be WB.
To further obtain the high-order WB ES schemes,
high-order dissipation terms are proposed to maintain the WB and ES properties at the same time and can suppress oscillations for discontinuous bottom topography on moving meshes.

\subsection{Two-point EC fluxes}
To construct high-order accurate EC schemes,
one major effort is to derive the two-point EC flux.
\begin{definition}\rm\label{def:sufficient_condition}
  For the SWEs in curvilinear coordinates \eqref{eq:SWECurv},
  a consistent two-point numerical flux $\widetilde{\bm{\mathcal{F}}}_{\ell}\left(\bm{U}_{L},\bm{U}_{R},\left(J\dfrac{\partial \xi_{\ell}}{\partial \zeta}\right)_{L},\left(J\dfrac{\partial \xi_{\ell}}{\partial \zeta}\right)_{R}\right)$ with $\zeta = t,x_1,x_2$ satisfying
  \begin{align}
    \left(\bm{V}\left(\bU_{R}\right)-\bm{V}\left(\bU_{L}\right)\right)^{\mathrm{T}}\widetilde{\bm{\mathcal{F}}}_{\ell}
    = \ &~\frac{1}{2}\left(\left(J \frac{\partial \xi_\ell}{\partial t}\right)_{L}+\left(J \frac{\partial \xi_\ell}{\partial t}\right)_{R}\right)\left({\phi}\left(\bU_{R}\right)-{\phi}\left(\bU_{L}\right)\right)\nonumber\\
    &+ \sum_{k=1}^2\frac{1}{2}\left(\left(J \frac{\partial \xi_\ell}{\partial x_{k}}\right)_L+\left(J \frac{\partial \xi_\ell}{\partial x_{k}}\right)_R\right)\left(\psi_{k}\left(\bU_R\right)-\psi_{k}\left(\bU_R\right)\right)\nonumber\\
    &
    - \sum_{k=1}^2\frac{g}{4}\left(\left(J \frac{\partial \xi_\ell}{\partial x_{k}}\right)_L+\left(J \frac{\partial \xi_\ell}{\partial x_{k}}\right)_R\right)\left(\left(hv_{k}\right)_{R}-\left(hv_{k}\right)_{L}\right)\left(b_{L}+b_{R}\right),
    \label{eq:EC_condition_1}
  \end{align}
  is called the two-point EC flux.
\end{definition}

One can choose the following two-point flux which satisfies the condition \eqref{eq:EC_condition_1}
\begin{align*}
  \widetilde{\bm{\mathcal{F}}}_{\ell}
  = \ &\frac{1}{2}\left(\left(J\frac{\partial \xi_\ell}{\partial t}\right)_{L}+\left(J\frac{\partial \xi_\ell}{\partial t}\right)_{R}\right)\widetilde{\bm{U}}+ \sum_{k=1}^{2} \frac{1}{2}\left(\left(J\frac{\partial \xi_\ell}{\partial x_k}\right)_{L}+\left(J\frac{\partial \xi_\ell}{\partial x_k}\right)_{R}\right)\bm{\widetilde{F}}_{k},
\end{align*}
where $\widetilde{\bm{U}}$ is the temporal two-point EC flux consistent with $\bU$ and satisfies
\begin{equation}\label{eq:tilde_U_2D}
  \left(\bm{V}_{R}-\bm{V}_{L}\right)^{\mathrm{T}} \widetilde{\bm{U}}=\phi_{R}-\phi_{L},
\end{equation}
and $\bm{\widetilde{F}}_{k}$ is the two-point EC flux in the Cartesian coordinates consistent with $\bF_k$ and satisfies
\begin{align}
  \left(\bm{V}_{R}-\bm{V}_{L}\right)^{\mathrm{T}} \bm{\widetilde{F}}_{k}=\big(\left(\psi_{k}\right)_{R}-\left(\psi_{k}\right)_{L}\big)  -\frac{g}{2}\big(\left(hv_{k}\right)_{R}-\left(hv_{k}\big)_{L}\right)\left(b_{L}+b_{R}\right),
  \label{eq:tilde_F_1}
\end{align}
with $\phi$ and $\psi_k$ given in \eqref{eq:EntropyPotential}.
In this paper, the notations $\mean{a}= (a_{L}+a_{R})/2$
and $\jump{a} = (a_{R}-a_{L})$
represent the average and jump of $a$, respectively.

\begin{proposition}\rm
  The two-point EC fluxes $\bm{\widetilde{{U}}}$, $\bm{\widetilde{F}}_1$ and $\bm{\widetilde{F}}_2$ can be chosen as
  \begin{equation*}
    \widetilde{\bm{U}}=\left(\begin{array}{c}
        \mean{h} \\
        \mean{h}\mean{v_1} \\
        \mean{h}\mean{v_2} \\
        \mean{b}
    \end{array}\right),
  \end{equation*}

  \begin{equation*}
    \widetilde{\bm{F}}_1=\left(\begin{array}{c}
        \mean{h}\mean{v_1} \\
        \mean{h}\mean{v_1}^2 + \frac{g}{2}\mean{h^2}+g\left(\mean{hb} - \mean{h}\mean{b}\right) \\
        \mean{h}\mean{v_1}\mean{v_2}\\
        0
    \end{array}\right),
  \end{equation*}

  \begin{equation*}
    \widetilde{\bm{F}}_2=\left(\begin{array}{c}
        \mean{h}\mean{v_2} \\
        \mean{h}\mean{v_1}\mean{v_2}\\
        \mean{h}\mean{v_2}^2 + \frac{g}{2}\mean{h^2}+g\left(\mean{hb} - \mean{h}\mean{b}\right) \\

        0
    \end{array}\right).
  \end{equation*}
\end{proposition}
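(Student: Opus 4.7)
The proposition asserts that the given closed-form expressions for $\widetilde{\bU}$, $\widetilde{\bF}_1$, $\widetilde{\bF}_2$ satisfy the EC conditions \eqref{eq:tilde_U_2D} and \eqref{eq:tilde_F_1} in the Cartesian coordinates. The plan is to verify each identity by a direct telescoping computation. The central tools are the three elementary algebraic identities
\begin{equation*}
\jump{ab} = \mean{a}\jump{b} + \mean{b}\jump{a},\qquad \jump{a^2} = 2\mean{a}\jump{a},\qquad \mean{ab} - \mean{a}\mean{b} = \tfrac{1}{4}\jump{a}\jump{b},
\end{equation*}
which follow at once from the definitions of $\mean{\cdot}$ and $\jump{\cdot}$.

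First, from \eqref{eq:EnergyVar} I would write the jump of the entropy vector as
\begin{equation*}
\jump{\bV} = \bigl(g\jump{h}+g\jump{b}-\mean{v_1}\jump{v_1}-\mean{v_2}\jump{v_2},\ \jump{v_1},\ \jump{v_2},\ g\jump{h}+2\gamma g\jump{b}\bigr)^{\mathrm{T}}.
\end{equation*}
Taking the inner product with $\widetilde{\bU}$, the velocity cross-terms cancel exactly: the $-\mean{v_1}\jump{v_1}\mean{h}$ coming from the first component is annihilated by $\jump{v_1}\mean{h}\mean{v_1}$ from the second, and likewise for $v_2$. What remains collapses via the product rule to $\tfrac{g}{2}\jump{h^2}+g\jump{hb}+\gamma g\jump{b^2}=\jump{\phi}$, verifying \eqref{eq:tilde_U_2D}.

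For \eqref{eq:tilde_F_1} applied to $\widetilde{\bF}_1$, the computation follows the same pattern. After the velocity contributions again telescope away, two groups remain: the hydrostatic group $g\mean{h}\mean{v_1}\jump{h} + \tfrac{g}{2}\mean{h^2}\jump{v_1}$, which recombines to $\tfrac{g}{2}\jump{h^2 v_1}$, and the topography group $g\mean{h}\mean{v_1}\jump{b} + g\jump{v_1}(\mean{hb}-\mean{h}\mean{b})$. The key step is to invoke the deviation identity to rewrite $\mean{hb}-\mean{h}\mean{b}=\tfrac{1}{4}\jump{h}\jump{b}$, after which the topography group equals $g\mean{hv_1}\jump{b}$ because $\mean{hv_1} = \mean{h}\mean{v_1}+\tfrac{1}{4}\jump{h}\jump{v_1}$. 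Finally, applying the product rule one more time gives $g\mean{hv_1}\jump{b} = g\jump{hv_1 b} - g\mean{b}\jump{hv_1}$, which matches the right-hand side of \eqref{eq:tilde_F_1} since $b_L+b_R = 2\mean{b}$ and $\jump{\psi_1} = \tfrac{g}{2}\jump{h^2 v_1} + g\jump{hv_1 b}$. The $\widetilde{\bF}_2$ identity is identical after swapping the indices $1\leftrightarrow 2$ in the momentum components.

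The main obstacle is less a conceptual hurdle than a matter of algebraic bookkeeping; in particular, one must recognize why the seemingly ad hoc correction $g(\mean{hb}-\mean{h}\mean{b})$ appears in the hydrodynamic pressure of $\widetilde{\bF}_1$. The identity $\mean{hb}-\mean{h}\mean{b}=\tfrac{1}{4}\jump{h}\jump{b}$ shows that this correction is precisely what is needed to upgrade the naive product $\mean{h}\mean{v_1}$ to the average $\mean{hv_1}$ when contracted against $\jump{b}$, thereby absorbing the source-term contribution $-\tfrac{g}{2}(b_L+b_R)\jump{hv_k}$ on the right-hand side of \eqref{eq:tilde_F_1}. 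Once this coupling between topography and momentum is identified, the whole verification reduces to routine telescoping.
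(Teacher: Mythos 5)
Your proposal is correct and follows essentially the same route as the paper: both rest on expanding $\jump{\bm{V}}$, $\jump{\phi}$, and $\jump{\psi_k}$ with the product rule $\jump{ab}=\mean{a}\jump{b}+\mean{b}\jump{a}$ and cancelling the velocity cross-terms; the paper runs the algebra as a coefficient-matching derivation of the flux while you run it as a direct verification, which is the same computation read in the opposite direction. Your explicit use of the deviation identity $\mean{ab}-\mean{a}\mean{b}=\tfrac14\jump{a}\jump{b}$ to explain the correction term $g(\mean{hb}-\mean{h}\mean{b})$ is a nice clarifying touch but not a substantively different argument.
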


\begin{proof}
  Utilizing the fact $\jump{ab} = \jump{a}\mean{b}+\mean{a}\jump{b}$ one can rewrite the jump of $\bm{V}$ in \eqref{eq:EnergyVar} as
  \begin{equation*}
    \jump{\bm{V}} = \begin{pmatrix}
      \jump{g(h+b)-\frac{v_1^2+v_2^2}{2}} = g\jump{h} + g\jump{b} - \mean{v_1}\jump{v_1} - \mean{v_2}\jump{v_2}\\
      \jump{v_1}\\
      \jump{v_2}\\
      \jump{gh+2\gamma gb} = g\jump{h}+2\gamma g\jump{b}
    \end{pmatrix},
  \end{equation*}
  while the jump of $\phi$ can be expressed as
  \begin{equation*}
    \jump{\phi} = \jump{\frac{gh^2}{2}+ghb+\gamma gb^2}
    = g\mean{h}\jump{h}+g\jump{hb}+\gamma g\jump{b^2}
    = g\left(\mean{h} + \mean{b}\right)\jump{h}+g\mean{h}\jump{b}+2\gamma g\mean{b}\jump{b}.
  \end{equation*}
  Substituting them into \eqref{eq:tilde_U_2D} and matching the coefficients of the same jump terms on both sides yields
  \begin{equation*}
    \widetilde{\bm{U}}=\left(\begin{array}{c}
        \mean{h} \\
        \mean{h}\mean{v_1} \\
        \mean{h}\mean{v_2} \\
        \mean{b}
    \end{array}\right).
  \end{equation*}
  For the construction of $\widetilde{\bm{F}}_1$,
  based on the condition \eqref{eq:tilde_F_1},
  one can proceed as follows,
  \begin{align*}
    &\jump{\psi_1} = \jump{ghv_1b}+\jump{\frac{gh^2v_1}{2}}= \jump{ghv_1b}+\frac{g}{2}\mean{h^2}\jump{v_1}+g\mean{h}\mean{v_1}\jump{h},
    \\
    &g\jump{hv_1b}
    = g\mean{hb}\jump{v_1}
    + g\jump{hb}\mean{v_1}
    = g\mean{hb}\jump{v_1}
    +g\mean{b}\mean{v_1}\jump{h}
    +g\mean{h}\mean{v_1}\jump{b},
    \\
    &
    g\jump{hv_1}\mean{b}
    = g\mean{h}\mean{b}\jump{v_1}
    +g\mean{b}\mean{v_1}\jump{h},
  \end{align*}
  and obtain the flux
  \begin{equation*}
    \widetilde{\bm{F}}_1=\left(\begin{array}{c}
        \mean{h}\mean{v_1} \\
        \mean{h}\mean{v_1}^2 + \frac{g}{2}\mean{h^2}+g\left(\mean{hb} - \mean{h}\mean{b}\right) \\
        \mean{h}\mean{v_1}\mean{v_2}\\
        0
    \end{array}\right).
  \end{equation*}
  The expressions of $\widetilde{\bm{F}}_2$ can be derived similarly.
  It is easy to check the consistency of $\widetilde{\bm{U}}, \widetilde{\bm{F}}_k$.
\end{proof}

\begin{remark}\rm\label{rmk:original_phi}
  The first three components of $\widetilde{\bm{F}}_k$ are the same as those in \cite{Duan2021SWMHD} with zero magnetic fields in the Cartesian coordinates,
  where $\widetilde{\bU}$ does not appear,
  which is newly derived as this paper considers the moving mesh schemes.
  If considering the original SWEs \eqref{eq:SWE0} and entropy pair \eqref{eq:Ori_EntropyPair},
  the expressions of $\widehat{\bV}$ \eqref{eq:original_V} depend on $b$
  but $\widehat{\phi}=\widehat{\bV}^\mathrm{T}\widehat{\bU} - \widehat{\eta} = {gh^2}/{2}$ does not depend on $b$,
  so that it is impossible to construct a consistent two-point EC flux $\widetilde{\bU}$ satisfying the condition \eqref{eq:tilde_U_2D},
  which motivates us to reformulate the SWEs.
\end{remark}

\subsection{High-order WB EC schemes}
Assume that a uniform Cartesian mesh is taken as the computational mesh
$(\xi_1)_i = a_1 + i\Delta\xi_1,~i=0,1,\cdots,N_1-1,~\Delta\xi_1 = (b_1-a_1)/(N_1-1)$,
$(\xi_2)_j = a_2 + j\Delta\xi_2,~j=0,1,\cdots,N_2-1,~\Delta\xi_2 = (b_2-a_2)/(N_2-1)$.
The 2D SWEs in curvilinear coordinates \eqref{eq:SWECurv} and the GCLs \eqref{eq:GCL_2D} are discretized by using the  $2p$th-order semi-discrete conservative finite difference schemes
\begin{align}
  \dfrac{\mathrm{d}}{\mathrm{d} t} \bm{\mathcal{U}}_{i, j}=
  & -\dfrac{1}{\Delta \xi_1}\left(\left(\bm{\widetilde{\mathcal{F}}}_{1}\right)_{i+\frac{1}{2}, j}^{\tt 2pth}-\left(\bm{\widetilde{\mathcal{F}}}_{1}\right)_{i-\frac{1}{2}, j}^{\tt 2pth}\right)-\frac{1}{\Delta \xi_2}\left(\left(\bm{\widetilde{\mathcal{F}}}_{2}\right)_{i, j+\frac{1}{2}}^{\tt 2pth}-\left(\bm{\widetilde{\mathcal{F}}}_{2}\right)_{i, j-\frac{1}{2}}^{\tt 2pth}\right)\nonumber
  \\
  &- \frac{gh_{i,j}}{\Delta \xi_1}\left(\left(\bm{\widetilde{\mathcal{B}}}_{1}\right)_{i+\frac{1}{2}, j}^{{\tt 2pth}}-\left(\bm{\widetilde{\mathcal{B}}}_{1}\right)_{i-\frac{1}{2}, j}^{{\tt 2pth}}\right)
  -\frac{gh_{i,j}}{\Delta \xi_2}\left(\left(\bm{\widetilde{\mathcal{B}}}_{2}\right)_{i, j+\frac{1}{2}}^{{\tt 2pth}}-\left(\bm{\widetilde{\mathcal{B}}}_{2}\right)_{i, j-\frac{1}{2}}^{{\tt 2pth}}\right),\label{eq:2D_HighOrder_EC_Discrete}
\end{align}
\begin{align}
  &\frac{\mathrm{d}}{\mathrm{d} t} J_{i, j}=-\frac{1}{\Delta \xi_1}\left(\left(\widetilde{J \frac{\partial \xi_1}{\partial t}}\right)_{i+\frac{1}{2}, j}^{\tt 2pth}-\left(\widetilde{J \frac{\partial \xi_1}{\partial t}}\right)_{i-\frac{1}{2}, j}^{\tt 2pth}\right)-\frac{1}{\Delta \xi_2}\left(\left(\widetilde{J \frac{\partial \xi_2}{\partial t}}\right)_{i, j+\frac{1}{2}}^{2 p \mathrm{th}}-\left(\widetilde{J \frac{\partial \xi_2}{\partial t}}\right)_{i, j-\frac{1}{2}}^{\tt 2pth}\right),\label{eq:2D_VCL_HighOrder}
  \\
  &\frac{1}{\Delta \xi_1}\left(\left(\widetilde{J \frac{\partial \xi_1}{\partial x_1}}\right)_{i+\frac{1}{2},j}^{{\tt 2pth}}-\left(\widetilde{J \frac{\partial \xi_1}{\partial x_1}}\right)_{i-\frac{1}{2},j}^{{\tt 2pth}}\right) + \frac{1}{\Delta \xi_2}\left(\left(\widetilde{J \frac{\partial \xi_2}{\partial x_1}}\right)_{i, j+\frac{1}{2}}^{{\tt 2pth}}-\left(\widetilde{J \frac{\partial \xi_2}{\partial x_1}}\right)_{i,j-\frac{1}{2}}^{{\tt 2pth}}\right)=0,\label{eq:2D_SCL_HighOrder_1}
  \\
  &\frac{1}{\Delta \xi_1}\left(\left(\widetilde{J \frac{\partial \xi_1}{\partial x_2}}\right)_{i+\frac{1}{2},j}^{{\tt 2pth}}-\left(\widetilde{J \frac{\partial \xi_1}{\partial x_2}}\right)_{i-\frac{1}{2},j}^{{\tt 2pth}}\right) + \frac{1}{\Delta \xi_2}\left(\left(\widetilde{J \frac{\partial \xi_2}{\partial x_2}}\right)_{i, j+\frac{1}{2}}^{{\tt 2pth}}-\left(\widetilde{J \frac{\partial \xi_2}{\partial x_2}}\right)_{i,j-\frac{1}{2}}^{{\tt 2pth}}\right)=0,\label{eq:2D_SCL_HighOrder_2}
\end{align}
where $\bm{\mathcal{U}}_{i,j}=J_{i,j}\bU_{i,j}$ and $J_{i,j}$ are the approximations of the point values of $J\bU$ and $J$ at $((\xi_1)_i, (\xi_2)_j)$, respectively,
and the numerical fluxes are defined as
\begin{equation}\label{eq:HighOrder_EC_Flux}
  \begin{aligned}
    \left(\bm{\widetilde{\mathcal{F}}}_{1}\right)_{i+\frac{1}{2}, j}^{{\tt 2pth}}=\sum_{m=1}^{p} \alpha_{p,m} \sum_{s=0}^{m-1} \Bigg[ &\frac{1}{2}\left(\left(J \frac{\partial \xi_1}{\partial t}\right)_{i-s, j}+\left(J \frac{\partial \xi_1}{\partial t}\right)_{i-s+m, j}\right) \widetilde{\bm{U}}\left(\bm{U}_{i-s, j}, \bm{U}_{i-s+m, j}\right) \\
      +& \frac{1}{2}\left(\left(J \frac{\partial \xi_1}{\partial x_1}\right)_{i-s, j}+\left(J \frac{\partial \xi_1}{\partial x_1}\right)_{i-s+m, j}\right) \widetilde{\bm{F}}_{1}\left(\bm{U}_{i-s, j}, \bm{U}_{i-s+m, j}\right) \\
    +& \frac{1}{2}\left(\left(J \frac{\partial \xi_1}{\partial x_2}\right)_{i-s, j}+\left(J \frac{\partial \xi_1}{\partial x_2}\right)_{i-s+m, j}\right) \widetilde{\bm{F}}_{2}\left(\bm{U}_{i-s, j}, \bm{U}_{i-s+m, j}\right)\Bigg],
    \\
    \left(\bm{\widetilde{\mathcal{F}}}_{2}\right)_{i, j+\frac{1}{2}}^{{\tt 2pth}}=\sum_{m=1}^{p} \alpha_{p,m} \sum_{s=0}^{m-1}  \Bigg[&\frac{1}{2}\left(\left(J \frac{\partial \xi_2}{\partial t}\right)_{i, j-s}+\left(J \frac{\partial \xi_2}{\partial t}\right)_{i, j-s+m}\right) \widetilde{\bm{U}}\left(\bm{U}_{i, j-s}, \bm{U}_{i, j-s+m}\right) \\
      +& \frac{1}{2}\left(\left(J \frac{\partial \xi_2}{\partial x_1}\right)_{i, j-s}+\left(J \frac{\partial \xi_2}{\partial x_1}\right)_{i, j-s+m}\right) \widetilde{\bm{F}}_{1}\left(\bm{U}_{i, j-s}, \bm{U}_{i, j-s+m}\right) \\
    +&\frac{1}{2}\left(\left(J \frac{\partial \xi_2}{\partial x_2}\right)_{i, j-s}+\left(J \frac{\partial \xi_2}{\partial x_2}\right)_{i, j-s+m}\right) \widetilde{\bm{F}}_{2}\left(\bm{U}_{i, j-s}, \bm{U}_{i, j-s+m}\right)\Bigg],
  \end{aligned}
\end{equation}
\begin{equation}\label{eq:HighOrder_EC_Flux_B}
  \begin{aligned}
    \left(\bm{\widetilde{\mathcal{B}}}_{1}\right)_{i+\frac{1}{2},j}^{{\tt 2pth}}=\sum_{m=1}^{p} \alpha_{p,m} \sum_{s=0}^{m-1} \Bigg[&\frac{1}{4}\left(\left(J \frac{\partial \xi_1}{\partial x_1}\right)_{i-s,j}+\left(J \frac{\partial \xi_1}{\partial x_1}\right)_{i-s+m,j}\right)\left(\left(\bm{B}_{1}\right)_{i-s,j}+\left(\bm{B}_{1}\right)_{i-s+m,j}\right) \\
    +&\frac{1}{4}\left(\left(J \frac{\partial \xi_1}{\partial x_2}\right)_{i-s,j}+\left(J \frac{\partial \xi_1}{\partial x_2}\right)_{i-s+m,j}\right)\left(\left(\bm{B}_{2}\right)_{i-s,j}+\left(\bm{B}_{2}\right)_{i-s+m,j}\right)\Bigg],
    \\
    \left(\bm{\widetilde{\mathcal{B}}}_{2}\right)_{i, j+\frac{1}{2}}^{{\tt 2pth}}=\sum_{m=1}^{p} \alpha_{p,m} \sum_{s=0}^{m-1} \Bigg[&\frac{1}{4}\left(\left(J \frac{\partial \xi_2}{\partial x_1}\right)_{i, j-s}+\left(J \frac{\partial \xi_2}{\partial x_1}\right)_{i, j-s+m}\right)\left(\left(\bm{B}_{1}\right)_{i, j-s}+\left(\bm{B}_{1}\right)_{i, j-s+m}\right) \\
    +&\frac{1}{4}\left(\left(J \frac{\partial \xi_2}{\partial x_2}\right)_{i, j-s}+\left(J \frac{\partial \xi_2}{\partial x_2}\right)_{i, j-s+m}\right)\left(\left(\bm{B}_{2}\right)_{i, j-s}+\left(\bm{B}_{2}\right)_{i, j-s+m}\right)\Bigg].
  \end{aligned}
\end{equation}
The discrete mesh metrics are given by
\begin{align}
  & \left(\widetilde{J \frac{\partial \xi_1}{\partial t}}\right)_{i+\frac{1}{2}, j}^{\tt 2pth}=\sum_{m=1}^p \alpha_{p,m} \sum_{s=0}^{m-1} \frac{1}{2}\left(\left(J \frac{\partial \xi_1}{\partial t}\right)_{i-s, j}+\left(J \frac{\partial \xi_1}{\partial t}\right)_{i-s+m, j}\right),
  \label{eq:2D_VCL_flux_1}\\
  & \left(\widetilde{J \frac{\partial \xi_2}{\partial t}}\right)_{i, j+\frac{1}{2}}^{\tt 2pth}=\sum_{m=1}^p \alpha_{p,m} \sum_{s=0}^{m-1} \frac{1}{2}\left(\left(J \frac{\partial \xi_2}{\partial t}\right)_{i, j-s}+\left(J \frac{\partial \xi_2}{\partial t}\right)_{i, j-s+m}\right),
  \label{eq:2D_VCL_flux_2}\\
  &\left(\widetilde{J \frac{\partial \xi_1}{\partial x_k}}\right)_{i+\frac{1}{2},j}^{{\tt 2pth}} =
  \sum_{m=1}^{p} \alpha_{p,m} \sum_{s=0}^{m-1} \frac{1}{2}\left(\left(J \frac{\partial \xi_1}{\partial x_k}\right)_{i-s,j}+\left(J \frac{\partial \xi_1}{\partial x_k}\right)_{i-s+m,j}\right),
  \label{eq:2D_SCL_flux_1}\\
  &\left(\widetilde{J \frac{\partial \xi_2}{\partial x_k}}\right)_{i,j+\frac{1}{2}}^{{\tt 2pth}} =
  \sum_{m=1}^{p} \alpha_{p,m} \sum_{s=0}^{m-1} \frac{1}{2}\left(\left(J \frac{\partial \xi_2}{\partial x_k}\right)_{i,j-s}+\left(J \frac{\partial \xi_2}{\partial x_k}\right)_{i,j-s+m}\right)\label{eq:2D_SCL_flux_2},
\end{align}
where
\begin{equation}
  \begin{aligned}\label{eq:Grid_Matrix_Coeff}
    & \left(J \frac{\partial \xi_1}{\partial t}\right)_{i, j}=-(\dot{x}_1)_{i, j}\left(J \frac{\partial \xi_1}{\partial x_1}\right)_{i, j}-(\dot{x}_2)_{i, j}\left(J \frac{\partial \xi_1}{\partial x_2}\right)_{i, j},
    \\
    & \left(J \frac{\partial \xi_2}{\partial t}\right)_{i, j}=-(\dot{x}_1)_{i, j}\left(J \frac{\partial \xi_2}{\partial x_1}\right)_{i, j}-(\dot{x}_2)_{i, j}\left(J \frac{\partial \xi_2}{\partial x_2}\right)_{i, j},
    \\
    & \left(J \frac{\partial \xi_1}{\partial x_1}\right)_{i, j}=+\sum_{m=1}^p \frac{\alpha_{p,m}}{2
    \Delta\xi_2}
    \left(\left(x_2\right)_{i, j+m}-\left(x_2\right)_{i, j-m}\right),
    \\
    & \left(J \frac{\partial \xi_1}{\partial x_2}\right)_{i, j}=-\sum_{m=1}^p \frac{\alpha_{p,m}}{2
    \Delta\xi_2}
    \left(\left(x_1\right)_{i, j+m}-\left(x_1\right)_{i, j-m}\right),
    \\
    & \left(J \frac{\partial \xi_2}{\partial x_1}\right)_{i, j}=-\sum_{m=1}^p \frac{\alpha_{p,m}}{2
    \Delta\xi_1}
    \left(\left(x_2\right)_{i+m, j}-\left(x_2\right)_{i-m, j}\right),
    \\
    & \left(J \frac{\partial \xi_2}{\partial x_2}\right)_{i, j}=+\sum_{m=1}^p \frac{\alpha_{p,m}}{2
    \Delta\xi_1}
    \left(\left(x_1\right)_{i+m, j}-\left(x_1\right)_{i-m, j}\right).
  \end{aligned}
\end{equation}
Here the coefficients $\alpha_{p,m}$ are introduced in \cite{Lefloch2002Fully} and satisfy
\begin{equation*}
  \sum_{m=1}^{p} m \alpha_{p,m} = 1,~\sum_{m=1}^{p} m^{2s-1}\alpha_{p,m} = 0,~s = 2,\dots,p,
\end{equation*}
and for $p=2,3$, they are
\begin{equation*}
  \begin{aligned}
    &\alpha_{2,1} = \dfrac{4}{3},~\alpha_{2,2} = -\dfrac{1}{6},\\
    &\alpha_{3,1} = \dfrac{3}{2},~\alpha_{3,2}=-\frac{3}{10},~\alpha_{3,3}=\frac{1}{30}.
  \end{aligned}
\end{equation*}
The choice of the mesh velocities $\dot{x}_1,~\dot{x}_2$ follows the strategy given in \cite{Duan2022High,Li2022High}.
The proof of the EC property is similar to that in \cite{Duan2022High}.
For completeness, it is presented in the following proposition.

\begin{proposition}\rm\label{prop:2D_HO_sufficient_condition}
  The schemes \eqref{eq:2D_HighOrder_EC_Discrete}-\eqref{eq:2D_SCL_HighOrder_2} with the numerical fluxes \eqref{eq:HighOrder_EC_Flux}-\eqref{eq:2D_SCL_flux_2} are EC, in the sense that,
  the numerical solutions satisfy the semi-discrete energy identities
  \begin{equation*}
    \frac{\mathrm{d}}{\mathrm{d} t}\mathcal{E}_{i, j}+\frac{1}{\Delta \xi_{1}}\left(\left({\widetilde{{\mathcal{Q}}}}_1\right)_{i+\frac{1}{2},j}^{\tt 2pth}-\left({\widetilde{{\mathcal{Q}}}}_1\right)_{i-\frac{1}{2},j}^{\tt 2pth}\right)+\frac{1}{\Delta \xi_{2}}\left(\left({\widetilde{{\mathcal{Q}}}}_2\right)_{i,j+\frac{1}{2}}^{\tt 2pth}-\left({\widetilde{{\mathcal{Q}}}}_2\right)_{i,j-\frac{1}{2}}^{\tt 2pth}\right) = 0,
  \end{equation*}
  with the numerical energy $\mathcal{E}_{i,j}=J_{i,j}\eta(\bU_{i,j})$,
  and the numerical energy fluxes
  \begin{align*}
    \left(\widetilde{\mathcal{Q}}_1\right)_{i+\frac{1}{2},j}^{\tt 2pth }=\sum_{m=1}^p \alpha_{p, m} \sum_{s=0}^{m-1} \widetilde{\mathcal{Q}}_1\left(\bm{U}_{i-s,j}, \bm{U}_{i-s+m,j},\left(J \frac{\partial \xi_1}{\partial \zeta}\right)_{i-s,j},\left(J \frac{\partial \xi_1}{\partial \zeta}\right)_{i-s+m,j}\right),
    \\
    \left(\widetilde{\mathcal{Q}}_2\right)_{i,j+\frac{1}{2}}^{\tt 2pth }=\sum_{m=1}^p \alpha_{p, m} \sum_{s=0}^{m-1} \widetilde{\mathcal{Q}}_2\left(\bm{U}_{i,j-s}, \bm{U}_{i,j-s+m},\left(J \frac{\partial \xi_2}{\partial \zeta}\right)_{i,j-s},\left(J \frac{\partial \xi_2}{\partial \zeta}\right)_{i,j-s+m}\right),
  \end{align*}
  where
  \begin{align*}
    \widetilde{\mathcal{Q}}_\ell =
    \ &\frac{1}{2}\left(\bm{V}(\bU_{L})+\bm{V}(\bU_{R})\right)^{\mathrm{T}}\widetilde{\bm{\mathcal{F}}}_{\ell}
    - \frac{1}{2}\left(\left(J \frac{\partial \xi_{\ell}}{\partial t}\right)_L+\left(J \frac{\partial \xi_{\ell}}{\partial t}\right)_R\right)\left(\phi\left(\bU_{L}\right)+\phi\left(\bU_{R}\right)\right) \nonumber \nonumber\\
    &- \sum_{k=1}^{2}\frac{1}{4}\left(\left(J \frac{\partial \xi_{\ell}}{\partial x_k}\right)_L+\left(J \frac{\partial \xi_{\ell}}{\partial x_k}\right)_R\right)\left(\psi_{k}\left(\bU_{L}\right)+\psi_{k}\left(\bU_{R}\right)\right) \nonumber\\
    & +\sum_{k=1}^{2} \frac{g}{8} \left(\left(J \frac{\partial \xi_\ell}{\partial x_k}\right)_L+\left(J \frac{\partial \xi_\ell}{\partial x_k}\right)_R\right)\left(\left(hv_k\right)_{L}+\left(hv_k\right)_{R}\right)\left(b_{L}+b_{R}\right).
  \end{align*}
\end{proposition}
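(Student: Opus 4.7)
The plan is to mimic the argument of \cite{Duan2022High} for entropy conservative schemes on moving meshes, adapted to account for the extra energy term $\gamma gb^2$ and the balance-law source in \eqref{eq:2D_HighOrder_EC_Discrete}. Throughout, I will contract the semi-discrete scheme with the local entropy variable $\bV_{i,j}$ and rearrange the result into a telescoping sum of suitably averaged two-point fluxes.

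First I would rewrite the temporal part of the left-hand side. Writing $\bm{\mathcal{U}}_{i,j} = J_{i,j}\bU_{i,j}$ and using the standard identities
\begin{equation*}
  \bV_{i,j}^{\mathrm{T}}\bU_{i,j} = \eta(\bU_{i,j}) + \phi(\bU_{i,j}), \qquad
  \bV_{i,j}^{\mathrm{T}}\dfrac{\mathrm{d}\bU_{i,j}}{\mathrm{d}t} = \dfrac{\mathrm{d}\eta(\bU_{i,j})}{\mathrm{d}t},
\end{equation*}
one obtains
\begin{equation*}
  \bV_{i,j}^{\mathrm{T}}\dfrac{\mathrm{d}\bm{\mathcal{U}}_{i,j}}{\mathrm{d}t}
  = \dfrac{\mathrm{d}\mathcal{E}_{i,j}}{\mathrm{d}t} + \phi(\bU_{i,j})\dfrac{\mathrm{d}J_{i,j}}{\mathrm{d}t}.
\end{equation*}
Substituting the discrete VCL \eqref{eq:2D_VCL_HighOrder} allows me to express $\phi\,\mathrm{d}J/\mathrm{d}t$ as a divergence of the $\widetilde{J\partial\xi_\ell/\partial t}$ fluxes; this divergence is precisely the $\phi$-contribution that will later combine with the flux terms to produce $\widetilde{\mathcal{Q}}_\ell$.

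Next I tackle the flux differences on the right-hand side. Using the telescoping identity
\begin{equation*}
  \left(\bm{\widetilde{\mathcal{F}}}_{1}\right)_{i+\frac{1}{2},j}^{\tt 2pth} - \left(\bm{\widetilde{\mathcal{F}}}_{1}\right)_{i-\frac{1}{2},j}^{\tt 2pth}
  = \sum_{m=1}^{p}\alpha_{p,m}\sum_{s=-m}^{m-1}\bigl(\bm{\widetilde{\mathcal{F}}}_{1}(\bU_{i+s+1,j},\bU_{i+s+m+1,j}) - \bm{\widetilde{\mathcal{F}}}_{1}(\bU_{i+s,j},\bU_{i+s+m,j})\bigr)
\end{equation*}
of the Lefloch--Mishra--Rohde type (with the mesh-metric averages carried along inside the two-point flux), the contraction of the $\xi_1$-difference with $\bV_{i,j}$ produces, after reindexing so that $\bV_{i,j}$ pairs with the adjacent stencil entry, a sum of jumps $(\bV_R - \bV_L)^{\mathrm{T}}\bm{\widetilde{\mathcal{F}}}_\ell$. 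Invoking the two-point EC condition \eqref{eq:EC_condition_1}, each such jump equals jumps of $\phi$, of $\psi_k$, and a source-type term $-\tfrac{g}{2}\mean{J\partial\xi_\ell/\partial x_k}\jump{hv_k}(b_L+b_R)$. The $\phi$-pieces combine with the VCL contribution obtained above, while the $\psi_k$-pieces give the desired spatial components of $\widetilde{\mathcal{Q}}_\ell$. This is precisely the telescoping bookkeeping developed in \cite{Duan2022High}; doing it analogously for the $\xi_2$-flux yields the full conservative divergence $\tfrac{1}{\Delta\xi_1}\Delta_1\widetilde{\mathcal{Q}}_1 + \tfrac{1}{\Delta\xi_2}\Delta_2\widetilde{\mathcal{Q}}_2$.

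Finally, I handle the bottom-topography source terms. Because only the second and third components of $\bm{B}_k$ are non-zero, contracting $\bV_{i,j}^{\mathrm{T}}$ with the $\widetilde{\bm{\mathcal{B}}}_k$ differences in \eqref{eq:2D_HighOrder_EC_Discrete} produces, after the same telescoping-and-reindexing step, exactly the terms $\tfrac{g}{2}\mean{J\partial\xi_\ell/\partial x_k}\jump{hv_k}(b_L+b_R)$ (times $h_{i,j}$ cancelling against the $gh$ prefactor through the entropy variables evaluated at the middle state). These cancel, stencil pair by stencil pair, the source-type residuals left over from the EC condition, provided the SCLs \eqref{eq:2D_SCL_HighOrder_1}--\eqref{eq:2D_SCL_HighOrder_2} are used to absorb any ``symmetric'' leftover metric terms. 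Assembling all three contributions yields the stated semi-discrete energy identity, with $\widetilde{\mathcal{Q}}_\ell$ in the advertised form.

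The main obstacle I anticipate is the careful bookkeeping in the last step: the $-gh\,\partial_{\xi_\ell}\widetilde{\bm{\mathcal{B}}}_\ell$ contribution on the right-hand side must match the source-type residual from \eqref{eq:EC_condition_1} exactly, and this matching hinges on the symmetric averaging chosen in \eqref{eq:HighOrder_EC_Flux_B} together with the SCLs; any other averaging would spoil the cancellation. Once that cancellation is verified at the level of a single Lefloch--Rohde pair $(i-s,\,i-s+m)$, linearity of the coefficients $\alpha_{p,m}$ finishes the argument without further work.
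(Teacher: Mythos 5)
Your plan is essentially the paper's own proof: contract the scheme with $\bV_{i,j}$, use $\bV^{\mathrm{T}}\bU=\eta+\phi$ together with the discrete VCL to absorb $\phi\,\mathrm{d}J/\mathrm{d}t$, add the (vanishing) $\psi_k$-weighted SCLs, telescope each high-order flux difference down to the two pair terms $(\bU_{i,j},\bU_{i\pm m,j})$, split point values into averages and jumps, and invoke the two-point EC condition \eqref{eq:EC_condition_1} so that the $\widetilde{\bm{\mathcal{B}}}$-source contribution cancels the $\jump{hv_k}(b_L+b_R)$ residual while the average parts assemble into $\widetilde{\mathcal{Q}}_\ell$. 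The only blemishes are bookkeeping slips — your displayed telescoping identity should reduce to $\sum_m\alpha_{p,m}\bigl(\bm{\widetilde{\mathcal{F}}}_1(\bU_{i,j},\bU_{i+m,j})-\bm{\widetilde{\mathcal{F}}}_1(\bU_{i-m,j},\bU_{i,j})\bigr)$ (the inner sum should run over $s=-m,\dots,-1$), and the SCLs are really needed to convert the $\jump{\psi_k}$ from the EC condition into the sums $\psi_k(\bU_L)+\psi_k(\bU_R)$ appearing in $\widetilde{\mathcal{Q}}_\ell$, not for the source-term matching — neither of which affects the validity of the argument.
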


\begin{proof}\rm
  Taking the dot product of the schemes \eqref{eq:2D_HighOrder_EC_Discrete} with $\bm{V}_{i,j}:=\bV(\bU_{i,j})$
  and using the discrete VCL \eqref{eq:2D_VCL_HighOrder},
  one has
  \begin{align*}
    \frac{\mathrm{d}}{\mathrm{d} t}\mathcal{E}_{i,j}
    =
    & -\dfrac{1}{\Delta \xi_1}\bm{V}_{i,j}^{\mathrm{T}}
    \left(
    \left(\bm{\widetilde{\mathcal{F}}}_{1}\right)_{i+\frac{1}{2}, j}^{\tt 2pth}-\left(\bm{\widetilde{\mathcal{F}}}_{1}\right)_{i-\frac{1}{2}, j}^{\tt 2pth}\right)
    -\frac{1}{\Delta \xi_2}\bm{V}_{i,j}^{\mathrm{T}}
    \left(\left(\bm{\widetilde{\mathcal{F}}}_{2}\right)_{i, j+\frac{1}{2}}^{\tt 2pth}
    -\left(\bm{\widetilde{\mathcal{F}}}_{2}\right)_{i, j-\frac{1}{2}}^{\tt 2pth}\right)\nonumber
    \\
    &- \frac{gh_{i,j}}{\Delta \xi_1}
    \bm{V}_{i,j}^{\mathrm{T}}
    \left(\left(\bm{\widetilde{\mathcal{B}}}_{1}\right)_{i+\frac{1}{2}, j}^{{\tt 2pth}}-\left(\bm{\widetilde{\mathcal{B}}}_{1}\right)_{i-\frac{1}{2}, j}^{{\tt 2pth}}\right)
    -\frac{gh_{i,j}}{\Delta \xi_2}\bm{V}_{i,j}^{\mathrm{T}}\left(\left(\bm{\widetilde{\mathcal{B}}}_{2}\right)_{i, j+\frac{1}{2}}^{{\tt 2pth}}
    -\left(\bm{\widetilde{\mathcal{B}}}_{2}\right)_{i, j-\frac{1}{2}}^{{\tt 2pth}}\right)
    \\
    &+  \phi_{i,j}\left[\frac{1}{\Delta \xi_1}
      \left(\left(\widetilde{J \frac{\partial \xi_1}{\partial t}}\right)_{i+\frac{1}{2}, j}^{\tt 2pth}
      -\left(\widetilde{J \frac{\partial \xi_1}{\partial t}}\right)_{i-\frac{1}{2}, j}^{\tt 2pth}\right)
      +\frac{1}{\Delta \xi_2}
    \left(\left(\widetilde{J \frac{\partial \xi_2}{\partial t}}\right)_{i, j+\frac{1}{2}}^{2 p \mathrm{th}}-\left(\widetilde{J \frac{\partial \xi_2}{\partial t}}\right)_{i, j-\frac{1}{2}}^{\tt 2pth}\right)\right].
  \end{align*}
  Further combining it with the discrete SCLs \eqref{eq:2D_SCL_HighOrder_1}-\eqref{eq:2D_SCL_HighOrder_2} gives
  \begin{align*}
    \frac{\mathrm{d}}{\mathrm{d} t}\mathcal{E}_{i,j}
    =
    & -\dfrac{1}{\Delta \xi_1}\bm{V}_{i,j}^{\mathrm{T}}
    \left(
    \left(\bm{\widetilde{\mathcal{F}}}_{1}\right)_{i+\frac{1}{2}, j}^{\tt 2pth}-\left(\bm{\widetilde{\mathcal{F}}}_{1}\right)_{i-\frac{1}{2}, j}^{\tt 2pth}\right)
    -\frac{1}{\Delta \xi_2}\bm{V}_{i,j}^{\mathrm{T}}
    \left(\left(\bm{\widetilde{\mathcal{F}}}_{2}\right)_{i, j+\frac{1}{2}}^{\tt 2pth}
    -\left(\bm{\widetilde{\mathcal{F}}}_{2}\right)_{i, j-\frac{1}{2}}^{\tt 2pth}\right)\nonumber
    \\
    &- \frac{gh_{i,j}}{\Delta \xi_1}
    \bm{V}_{i,j}^{\mathrm{T}}
    \left(\left(\bm{\widetilde{\mathcal{B}}}_{1}\right)_{i+\frac{1}{2}, j}^{{\tt 2pth}}-\left(\bm{\widetilde{\mathcal{B}}}_{1}\right)_{i-\frac{1}{2}, j}^{{\tt 2pth}}\right)
    -\frac{gh_{i,j}}{\Delta \xi_2}\bm{V}_{i,j}^{\mathrm{T}}\left(\left(\bm{\widetilde{\mathcal{B}}}_{2}\right)_{i, j+\frac{1}{2}}^{{\tt 2pth}}
    -\left(\bm{\widetilde{\mathcal{B}}}_{2}\right)_{i, j-\frac{1}{2}}^{{\tt 2pth}}\right)
    \\
    &+  \phi_{i,j}\left[\frac{1}{\Delta \xi_1}
      \left(\left(\widetilde{J \frac{\partial \xi_1}{\partial t}}\right)_{i+\frac{1}{2}, j}^{\tt 2pth}
      -\left(\widetilde{J \frac{\partial \xi_1}{\partial t}}\right)_{i-\frac{1}{2}, j}^{\tt 2pth}\right)
      +\frac{1}{\Delta \xi_2}
    \left(\left(\widetilde{J \frac{\partial \xi_2}{\partial t}}\right)_{i, j+\frac{1}{2}}^{2 p \mathrm{th}}-\left(\widetilde{J \frac{\partial \xi_2}{\partial t}}\right)_{i, j-\frac{1}{2}}^{\tt 2pth}\right)\right]\\
    &+\sum_{k=1}^{2}\left(\psi_{k}\right)_{i,j}\left[\frac{1}{\Delta \xi_1}\left(\left(\widetilde{J \frac{\partial \xi_1}{\partial x_k}}\right)_{i+\frac{1}{2},j}^{{\tt 2pth}}-\left(\widetilde{J \frac{\partial \xi_1}{\partial x_k}}\right)_{i-\frac{1}{2},j}^{{\tt 2pth}}\right) + \frac{1}{\Delta \xi_2}\left(\left(\widetilde{J \frac{\partial \xi_2}{\partial x_k}}\right)_{i, j+\frac{1}{2}}^{{\tt 2pth}}-\left(\widetilde{J \frac{\partial \xi_2}{\partial x_k}}\right)_{i,j-\frac{1}{2}}^{{\tt 2pth}}\right)\right].
  \end{align*}
  For ease of exposition, the right-hand side of the above equation can be split as
  \begin{equation*}
    \frac{\mathrm{d}}{\mathrm{d} t}\mathcal{E}_{i,j}
    = -\frac{1}{\Delta \xi_1}\sum_{m=1}^{p}\alpha_{p,m}\left(I_1+I_2\right)-\frac{1}{\Delta \xi_2}\sum_{m=1}^{p}\alpha_{p,m}\left(I_3+I_4\right),
  \end{equation*}
  where
  \begin{align*}
    &I_1 = \bm{V}_{i,j}^{\mathrm{T}}
    \left[\bm{\widetilde{\mathcal{F}}}_1
      \left(\bm{U}_{i,j},\bm{U}_{i+m,j},\left(J\frac{\partial \xi_1}{\partial \zeta}\right)_{i,j},\left(J\frac{\partial \xi_1}{\partial \zeta}\right)_{i+m,j}\right)
    -\bm{\widetilde{\mathcal{F}}}_1\left(\bm{U}_{i,j},\bm{U}_{i-m,j},\left(J\frac{\partial \xi_1}{\partial \zeta}\right)_{i,j},\left(J\frac{\partial \xi_1}{\partial \zeta}\right)_{i-m,j}\right)\right]\nonumber
    \\
    &\qquad-\phi_{i,j}\Bigg[\frac{1}{2}\left(\left(J\frac{\partial \xi_1}{\partial t}\right)_{i,j}
      +\left(J\frac{\partial \xi_1}{\partial t}\right)_{i+m,j}\right)
      - \frac{1}{2}\left(\left(J\frac{\partial \xi_1}{\partial t}\right)_{i,j}
    +\left(J\frac{\partial \xi_1}{\partial t}\right)_{i-m,j}\right) \Bigg]
    \\
    &\qquad-\sum_{k=1}^{2}\left(\psi_{k}\right)_{i,j}\left[\frac{1}{2}\left(\left(J\frac{\partial \xi_1}{\partial x_k}\right)_{i,j}+\left(J\frac{\partial \xi_1}{\partial x_k}\right)_{i+m,j}\right)-\frac{1}{2}\left(\left(J\frac{\partial \xi_1}{\partial x_k}\right)_{i,j}+\left(J\frac{\partial \xi_1}{\partial x_k}\right)_{i-m,j}\right)\right],
    \\
    &I_2 =\sum_{k=1}^2\frac{ g\left(hv_k\right)_{i,j}}{4}
    \left[\left(\left(J \frac{\partial \xi_1}{\partial x_k}\right)_{i,j}+\left(J \frac{\partial \xi_1}{\partial x_k}\right)_{i+m,j}\right)\left({b}_{i,j}+b_{i+m,j}\right)
    \right.\\
    & \left.\qquad
    -\left(\left(J \frac{\partial \xi_1}{\partial x_k}\right)_{i,j}+\left(J \frac{\partial \xi_1}{\partial x_k}\right)_{i-m,j}\right)\left({b}_{i,j}+b_{i-m,j}\right)\right],\\
    &I_3 =\bm{V}_{i,j}^{\mathrm{T}} \left[\bm{\widetilde{\mathcal{F}}}_2\left(\bm{U}_{i,j},\bm{U}_{i,j+m},\left(J\frac{\partial \xi_2}{\partial \zeta}\right)_{i,j},\left(J\frac{\partial \xi_2}{\partial \zeta}\right)_{i,j+m}\right)-\bm{\widetilde{\mathcal{F}}}_2\left(\bm{U}_{i,j},\bm{U}_{i,j-m},\left(J\frac{\partial \xi_2}{\partial \zeta}\right)_{i,j},\left(J\frac{\partial \xi_2}{\partial \zeta}\right)_{i,j-m}\right)\right]\\
    &\qquad-\phi_{i,j}\Bigg[\frac{1}{2}\left(\left(J\frac{\partial \xi_2}{\partial t}\right)_{i,j}
      +\left(J\frac{\partial \xi_2}{\partial t}\right)_{i,j+m}\right)
      - \frac{1}{2}\left(\left(J\frac{\partial \xi_2}{\partial t}\right)_{i,j}
    +\left(J\frac{\partial \xi_2}{\partial t}\right)_{i,j-m}\right) \Bigg]
    \\
    &\qquad-\sum_{k=1}^{2}\left(\psi_{k}\right)_{i,j}\left[\frac{1}{2}\left(\left(J\frac{\partial \xi_2}{\partial x_k}\right)_{i,j}+\left(J\frac{\partial \xi_2}{\partial x_k}\right)_{i,j+m}\right)
    -\frac{1}{2}\left(\left(J\frac{\partial \xi_2}{\partial x_k}\right)_{i,j}+\left(J\frac{\partial \xi_1}{\partial x_k}\right)_{i,j-m}\right)\right],
    \\
    &I_4 =\sum_{k=1}^2\frac{ g\left(hv_k\right)_{i,j}}{4}\left[\left(\left(J \frac{\partial \xi_2}{\partial x_k}\right)_{i,j}+\left(J \frac{\partial \xi_2}{\partial x_k}\right)_{i,j+m}\right)\left({b}_{i,j}+b_{i,j+m}\right)
         \right.\\
    & \left.\qquad
    -\left(\left(J \frac{\partial \xi_2}{\partial x_k}\right)_{i,j}+\left(J \frac{\partial \xi_2}{\partial x_k}\right)_{i,j-m}\right)\left({b}_{i,j}+b_{i,j-m}\right)\right],
  \end{align*}
  with $\zeta = t,x_1,x_2$.
  Using $a_{i,j}= \frac{1}{2}\left(a_{i,j}+a_{i+m,j}\right)-\frac{1}{2}\left(a_{i+m,j}-a_{i,j}\right)$ and $a_{i,j} = \frac{1}{2}\left(a_{i,j}+a_{i-m,j}\right)+\frac{1}{2}\left(a_{i,j}-a_{i-m,j}\right)$,
  the term $I_1$ can be simplified as follows
  \begin{align*}
    I_1 = &+\frac{1}{2}\left(\bm{V}_{i,j}+\bm{V}_{i+m,j}\right)^{\mathrm{T}}
    \bm{\widetilde{\mathcal{F}}}_1\left(\bm{U}_{i,j},\bm{U}_{i+m,j},\left(J\frac{\partial \xi_1}{\partial \zeta}\right)_{i,j},\left(J\frac{\partial \xi_1}{\partial \zeta}\right)_{i+m,j}\right)\\
    &-\frac{1}{2}\left(\bm{V}_{i+m,j}-\bm{V}_{i,j}\right)^{\mathrm{T}}
    \bm{\widetilde{\mathcal{F}}}_1\left(\bm{U}_{i,j},\bm{U}_{i+m,j},\left(J\frac{\partial \xi_1}{\partial \zeta}\right)_{i,j},\left(J\frac{\partial \xi_1}{\partial \zeta}\right)_{i+m,j}\right)\\
    &-\frac{1}{2}\left(\bm{V}_{i,j}+\bm{V}_{i-m,j}\right)^{\mathrm{T}}\bm{\widetilde{\mathcal{F}}}_1\left(\bm{U}_{i,j},\bm{U}_{i-m,j},\left(J\frac{\partial \xi_1}{\partial \zeta}\right)_{i,j},\left(J\frac{\partial \xi_1}{\partial \zeta}\right)_{i-m,j}\right)\\
    & -\frac{1}{2}\left(\bm{V}_{i,j}-\bm{V}_{i-m,j}\right)^{\mathrm{T}}\bm{\widetilde{\mathcal{F}}}_1\left(\bm{U}_{i,j},\bm{U}_{i-m,j},\left(J\frac{\partial \xi_1}{\partial \zeta}\right)_{i,j},\left(J\frac{\partial \xi_1}{\partial \zeta}\right)_{i-m,j}\right)\\
    &-\frac{1}{4}\left(\phi_{i,j}+\phi_{i+m,j}\right)\left(\left(J\frac{\partial \xi_1}{\partial t}\right)_{i,j}
    +\left(J\frac{\partial \xi_1}{\partial t}\right)_{i+m,j}\right)
    +\frac{1}{4}\left(\phi_{i+m,j}-\phi_{i,j}\right)
    \left(\left(J\frac{\partial \xi_1}{\partial t}\right)_{i,j}
    +\left(J\frac{\partial \xi_1}{\partial t}\right)_{i+m,j}\right)\\
    &+ \frac{1}{4}\left(\phi_{i,j}+\phi_{i-m,j}\right)
    \left(\left(J\frac{\partial \xi_1}{\partial t}\right)_{i,j}
    +\left(J\frac{\partial \xi_1}{\partial t}\right)_{i-m,j}\right)
    +\frac{1}{4}\left(\phi_{i,j}-\phi_{i-m,j}\right)
    \left(\left(J\frac{\partial \xi_1}{\partial t}\right)_{i,j}
    +\left(J\frac{\partial \xi_1}{\partial t}\right)_{i-m,j}\right)
    \\
    &-\sum_{k=1}^{2}\Bigg[\frac{1}{4}\left(\left(\psi_k\right)_{i,j}+\left(\psi_k\right)_{i+m,j}\right)\left(\left(J\frac{\partial \xi_1}{\partial x_1}\right)_{i,j}
      +\left(J\frac{\partial \xi_1}{\partial x_1}\right)_{i+m,j}\right)
      \\
      &-\frac{1}{4}\left(\left(\psi_k\right)_{i+m,j}-\left(\psi_k\right)_{i,j}\right)
      \left(\left(J\frac{\partial \xi_1}{\partial x_1}\right)_{i,j}
      +\left(J\frac{\partial \xi_1}{\partial x_1}\right)_{i+m,j}\right)
      \\
      &- \frac{1}{4}\left(\left(\psi_k\right)_{i,j}+\left(\psi_k\right)_{i-m,j}\right)
      \left(\left(J\frac{\partial \xi_1}{\partial x_1}\right)_{i,j}
      +\left(J\frac{\partial \xi_1}{\partial x_1}\right)_{i-m,j}\right)
      \\
      &-\frac{1}{4}\left(\left(\psi_k\right)_{i,j}-\left(\psi_k\right)_{i-m,j}\right)
      \left(\left(J\frac{\partial \xi_1}{\partial x_1}\right)_{i,j}
    +\left(J\frac{\partial \xi_1}{\partial x_1}\right)_{i-m,j}\right)\Bigg].
  \end{align*}
  Similarly, $I_2$ becomes
  \begin{align*}
    I_2 =&+\sum_{k=1}^2 g\Bigg[\frac{1}{8}\left(\left(hv_k\right)_{i+m,j}+\left(hv_k\right)_{i,j}\right)
      \left(\left(J \frac{\partial \xi_1}{\partial x_k}\right)_{i,j}
      +\left(J \frac{\partial \xi_1}{\partial x_k}\right)_{i+m,j}\right)\left({b}_{i,j}+b_{i+m,j}\right)\\
      &-\frac{1}{8}\left(\left(hv_k\right)_{i+m,j}-\left(hv_k\right)_{i,j}\right)
      \left(\left(J \frac{\partial \xi_1}{\partial x_k}\right)_{i,j}
      +\left(J \frac{\partial \xi_1}{\partial x_k}\right)_{i+m,j}\right)\left({b}_{i,j}+b_{i+m,j}\right)\\
      &-\frac{1}{8}\left(\left(hv_k\right)_{i-m,j}+\left(hv_k\right)_{i,j}\right)
      \left(\left(J \frac{\partial \xi_1}{\partial x_k}\right)_{i,j}
      +\left(J \frac{\partial \xi_1}{\partial x_k}\right)_{i-m,j}\right)\left({b}_{i,j}+b_{i-m,j}\right)\\
      &-\frac{1}{8}\left(\left(hv_k\right)_{i,j}-\left(hv_k\right)_{i-m,j}\right)
      \left(\left(J \frac{\partial \xi_1}{\partial x_k}\right)_{i,j}
      +\left(J \frac{\partial \xi_1}{\partial x_k}\right)_{i-m,j}\right)\left({b}_{i,j}+b_{i-m,j}\right)
    \Bigg].
  \end{align*}
  Using the condition \eqref{eq:EC_condition_1} yields
  \begin{equation*}
    I_1+I_2 = \widetilde{\mathcal{Q}}_1\left(\bm{U}_{i,j}, \bm{U}_{i+m,j},\left(J \frac{\partial \xi_1}{\partial \zeta}\right)_{i,j},\left(J \frac{\partial \xi_1}{\partial \zeta}\right)_{i+m,j}\right)-\widetilde{\mathcal{Q}}_1\left(\bm{U}_{i,j}, \bm{U}_{i-m,j},\left(J \frac{\partial \xi_1}{\partial \zeta}\right)_{i,j},\left(J \frac{\partial \xi_1}{\partial \zeta}\right)_{i-m,j}\right).
  \end{equation*}
  Simplifying the terms $I_3,I_4$ in the same way completes the proof.
\end{proof}

\begin{proposition}\rm
  The fully-discrete schemes based on the semi-discrete EC schemes \eqref{eq:2D_HighOrder_EC_Discrete}-\eqref{eq:2D_SCL_HighOrder_2} with the numerical fluxes \eqref{eq:HighOrder_EC_Flux}-\eqref{eq:2D_SCL_flux_2} under the forward Euler or explicit SSP RK discretizations are WB on the moving meshes, in the sense that,
  if the mesh does not interleave during the computation,
  and the water depth is always positive,
  then for the given initial data satisfying the lake at rest
  \begin{equation*}
    (h+b)_{i,j}^{0} \equiv C,
    ~(v_1)_{i,j}^{0} = (v_2)_{i,j}^{0} = 0,
    ~\forall i,j,
  \end{equation*}
  the numerical solutions satisfy
  \begin{equation}\label{eq:2pnd_WB_tn}
    (h+b)_{i,j}^{n} \equiv C,
    ~(v_1)_{i,j}^{n} = (v_2)_{i,j}^{n} = 0,
    ~\forall i,j,
  \end{equation}
  at $t^n$, where $C$ is a given constant.
\end{proposition}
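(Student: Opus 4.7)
The plan is to reduce the claim to a single forward Euler step: any explicit SSP RK update is a convex combination of forward Euler stages, and the lake-at-rest set is affine (hence convex), so it is preserved under such combinations. Since $h^{n+1}, J^{n+1}>0$, it then suffices to prove, at the lake-at-rest state, that
\begin{equation*}
\frac{\mathrm{d}(J(h+b))_{i,j}}{\mathrm{d}t}=C\,\frac{\mathrm{d}J_{i,j}}{\mathrm{d}t},\qquad \frac{\mathrm{d}(Jhv_\ell)_{i,j}}{\mathrm{d}t}=0,\quad \ell=1,2.
\end{equation*}
The first identity is straightforward: at rest, $\widetilde{\bm{U}}=(\mean{h},0,0,\mean{b})^{\mathrm{T}}$ and the first and fourth components of $\widetilde{\bm{F}}_k$ vanish, so only the temporal-metric contributions survive in the first and fourth components of $\widetilde{\bm{\mathcal{F}}}_\ell^{\tt 2pth}$. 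Summing those two components produces a high-order temporal-metric flux multiplied by $\mean{h+b}=C$, and the discrete VCL \eqref{eq:2D_VCL_HighOrder} immediately yields the identity.

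For the momentum equation I would substitute $h=C-b$ and $v_1=v_2=0$ into the two-point flux to obtain
\begin{equation*}
\widetilde{F}_{1,2}\big|_{\mathrm{rest}}=\widetilde{F}_{2,3}\big|_{\mathrm{rest}}=\frac{gC^{2}}{2}-gC\mean{b}+\frac{g\,b_L b_R}{2},
\end{equation*}
together with $\widetilde{U}_2=\widetilde{U}_3=0$ and $(\widetilde{\bm{F}}_1)_3=(\widetilde{\bm{F}}_2)_2=0$. Inserting this into the high-order flux decomposes the second component of $\widetilde{\bm{\mathcal{F}}}_\ell^{\tt 2pth}$ into three pieces: a constant $gC^{2}/2$ times the high-order metric flux $(\widetilde{J{\partial \xi_\ell}/{\partial x_1}})^{\tt 2pth}$; a $-gC$ multiple of the source-term flux $(\widetilde{\bm{\mathcal{B}}}_\ell)_2^{\tt 2pth}$; and an auxiliary flux $(g/2)\mathcal{C}^{(\ell)}$ built from pointwise products $b_L b_R$ with the same metric weighting. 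Taking the $\xi$-divergence, the constant piece drops out by the discrete SCL \eqref{eq:2D_SCL_HighOrder_1}, and the $-gC$ piece combines with the source contribution $-g(C-b_{i,j})\sum_\ell\Delta(\widetilde{\bm{\mathcal{B}}}_\ell)_2^{\tt 2pth}/\Delta\xi_\ell$ so that the $gC$ terms cancel, leaving
\begin{equation*}
\frac{\mathrm{d}(Jhv_1)_{i,j}}{\mathrm{d}t}\bigg|_{\mathrm{rest}}=g\,b_{i,j}\sum_{\ell=1}^{2}\frac{\Delta(\widetilde{\bm{\mathcal{B}}}_\ell)_2^{\tt 2pth}}{\Delta\xi_\ell}-\frac{g}{2}\sum_{\ell=1}^{2}\frac{\Delta\mathcal{C}^{(\ell)}}{\Delta\xi_\ell}.
\end{equation*}

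The hard part will be to prove that this remainder vanishes, for which I plan to establish the discrete product-rule identity
\begin{equation*}
2b_{i,j}\,\Delta(\widetilde{\bm{\mathcal{B}}}_\ell)_2^{\tt 2pth}-\Delta\mathcal{C}^{(\ell)}=b_{i,j}^{2}\,\Delta(\widetilde{J{\partial \xi_\ell}/{\partial x_1}})^{\tt 2pth}
\end{equation*}
at each interior node. This should follow by a direct telescoping computation inside each $m$-block of $\sum_{m=1}^{p}\alpha_{p,m}\sum_{s=0}^{m-1}$: the $s$-sum collapses to the $s=0$ and $s=m-1$ endpoint contributions, and the elementary algebraic identity $2b_{i,j}\cdot\tfrac12(b_{i,j}+b_{i\pm m,j})-b_{i,j}b_{i\pm m,j}=b_{i,j}^{2}$ pulls the bilinear $b$-dependence out as $b_{i,j}^{2}$ multiplying $\tfrac12(a_{i+m,j}-a_{i-m,j})$, whose sum against $\alpha_{p,m}$ is exactly the jump of the high-order metric flux. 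Summing the identity over $\ell=1,2$ and invoking the SCL \eqref{eq:2D_SCL_HighOrder_1} annihilates the right-hand side, so the two bracketed sums in the previous display agree and $\mathrm{d}(Jhv_1)/\mathrm{d}t|_{\mathrm{rest}}=0$. The case of $hv_2$ is entirely analogous, with the SCL \eqref{eq:2D_SCL_HighOrder_2} used in place of \eqref{eq:2D_SCL_HighOrder_1}. Combined with the $h+b$ identity, this shows that forward Euler preserves the lake at rest, whence SSP RK does as well.
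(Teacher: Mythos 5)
Your proposal is correct, and its skeleton (induction, reduction to a single forward Euler step via the convex-combination structure of SSP RK, the VCL argument for $h+b$, and the SCLs for the momentum components) coincides with the paper's. The momentum cancellation, however, is organized differently. The paper keeps $h$ general, folds the source contribution into the flux differences $H_1,\dots,H_4$, adds the SCLs multiplied by $\frac{g}{2}h_{i,j}^2$ and $g(hb)_{i,j}$, and then exhibits in each resulting term $\widetilde{H}_1,\dots,\widetilde{H}_4$ an explicit factor $(h+b)_{i\pm m,j}-(h+b)_{i,j}$, so every term vanishes individually without any further use of the SCLs. You instead substitute $h=C-b$, $v_\ell=0$ into the two-point flux first, split off the constant $gC^2/2$ and the $-gC\mean{b}$ pieces, and reduce the remainder to the discrete product-rule identity $2b_{i,j}\,\Delta(\widetilde{\bm{\mathcal{B}}}_\ell)_2-\Delta\mathcal{C}^{(\ell)}=b_{i,j}^2\,\Delta\big(\widetilde{J\partial\xi_\ell/\partial x_1}\big)$, whose right-hand side is killed only after summing over $\ell$ and invoking the SCL once more. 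The two routes are algebraically equivalent — the total SCL multiple you add, $\frac{g}{2}C^2-\frac{g}{2}b_{i,j}^2$, equals the paper's $\frac{g}{2}h_{i,j}^2+g(hb)_{i,j}$ at the rest state — but the paper's factorization makes the mechanism (each flux difference is proportional to the jump of the free surface) more transparent, while your version isolates cleanly which pieces require the SCL and which cancel pointwise, at the cost of an extra lemma.
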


\begin{proof}
  By induction, assuming that the conditions \eqref{eq:2pnd_WB_tn} are satisfied at $t^{n}$,
  it is enough to prove they still hold at $t^{n+1}$.
  Moreover, we only need to prove in the case of the forward Euler time discretization
  since the explicit SSP RK schemes can be rewritten as its convex combination,
  and the superscript $n$ will be dropped in the right-hand sides for simplicity in the proof.
  For the fully-discrete schemes based on the semi-discrete schemes \eqref{eq:2D_HighOrder_EC_Discrete} with the EC fluxes \eqref{eq:HighOrder_EC_Flux}-\eqref{eq:HighOrder_EC_Flux_B} and the forward Euler time discretization,
  the first and last components can be simplified as
  \begin{align}
    (Jh)_{i,j}^{n+1} - (Jh)_{i,j}^{n} = &
    -\frac{\Delta t}{4\Delta {\xi_1}}\sum_{m=1}^{p}\alpha_{p,m}
    \Bigg[\left(\left(J\frac{\partial \xi_1}{\partial t}\right)_{i,j}
      +\left(J\frac{\partial \xi_1}{\partial t}\right)_{i+m,j}\right)
      (h_{i+m,j}+h_{i,j})
      \nonumber\\
      &- \left(\left(J\frac{\partial \xi_1}{\partial t}\right)_{i,j}
      +\left(J\frac{\partial \xi_1}{\partial t}\right)_{i-m,j}\right)
    (h_{i-m,j}+h_{i,j}) \Bigg]\nonumber\\
    &
    -\frac{\Delta t}{4\Delta {\xi_2}}\sum_{m=1}^{p}\alpha_{p,m}
    \Bigg[\left(\left(J\frac{\partial \xi_2}{\partial t}\right)_{i,j}
      +\left(J\frac{\partial \xi_2}{\partial t}\right)_{i,j+m}\right)
      (h_{i,j+m}+h_{i,j})
      \nonumber\\
      &- \left(\left(J\frac{\partial \xi_2}{\partial t}\right)_{i,j}
      +\left(J\frac{\partial \xi_2}{\partial t}\right)_{i,j-m}\right)
    (h_{i,j-m}+h_{i,j}) \Bigg],\label{eq:2D_h_discre}\\
    (Jb)_{i,j}^{n+1} - (Jb)_{i,j}^{n} = &
    -\frac{\Delta t}{4\Delta {\xi_1}}\sum_{m=1}^{p}\alpha_{p,m}
    \Bigg[\left(\left(J\frac{\partial \xi_1}{\partial t}\right)_{i,j}
      +\left(J\frac{\partial \xi_1}{\partial t}\right)_{i+m,j}\right)
      (b_{i+m,j}+b_{i,j})
      \nonumber\\
      &- \left(\left(J\frac{\partial \xi_1}{\partial t}\right)_{i,j}
      +\left(J\frac{\partial \xi_1}{\partial t}\right)_{i-m,j}\right)
    (b_{i-m,j}+b_{i,j}) \Bigg]\nonumber\\
    &
    -\frac{\Delta t}{4\Delta {\xi_2}}\sum_{m=1}^{p}\alpha_{p,m}
    \Bigg[\left(\left(J\frac{\partial \xi_2}{\partial t}\right)_{i,j}
      +\left(J\frac{\partial \xi_2}{\partial t}\right)_{i,j+m}\right)
      (b_{i,j+m}+b_{i,j})
      \nonumber\\
      &- \left(\left(J\frac{\partial \xi_2}{\partial t}\right)_{i,j}
      +\left(J\frac{\partial \xi_2}{\partial t}\right)_{i,j-m}\right)
    (b_{i,j-m}+b_{i,j}) \Bigg].\label{eq:2D_b_discre}
  \end{align}
  The summation of \eqref{eq:2D_h_discre} and \eqref{eq:2D_b_discre} is
  \begin{align*}
    (J(h+b))_{i,j}^{n+1} - J_{i,j}^{n}C =&
    -C\frac{\Delta t}{2\Delta {\xi_1}}\sum_{m=1}^{p}\alpha_{p,m}
    \Bigg[\left(\left(J\frac{\partial \xi_1}{\partial t}\right)_{i,j}
      +\left(J\frac{\partial \xi_1}{\partial t}\right)_{i+m,j}\right)
      \nonumber
      \\
      &- \left(\left(J\frac{\partial \xi_1}{\partial t}\right)_{i,j}
    +\left(J\frac{\partial \xi_1}{\partial t}\right)_{i-m,j}\right) \Bigg]\nonumber
    \\
    &
    -C\frac{\Delta t}{2\Delta {\xi_2}}\sum_{m=1}^{p}\alpha_{p,m}
    \Bigg[\left(\left(J\frac{\partial \xi_2}{\partial t}\right)_{i,j}
      +\left(J\frac{\partial \xi_2}{\partial t}\right)_{i,j+m}\right)
      \nonumber
      \\
      &- \left(\left(J\frac{\partial \xi_2}{\partial t}\right)_{i,j}
    +\left(J\frac{\partial \xi_2}{\partial t}\right)_{i,j-m}\right) \Bigg].
  \end{align*}
  Combining it with the fully-discrete VCL under the forward Euler time discretization
  \begin{align*}
    J_{i,j}^{n+1} - J_{i,j}^{n} = &
    -\frac{\Delta t}{2\Delta {\xi_1}}\sum_{m=1}^{p}\alpha_{p,m}
    \Bigg[\left(\left(J\frac{\partial \xi_1}{\partial t}\right)_{i,j}
      +\left(J\frac{\partial \xi_1}{\partial t}\right)_{i+m,j}\right)
      - \left(\left(J\frac{\partial \xi_1}{\partial t}\right)_{i,j}
    +\left(J\frac{\partial \xi_1}{\partial t}\right)_{i-m,j}\right) \Bigg]\nonumber
    \\
    &
    -\frac{\Delta t}{2\Delta {\xi_2}}\sum_{m=1}^{p}\alpha_{p,m}
    \Bigg[\left(\left(J\frac{\partial \xi_2}{\partial t}\right)_{i,j}
      +\left(J\frac{\partial \xi_2}{\partial t}\right)_{i,j+m}\right)
      - \left(\left(J\frac{\partial \xi_2}{\partial t}\right)_{i,j}
    +\left(J\frac{\partial \xi_2}{\partial t}\right)_{i,j-m}\right) \Bigg]
  \end{align*}
  gives
  \begin{equation*}
    J_{i,j}^{n+1}\left((h+b)_{i,j}^{n+1} - C\right) = 0.
  \end{equation*}
  Under the assumption $J_{i,j}^{n+1} > 0$,
  one has $(h+b)_{i,j}^{n+1} \equiv C$.

  The $2$nd component of the fully-discrete schemes under the forward Euler time discretization can be written as
  \begin{align}\label{eq:JHU_original}
    (Jhv_1)^{n+1}_{i,j} - (Jhv_1)^n_{i,j} =
    -\frac{\Delta t}{\Delta \xi_1}\sum_{m=1}^{p}\alpha_{p,m}\left(H_1-H_2\right)
    -\frac{\Delta t}{\Delta \xi_2}\sum_{m=1}^{p}\alpha_{p,m}\left(H_3-H_4\right),
  \end{align}
  where
  \begin{align}
    H_1 = \ & \frac{g}{2}\left(\left(J\frac{\partial \xi_1}{\partial x_1}\right)_{i,j}+\left(J\frac{\partial \xi_1}{\partial x_1}\right)_{i+m,j}\right)\Bigg[\frac{1}{4}\left(h^2_{i,j}+h^2_{i+m,j}\right) + \frac{1}{2}\left((hb)_{i,j}+(hb)_{i+m,j}\right)\nonumber
      \\
    & - \frac{1}{4}\left({h}_{i,j}+{h}_{i+m,j}\right)\left({b}_{i,j}+{b}_{i+m,j}\right) + \frac{1}{2}h_{i,j} \left({b}_{i,j}+{b}_{i+m,j}\right) \Bigg]\nonumber,
    \\
    H_2 = \ & \frac{g}{2}\left(\left(J\frac{\partial \xi_1}{\partial x_1}\right)_{i,j}+\left(J\frac{\partial \xi_1}{\partial x_1}\right)_{i-m,j}\right)\Bigg[\frac{1}{4}\left(h^2_{i,j}+h^2_{i-m,j}\right)  + \frac{1}{2}\left((hb)_{i,j}+(hb)_{i-m,j}\right)\nonumber
      \\
    & - \frac{1}{4}\left({h}_{i,j}+{h}_{i-m,j}\right)\left({b}_{i,j}+{b}_{i-m,j}\right) + \frac{1}{2}h_{i,j} \left({b}_{i,j}+{b}_{i-m,j}\right) \Bigg]\nonumber,
    \\
    H_3 = \ & \frac{g}{2}\left(\left(J\frac{\partial \xi_2}{\partial x_1}\right)_{i,j}+\left(J\frac{\partial \xi_2}{\partial x_1}\right)_{i,j+m}\right)\Bigg[\frac{1}{4}\left(h^2_{i,j}+h^2_{i,j+m}\right)  + \frac{1}{2}\left((hb)_{i,j}+(hb)_{i,j+m}\right)\nonumber
      \\
    & - \frac{1}{4}\left({h}_{i,j}+{h}_{i,j+m}\right)\left({b}_{i,j}+{b}_{i,j+m}\right) + \frac{1}{2}h_{i,j} \left({b}_{i,j}+{b}_{i,j+m}\right) \Bigg]\nonumber,
    \\
    H_4 = \ & \frac{g}{2}\left(\left(J\frac{\partial \xi_2}{\partial x_1}\right)_{i,j}+\left(J\frac{\partial \xi_2}{\partial x_1}\right)_{i,j-m}\right)\Bigg[\frac{1}{4}\left(h^2_{i,j}+h^2_{i,j-m}\right) + \frac{1}{2}\left((hb)_{i,j}+(hb)_{i,j-m}\right)\nonumber \\
    & - \frac{1}{4}\left({h}_{i,j}+{h}_{i,j-m}\right)\left({b}_{i,j}+{b}_{i,j-m}\right) + \frac{1}{2}h_{i,j} \left({b}_{i,j}+{b}_{i,j-m}\right) \Bigg]\nonumber.
  \end{align}
  Based on the discrete SCLs \eqref{eq:2D_SCL_HighOrder_1}-\eqref{eq:2D_SCL_HighOrder_2}, the following two identities hold
  \begin{align*}
    &\frac{g}{2}h^2_{i,j}\Bigg\{\sum_{m=1}^{p}\alpha_{p,m}\frac{1}{\Delta \xi_1}\left[\frac{1}{2}\left(\left(J\frac{\partial \xi_1}{\partial x_1}\right)_{i,j}+\left(J\frac{\partial \xi_1}{\partial x_1}\right)_{i+m,j}\right)-\frac{1}{2}\left(\left(J\frac{\partial \xi_1}{\partial x_1}\right)_{i,j}+\left(J\frac{\partial \xi_1}{\partial x_1}\right)_{i-m,j}\right)\right]\\
    &+\sum_{m=1}^{p}\alpha_{p,m}\frac{1}{\Delta \xi_2}\left[\frac{1}{2}\left(\left(J\frac{\partial \xi_2}{\partial x_1}\right)_{i,j}+\left(J\frac{\partial \xi_2}{\partial x_1}\right)_{i,j+m}\right)-\frac{1}{2}\left(\left(J\frac{\partial \xi_2}{\partial x_1}\right)_{i,j}+\left(J\frac{\partial \xi_2}{\partial x_1}\right)_{i,j-m}\right)\right]\Bigg\} = 0,\\
    &{g}(hb)_{i,j}\Bigg\{\sum_{m=1}^{p}\alpha_{p,m}\frac{1}{\Delta \xi_1}\left[\frac{1}{2}\left(\left(J\frac{\partial \xi_1}{\partial x_1}\right)_{i,j}+\left(J\frac{\partial \xi_1}{\partial x_1}\right)_{i+m,j}\right)-\frac{1}{2}\left(\left(J\frac{\partial \xi_1}{\partial x_1}\right)_{i,j}+\left(J\frac{\partial \xi_1}{\partial x_1}\right)_{i-m,j}\right)\right]
      \\
    &+\sum_{m=1}^{p}\alpha_{p,m}\frac{1}{\Delta \xi_2}\left[\frac{1}{2}\left(\left(J\frac{\partial \xi_2}{\partial x_1}\right)_{i,j}+\left(J\frac{\partial \xi_2}{\partial x_1}\right)_{i,j+m}\right)-\frac{1}{2}\left(\left(J\frac{\partial \xi_2}{\partial x_1}\right)_{i,j}+\left(J\frac{\partial \xi_2}{\partial x_1}\right)_{i,j-m}\right)\right]\Bigg\} = 0.
  \end{align*}
  One can combine the above two identities to rewrite \eqref{eq:JHU_original} as
  \begin{align*}
    (Jhv_1)^{n+1}_{i,j} - (Jhv_1)^n_{i,j} =
    -\frac{\Delta t}{\Delta \xi_1}\sum_{m=1}^{p}\alpha_{p,m}\left(\widetilde{H}_1-\widetilde{H}_2\right)
    -\frac{\Delta t}{\Delta \xi_2}\sum_{m=1}^{p}\alpha_{p,m}\left(\widetilde{H}_3-\widetilde{H}_4\right),
  \end{align*}
  where
  \begin{align*}
    \widetilde{H}_1 = \ &\frac{g}{2}\left(\left(J\frac{\partial \xi_1}{\partial x_1}\right)_{i,j}+\left(J\frac{\partial \xi_1}{\partial x_1}\right)_{i+m,j}\right)\Bigg[\frac{1}{4}\left(h^2_{i,j}+h^2_{i+m,j}\right)  - \frac{1}{2}h^2_{i,j}+ \frac{1}{2}\left((hb)_{i,j}+(hb)_{i+m,j}\right)
      \\
    & - \frac{1}{4}\left({h}_{i,j}+{h}_{i+m,j}\right)\left({b}_{i,j}+{b}_{i+m,j}\right) + \frac{1}{2}h_{i,j} \left({b}_{i,j}+{b}_{i+m,j}\right) - (hb)_{i,j}\Bigg],
    \\
    \widetilde{H}_2 = \ &\frac{g}{2}\left(\left(J\frac{\partial \xi_1}{\partial x_1}\right)_{i,j}+\left(J\frac{\partial \xi_1}{\partial x_1}\right)_{i-m,j}\right)\Bigg[\frac{1}{4}\left(h^2_{i,j}+h^2_{i-m,j}\right)  - \frac{1}{2}h^2_{i,j}+ \frac{1}{2}\left((hb)_{i,j}+(hb)_{i-m,j}\right)
      \\
    & - \frac{1}{4}\left({h}_{i,j}+{h}_{i-m,j}\right)\left({b}_{i,j}+{b}_{i-m,j}\right) + \frac{1}{2}h_{i,j} \left({b}_{i,j}+{b}_{i-m,j}\right) - (hb)_{i,j}\Bigg],
    \\
    \widetilde{H}_3 = \ &\frac{g}{2}\left(\left(J\frac{\partial \xi_2}{\partial x_1}\right)_{i,j}+\left(J\frac{\partial \xi_2}{\partial x_1}\right)_{i,j+m}\right)\Bigg[\frac{1}{4}\left(h^2_{i,j}+h^2_{i,j+m}\right)  - \frac{1}{2}h^2_{i,j}+ \frac{1}{2}\left((hb)_{i,j}+(hb)_{i,j+m}\right)
      \\
    & - \frac{1}{4}\left({h}_{i,j}+{h}_{i,j+m}\right)\left({b}_{i,j}+{b}_{i,j+m}\right) + \frac{1}{2}h_{i,j} \left({b}_{i,j}+{b}_{i,j+m}\right) - (hb)_{i,j}\Bigg],
    \\
    \widetilde{H}_4 = \ &\frac{g}{2}\left(\left(J\frac{\partial \xi_2}{\partial x_1}\right)_{i,j}+\left(J\frac{\partial \xi_2}{\partial x_1}\right)_{i,j-m}\right)\Bigg[\frac{1}{4}\left(h^2_{i,j}+h^2_{i,j-m}\right)  - \frac{1}{2}h^2_{i,j}+ \frac{1}{2}\left((hb)_{i,j}+(hb)_{i,j-m}\right)
      \\
    & - \frac{1}{4}\left({h}_{i,j}+{h}_{i,j-m}\right)\left({b}_{i,j}+{b}_{i,j-m}\right) + \frac{1}{2}h_{i,j} \left({b}_{i,j}+{b}_{i,j-m}\right) - (hb)_{i,j}\Bigg].
  \end{align*}
  The two parts in the bracket in $\widetilde{H}_1$ can be simplified as
  \begin{align}
    &\frac{1}{4}\left({h}_{i,j}^2+{h}_{i+m,j}^2\right)  - \frac{1}{2}h^2_{i,j} = \frac{1}{4}(h_{i,j}+h_{i+m,j})(h_{i+m,j}-h_{i,j}),\label{eq:h2_express}
    \\
    &\frac{1}{2}\left((hb)_{i,j}+(hb)_{i+m,j}\right)- \frac{1}{4}\left({h}_{i,j}+{h}_{i+m,j}\right)\left({b}_{i,j}+{b}_{i+m,j}\right) +  \frac{1}{2}h_{i,j}\left({b}_{i,j}+{b}_{i+m,j}\right) - (hb)_{i,j}\nonumber
    \\
    & = \frac{1}{4}\left(h_{i,j}+h_{i+m,j}\right)\left(b_{i+m,j}-b_{i,j}\right).\label{eq:h_b_express}
  \end{align}
  Substituting \eqref{eq:h2_express}-\eqref{eq:h_b_express} into $\widetilde{H}_1$ gives
  \begin{align*}
    \widetilde{H}_1= \frac{g}{8}\left(\left(J\frac{\partial \xi_1}{\partial x_1}\right)_{i,j}+\left(J\frac{\partial \xi_1}{\partial x_1}\right)_{i+m,j}\right)\Bigg\{(h_{i+m,j}+h_{i,j})\left[(h+b)_{i+m,j} - (h+b)_{i,j}\right]\Bigg\}=0.
  \end{align*}
  Similarly, $\widetilde{H}_2,~\widetilde{H}_3,~\widetilde{H}_4$ can be simplified as
  \begin{align*}
    \widetilde{H}_2=\ & \frac{g}{8}\left(\left(J\frac{\partial \xi_1}{\partial x_1}\right)_{i,j}+\left(J\frac{\partial \xi_1}{\partial x_1}\right)_{i-m,j}\right)\Bigg\{(h_{i-m,j}+h_{i,j})\left[(h+b)_{i-m,j} - (h+b)_{i,j}\right]\Bigg\}=0,\\
    \widetilde{H}_3=\ & \frac{g}{8}\left(\left(J\frac{\partial \xi_2}{\partial x_1}\right)_{i,j}+\left(J\frac{\partial \xi_2}{\partial x_1}\right)_{i,j+m}\right)\Bigg\{(h_{i,j+m}+h_{i,j})\left[(h+b)_{i,j+m} - (h+b)_{i,j}\right]\Bigg\}=0,
    \\
    \widetilde{H}_4=\ & \frac{g}{8}\left(\left(J\frac{\partial \xi_2}{\partial x_1}\right)_{i,j}+\left(J\frac{\partial \xi_2}{\partial x_1}\right)_{i,j-m}\right)\Bigg\{(h_{i,j-m}+h_{i,j})\left[(h+b)_{i,j-m} - (h+b)_{i,j}\right]\Bigg\}=0,
  \end{align*}
  so that $(Jhv_1)^{n+1}_{i,j} - (Jhv_1)^n_{i,j} = 0$.
 One can further conclude that $\left(v_1\right)_{i,j}^{n+1} = 0$,
  and $(v_2)^{n+1}_{i,j}=0$ can be proved in the same way.
\end{proof}

\subsection{High-order WB ES schemes}\label{section:ES}
Since the energy identity is available only if the solution is smooth,
the EC schemes may produce unphysical oscillations near discontinuities,
which leads to the development of ES schemes,
obtained by adding suitable dissipation terms to the EC schemes to suppress oscillations.
Meanwhile, the choice of the dissipation terms is required to make the schemes maintain the ES and WB properties simultaneously.

Considering the dissipation terms in the ES schemes for the shallow water magnetohydrodynamics \cite{Duan2021SWMHD} with zero magnetic fields,
they are based on the jump of $\widehat{\bV} = (g(h+b)-(v_1^2+v_2^2)/2, v_1, v_2)$,
which vanishes for the lake at rest.
So the ES schemes are WB as no dissipation is added in this case.
It should be noticed that the last component of the reformulated SWEs \eqref{eq:SWE1} is a transport equation of the bottom topography
\begin{equation*}
    \pd{}{t}\left(Jb\right) + \pd{}{\xi_1}\left(J\pd{\xi_1}{t}b\right) + \pd{}{\xi_2}\left(J\pd{\xi_2}{t}b\right) = 0.
\end{equation*}
If no dissipation is added for the lake at rest,
oscillations may appear near the discontinuities in $b$
as high-order EC schemes are used to solve that equation,
which is also observed in the numerical tests,
see Figures \ref{1D_Well_balance_Compare} and \ref{1D_ES_Pertubation_WB_bottom}.

Therefore, this paper proposes to add additional dissipation terms to stabilize the schemes for discontinuous $b$ and maintain the ES and WB properties at the same time.
Take the $\xi_1$-direction as example,
\begin{align}\label{eq:HO_ES_flux}
  &\left(\bm{\widetilde{\mathcal{F}}}_1\right)_{i+\frac{1}{2},j}^{\tt ES}
  = \left(\bm{\widetilde{\mathcal{F}}}_{1}\right)_{i+\frac{1}{2}, j}^{\tt 2pth}
  - \begin{pmatrix}
    \widehat{\bm{D}}_{i+\frac12,j} \\ 0 \\
  \end{pmatrix}
  - \mathring{\bm{D}}_{i+\frac12,j},\\
  &\widehat{\bm{D}}_{i+\frac12,j} = \frac{1}{2}\widehat{\bm{S}}_{i+\frac{1}{2},j}\widehat{\bm{Y}}_{{i+\frac{1}{2},j} }\jumpangle{\bm{\widetilde{V}}}_{i+\frac{1}{2},j}^{\tt WENO},\label{eq:diss1}\\
  &\mathring{\bm{D}}_{i+\frac12,j} = \frac{1}{2}\mathring{\bm{S}}_{i+\frac{1}{2},j}\mathring{\bm{Y}}_{{i+\frac{1}{2},j} }\jumpangle{\bm{U}}_{i+\frac{1}{2},j}^{\tt WENO}.\label{eq:diss2}
\end{align}
To preserve the lake at rest,
i.e., $h+b\equiv C$ and $v_\ell\equiv 0$ are not influenced by the dissipation terms,
the summation of the first and last components,
and the other two components in the dissipation terms should be zero.
The first three components $\widehat{\bm{D}}_{i+\frac{1}{2},j}\in\bbR^3$ in the first dissipation term are designed in the same manner of high-order dissipation terms in curvilinear coordinates as \cite{Duan2022High},
which are based on the jump of reconstructed values of $\widetilde{\bV}$, scaled version of $\widehat{\bV}$ to be defined later,
so that the first dissipation term preserves the WB property similar to \cite{Duan2021SWMHD}.
To be specific,
the first three components depend on the jumps of the reconstructed values of $h+b$ and $v_{\ell}$,
which vanish for the lake at rest.
The second dissipation term $\mathring{\bm{D}}_{i+\frac{1}{2},j}\in\bbR^4$ is proposed to suppress possible oscillations due to the transportation of the discontinuous bottom topography on moving meshes,
which is based on the jump of reconstructed values of $\bU$.
Note the first and last components of $\mathring{\bm{D}}_{i+\frac{1}{2},j}$ depend on the jump of reconstructed values of $h$ and $b$, respectively,
so that to make the summation of those two components vanish for the lake at rest,
they should be added at the same time,
to be detailed in the discussions below.

In \eqref{eq:diss1}, $\widehat{\bm{S}}_{i+\frac{1}{2},j}$ is defined as
\begin{equation}\label{eq:diss1_S1}
  \widehat{\bm{S}}_{i+\frac{1}{2},j} = \left(\alpha_1\right)_{i+\frac12,j}\left(\bm{T}^{-1} \bm{R}(\bm{T} \widehat{\bm{U}})\right)_{i+\frac{1}{2},j},
\end{equation}
with the rotational matrix
\begin{equation*}
  \bm{T}=\begin{pmatrix}
    1 & 0 & 0  \\
    0 &  \cos \varphi &  \sin \varphi  \\
    0 & -\sin \varphi & \cos \varphi \\
  \end{pmatrix},~
  \varphi=\arctan \left(\left(J \frac{\partial \xi_1}{\partial x_2}\right) \Big/\left(J \frac{\partial \xi_1}{\partial x_1}\right)\right).
\end{equation*}
The matrix $\bm{R}$ satisfies the following decomposition based on the convexity of $\widehat{\eta}$ with respect to $\widehat{\bU}$,
\begin{equation*}
  \pd{\widehat{\bm{U}}}{\bm{\widehat{V}}}=\bm{R} \bm{R}^{\mathrm{T}},~ \pd{\bm{\widehat{F}}_{1}}{\bm{\widehat{U}}}=\bm{R} \bm{\Lambda} \bm{R}^{-1},~ \bm{\Lambda}=\operatorname{diag}\left\{\lambda_{1}, \ldots, \lambda_{3}\right\},
\end{equation*}
where $\bm{\widehat{U}}$, $\bm{\widehat{V}}$, and $\bm{\widehat{F}}_{1}$ are the conservative variables, entropy variables, and physical flux of the original SWEs in Remark \ref{rmk:EntropyForOriSWEs}, respectively.
Indeed, the columns of $\bm{R}$ are the scaling eigenvectors of ${\partial\bm{\widehat{F}}_{1}}/{\partial\bm{\widehat{U}}}$,
with $\bm{\Lambda}$ the diagonal matrix consisting of the corresponding eigenvalues.
The specific expressions are
\begin{equation*}
  \bm{R} = \begin{pmatrix}
    1 &1& 0 \\
    v_1+c &v_1-c &0\\
    v_2  & v_2& 1\\
  \end{pmatrix}
  \begin{pmatrix}
    \frac{1}{\sqrt{2g}} & 0 & 0\\
    0&\frac{1}{\sqrt{2g}}&0\\
    0& 0&\sqrt{h}\\
  \end{pmatrix},~
  \lambda_1=v_1+c,~\lambda_2=v_1-c,
  ~\lambda_3=v_1,
\end{equation*}
with $c = \sqrt{gh}$.
The maximal wave speed $\left(\alpha_1\right)_{i+\frac12,j}$ in \eqref{eq:diss1_S1} is evaluated at  $\left(\left(\xi_1\right)_{i+\frac12},\left(\xi_2\right)_{j}\right)$ based on the spectral radius
\begin{equation}\label{eq:eigens}
  \left(\alpha_1\right)_{i+\frac12,j}:=\left(\varrho_1\right)_{i+ \frac12,j},~
  \varrho_1 = \max\limits_{w}\left|J\frac{\partial \xi_1}{\partial t}+\widetilde{L}_1{\lambda}_{w}(\bm{T} \widehat{\bm{U}})\right|,~w=1,2,3,
\end{equation}
with
$\widetilde{L}_1=\sqrt{\sum\limits_{k=1}^{2}\left(J \pd{\xi_1}{x_{k}}\right)^{2}}$.
To construct high-order accurate ES fluxes, the high-order WENO reconstruction \cite{Borges2008An} is employed in the scaled entropy variables
$\left\{\widetilde{\bm{V}}_{i+r,j}:=\left(\bm{R}^{\mathrm{T}}(\bT\widehat{\bU})\bT\right)_{i+\frac12,j}\bm{\widehat{V}}_{i+r,j}\right\}$
to obtain the left and right limit values,
denoted by $\widetilde{\bm{V}}_{i+\frac{1}{2},j}^{{\tt WENO},-}$ (using stencil $r=-p+1,\cdots,p-1$) and $\widetilde{\bm{V}}_{i+\frac{1}{2},j}^{{\tt WENO},+}$ (using stencil $r=-p+2,\cdots,p$),
then the high-order jump of the scaled entropy variables can be defined as
\begin{equation*}
  \jumpangle{\widetilde{\bm{V}}}_{i+\frac{1}{2},j}^{{\tt WENO}} = \widetilde{\bm{V}}_{i+\frac{1}{2},j}^{{\tt WENO},+} - \widetilde{\bm{V}}_{i+\frac{1}{2},j}^{{\tt WENO},-}.
\end{equation*}
The diagonal matrix $\widehat{\bm{Y}}_{i+\frac{1}{2},j}$ is used to preserve the ``sign'' property \cite{Biswas2018Low},
with the diagonal elements
\begin{equation*}
  \left(\widehat{\bm{Y}}_{i+\frac{1}{2},j}\right)_{\iota,\iota} = \begin{cases}
    1,\quad
    &\text{if}\quad \text{sign}\left(\jumpangle{\widetilde{\bm{V}}_\iota}^{{\tt WENO}}_{{{i+\frac{1}{2},j} }}\right) ~\text{sign}\left(\jump{\bm{\widetilde{V}}_\iota}_{{{i+\frac{1}{2},j} }}\right)\geqslant 0, \\
    0, &\text{otherwise}, \\
  \end{cases}
\end{equation*}
where $\iota = 1,2,3$.

In the second dissipation term \eqref{eq:diss2},
$\mathring{\bm{S}}_{i+\frac{1}{2},j}$ and $\mathring{\bm{Y}}_{i+\frac{1}{2},j}$ are both diagonal matrices
\begin{align*}
  &\mathring{\bm{S}}_{i+\frac{1}{2},j} = \left|J\frac{\partial \xi_1}{\partial t}\right|_{i+\frac{1}{2},j}\bm{I}_{4},\\
  &(\mathring{\bm{Y}}_{i+\frac{1}{2},j})_{\nu,\nu} =
  \begin{cases}
    1,~ &\text{if}~ \nu =1,4, ~        \text{sign}\left(\jumpangle{\bm{U}_1}_{i+\frac{1}{2},j}^{{\tt WENO}}\right)\text{sign}\left(\jump{\bm{V}_1}_{i+\frac{1}{2},j} \right)\geqslant 0,
    \text{sign}\left(\jumpangle{\bm{U}_4}_{i+\frac{1}{2},j}^{{\tt WENO}}\right)\text{sign}\left(\jump{\bm{V}_4}_{i+\frac{1}{2},j} \right)\geqslant 0,\\
    1,~ &\text{if}~ \nu =2,3, ~       \text{sign}\left(\jumpangle{\bm{U}_\nu}_{i+\frac{1}{2},j}^{{\tt WENO}}\right)\text{sign}\left(\jump{\bm{V}_\nu}_{i+\frac{1}{2},j} \right)\geqslant 0,\\
    0,&\text{otherwise}.
  \end{cases}
\end{align*}
As mentioned before,
the summation of the first and last components in the second dissipation term corresponding to $h$ and $b$ must be zero for the lake at rest.
In this way, the coefficients in the switch function $(\mathring{\bm{Y}}_{i+\frac{1}{2},j})_{1,1}$ and $(\mathring{\bm{Y}}_{i+\frac{1}{2},j})_{4,4}$ should be the same,
thus one can see that only when the jumps of the reconstructed first and last components both satisfy the sign property,
the corresponding components for $h$ and $b$ are added at the same time.
Meanwhile, the nonlinear weights in the WENO reconstruction need to be the same for $h$ and $b$, similar to the approach in \cite{Xing2005High}.
For example, the left limit value of bottom topography $b$ at $\left(\left(\xi_1\right)_{i+\frac12},\left(\xi_2\right)_{j}\right)$ is first reconstructed as
$b_{i+\frac{1}{2},j}^{{\tt WENO},-} = \sum\limits_{r = -p+1}^{p-1} \beta_{r} b_{i+r,j}$,
then the reconstruction of $h$ is given by
$h_{i+\frac{1}{2},j}^{{\tt WENO},-} = \sum\limits_{r = -p+1}^{p-1} \beta_{r} h_{i+r,j}$.
It can be verified that the second dissipation term maintains the WB property by construction.
The high-order ES fluxes in the $\xi_2$-direction $\left(\bm{\widetilde{\mathcal{F}}}_2\right)_{i,j+\frac{1}{2}}^{\tt ES}$ are established similarly,
thus the overall schemes are WB.

In summary, the high-order WB ES schemes are obtained by replacing the high-order EC fluxes in \eqref{eq:2D_HighOrder_EC_Discrete} with the high-order ES fluxes \eqref{eq:HO_ES_flux}, i.e.,
\begin{align}\label{eq:2D_HighOrder_Discrete_ES}
    \frac{\mathrm{d}}{\mathrm{d} t} \bm{\mathcal{U}}_{i, j}=&-\frac{1}{\Delta \xi_1}\left(\left(\bm{\widetilde{\mathcal{F}}}_{1}\right)_{i+\frac{1}{2}, j}^{\tt ES}-\left(\bm{\widetilde{\mathcal{F}}}_{1}\right)_{i-\frac{1}{2}, j}^{\tt ES}\right)-\frac{1}{\Delta \xi_2}\left(\left(\bm{\widetilde{\mathcal{F}}}_{2}\right)_{i, j+\frac{1}{2}}^{\tt ES}-\left(\bm{\widetilde{\mathcal{F}}}_{2}\right)_{i, j-\frac{1}{2}}^{\tt ES}\right)\nonumber\\
    &- \frac{gh_{i,j}}{\Delta \xi_1}\left(\left(\bm{\widetilde{\mathcal{B}}}_{1}\right)_{i+\frac{1}{2}, j}^{{\tt 2pth}}-\left(\bm{\widetilde{\mathcal{B}}}_{1}\right)_{i-\frac{1}{2}, j}^{{\tt 2pth}}\right)
    - \frac{gh_{i,j}}{\Delta \xi_2}\left(\left(\bm{\widetilde{\mathcal{B}}}_{2}\right)_{i, j+\frac{1}{2}}^{{\tt 2pth}}-\left(\bm{\widetilde{\mathcal{B}}}_{2}\right)_{i, j-\frac{1}{2}}^{{\tt 2pth}}\right),
\end{align}
and the semi-discrete VCL \eqref{eq:2D_VCL_HighOrder}
and discrete SCLs \eqref{eq:2D_SCL_HighOrder_1}-\eqref{eq:2D_SCL_HighOrder_2} remain the same.
The following proposition proves the ES property,
which adapts the proof in \cite{Duan2022High} to accommodate the second dissipation term \eqref{eq:diss2}.
\begin{proposition}\rm
    The high-order schemes \eqref{eq:2D_HighOrder_Discrete_ES} and \eqref{eq:2D_VCL_HighOrder}-\eqref{eq:2D_SCL_HighOrder_2} are ES in the sense that,
  the numerical solutions satisfy the semi-discrete energy inequality
  \begin{equation}\label{eq:numEntropyInq}
    \frac{\mathrm{d}}{\mathrm{d} t}{\mathcal{E}}_{i, j}+\frac{1}{\Delta \xi_{1}}\left(\left({\widetilde{{\mathcal{Q}}}}_1\right)_{i+\frac{1}{2},j}^{\tt ES}-\left({\widetilde{{\mathcal{Q}}}}_1\right)_{i-\frac{1}{2},j}^{\tt ES}\right)+\frac{1}{\Delta \xi_{2}}\left(\left({\widetilde{{\mathcal{Q}}}}_2\right)_{i,j+\frac{1}{2}}^{\tt ES}-\left({\widetilde{{\mathcal{Q}}}}_2\right)_{i,j-\frac{1}{2}}^{\tt ES}\right)\leqslant 0,
  \end{equation}
  with the numerical energy fluxes
  \begin{align*}
    &\left({\widetilde{{\mathcal{Q}}}}_1\right)_{i+\frac{1}{2},j}^{\tt ES} = \left({\widetilde{{\mathcal{Q}}}}_1\right)_{i+\frac{1}{2},j}^{{\tt 2pth}}
    -\frac{1}{2}\left[\alpha_1\mean{\bm{\widetilde{V}}}^{\mathrm{T}}\widehat{\bm{Y}}\jumpangle{\bm{\widetilde{V}}}^{\tt WENO}\right]_{i+\frac{1}{2},j}
    -\frac{1}{2}\left[\left|J\frac{\partial \xi_1}{\partial t}\right|\mean{\bm{V}}^{\mathrm{T}}\mathring{\bm{Y}}\jumpangle{\bm{U}}^{\tt WENO}\right]_{i+\frac{1}{2},j},\nonumber\\
    &\left({\widetilde{{\mathcal{Q}}}}_2\right)_{i,j+\frac{1}{2}}^{\tt ES} = \left({\widetilde{{\mathcal{Q}}}}_2\right)_{i,j+\frac{1}{2}}^{\tt 2pth}
    -\frac{1}{2}\left[\alpha_2\mean{\bm{\widetilde{V}}}^{\mathrm{T}}\widehat{\bm{Y}}\jumpangle{\bm{\widetilde{V}}}^{\tt WENO}\right]_{i,j+\frac{1}{2}}-\frac{1}{2}\left[\left|J\frac{\partial \xi_2}{\partial t}\right|\mean{\bm{V}}^{\mathrm{T}}\mathring{\bm{Y}}\jumpangle{\bm{U}}^{\tt WENO}\right]_{i,j+\frac{1}{2}}.\nonumber
  \end{align*}
\end{proposition}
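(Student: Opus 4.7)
My plan is to reduce the ES inequality \eqref{eq:numEntropyInq} to the EC identity of Proposition~\ref{prop:2D_HO_sufficient_condition} plus a local sign analysis of the two dissipation pieces. Writing the ES flux as $\bm{\widetilde{\mathcal{F}}}_\ell^{\tt ES} = \bm{\widetilde{\mathcal{F}}}_\ell^{\tt 2pth} - \bm{D}_\ell$, where $\bm{D}_\ell$ denotes the zero-padded $\widehat{\bm{D}}$ plus $\mathring{\bm{D}}$ in the $\xi_\ell$-direction, the ES scheme differs from the EC scheme only by a telescoping dissipation source. Taking the dot product of \eqref{eq:2D_HighOrder_Discrete_ES} with $\bm{V}_{i,j}$ and invoking Proposition~\ref{prop:2D_HO_sufficient_condition} for the conservative part yields
\begin{equation*}
\frac{\mathrm{d}\mathcal{E}_{i,j}}{\mathrm{d}t} + \sum_{\ell=1}^{2}\frac{1}{\Delta\xi_\ell}\bigl(\widetilde{\mathcal{Q}}_\ell^{{\tt 2pth},+}-\widetilde{\mathcal{Q}}_\ell^{{\tt 2pth},-}\bigr) = \sum_{\ell=1}^{2}\frac{1}{\Delta\xi_\ell}\bm{V}_{i,j}^{\mathrm{T}}\bigl(\bm{D}_\ell^{+}-\bm{D}_\ell^{-}\bigr),
\end{equation*}
where $\pm$ denotes evaluation at the two neighbouring interfaces of $(i,j)$ in the $\xi_\ell$-direction.

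Next, I would split $\bm{V}_{i,j}$ at each interface via $\bm{V}_{i,j}=\mean{\bm{V}}^{\pm}\mp\tfrac12\jump{\bm{V}}^{\pm}$, decomposing $\bm{V}_{i,j}^{\mathrm{T}}(\bm{D}_\ell^{+}-\bm{D}_\ell^{-})$ into a telescoping flux $(\mean{\bm{V}}^{\mathrm{T}}\bm{D}_\ell)^{+}-(\mean{\bm{V}}^{\mathrm{T}}\bm{D}_\ell)^{-}$ and a local piece $-\tfrac12\bigl[(\jump{\bm{V}}^{\mathrm{T}}\bm{D}_\ell)^{+}+(\jump{\bm{V}}^{\mathrm{T}}\bm{D}_\ell)^{-}\bigr]$. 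Since the rotation matrix $\bm{T}$ is orthogonal and both $\bm{T},\bm{R}$ are frozen at each interface, the scaling identity $\bm{\widetilde{V}}=\bm{R}^{\mathrm{T}}\bm{T}\widehat{\bm{V}}$ passes through averages and jumps; combining this with the trivial factorisation for $\mathring{\bm{D}}$ gives
\begin{equation*}
\mean{\bm{V}}^{\mathrm{T}}\bm{D}_\ell = \tfrac12\alpha_\ell\mean{\bm{\widetilde{V}}}^{\mathrm{T}}\widehat{\bm{Y}}\jumpangle{\bm{\widetilde{V}}}^{\tt WENO}+\tfrac12\left|J\pd{\xi_\ell}{t}\right|\mean{\bm{V}}^{\mathrm{T}}\mathring{\bm{Y}}\jumpangle{\bm{U}}^{\tt WENO},
\end{equation*}
so moving the flux piece onto the left-hand side reproduces exactly the $\widetilde{\mathcal{Q}}_\ell^{\tt ES}$ announced in the proposition.

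It then remains to check $\jump{\bm{V}}^{\mathrm{T}}\bm{D}_\ell\geq 0$ at every interface. The same scaling calculation produces
\begin{equation*}
\jump{\bm{V}}^{\mathrm{T}}\bm{D}_\ell = \tfrac12\alpha_\ell\sum_{\iota=1}^{3}\jump{\bm{\widetilde{V}}_\iota}(\widehat{\bm{Y}})_{\iota\iota}\jumpangle{\bm{\widetilde{V}}_\iota}^{\tt WENO}+\tfrac12\left|J\pd{\xi_\ell}{t}\right|\sum_{\nu=1}^{4}\jump{\bm{V}_\nu}(\mathring{\bm{Y}})_{\nu\nu}\jumpangle{\bm{U}_\nu}^{\tt WENO},
\end{equation*}
and every summand is non-negative: $\alpha_\ell\geq 0$, and the sign switches $\widehat{\bm{Y}},\mathring{\bm{Y}}$ are zero exactly where a factor jump disagrees in sign with its paired WENO jump. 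Summing over all interfaces and both directions delivers \eqref{eq:numEntropyInq}.

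The one step I expect to require genuine care, and which is the only substantive modification relative to the proof in \cite{Duan2022High}, is the coupled switch $(\mathring{\bm{Y}})_{1,1}=(\mathring{\bm{Y}})_{4,4}$ imposed to preserve the WB property. Activating both entries simultaneously must not spoil the individual sign-definiteness of the $\nu=1$ and $\nu=4$ summands; this is precisely what the joint sign test in the definition of $\mathring{\bm{Y}}$ enforces, since the switch only turns on when \emph{both} pairs of jumps agree in sign, which makes both products non-negative at the same time.
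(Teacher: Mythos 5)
Your proposal is correct and follows essentially the same route as the paper's proof: take the inner product with $\bm{V}_{i,j}$, invoke the EC identity for the conservative and source parts, split $\bm{V}_{i,j}$ into interface averages and jumps so the average part telescopes into $\widetilde{\mathcal{Q}}_\ell^{\tt ES}$, convert $\jump{\widehat{\bm{V}}}^{\mathrm{T}}\bm{T}^{-1}\bm{R}$ into $\jump{\widetilde{\bm{V}}}^{\mathrm{T}}$, and conclude non-negativity of the remaining quadratic terms from the sign switches $\widehat{\bm{Y}}$ and $\mathring{\bm{Y}}$. You also correctly isolate the only genuinely new ingredient relative to \cite{Duan2022High}, namely that the coupled switch $(\mathring{\bm{Y}})_{1,1}=(\mathring{\bm{Y}})_{4,4}$ activates only when both the $\nu=1$ and $\nu=4$ sign tests pass, so both summands are simultaneously non-negative.
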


\begin{proof}\rm
  Take the inner product of the schemes \eqref{eq:2D_HighOrder_Discrete_ES} with $\bm{V}_{i,j}$,
  use the semi-discrete VCL \eqref{eq:2D_VCL_HighOrder} and discrete SCLs \eqref{eq:2D_SCL_HighOrder_1}-\eqref{eq:2D_SCL_HighOrder_2},
  then one has
  \begin{align}
    \frac{\mathrm{d}}{\mathrm{d} t} {\mathcal{E}}_{i, j}=&-\frac{1}{\Delta \xi_{1}}\left(\left({\widetilde{{\mathcal{Q}}}}_1\right)_{i+\frac{1}{2},j}^{\tt 2pth}-\left({\widetilde{{\mathcal{Q}}}}_1\right)_{i-\frac{1}{2},j}^{\tt 2pth}\right)-\frac{1}{\Delta \xi_{2}}\left(\left({\widetilde{{\mathcal{Q}}}}_2\right)_{i,j+\frac{1}{2}}^{\tt 2pth}-\left({\widetilde{{\mathcal{Q}}}}_2\right)_{i,j-\frac{1}{2}}^{\tt 2pth}\right)\nonumber\\
    &+\frac{1}{2\Delta \xi_{1}}\left(\alpha_1\right)_{i+\frac{1}{2},j}\widehat{\bm{V}}_{i,j}^{\mathrm{T}}\left(\bm{T}^{-1} \bm{R}(\bm{T}\widehat{\bm{U}})\right)_{i+\frac{1}{2},j}\widehat{\bm{Y}}_{i+\frac{1}{2},j}\jumpangle{\bm{\widetilde{V}}}_{i+\frac{1}{2},j}^{\tt WENO}\nonumber\\
    &-\frac{1}{2\Delta \xi_{1}}\left(\alpha_1\right)_{i-\frac{1}{2},j}\widehat{\bm{V}}_{i,j}^{\mathrm{T}}\left(\bm{T}^{-1} \bm{R}(\bm{T}\widehat{\bm{U}})\right)_{i-\frac{1}{2},j}\widehat{\bm{Y}}_{i-\frac{1}{2},j}\jumpangle{\bm{\widetilde{V}}}_{i-\frac{1}{2},j}^{\tt WENO}\nonumber\\
    &+\frac{1}{2\Delta \xi_{2}}\left(\alpha_2\right)_{i,j+\frac{1}{2}}\widehat{\bm{V}}_{i,j}^{\mathrm{T}}\left(\bm{T}^{-1} \bm{R}(\bm{T}\widehat{\bm{U}})\right)_{i,j+\frac{1}{2}}\widehat{\bm{Y}}_{i,j+\frac{1}{2}}\jumpangle{\bm{\widetilde{V}}}_{i,j+\frac{1}{2}}^{\tt WENO}\nonumber\\
    &-\frac{1}{2\Delta \xi_{2}}\left(\alpha_2\right)_{i,j-\frac{1}{2}}\widehat{\bm{V}}_{i,j}^{\mathrm{T}}\left(\bm{T}^{-1} \bm{R}(\bm{T}\widehat{\bm{U}})\right)_{i,j-\frac{1}{2}}\widehat{\bm{Y}}_{i,j-\frac{1}{2}}\jumpangle{\bm{\widetilde{V}}}_{i,j-\frac{1}{2}}^{\tt WENO}\nonumber\\
    &+\frac{1}{2\Delta \xi_{1}}\left|J\frac{\partial \xi_1}{\partial t}\right|_{i+\frac{1}{2},j}\bm{V}_{i,j}^{\mathrm{T}}\mathring{\bm{Y}}_{i+\frac{1}{2},j}\jumpangle{\bm{U}}_{i+\frac{1}{2},j}^{\tt WENO} -\frac{1}{2\Delta \xi_{1}}\left|J\frac{\partial \xi_1}{\partial t}\right|_{i-\frac{1}{2},j}\bm{V}_{i,j}^{\mathrm{T}}\mathring{\bm{Y}}_{i-\frac{1}{2},j}\jumpangle{\bm{U}}_{i-\frac{1}{2},j}^{\tt WENO}\nonumber\\
    &+\frac{1}{2\Delta \xi_{2}}\left|J\frac{\partial \xi_2}{\partial t}\right|_{i,j+\frac{1}{2}}\bm{V}_{i,j}^{\mathrm{T}}\mathring{\bm{Y}}_{i,j+\frac{1}{2}}\jumpangle{\bm{U}}_{i,j+\frac{1}{2}}^{\tt WENO} -\frac{1}{2\Delta \xi_{2}}\left|J\frac{\partial \xi_2}{\partial t}\right|_{i,j-\frac{1}{2}}\bm{V}_{i,j}^{\mathrm{T}}\mathring{\bm{Y}}_{i,j-\frac{1}{2}}\jumpangle{\bm{U}}_{i,j-\frac{1}{2}}^{\tt WENO}\nonumber\\
    =&-\frac{1}{\Delta \xi_{1}}\left(\left({\widetilde{{\mathcal{Q}}}}_1\right)_{i+\frac{1}{2},j}^{\tt ES}-\left({\widetilde{{\mathcal{Q}}}}_1\right)_{i-\frac{1}{2},j}^{\tt ES}\right)-\frac{1}{\Delta \xi_{2}}\left(\left({\widetilde{{\mathcal{Q}}}}_2\right)_{i,j+\frac{1}{2}}^{\tt ES}-\left({\widetilde{{\mathcal{Q}}}}_2\right)_{i,j-\frac{1}{2}}^{\tt ES}\right)\nonumber\\
    &-\frac{1}{4\Delta \xi_{1}}\left(\alpha_1\right)_{i+\frac{1}{2},j}\jump{\widehat{\bm{V}}}_{i+\frac{1}{2},j}^{\mathrm{T}}\left(\bm{T}^{-1} \bm{R}(\bm{T}\widehat{\bm{U}})\right)_{i+\frac{1}{2},j}\widehat{\bm{Y}}_{i+\frac{1}{2},j}\jumpangle{\bm{\widetilde{V}}}_{i+\frac{1}{2},j}^{\tt WENO}\nonumber\\
    &-\frac{1}{4\Delta \xi_{1}}\left(\alpha_1\right)_{i-\frac{1}{2},j}\jump{\widehat{\bm{V}}}_{i-\frac{1}{2},j}^{\mathrm{T}}\left(\bm{T}^{-1} \bm{R}(\bm{T}\widehat{\bm{U}})\right)_{i-\frac{1}{2},j}\widehat{\bm{Y}}_{i-\frac{1}{2},j}\jumpangle{\bm{\widetilde{V}}}_{i-\frac{1}{2},j}^{\tt WENO}\nonumber\\
    &-\frac{1}{4\Delta \xi_{2}}\left(\alpha_2\right)_{i,j+\frac{1}{2}}\jump{\widehat{\bm{V}}}_{i,j+\frac{1}{2}}^{\mathrm{T}}\left(\bm{T}^{-1} \bm{R}(\bm{T}\widehat{\bm{U}})\right)_{i,j+\frac{1}{2}}\widehat{\bm{Y}}_{i,j+\frac{1}{2}}\jumpangle{\bm{\widetilde{V}}}_{i,j+\frac{1}{2}}^{\tt WENO}\nonumber\\
    &-\frac{1}{4\Delta \xi_{2}}\left(\alpha_2\right)_{i,j-\frac{1}{2}}\jump{\widehat{\bm{V}}}_{i,j-\frac{1}{2}}^{\mathrm{T}}\left(\bm{T}^{-1} \bm{R}(\bm{T}\widehat{\bm{U}})\right)_{i,j-\frac{1}{2}}\widehat{\bm{Y}}_{i,j-\frac{1}{2}}\jumpangle{\bm{\widetilde{V}}}_{i,j-\frac{1}{2}}^{\tt WENO}\nonumber\\
    &-\frac{1}{4\Delta \xi_{1}}\left|J\frac{\partial \xi_1}{\partial t}\right|_{i+\frac{1}{2},j}\jump{\bm{V}}_{i+\frac{1}{2},j}^{\mathrm{T}}\mathring{\bm{Y}}_{i+\frac{1}{2},j}\jumpangle{\bm{U}}_{i+\frac{1}{2},j}^{\tt WENO} -\frac{1}{4\Delta \xi_{1}}\left|J\frac{\partial \xi_1}{\partial t}\right|_{i-\frac{1}{2},j}\jump{\bm{V}}_{i-\frac{1}{2},j}^{\mathrm{T}}\mathring{\bm{Y}}_{i-\frac{1}{2},j}\jumpangle{\bm{U}}_{i-\frac{1}{2},j}^{\tt WENO}\nonumber\\
    &-\frac{1}{4\Delta \xi_{2}}\left|J\frac{\partial \xi_2}{\partial t}\right|_{i,j+\frac{1}{2}}\jump{\bm{V}}_{i,j+\frac{1}{2}}^{\mathrm{T}}\mathring{\bm{Y}}_{i,j+\frac{1}{2}}\jumpangle{\bm{U}}_{i,j+\frac{1}{2}}^{\tt WENO} -\frac{1}{4\Delta \xi_{2}}\left|J\frac{\partial \xi_2}{\partial t}\right|_{i,j-\frac{1}{2}}\jump{\bm{V}}_{i,j-\frac{1}{2}}^{\mathrm{T}}\mathring{\bm{Y}}_{i,j-\frac{1}{2}}\jumpangle{\bm{U}}_{i,j-\frac{1}{2}}^{\tt WENO}\nonumber.
  \end{align}
  Due to the fact that
  $\jump{\widehat{\bm{V}}}_{i\pm\frac12,j}^{\mathrm{T}}\left(\bm{T}^{-1} \bm{R}(\bm{T}\widehat{\bm{U}})\right)_{i\pm\frac12,j} = \jump{\bm{\widetilde{V}}}_{i\pm\frac12,j}^{\mathrm{T}}$,
  $\jump{\widehat{\bm{V}}}_{i,j\pm\frac12}^{\mathrm{T}}\left(\bm{T}^{-1} \bm{R}(\bm{T}\widehat{\bm{U}})\right)_{i,j\pm\frac12} = \jump{\bm{\widetilde{V}}}_{i,j\pm\frac12}^{\mathrm{T}}$,
  and from the definitions of $\widehat{\bm{Y}}$ and $\mathring{\bm{Y}}$,
  one has
  \begin{equation*}
    \begin{aligned}
      \jump{\bm{\widetilde{V}}}^{\mathrm{T}}_{i\pm\frac{1}{2},j}\widehat{\bm{Y}}_{i\pm\frac{1}{2},j}\jumpangle{\bm{\widetilde{V}}}_{i\pm\frac{1}{2},j}^{\tt WENO} \geqslant 0,\\
      \jump{\bm{\widetilde{V}}}^{\mathrm{T}}_{i,j\pm\frac{1}{2}}\widehat{\bm{Y}}_{i,j\pm\frac{1}{2}}\jumpangle{\bm{\widetilde{V}}}_{i,j\pm\frac{1}{2}}^{\tt WENO} \geqslant 0,\\
      \jump{\bm{{V}}}^{\mathrm{T}}_{i\pm\frac{1}{2},j}\mathring{\bm{Y}}_{i\pm\frac{1}{2},j}\jumpangle{\bm{U}}_{i\pm\frac{1}{2},j}^{\tt WENO} \geqslant 0,\\
      \jump{\bm{{V}}}^{\mathrm{T}}_{i,j\pm\frac{1}{2}}\mathring{\bm{Y}}_{i,j\pm\frac{1}{2}}\jumpangle{\bm{U}}_{i,j\pm\frac{1}{2}}^{\tt WENO} \geqslant 0,\\
    \end{aligned}
  \end{equation*}
  thus, the semi-discrete energy inequality \eqref{eq:numEntropyInq} holds.
\end{proof}

\begin{remark}\rm
  When the solution is the lake at rest, one has $\bU=(h,0,0,b), \bV=(gC,0,0,gC+(2\gamma-1)gb)$, where $C$ is a constant.
  Then the first and last components in the second switch function
  depend on the signs of the jump of $b$ before and after reconstruction when $\gamma> 1/2$,
  thus we choose $\gamma=1$ in all the numerical tests.
  The location where the second dissipation term for $b$ is added will be shown in the numerical tests.
\end{remark}

\subsection{Time discretization}
To get the fully-discrete schemes, this paper uses the explicit SSP RK3 time discretization 
\begin{equation*}
  \begin{aligned}
    \bm{\mathcal{U}}^{*} &= \bm{\mathcal{U}}^{n}+\Delta t^n \bm{\mathcal{L}}\left(\bU^{n}, \bx^n\right),\\
    J^{*} &= J^{n}+\Delta t^n {\mathcal{L}}\left(
    \bx^n\right),\\
    \bx^{*} &= \bx^{n}+\Delta t^n \dot{\bx}^n,
    \\
    \bm{\mathcal{U}}^{**} &= \frac{3}{4}\bm{\mathcal{U}}^{n}+\frac{1}{4}\left(\bm{\mathcal{U}}^{*}+\Delta t^n \bm{\mathcal{L}}\left(\bU^{*},
    {\bx}^*\right)\right),\\
    J^{**} &= \frac{3}{4}J^{n}+\frac{1}{4}\left(J^{*}+\Delta t^n {\mathcal{L}}\left(
    {\bx}^*\right)\right),\\
    \bx^{**} &= \frac{3}{4}\bx^{n}+\frac{1}{4}\left(\bx^{*} + \Delta t^n\dot{\bx}^n\right) = \bx^{n}+\frac12\Delta t^n \dot{\bx}^n,
    \\
    \bm{\mathcal{U}}^{n+1} &= \frac{1}{3}\bm{\mathcal{U}}^{n}+\frac{2}{3}\left(\bm{\mathcal{U}}^{**}+\Delta t^n \bm{\mathcal{L}}\left(\bU^{**},
    {\bx}^{**}\right)\right),\\
    J^{n+1} &= \frac{1}{3}J^{n}+\frac{2}{3}\left(J^{**}+\Delta t^n {\mathcal{L}}\left(
    {\bx}^{**}\right)\right),\\
    \bx^{n+1} &= \frac{1}{3}\bx^{n}+\frac{2}{3}\left(\bx^{**} + \Delta t^n{\bx}^n\right) = \bx^{n}+\Delta t^n \dot{\bx}^n,
  \end{aligned}
\end{equation*}
where $\dot{\bx}^n$ is the mesh velocity determined by using the adaptive moving mesh strategy in \cite{Duan2022High,Li2022High},
$\bm{\mathcal{L}}$ is the right-hand side of
\eqref{eq:2D_HighOrder_EC_Discrete} or \eqref{eq:2D_HighOrder_Discrete_ES}
for the semi-discrete high-order WB EC or ES schemes, respectively,
and ${\mathcal{L}}$ is the right-hand side of the semi-discrete VCL \eqref{eq:2D_VCL_HighOrder}.

\section{Numerical results}\label{section:Result}
In this section, several numerical tests are conducted to demonstrate the performance of our high-order accurate WB ES adaptive moving mesh schemes.
In all the numerical experiments, the parameter $\gamma$ in \eqref{eq:EntropyPair} is taken as $1$.
The reconstruction in the dissipation term adopts the $5$th-order WENO-Z reconstruction \cite{Borges2008An}.
For the adaptive mesh redistribution,
the total number of iterations for solving the adaptive mesh equation (see \cite{Duan2022High,Li2022High}) is $10$,
and the monitor function $\omega$ will be given in each example.
Unless otherwise specified, the time stepsize $\Delta t^{n}$ of the 2D schemes is
\begin{equation}\label{eq:dt}
  \Delta t^{n} \leqslant \frac{C_{\text{\tiny \tt CFL}}}{\sum\limits_{\ell = 1}^{2}\max_{i,j}\limits\left(\mathcal{\varrho}_{\ell}\right)^{n}_{i,j}/\Delta {\xi_\ell}},
\end{equation}
where $C_{\text{\tiny \tt CFL}}=0.4$ is the CFL number,
and $\left(\mathcal{\varrho}_{\ell}\right)_{i,j}^{n}$ in the $\xi_1$-direction is defined in \eqref{eq:eigens}.
In the accuracy test, the time stepsize is taken as $C_{\text{\tiny \tt CFL}} (\min_{\ell}\Delta\xi_{\ell})^{5/3}$ to make the spatial error dominant.
For the 1D tests, the corresponding 1D schemes are detailed in \ref{section:OneDimensionCase}.
Our WB EC and ES schemes on the fixed uniform mesh and moving mesh will be denoted by ``{\tt UM-EC}'',
``{\tt UM-ES}'', ``{\tt MM-EC}'', and ``{\tt MM-ES}''.

\subsection{1D tests}
\begin{example}[Accuracy test with manufactured solution]\label{ex:1DSmooth}\rm
  This example is first used to test the convergence rates of our schemes.
  The physical domain is $[0,2]$ with the periodic boundary conditions.
  To construct a manufactured solution,
  this test solves the following equations with extra source terms
  \begin{equation*}
    \pd{\bU}{t} + \pd{\bF}{x} = -gh\pd{\bm{B}}{x} + \bm{S},
  \end{equation*}
  and the exact solution is chosen as
  \begin{equation*}
    h(x,t) = 4+\cos(\pi x)\cos(\pi t),~v_1(x,t) = \frac{\sin(\pi x)\sin(\pi t)}{h},~b(x) = 1.5+\sin(\pi x),
  \end{equation*}
  with the gravitational acceleration $g=1$,
  then the extra source terms are
  \begin{align*}
    \bm{S}(x,t) = (0, ~&4\pi \cos(\pi x) + \pi\cos(\pi t)\cos^2(\pi x) - 3\pi\cos(\pi t)\sin(\pi x) - \pi\cos^2(\pi t)\cos(\pi x)\sin(\pi x) \\&+ (\pi\cos(\pi t)\sin^2(\pi t) \sin^3(\pi x))/(\cos(\pi t)\cos(\pi x) + 4)^2 \\&+ (2\pi\cos(\pi x)\sin^2(\pi t)\sin(\pi x))/(\cos(\pi t)\cos(\pi x) + 4)
    ,~0)^{\mathrm{T}}.
  \end{align*}
  The output time is $t = 0.2$ and the monitor function is
  \begin{equation*}
    \omega=\left({1+\frac{\theta |\nabla_{{\xi}} (h+b)|}{\max |\nabla_{{\xi}} (h+b)|}}\right)^{1/2},
  \end{equation*}
  with $\theta = 10$.
\end{example}

Figure \ref{1D_Smooth_balance} shows the shape of the bottom topography and the water surface level $h+b$ obtained by the {\tt MM-EC} and {\tt MM-ES} schemes.
Figure \ref{1D test} plots the errors and convergence rates in water depth $h$ obtained by the {\tt UM-EC}, {\tt UM-ES}, {\tt MM-EC}, and {{\tt MM-ES}} schemes at $t =0.2$, which show that our schemes can achieve the expected $6$th- and $5$th-order accuracy, respectively.
\begin{figure}[hbt!]
  \centering
  \includegraphics[width=0.4\textwidth]{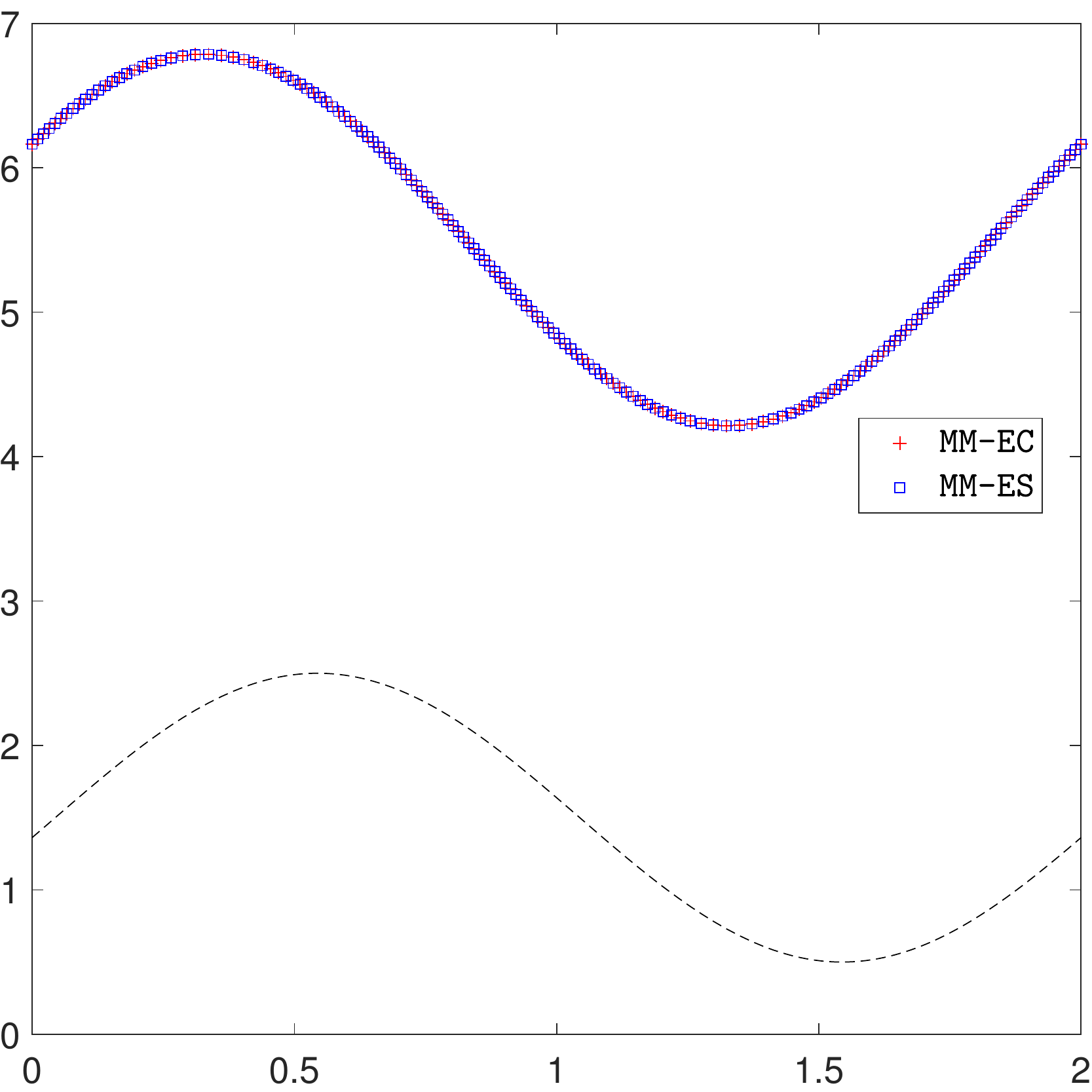}
  \caption{Example \ref{ex:1DSmooth}. The numerical solutions of water surface level $h+b$ obtained by the {\tt MM-EC} and {\tt MM-ES} schemes with $100$ mesh points at $t$ =0.2. The dashed line denotes the bottom topography.}
  \label{1D_Smooth_balance}
\end{figure}

\begin{figure}[hbt!]
  \centering
  \includegraphics[width=0.4\textwidth]{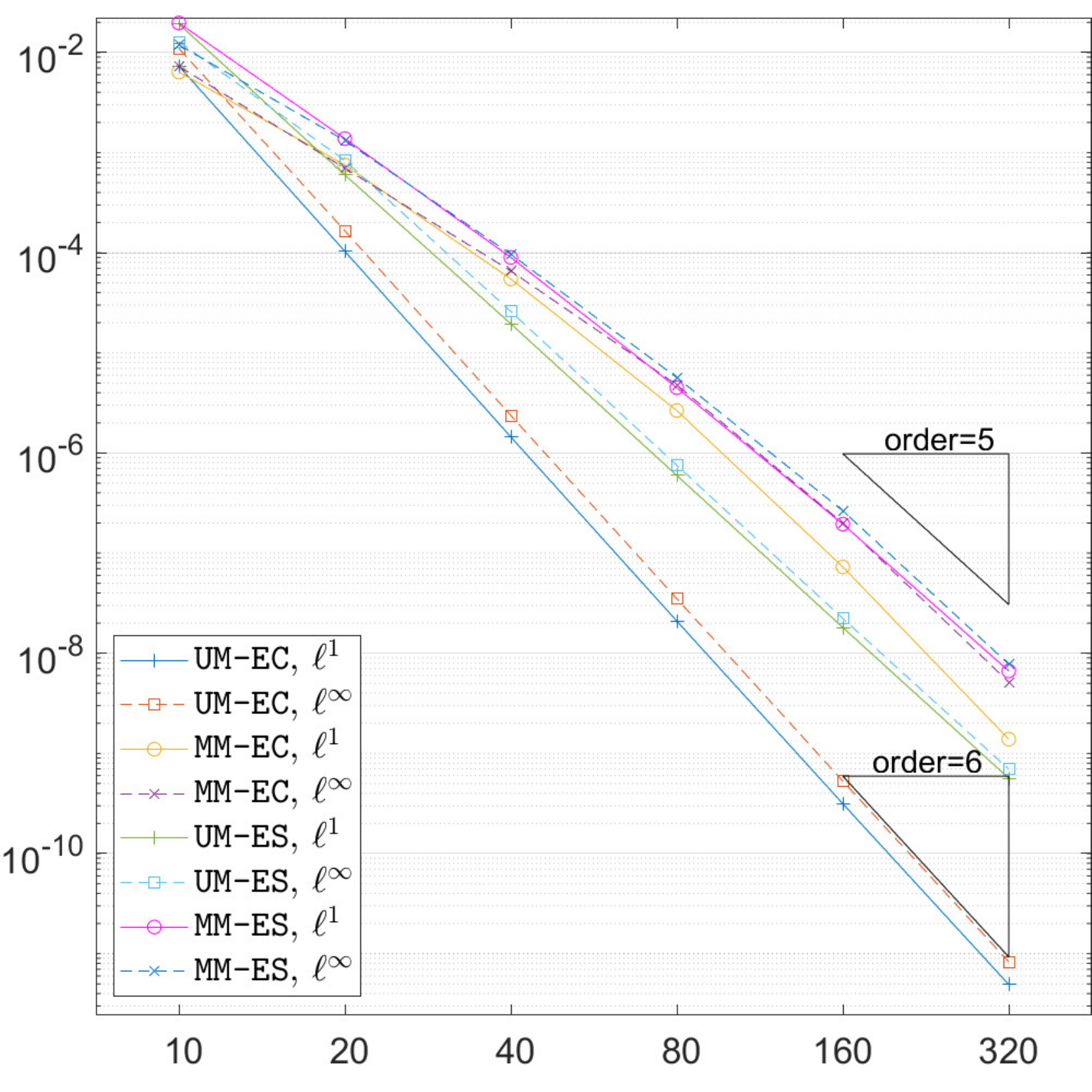}
  \caption{Example \ref{ex:1DSmooth}. The errors and convergence orders in water depth $h$ at $t =0.2$.}\label{1D test}
\end{figure}

\begin{example}[1D WB test]\label{ex:1D_WB_Test}\rm
  This example is used to verify the WB property of our schemes for the 1D SWEs.
  The physical domain is $[0, 10]$ with the outflow boundary conditions,
  and the bottom topography is a smooth Gaussian profile
  \begin{equation}\label{eq:b_Smooth}
    b(x) = 5e^{-\frac{2}{5}(x-5)^2},
  \end{equation}
  or a discontinuous square step
  \begin{equation}\label{eq:b_dis}
    b(x)= \begin{cases}4,~&\text{if}\quad x \in[4,8], \\ 0,~&\text{otherwise}.\end{cases}
  \end{equation}
  The initial water depth is $h = 10 - b$ with zero velocity.
  The output time is $t = 0.2$, and the gravitational acceleration constant is taken as $g = 1$.
  The monitor function is
  \begin{equation*}
    \omega=\left({1+\frac{100|\nabla_{{\xi}} h|}{\max |\nabla_{{\xi}} h|}}\right)^{1/2}.
  \end{equation*}
\end{example}

Table \ref{1D Well_Balance} gives the errors in $h+b$ and $v_1$ at $t = 0.2$,
which are at the level of rounding error in double precision, thus our schemes are WB.
The bottom topography $b$ and water surface level $h+b$ obtained by using the {\tt UM-ES} and {\tt MM-ES} schemes with $100$ mesh points are plotted in Figure \ref{1D_Well_balance},
from which one can see that our schemes can preserve the lake at rest well.
To see the effects of adding the second dissipation term ${\mathring{\bm{D}}}$ in \eqref{eq:HO_ES_flux},
the results obtained by the {\tt MM-ES} without that term are compared in Figure \ref{1D_Well_balance_Compare}.
For the smooth bottom topography, {\tt MM-ES} with and without $\mathring{\bm{D}}$ can give similar results,
while {\tt MM-ES} without $\mathring{\bm{D}}$ produces some overshoots or undershoots near the discontinuities,
which leads to overshoots or undershoots in $h$ as $h+b$ is constant.
The location where ${\mathring{\bm{D}}}$ is added for the bottom topography \eqref{eq:b_dis} is also plotted in Figure \ref{1D_Well_balance_Compare},
which shows that when $b$ is discontinuous,
the second dissipation term is almost added.
The results indicate that the second dissipation term is vital in the construction of our schemes.

\begin{table}[hbt!]
  \centering
  \begin{tabular}{c|c|cc|cc|cc|cc}
    \toprule
    \multicolumn{2}{c|}{\multirow{2}{*}{}} & \multicolumn{2}{c|}{{\tt UM-EC}} & \multicolumn{2}{c|}{{\tt UM-ES}} & \multicolumn{2}{c|}{{\tt MM-ES}} & \multicolumn{2}{c}{{\tt MM-ES} without $\bm{\mathring{D}}$ } \\ 
    \cline{3-10}
    \multicolumn{2}{c|}{} & \multicolumn{1}{c}{$\ell^{1}$~error}  & \multicolumn{1}{c|}{$\ell^{\infty}$~error} &  \multicolumn{1}{c}{$\ell^{1}$~error}  & \multicolumn{1}{c|}{$\ell^{\infty}$~error} &  \multicolumn{1}{c}{$\ell^{1}$~error}  & \multicolumn{1}{c|}{$\ell^{\infty}$~error} &  \multicolumn{1}{c}{$\ell^{1}$~error}  & \multicolumn{1}{c}{$\ell^{\infty}$~error}   \\
    \hline
    \multirow{2}{*}{$b$ in \eqref{eq:b_Smooth}} &
    $h+b$ & 6.57e-15 	 & 5.32e-15 	 & 2.84e-15 	 & 3.55e-15 & 9.41e-14 	 & 1.28e-13 & 3.60e-14 &1.24e-14\\ 
    & $v_1$ & 2.64e-15 	 & 1.73e-15 	 & 1.60e-15 	 & 1.24e-15 & 3.09e-14 	 & 3.39e-14 & 1.12e-14&   4.20e-15\\ 
    \hline
    \multirow{2}{*}{$b$ in \eqref{eq:b_dis}} &
    $h+b$ & 3.14e-14 	 & 1.24e-14 	 & 1.44e-14 	 & 7.11e-15 & 4.67e-14 	 & 2.66e-14 &  3.58e-14 &  1.06e-14\\ 
    & $v_1$ & 8.37e-15 	 & 3.35e-15 	 & 4.77e-15 	 & 1.87e-15 & 1.72e-14 	 & 8.90e-15 &  1.07e-14 & 3.33e-15\\ 
    \bottomrule
  \end{tabular}
  \caption{Example \ref{ex:1D_WB_Test}. Errors in $h+b$ and $v_1$ obtained by our schemes using $100$ mesh points at $t=0.2$, with the bottom topography \eqref{eq:b_Smooth} and \eqref{eq:b_dis}.}\label{1D Well_Balance}
\end{table}

\begin{figure}[hbt!]
  \centering
  \begin{subfigure}[b]{0.3\textwidth}
    \centering
    \includegraphics[width=1.0\linewidth,  clip]{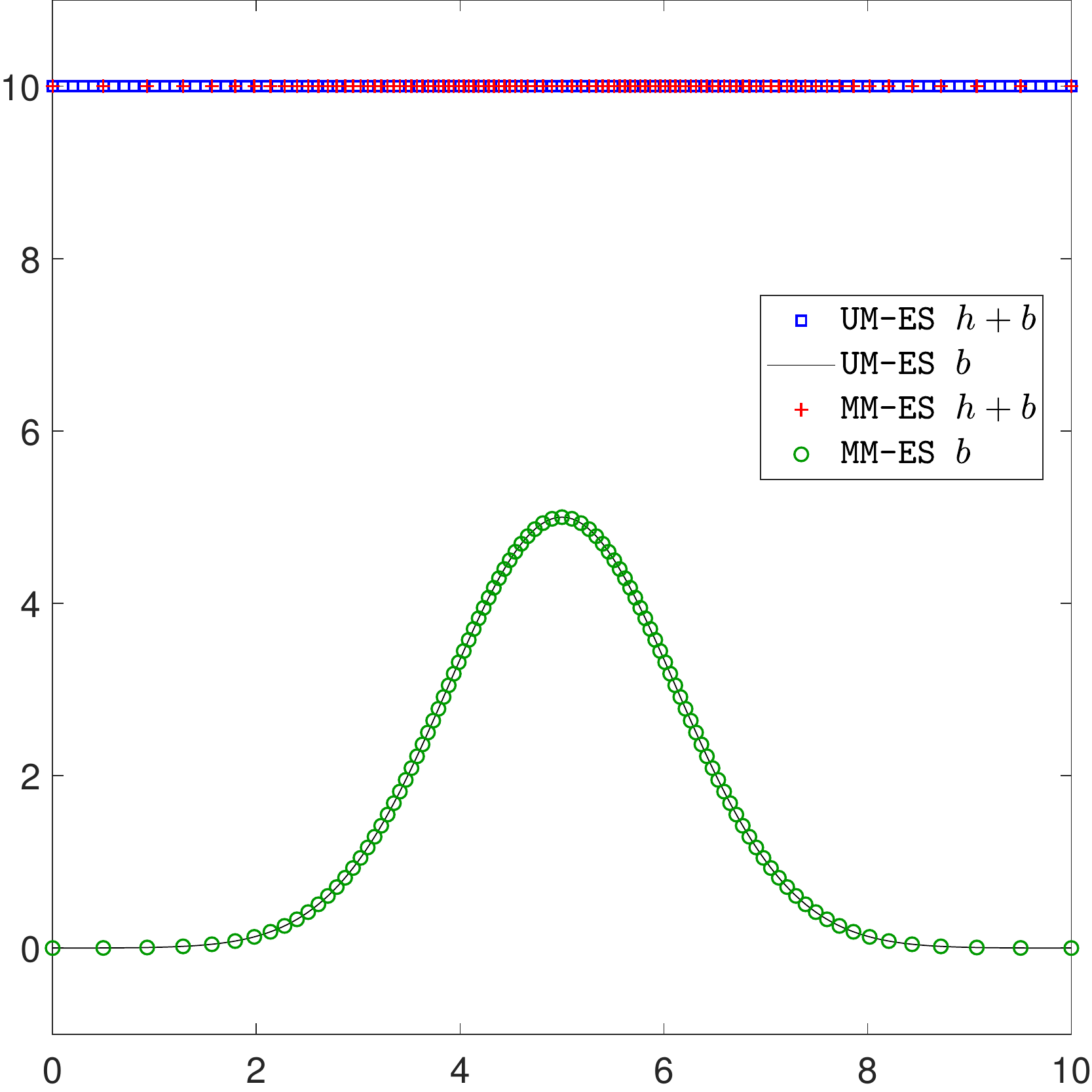}
  \end{subfigure}
  \begin{subfigure}[b]{0.3\textwidth}
    \centering
    \includegraphics[width=1.0\linewidth]{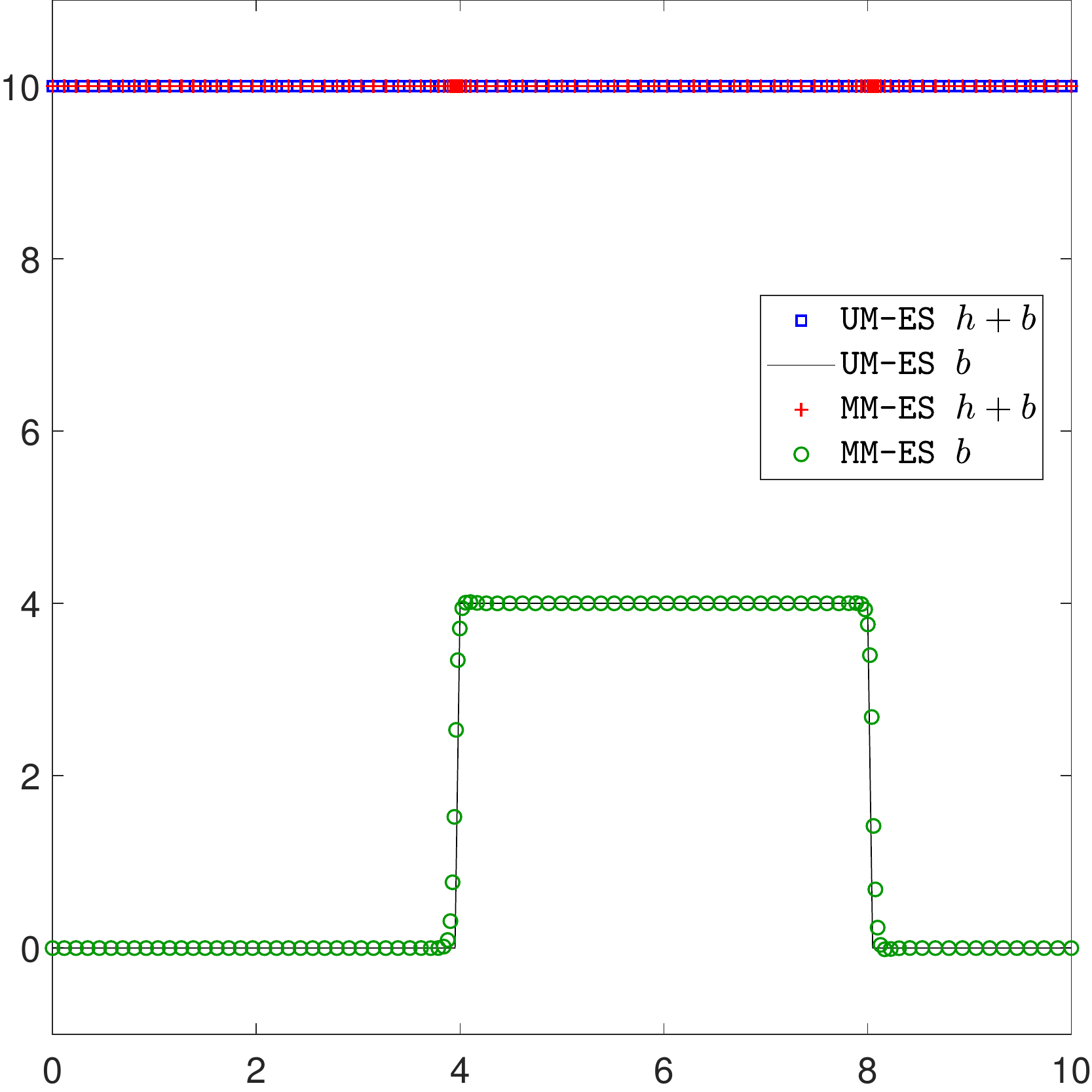}
  \end{subfigure}
  \caption{Example \ref{ex:1D_WB_Test}. The bottom topography $b$ and water surface level $h+b$ obtained by using the {\tt UM-ES} and {\tt MM-ES} schemes with $100$ mesh points at $t =0.2$.
    Left: with the bottom topography \eqref{eq:b_Smooth},
    right: with the bottom topography \eqref{eq:b_dis}.
  }
  \label{1D_Well_balance}
\end{figure}
\begin{figure}[hbt!]
  \centering
  \begin{subfigure}[b]{0.3\textwidth}
    \centering
    \includegraphics[width=1.0\linewidth,  clip]{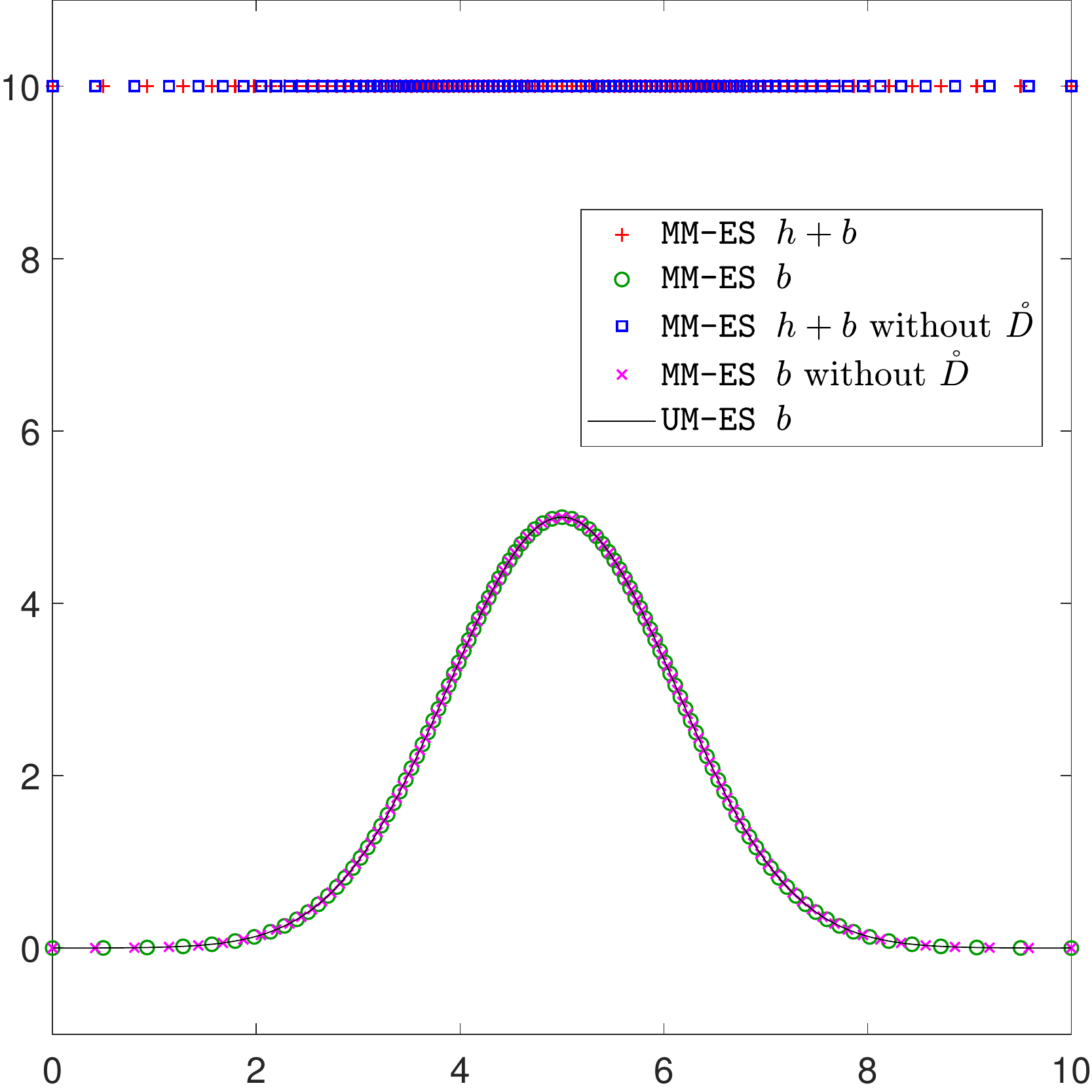}
  \end{subfigure}
  \begin{subfigure}[b]{0.3\textwidth}
    \centering
    \includegraphics[width=1.0\linewidth]{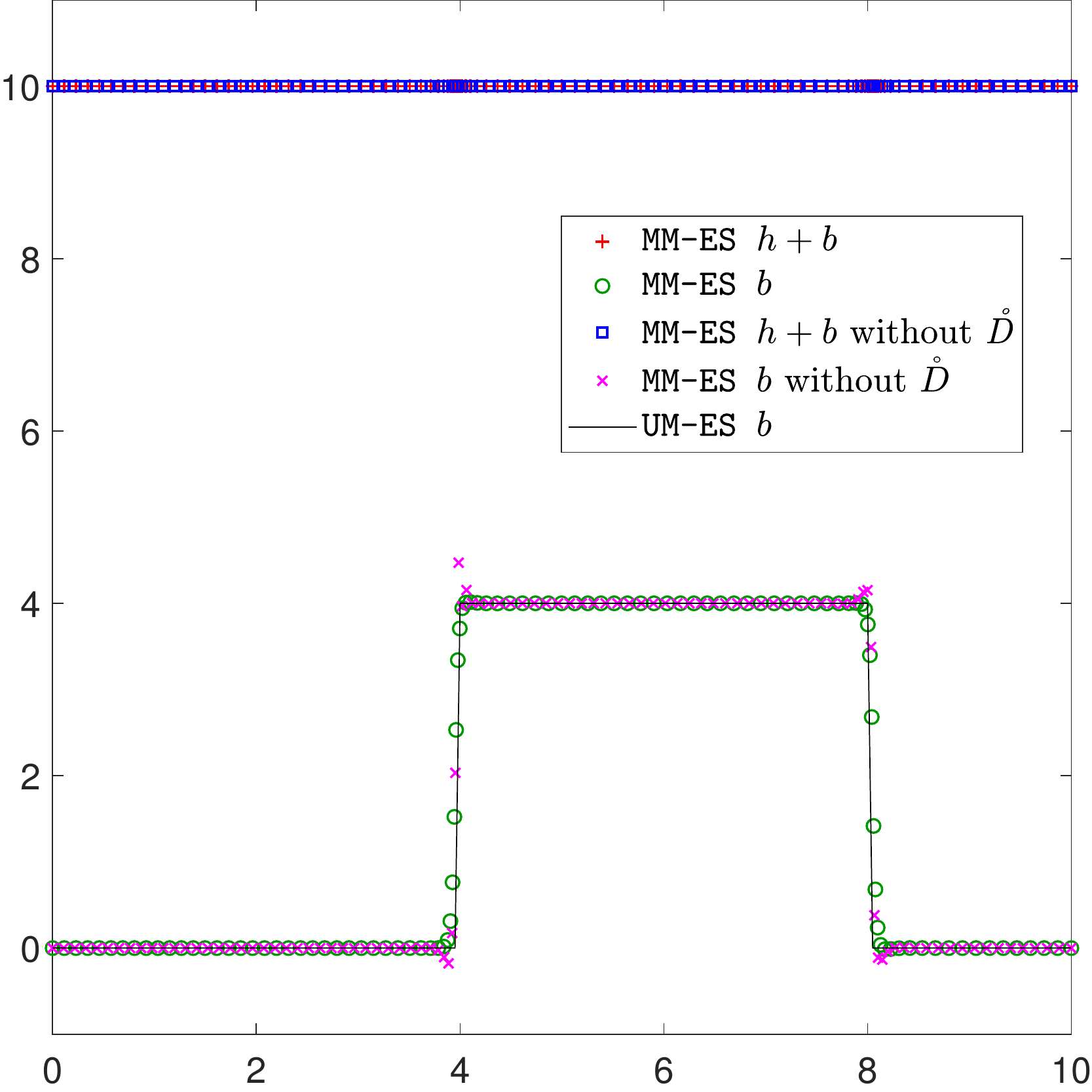}
  \end{subfigure}
  \begin{subfigure}[b]{0.3\textwidth}
    \centering
    \includegraphics[width=1.0\linewidth]{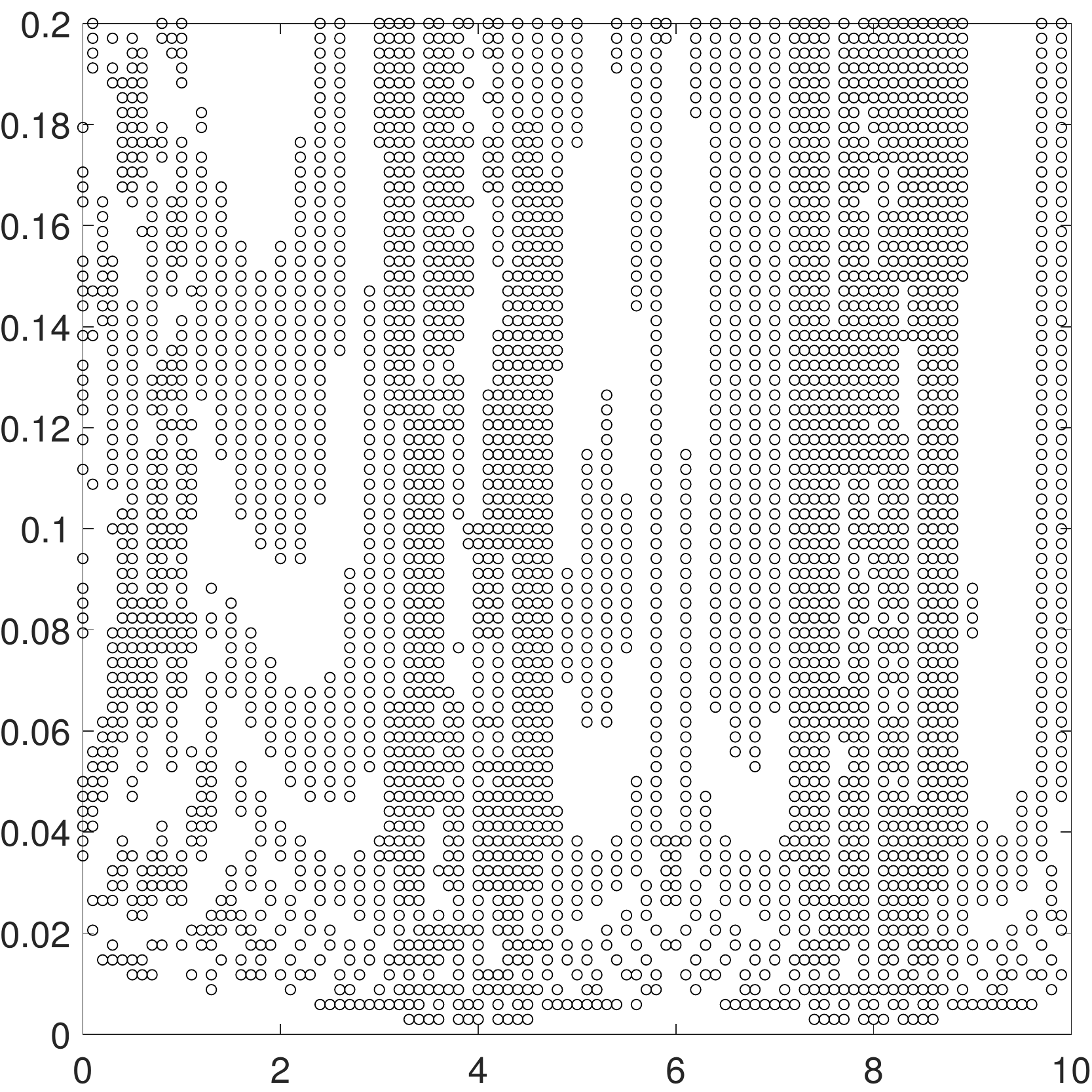}
  \end{subfigure}
  \caption{Example \ref{ex:1D_WB_Test}. The bottom topography $b$ and water surface level $h+b$ obtained by using different schemes with $100$ mesh points at $t$ =0.2.
    Left: with the bottom topography \eqref{eq:b_Smooth},
    middle: with the bottom topography \eqref{eq:b_dis},
    right: the dots denote where the second dissipation term $\mathring{\bm{D}}$ for $b$ is added in the {\tt MM-ES} scheme in the $x-t$ coordinate.
  }
  \label{1D_Well_balance_Compare}
\end{figure}

\begin{example}[Small perturbation test]\label{ex:1D_Pertubation_Test}\rm
  To test the ability of the {\tt MM-ES} scheme to capture small perturbations in the steady-state flow, the bottom topography consisting of a ``hump'' \cite{Leveque1998Balancing} is considered here
  \begin{equation*}
    b(x) = \begin{cases}
      0.25(\cos(10\pi(x-1.5))+1), &\text{if}\quad 1.4 \leqslant x \leqslant 1.6,\\
      0,   &\text{otherwise},
    \end{cases}
  \end{equation*}
  and the initial water depth is
  \begin{equation*}
    h = \begin{cases}
      1-b+\epsilon, &\text{if}\quad 1.1\leqslant x\leqslant 1.2,\\
      1-b, &\text{otherwise},
    \end{cases}
  \end{equation*}
  with the initial zero velocity.
  The physical domain is $[0, 2]$ with the outflow boundary conditions.
  The gravitational acceleration constant is $g = 9.812 $, and the magnitude of the small perturbation is taken as $\epsilon = 0.2$ and $0.001$.
  The choice of monitor function is the same as Example \ref{ex:1DSmooth} with $\theta = 100$.
\end{example}

Figure \ref{1D_ES_Pertubation_WB} presents the numerical solutions obtained by the {\tt UM-ES} and {\tt MM-ES} schemes with $200$ mesh points at $t=0.2$,
and the reference solution is obtained by the {\tt UM-ES} scheme using $3000$ mesh points.
It is seen that the structures in the solution are captured well and there is no obvious numerical oscillation.
Meanwhile, the results obtained by the {\tt MM-ES} scheme are similar to those obtained by the {\tt UM-ES} scheme with $600$ mesh points.
Furthermore, the mesh trajectories show that the mesh points concentrate near where the water surface level $h+b$ changes rapidly,
matching the choice of the monitor function.
To examine the ES properties,
the physical domain is enlarged as $[-5, 5]$ to exclude the influence of the boundary conditions.
Figure \ref{1D_ES_Pertubation_WB_Entropy} gives the evolution of the discrete total energy $\sum\limits_i \eta(\bU_i)\Delta x$ (on the fixed uniform mesh) and $\sum\limits_i J_i \eta(\bU_i)\Delta \xi$ (on the moving mesh) obtained by the {\tt UM-ES} and {\tt MM-ES} schemes with $1000$ mesh points, respectively,
from which one can see that the discrete total energy decays as expected.
The results obtained by using the {\tt MM-ES} without the second dissipation term ${\mathring{\bm{D}}}$ in \eqref{eq:HO_ES_flux} are compared in Figure \ref{1D_ES_Pertubation_WB_bottom},
which shows that without ${\mathring{\bm{D}}}$, the bottom topography in the numerical solution is oscillatory when its derivative is not continuous.

\begin{figure}[hbt!]
  \centering
  \begin{subfigure}[b]{0.35\textwidth}
    \centering
    \includegraphics[width=1.0\linewidth]{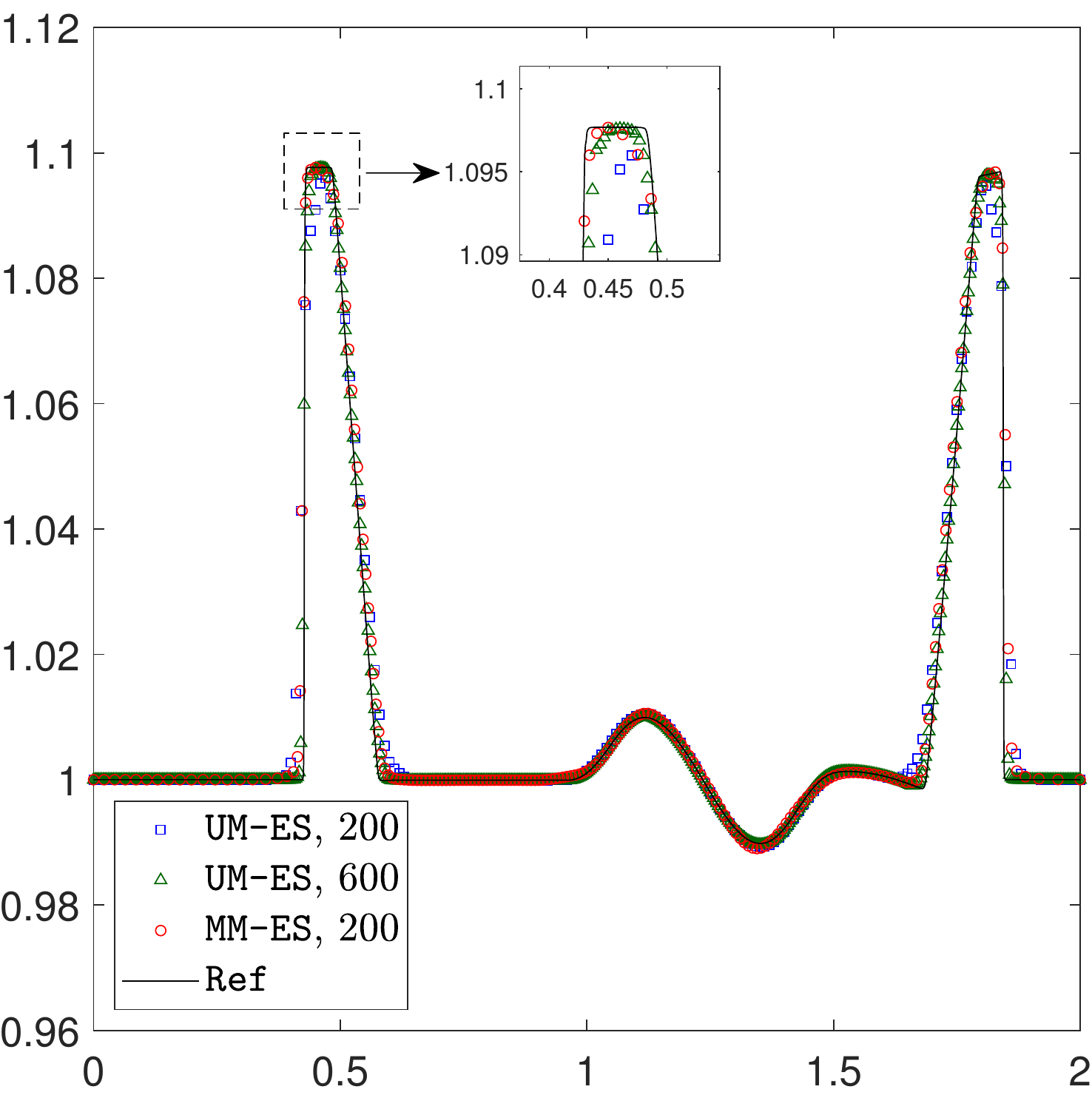}
    \caption{$h+b, \epsilon = 0.2$}
  \end{subfigure}
  \begin{subfigure}[b]{0.35\textwidth}
    \centering
    \includegraphics[width=1.0\linewidth]{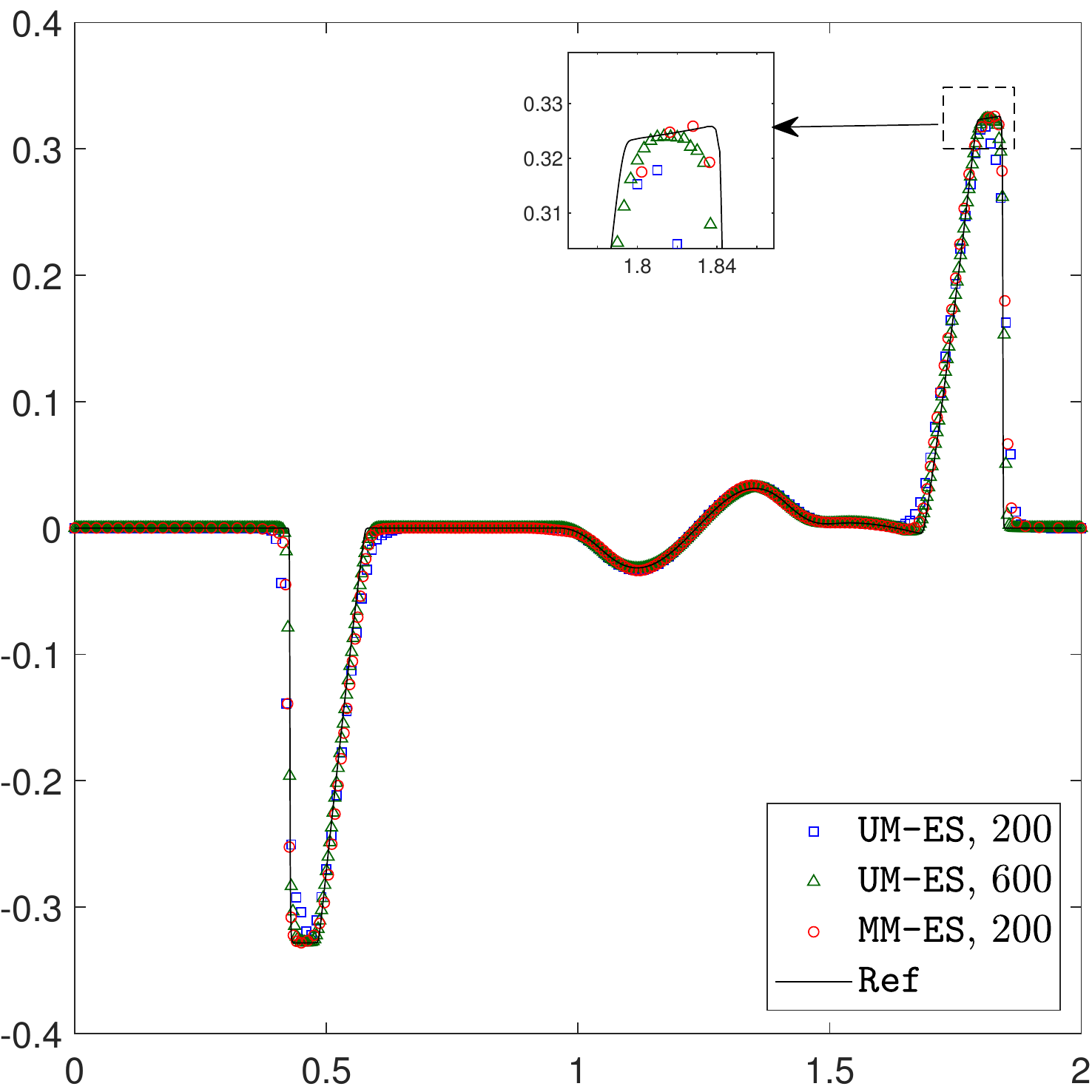}
    \caption{$hv_1, \epsilon = 0.2$}
  \end{subfigure}
  \\
  \begin{subfigure}[b]{0.366\textwidth}
    \centering
    \includegraphics[width=1.0\linewidth]{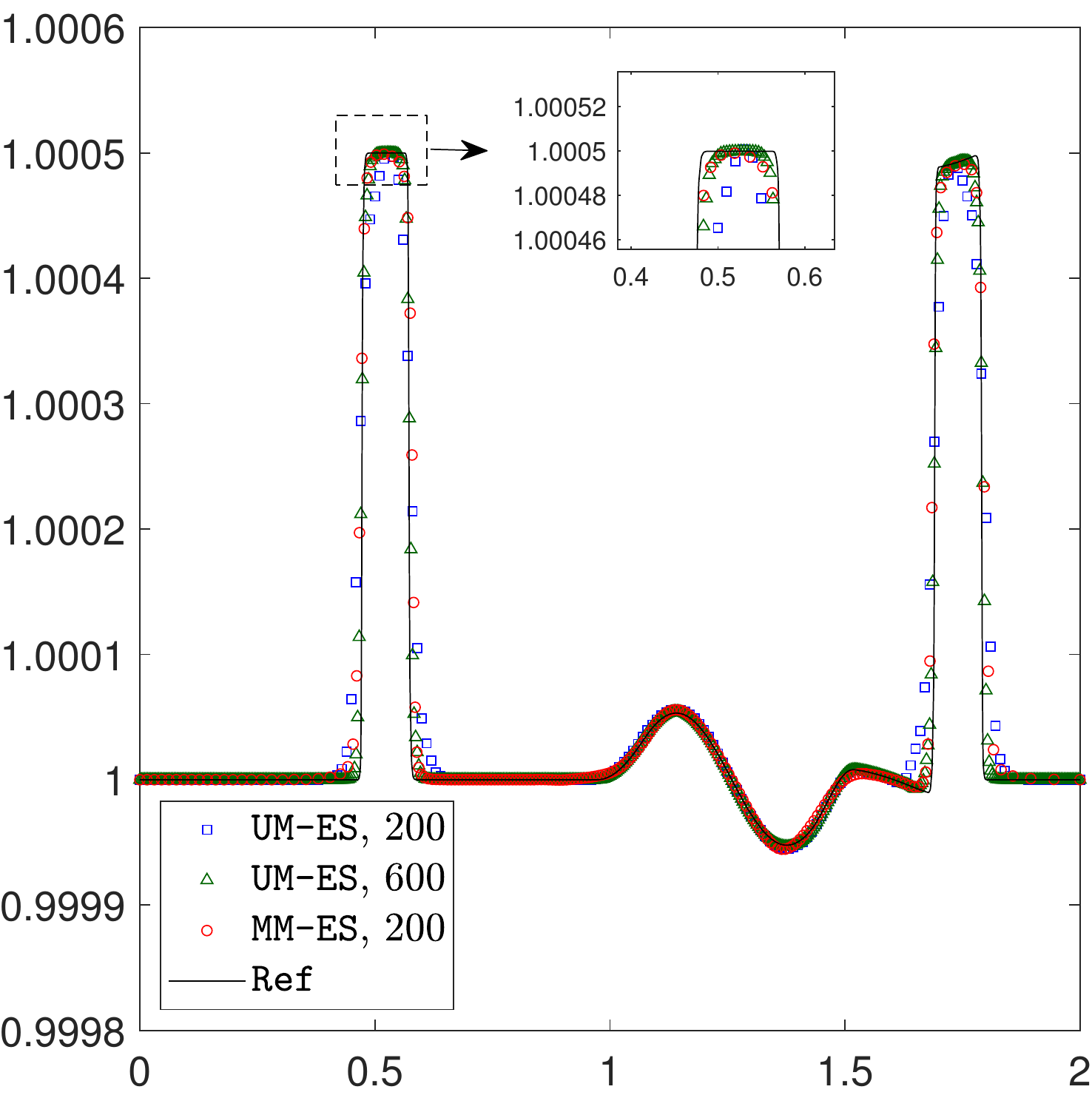}
    \caption{$h+b, \epsilon = 0.001$}
  \end{subfigure}
  \begin{subfigure}[b]{0.378\textwidth}
    \centering
    \includegraphics[width=0.93\linewidth,height=1.0\linewidth]{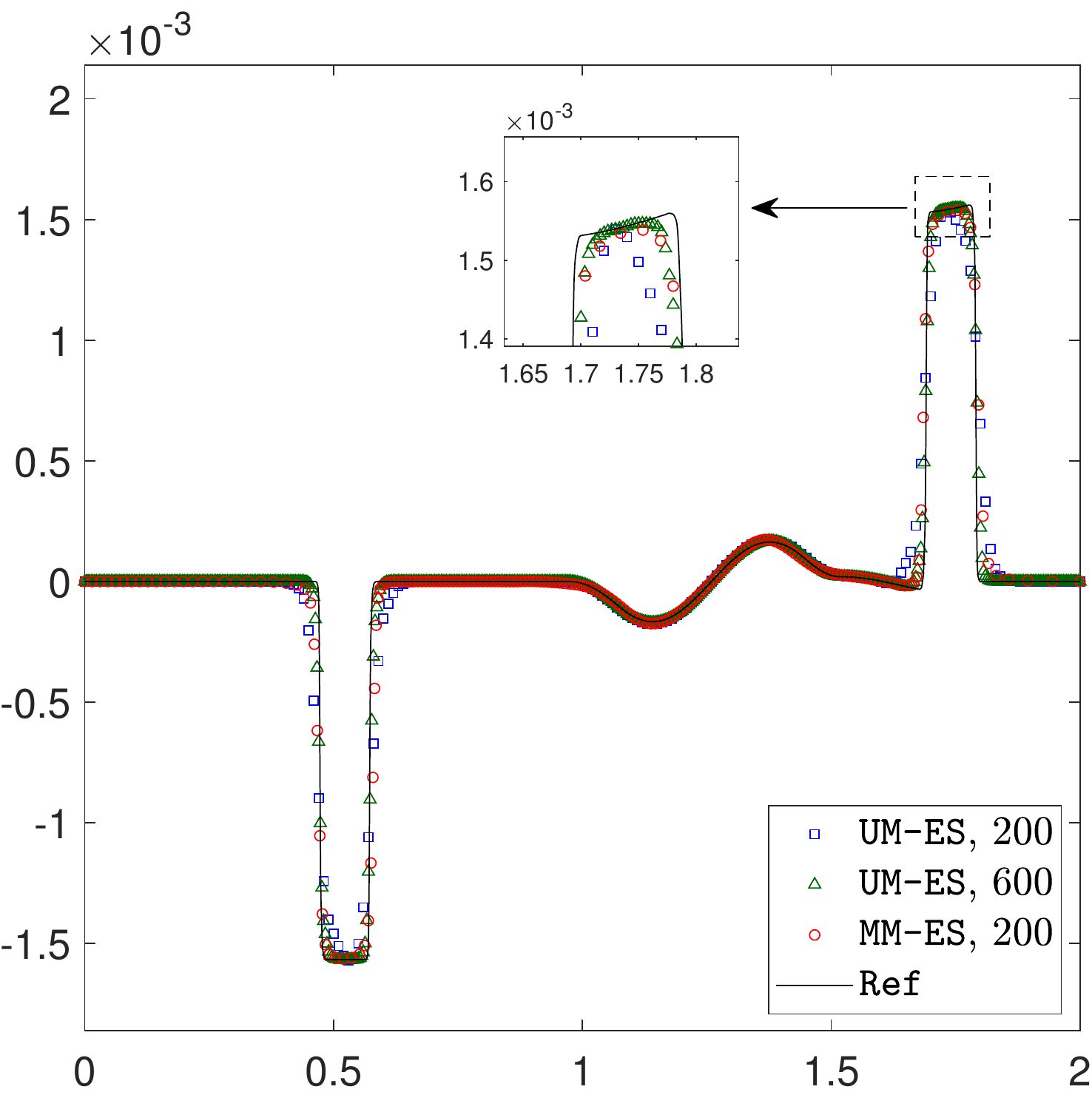}
    \caption{$hv_1, \epsilon = 0.001$}
  \end{subfigure}
  \\
  \begin{subfigure}[b]{0.35\textwidth}
    \centering
    \includegraphics[width=1.0\linewidth]{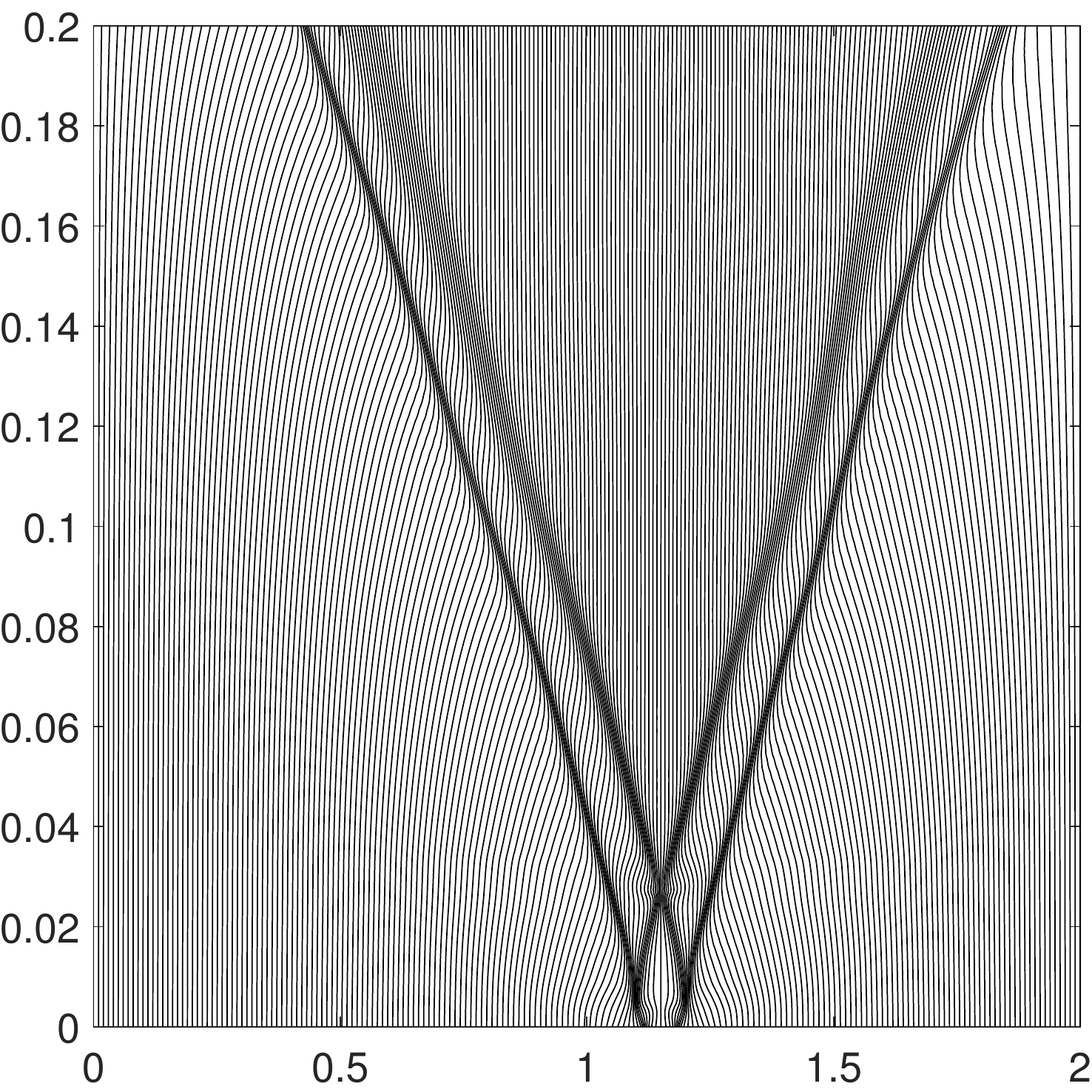}
    \caption{mesh trajectory, $\epsilon = 0.2$}
  \end{subfigure}
  \begin{subfigure}[b]{0.35\textwidth}
    \centering
    \includegraphics[width=1.0\linewidth]{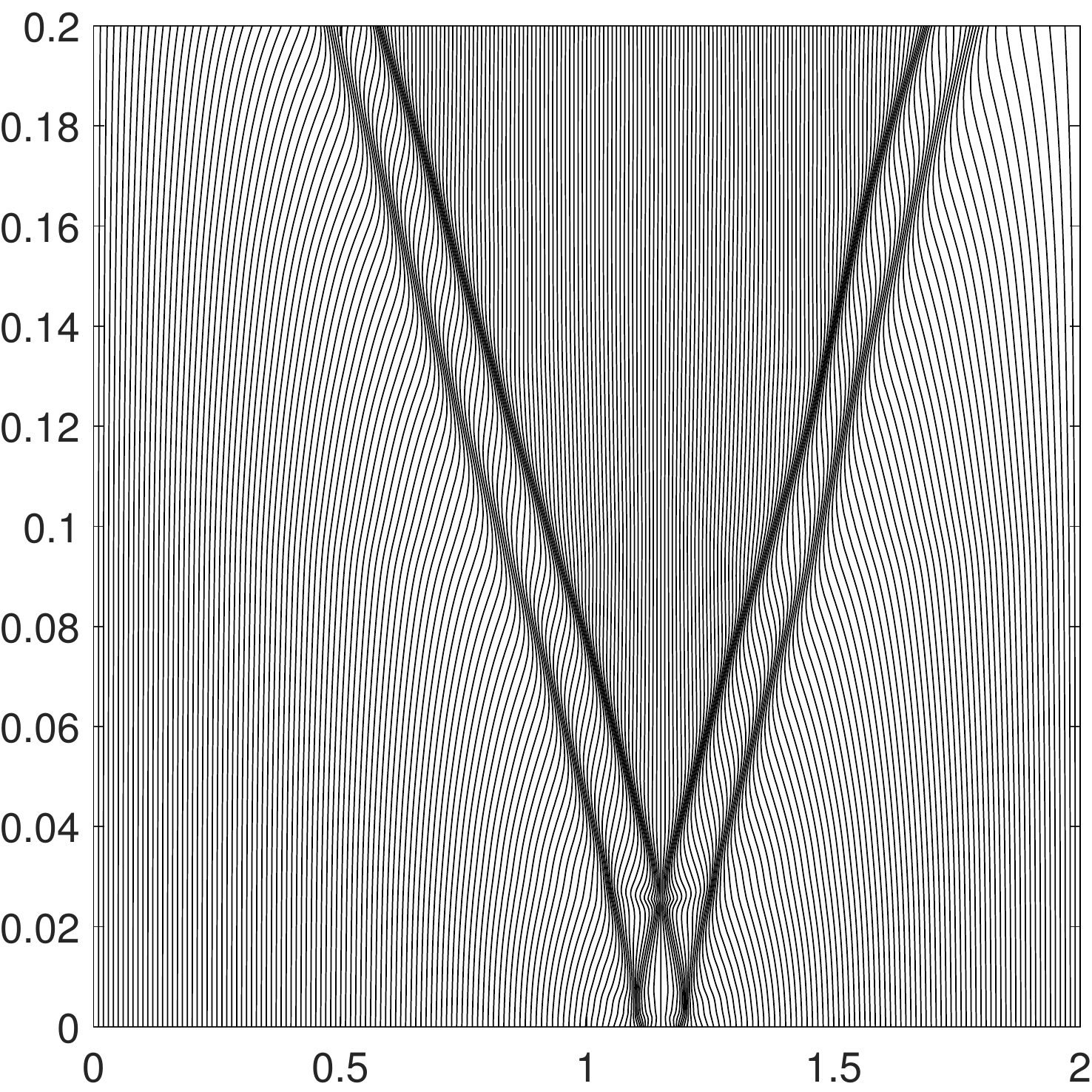}
    \caption{mesh trajectory, $\epsilon = 0.001$}
  \end{subfigure}
  \caption{Example \ref{ex:1D_Pertubation_Test}. The numerical solutions obtained by using the {\tt UM-ES} and {\tt MM-ES} schemes at $t=0.2$.
  The reference solution is obtained by using the {\tt UM-ES} scheme with $3000$ mesh points.}\label{1D_ES_Pertubation_WB}
\end{figure}

\begin{figure}[hbt!]
  \centering
  \begin{subfigure}[b]{0.35\textwidth}
    \centering
    \includegraphics[width=1.0\linewidth]{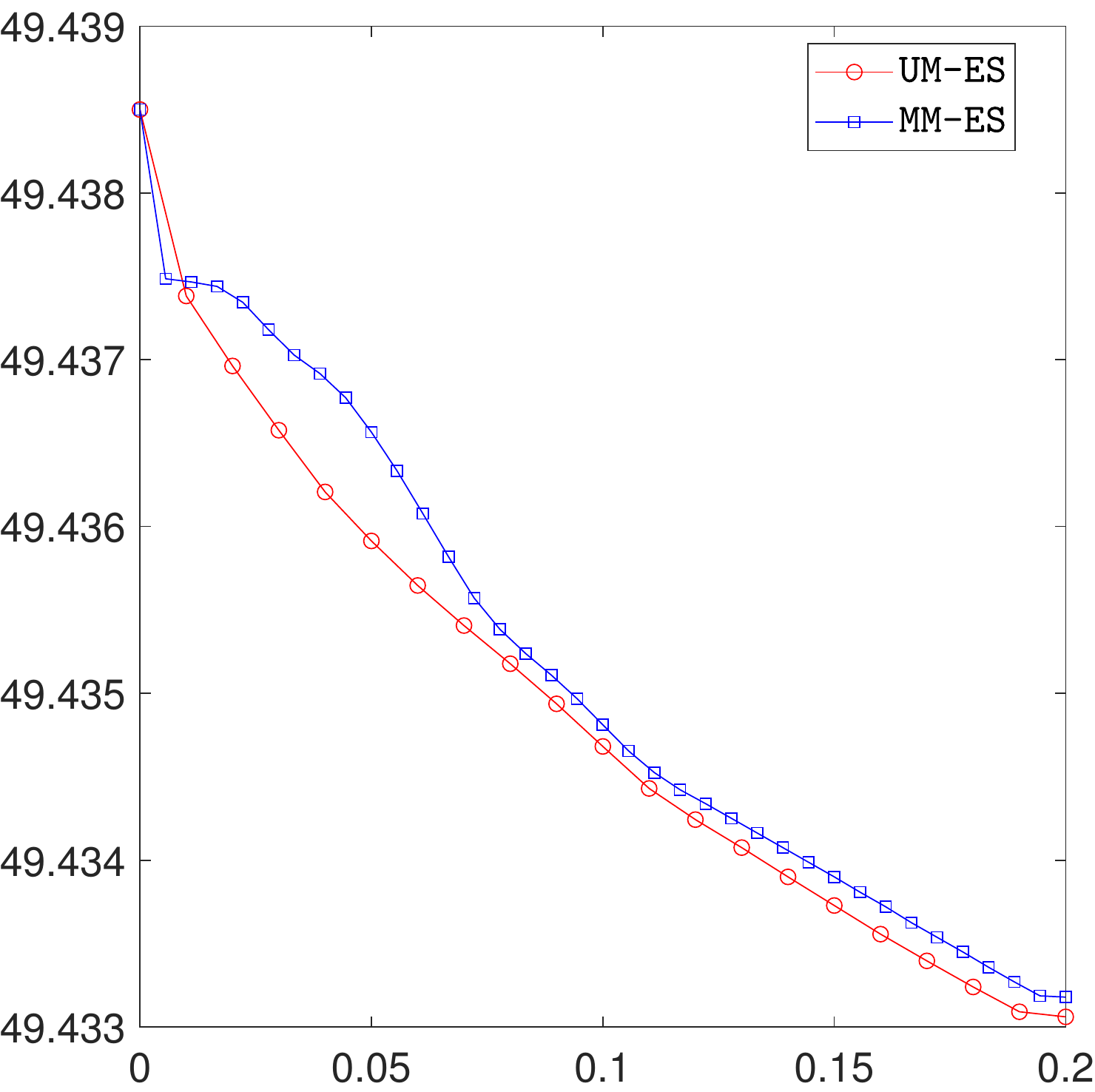}
    \caption{$\epsilon = 0.2$}
  \end{subfigure}
  \begin{subfigure}[b]{0.35\textwidth}
    \centering
    \includegraphics[width=1.0\linewidth]{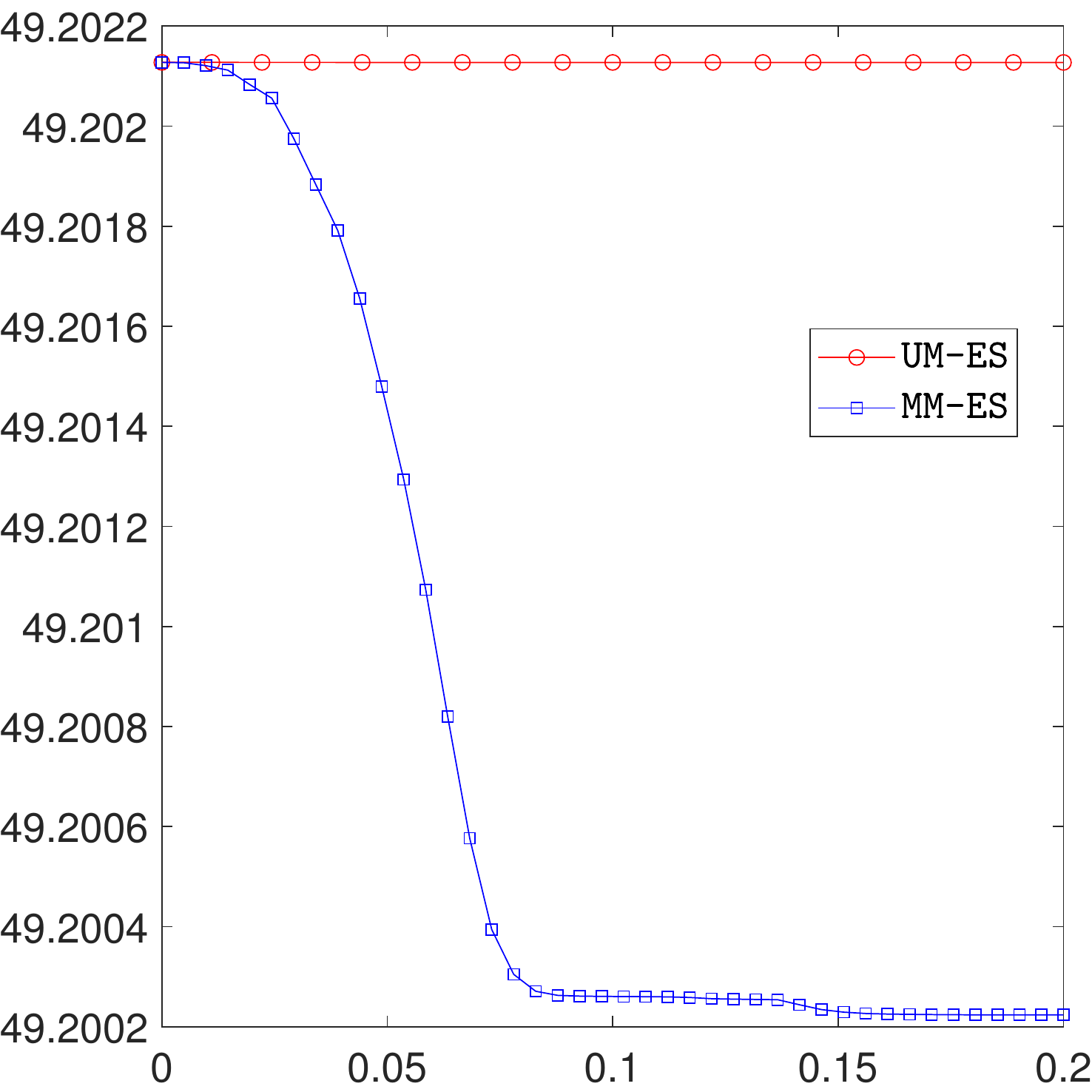}
    \caption{$\epsilon = 0.001$}
  \end{subfigure}
  \caption{Example \ref{ex:1D_Pertubation_Test}. The evolution of the discrete total energy over time by using the {\tt UM-ES} and {\tt MM-ES} schemes with $1000$ mesh points.}\label{1D_ES_Pertubation_WB_Entropy}
\end{figure}

\begin{figure}[hbt!]
  \centering
  \begin{subfigure}[b]{0.35\textwidth}
    \centering
    \includegraphics[width=1.0\linewidth, clip]{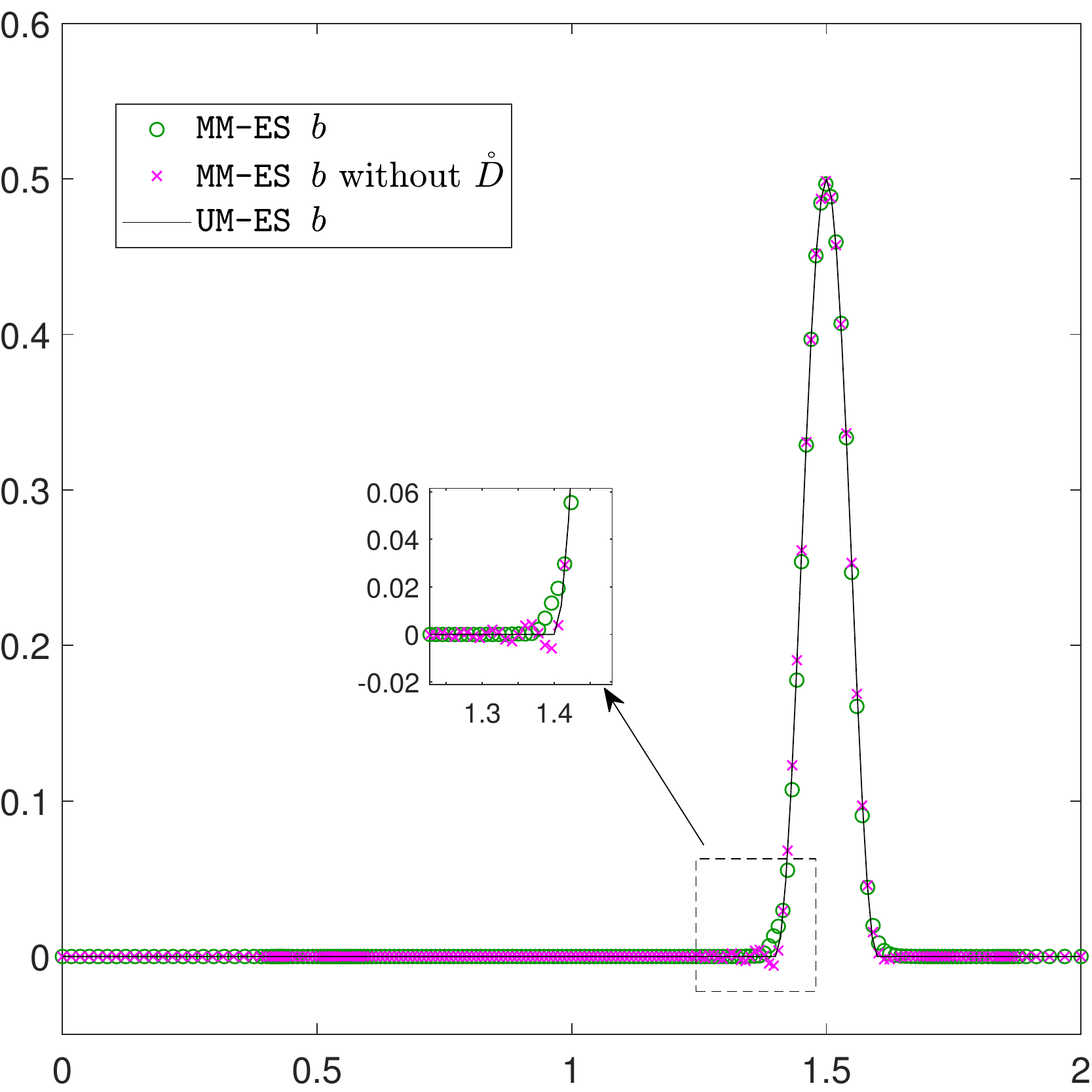}
  \end{subfigure}
  \begin{subfigure}[b]{0.35\textwidth}
    \centering
    \includegraphics[width=1.0\linewidth]{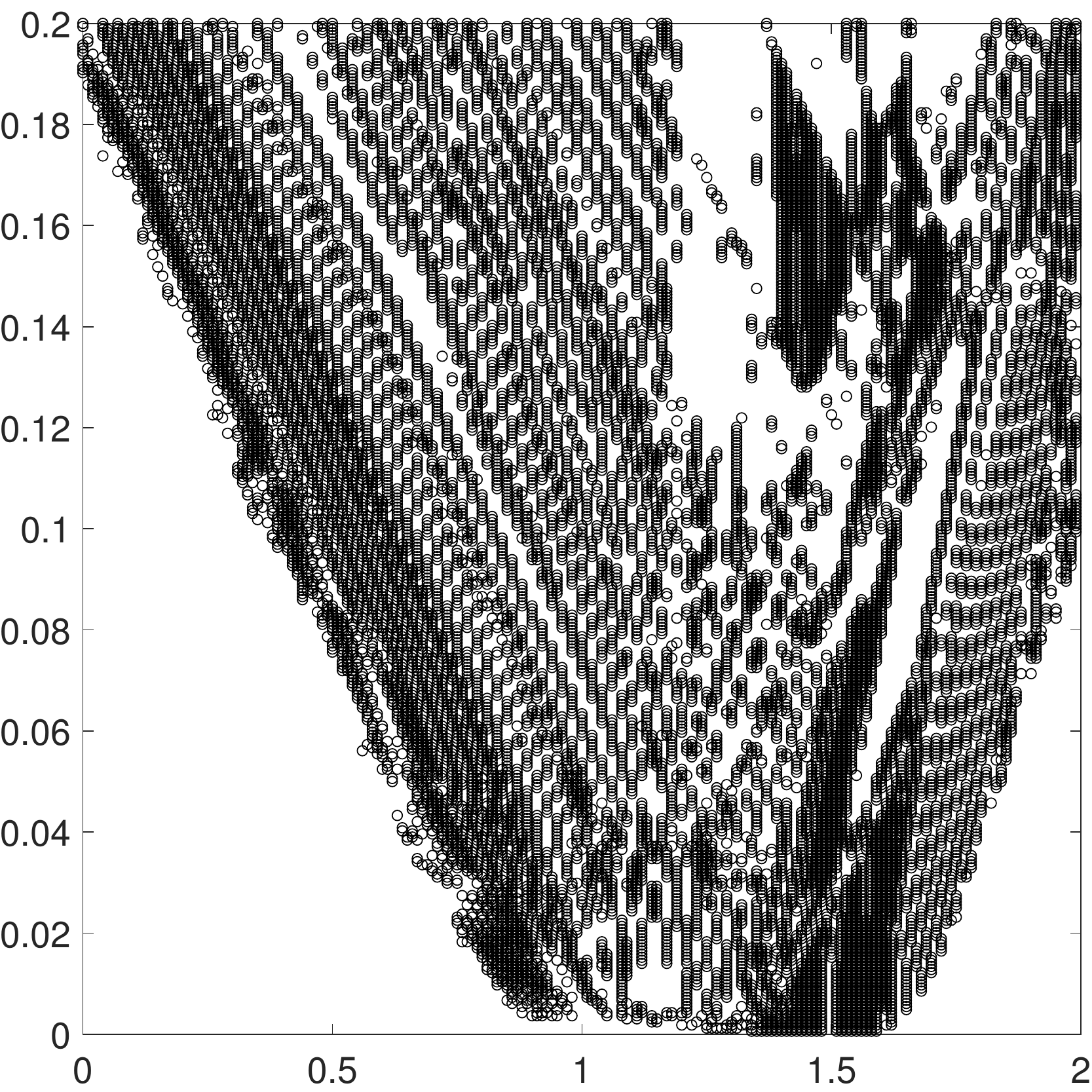}
  \end{subfigure}
  \caption{Example \ref{ex:1D_Pertubation_Test}.
    Left: the bottom topography $b$ obtained by using the {\tt UM-ES}, {\tt MM-ES},
    and {\tt MM-ES} schemes without $\bm{\mathring{D}}$ with $200$ mesh points.
    Right: the dots denote where the second  dissipation term $\bm{\mathring{D}}$ for $b$ is added in the $x-t$ coordinate.
  }
  \label{1D_ES_Pertubation_WB_bottom}
\end{figure}

\subsection{2D tests}
\begin{example}[Accuracy test with moving vortex]\label{eq:Smooth_2D}\rm
  This problem is taken to verify the accuracy of our 2D schemes, utilizing the 2D vortex example in \cite{Duan2021SWMHD} without the magnetic fields.
  A steady vortex can be determined by
  \begin{equation*}
    \begin{aligned}
      &h^{\prime}=h_{\max}-v_{\max }^2 e^{1-r^2} /(2 g), \\
      &\left(v_1^{\prime}, v_2^{\prime}\right)=v_{\max} e^{0.5\left(1-r^2\right)}(-x_2, x_1), \\
    \end{aligned}
  \end{equation*}
  with $h_{\max} = 1,~v_{\max} = 0.2,~r = \sqrt{x_1^2+x_2^2}, ~g=1$.
  Then a moving vortex with constant velocity $(1,1)$ can be obtained by using the Galilean transformation
  \begin{equation*}
    \begin{aligned}
      & h(x_1, x_2, t)=h^{\prime}(x_1-t, x_2-t, t),~\left(v_1, v_2\right)(x_1, x_2, t)=(1,1)+\left(v_1^{\prime}, v_2^{\prime}\right)(x_1-t, x_2-t, t). \\
    \end{aligned}
  \end{equation*}
  The physical domain is $[-10, 10] \times [-10, 10]$ with the periodic boundary conditions, and the output time is $t = 2, 4 $. The monitor function is set to be
  \begin{equation*}
    \omega=\left({1+\frac{15 |\nabla_{\bm{\xi}} (h+b)|}{\max |\nabla_{\bm{\xi}} (h+b)|}+\frac{10 |\Delta_{\bm{\xi}} (h+b)|}{\max |\Delta_{\bm{\xi}} (h+b)|}}\right)^{1/2}.
  \end{equation*}
\end{example}

Figure \ref{2D test} shows the errors and convergence orders in the water depth $h$ at $t =2,4$,
which verify the $5$th-order accuracy.
Figure \ref{fig:Mesh_Entropy} gives the adaptive meshes at $t=0,2,4$ with $5$ equally spaced contours of $h+b$ and the evolution of the discrete total energy $\sum\limits_{i,j} J_{i,j}\eta(\bm{U}_{i,j})\Delta \xi_{1} \Delta {\xi_2}$ obtained by using the {\tt MM-ES} scheme with $40\times40$ mesh.
One can see that the concentration of the mesh points adaptively moves with the vortex,
and the {\tt MM-EC} scheme can keep the discrete total energy almost constant, while the {\tt MM-ES} scheme makes the discrete total energy decay.

\begin{figure}[hbt!]
  \centering
  \includegraphics[width=0.4\textwidth]{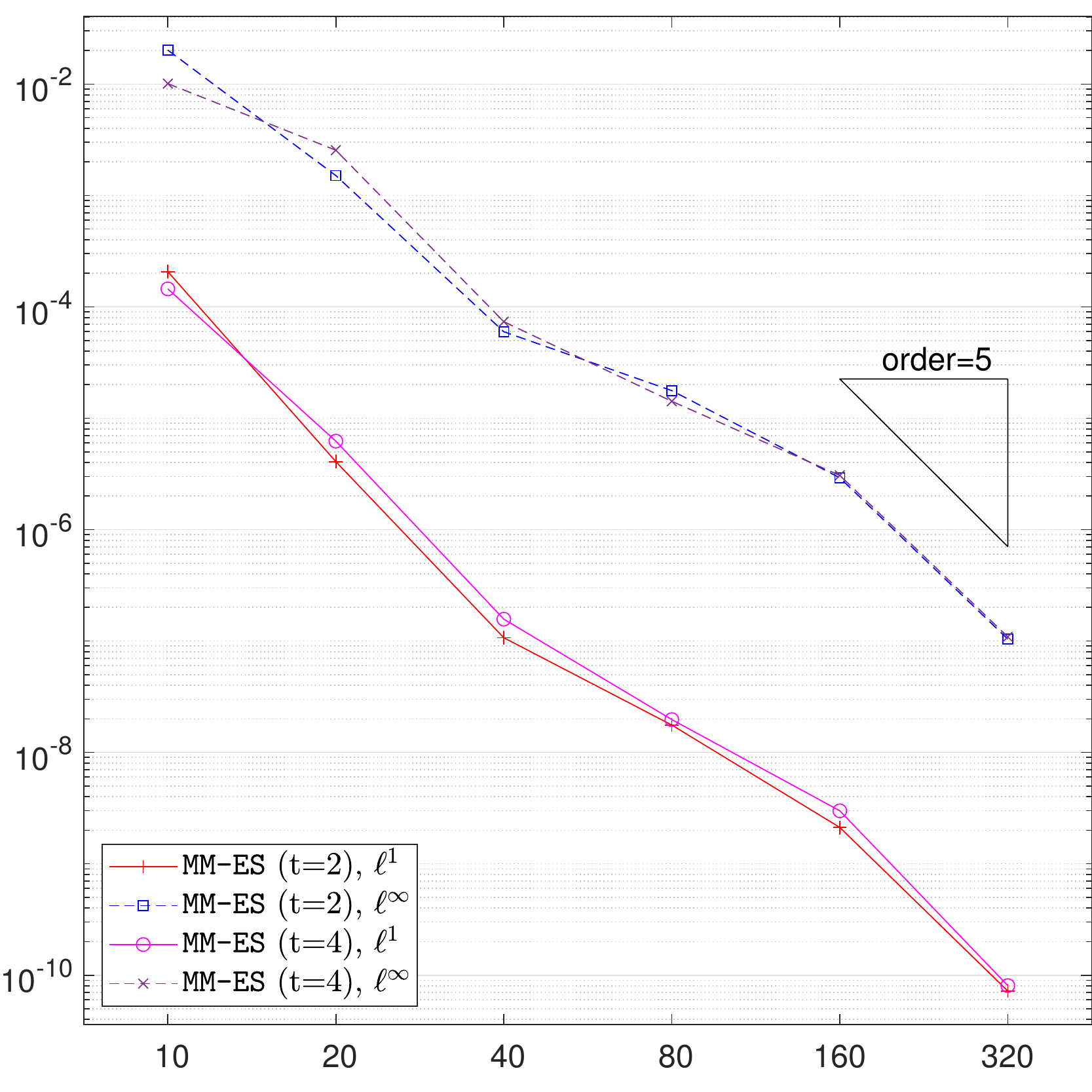}
  \caption{Example \ref{eq:Smooth_2D}. The errors and convergence orders in the water depth $h$ at $t =2$, $4$ obtained by the {\tt MM-ES} scheme.
  }
  \label{2D test}
\end{figure}

\begin{figure}[hbt!]
  \centering
  \begin{subfigure}[b]{0.35\textwidth}
    \centering
    \includegraphics[width=1.0\linewidth]{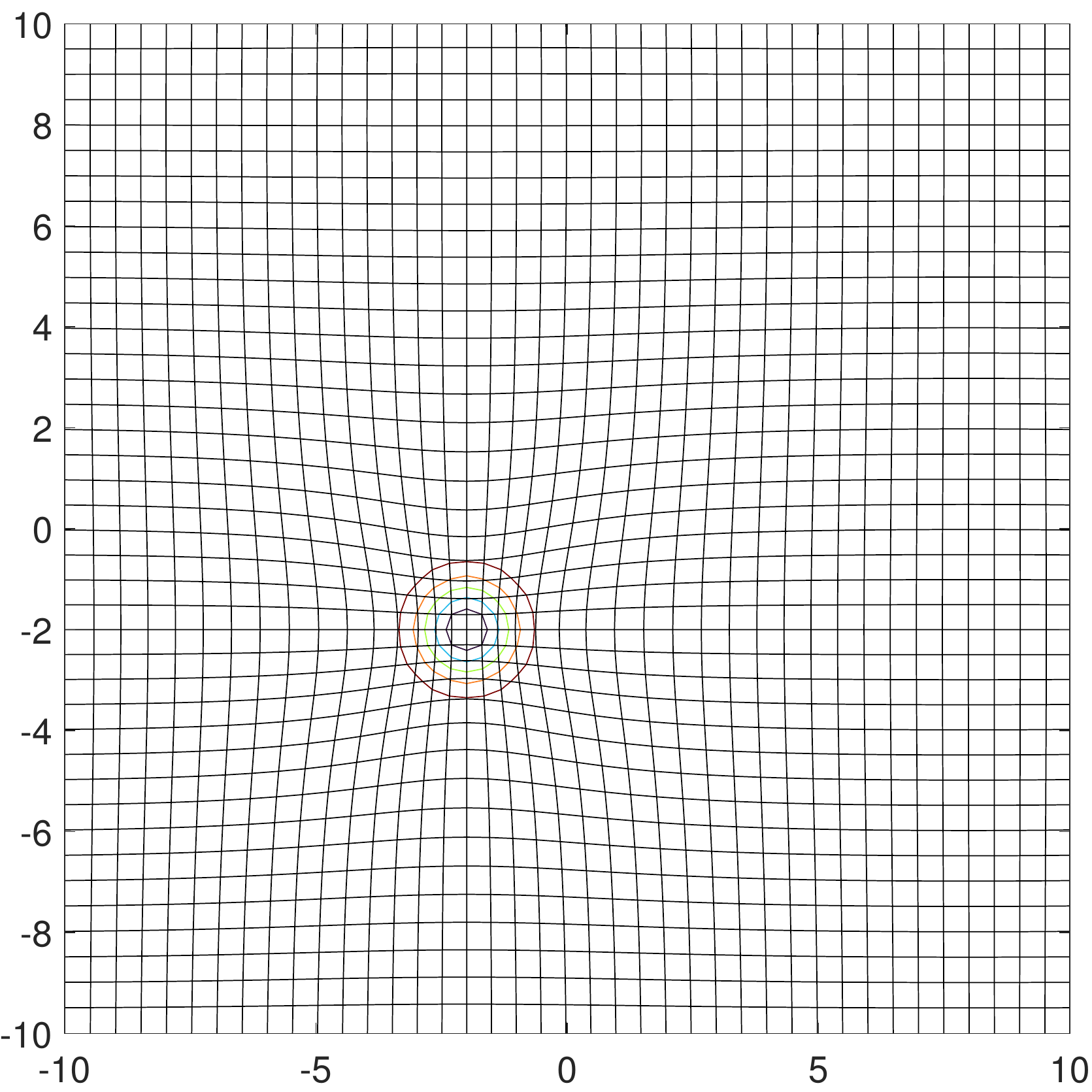}
    \caption{$t=0$}
  \end{subfigure}
  \begin{subfigure}[b]{0.35\textwidth}
    \centering
    \includegraphics[width=1.0\linewidth]{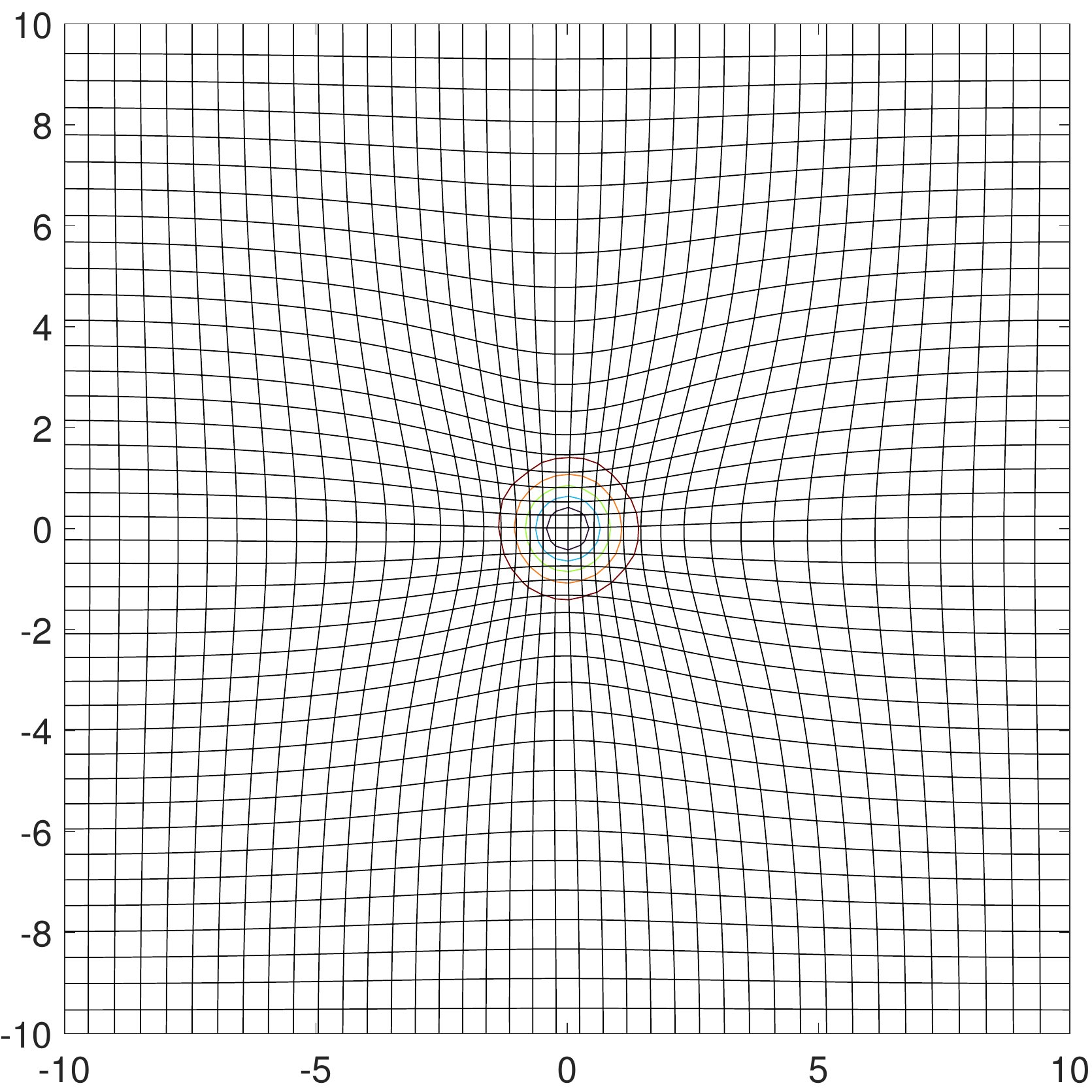}
    \caption{$t=2$}
  \end{subfigure}\\
  \begin{subfigure}[b]{0.35\textwidth}
    \centering
    \includegraphics[width=1.0\linewidth]{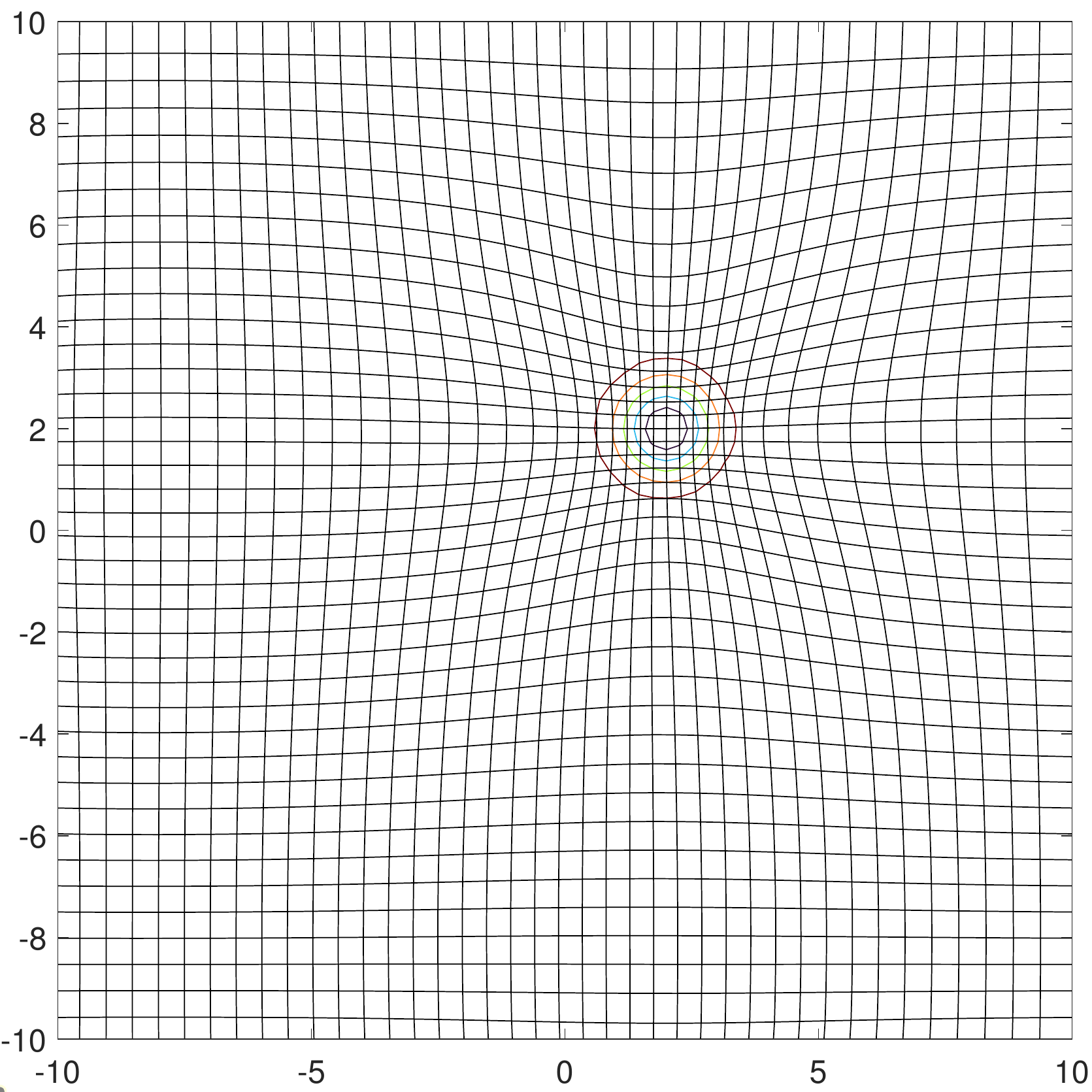}
    \caption{$t=4$}
  \end{subfigure}
  \begin{subfigure}[b]{0.35\textwidth}
    \centering
    \includegraphics[width=1.0\linewidth]{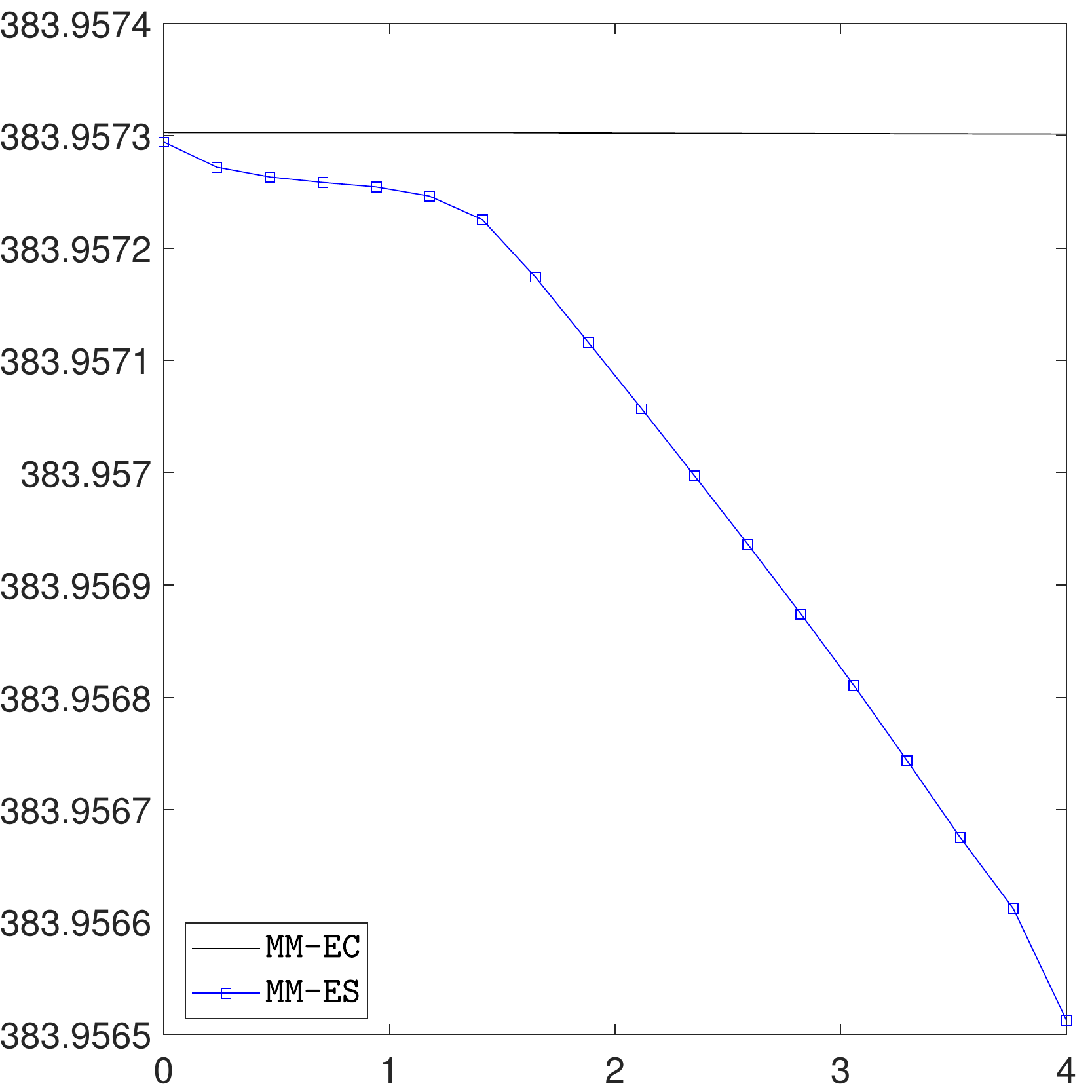}
    \caption{The discrete total energy}
  \end{subfigure}
  \caption{Example \ref{eq:Smooth_2D}.
    The adaptive meshes and 5 equally spaced contour lines of the water surface level $h+b$ at different times by using the {\tt MM-ES} scheme with $40\times40$ mesh.
  And the discrete total energy evolution in time.}\label{fig:Mesh_Entropy}
\end{figure}

%

\begin{example}[2D WB test]\label{ex:2D_WB_Test}\rm
  This example is utilized to verify the WB properties of our 2D {\tt UM-ES} and {\tt MM-ES} schemes.
  The bottom topography is
  \begin{equation}\label{eq:2D_b_Smooth}
    b(x_1, x_2) = 0.8 \exp\left(-50\left((x_1-0.5)^2+(x_2-0.5)^2\right)\right),~ (x_1,x_2) \in [0,1]\times[0,1],
  \end{equation}
  or
  \begin{equation}\label{eq:2D_b_dis}
    b(x_1, x_2) = \begin{cases}
      0.5 , &\text{if}\quad  (x_1,x_2) \in [0.3,0.5]\times[0.3,0.5],\\
      0, &\text{otherwise},
    \end{cases}
  \end{equation}
  with the initial water depth $ h = 1 - b$,
  zero velocities, and $g=1$ with the outflow boundary conditions.
  The monitor function is
  \begin{equation*}
    \omega=\sqrt{1+\theta\frac{\left|\nabla_{\bm{\xi}} \sigma\right|}{ \max \left|\nabla_{\bm{\xi}} \sigma\right|}},
  \end{equation*}
  where $\sigma$ is the water depth $h$ and $\theta = 100$.
\end{example}

Table \ref{tb:2D_Well_Balance} lists the $\ell^1,\ell^{\infty}$ errors in $h+b$ and $v_1$ obtained by using our schemes with $100\times100$ meshes at $t=0.1$,
which are clearly at the level of rounding error in double precision.
Figure \ref{fig:2D_Well_Balance} shows the results of the water surface level $h+b$ and bottom topography $b$ obtained by using the {\tt MM-ES} scheme with $100\times100$ mesh.
The results illustrate that our schemes are WB on the adaptive moving mesh.
Figure \ref{fig:2D_WB_Scatter} plots the location where the second dissipation term ${\mathring{\bm{D}}}$ is added for the bottom topography \eqref{eq:2D_b_dis},
indicating it is almost added where $b$ is discontinuous.

\begin{table}[hbt!]
  \centering
  \begin{tabular}{c|c|cc|cc|cc|cc}
    \toprule
    \multicolumn{2}{c|}{\multirow{2}{*}{}} & \multicolumn{2}{c|}{{\tt UM-EC}} & \multicolumn{2}{c|}{{\tt UM-ES}} & \multicolumn{2}{c|}{{\tt MM-ES}} & \multicolumn{2}{c}{{\tt MM-ES} without $\bm{\mathring{D}}$ } \\ 
    \cline{3-10}
    \multicolumn{2}{c|}{} & \multicolumn{1}{c}{$\ell^{1}$~error}  & \multicolumn{1}{c|}{$\ell^{\infty}$~error} &  \multicolumn{1}{c}{$\ell^{1}$~error}  & \multicolumn{1}{c|}{$\ell^{\infty}$~error} &  \multicolumn{1}{c}{$\ell^{1}$~error}  & \multicolumn{1}{c|}{$\ell^{\infty}$~error} &  \multicolumn{1}{c}{$\ell^{1}$~error}  & \multicolumn{1}{c}{$\ell^{\infty}$~error}   \\
    \hline
    \multirow{2}{*}{$b$ in \eqref{eq:2D_b_Smooth}} &
    $h+b$ & 3.20e-16 	 & 7.77e-15 	 & 1.06e-16 	 & 6.66e-16 	& 3.20e-16 	 & 1.67e-15& 3.40e-16 	 & 1.89e-15 \\ 
    &$v_1$ & 4.01e-16 	 & 7.12e-15 	 & 1.42e-16 	 & 1.29e-15 	 & 2.46e-16 	 & 2.17e-15& 2.62e-16 	 & 1.76e-15\\ 
    \hline
    \multirow{2}{*}{$b$ in \eqref{eq:2D_b_dis}} &
    $h+b$ & 9.79e-18 	 & 1.11e-15 	 & 3.38e-18 	 & 4.44e-16 	& 3.12e-16 	 & 1.55e-15& 2.84e-16 	 & 1.33e-15 \\ 
    &$v_1$  & 1.72e-17 	 & 1.31e-15 	 & 7.58e-18 	 & 7.53e-16 	 & 2.20e-16 	 & 1.59e-15& 2.10e-16 	 & 1.23e-15\\ 
    \bottomrule
  \end{tabular}
  \caption{Example \ref{ex:2D_WB_Test}. Errors in $h+b$ and $v_1$ by using our schemes with $100\times100$ meshes at $t=0.1$,
  for the bottom topography \eqref{eq:2D_b_Smooth} and \eqref{eq:2D_b_dis}.}\label{tb:2D_Well_Balance}
\end{table}
\begin{figure}[hbt!]
  \centering
  \begin{subfigure}[b]{0.39\textwidth}
    \centering
    \includegraphics[width=1.0\linewidth]{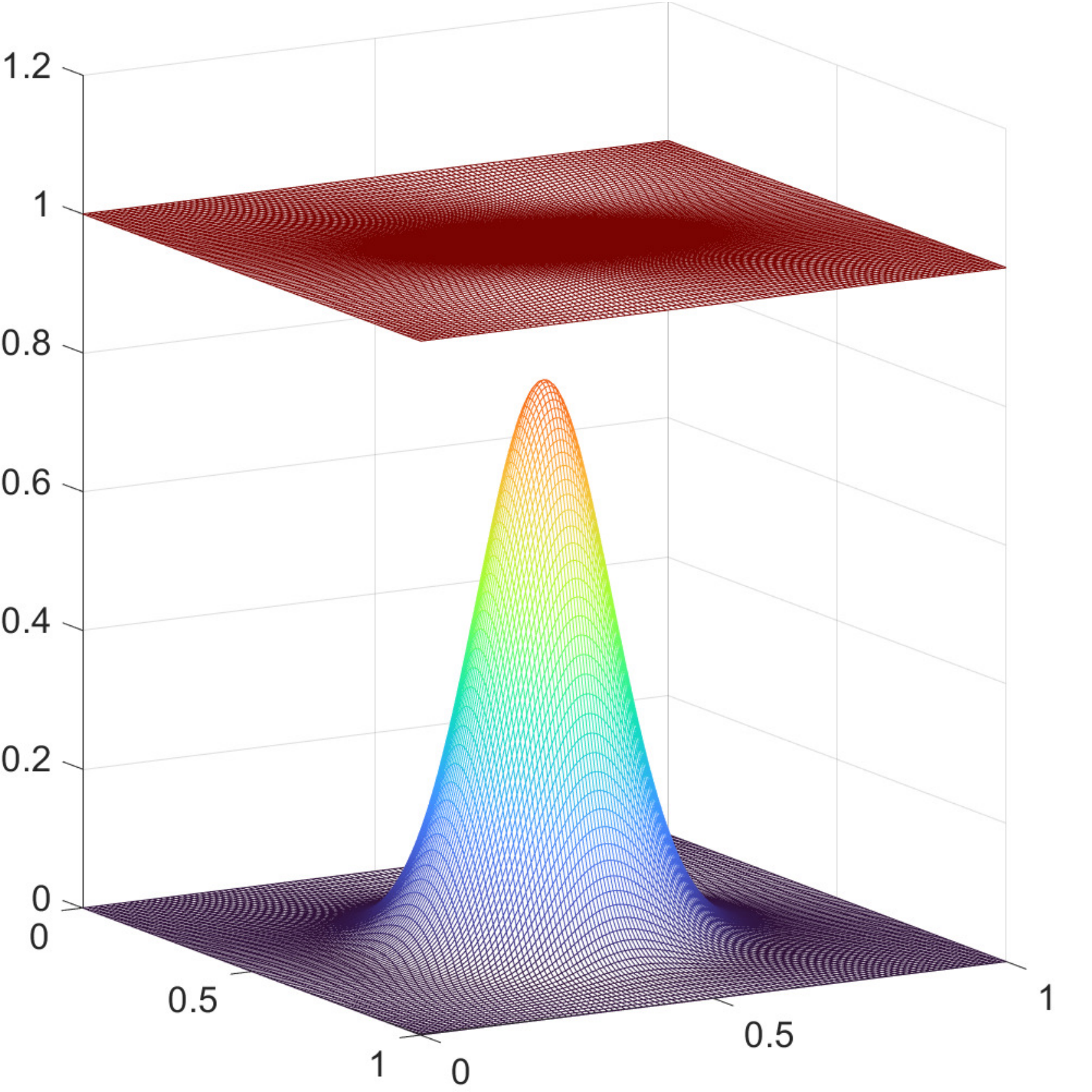}
  \end{subfigure}
  \begin{subfigure}[b]{0.35\textwidth}
    \centering
    \includegraphics[width=1.0\linewidth]{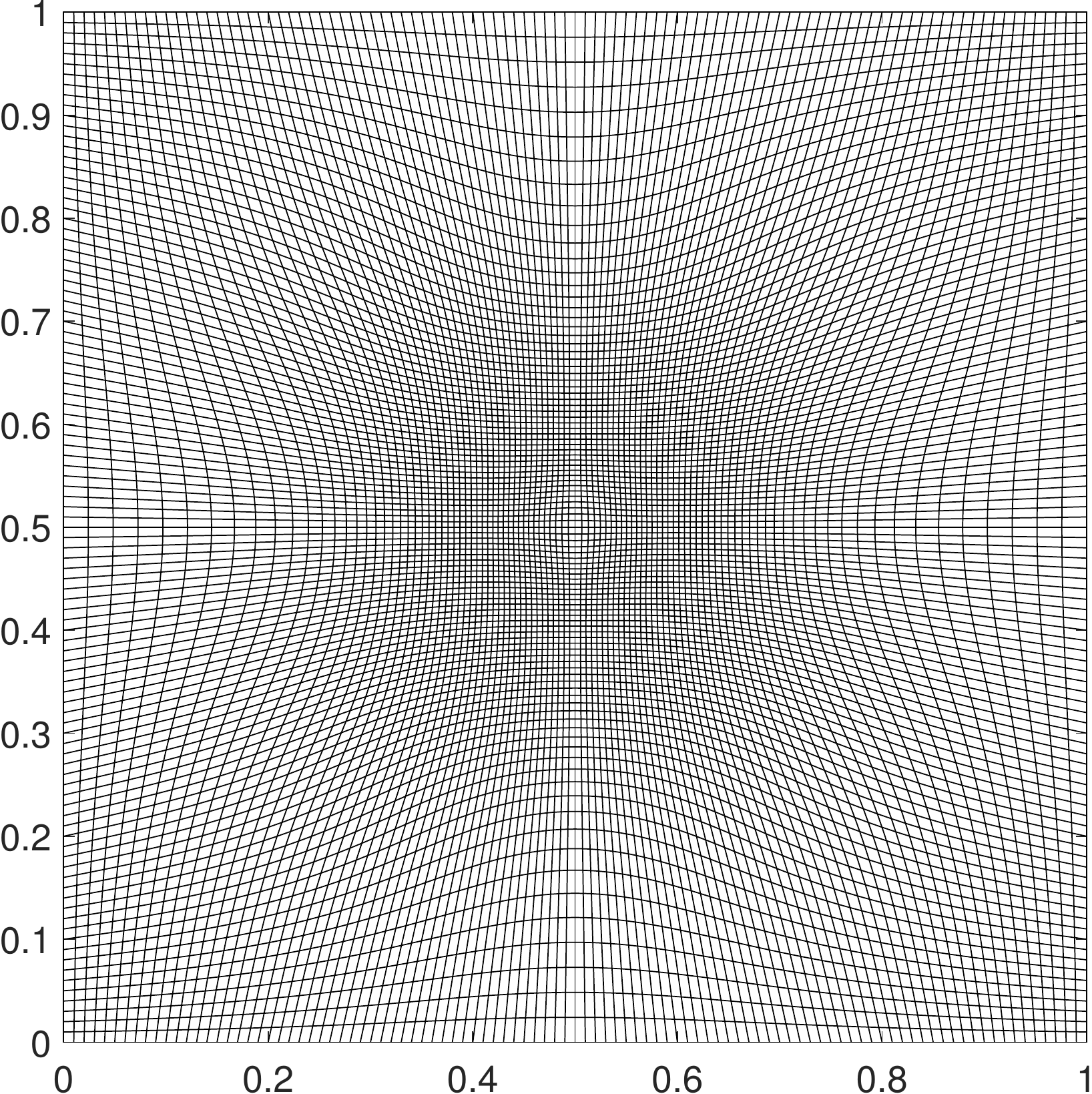}
  \end{subfigure}
  \\
  \begin{subfigure}[b]{0.39\textwidth}
    \centering
    \includegraphics[width=1.0\linewidth]{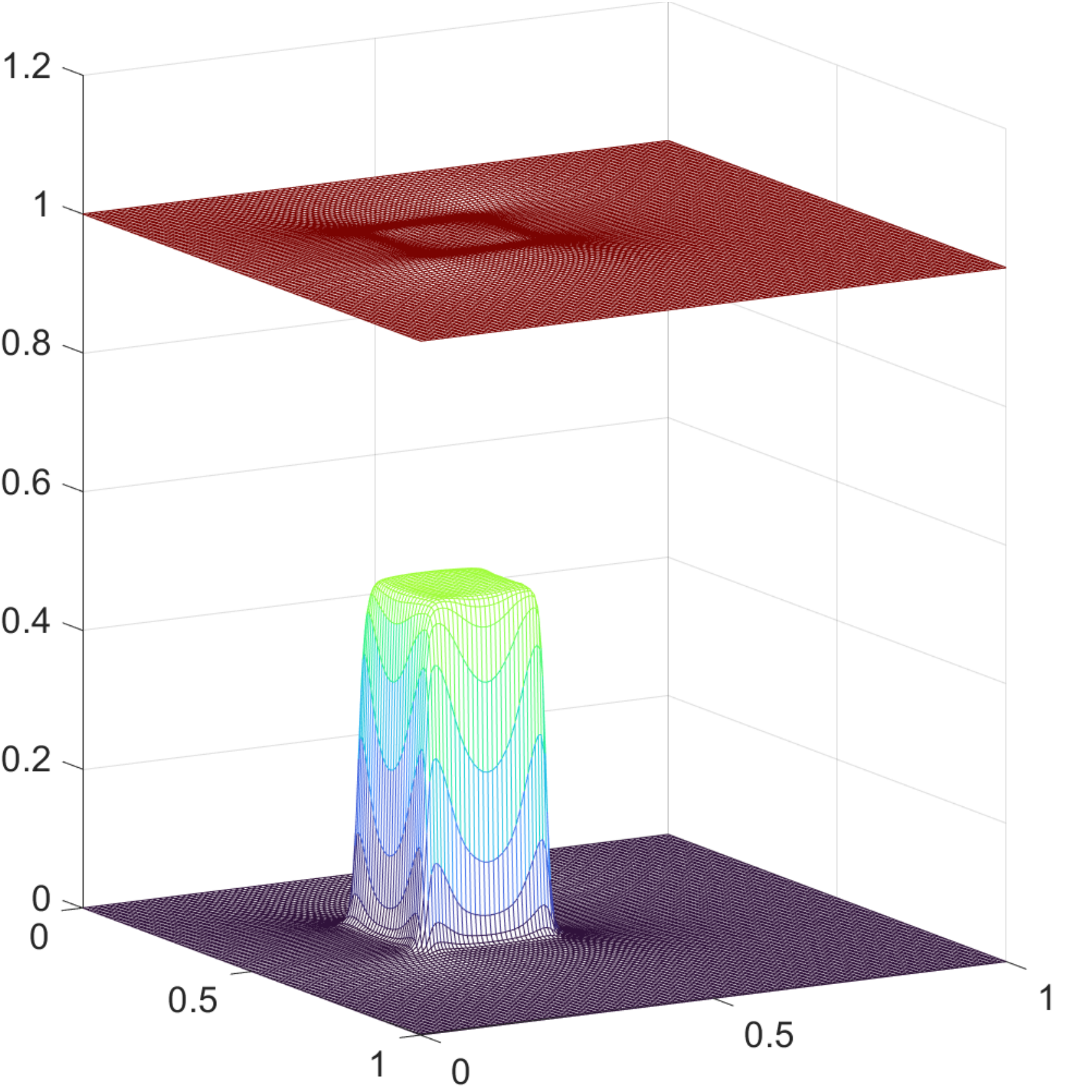}
  \end{subfigure}
  \begin{subfigure}[b]{0.35\textwidth}
    \centering
    \includegraphics[width=1.0\linewidth]{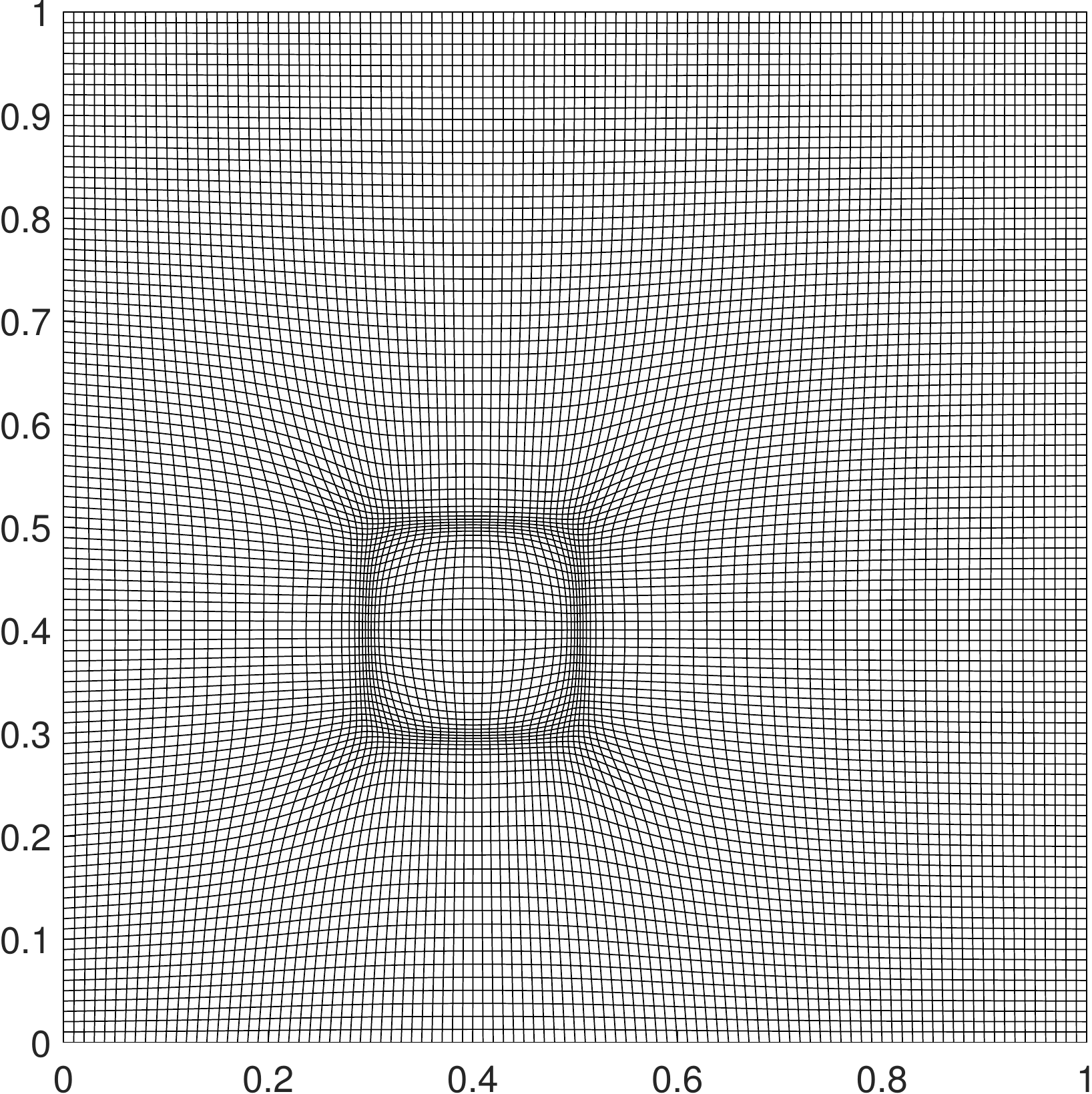}
  \end{subfigure}
  \caption{Example \ref{ex:2D_WB_Test}. The water surface level $h+b$, bottom topography $b$, and adaptive meshes obtained by using the {\tt MM-ES} scheme with $100 \times 100$ mesh at $t=0.1$.
  The figures in the first and second rows correspond to the cases with the bottom topography \eqref{eq:2D_b_Smooth} and \eqref{eq:2D_b_dis}, respectively.}
  \label{fig:2D_Well_Balance}
\end{figure}

\begin{figure}[hbt!]
  \centering
  \begin{subfigure}[b]{0.35\textwidth}
    \centering
    \includegraphics[width=1.0\linewidth]{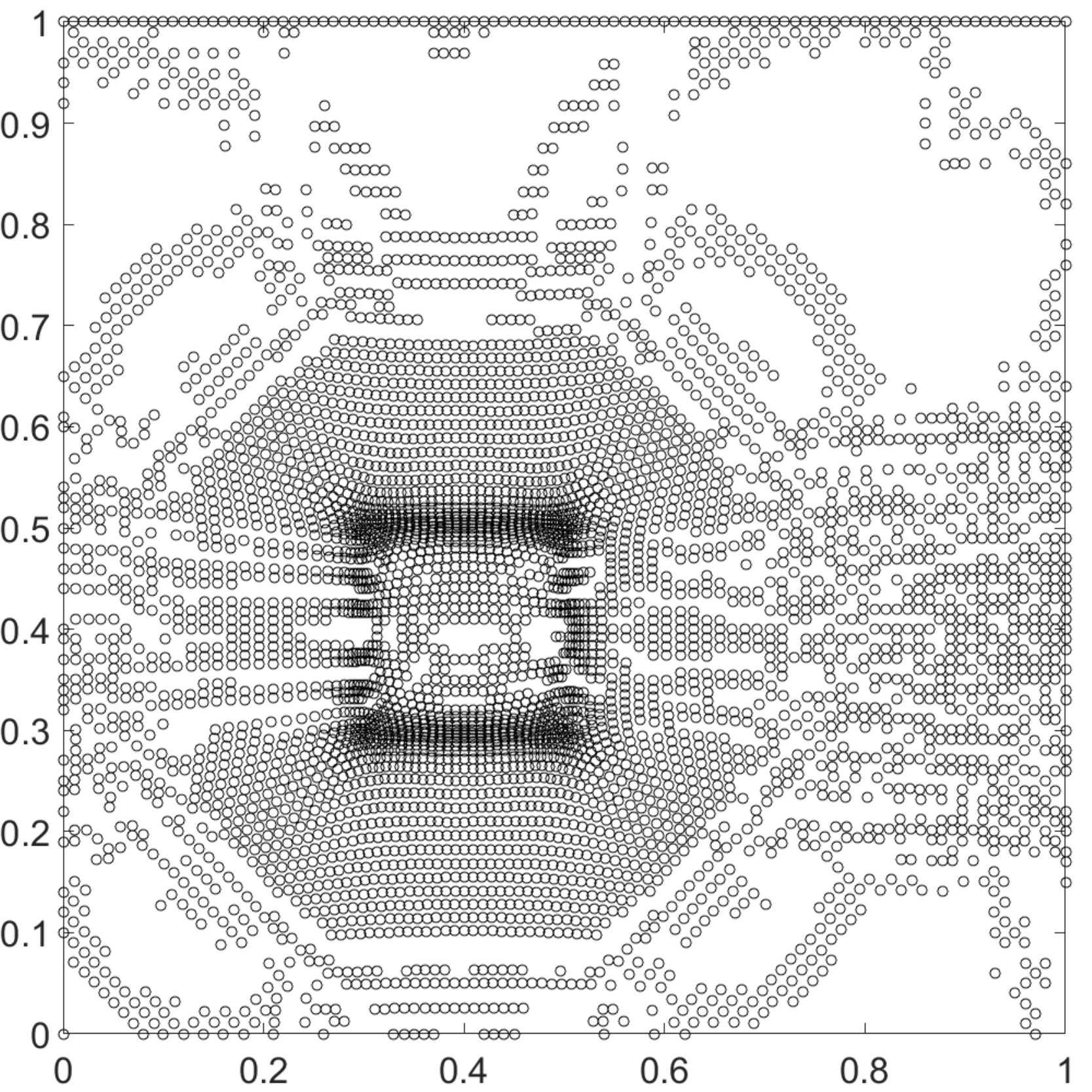}
  \end{subfigure}
  \begin{subfigure}[b]{0.35\textwidth}
    \centering
    \includegraphics[width=1.0\linewidth]{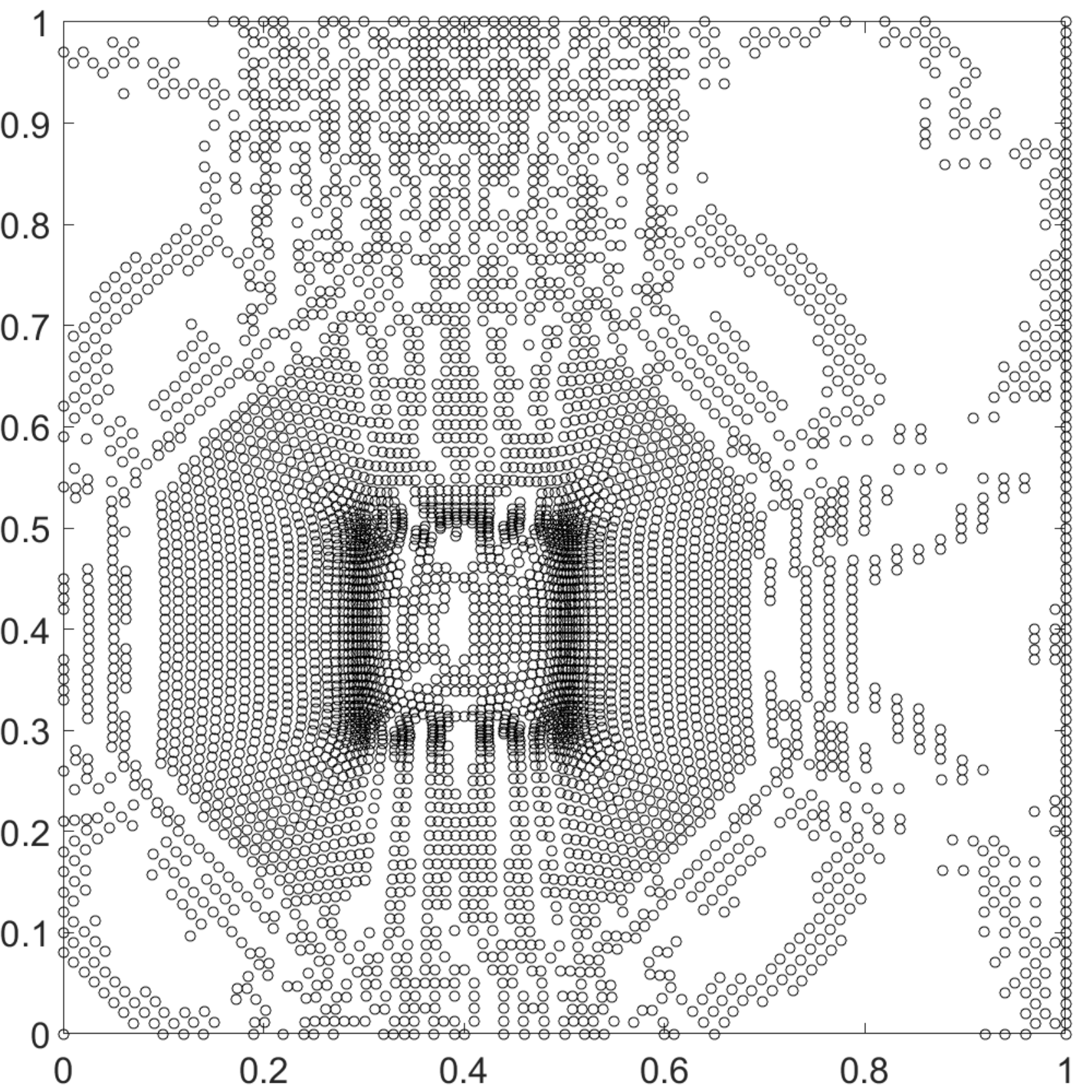}
  \end{subfigure}
  \caption{Example \ref{ex:2D_WB_Test}. The location where the second dissipation term for $b$ is added with the bottom topography \eqref{eq:2D_b_dis} at t = 0.1.
    Left: $\xi_1$-direction, right: $\xi_2$-direction.
  }
  \label{fig:2D_WB_Scatter}
\end{figure}

\begin{example}[The perturbed flow in lake at rest]\label{ex:Pertubation}\rm
  This example, in which the bottom topography resembles an ``oval hump'' \cite{Xing2017Numerical,Xing2005High}, is used to test the ability of our schemes to capture small perturbations over the lake at rest in the domain $[0,2] \times[0,1]$ with the outflow boundary conditions.
  The initial data are
  \begin{align*}
    &h= \begin{cases}1.01-b,~&\text{if}\quad x_1 \in[0.05, 0.15], \\
      1-b,~&\text{otherwise},
    \end{cases} \\
    & v_1=v_2=0,\\
    &b=0.8 \exp(-5(x_1-0.9)^2-50(x_2-0.5)^2).
  \end{align*}
  The initial disturbance will split into two waves propagating at the speeds of $\pm \sqrt{gh},~g=9.812$.
  The monitor function is selected as the one defined in Example \ref{ex:2D_WB_Test},
  except that $\sigma = h+b$ and $\theta=800$.
\end{example}

Figure \ref{fig:2D_MM_Mesh_Value} shows the $40$ equally spaced contour lines of $h+b$ at $t = 0.12$, $0.24$, $0.36$, $0.48$, $0.6$, obtained by using the {\tt MM-ES} scheme with $300\times150$ mesh,
and the plots of the adaptive meshes at different times.
Our scheme can capture complex small features,
and the adaptive moving mesh points concentrate near those features to increase the resolution.
In Figure \ref{2D_Case_compare},
the results are also compared to those by using the {\tt UM-ES} scheme with $300\times150$ and $900\times450$ meshes.
The contour ranges are taken from the ranges of the results obtained by using the {\tt UM-ES} scheme with $900\times450$ mesh,
specifically, $[0.999750, 1.006230]$, $[0.994453, 1.017051]$, $[0.986720, 1.012746]$, $[0.989904, 1.
005154]$, $[0.995003,1.006070]$ at $t = 0.12$, $0.24$, $0.36$, $0.48$, $0.6$, respectively.
It shows that when the numbers of the mesh points are the same, the results obtained by the {\tt MM-ES} scheme are better than the {\tt UM-ES} scheme,
and comparable to those by {\tt UM-ES} with finer mesh.
From Table \ref{tb:Time_Compare},
one can see that when similar resolutions are achieved at $t=0.6$,
the {\tt MM-ES} scheme takes only $14.8\%$ CPU time of that using the {\tt UM-ES} scheme with finer mesh,
which highlights the high efficiency of our adaptive moving mesh method.
The CPU times are measured on a laptop with Intel{\textregistered} Core{\texttrademark} i7-8750H CPU @2.20GHz, 24GB memory, and the code is programmed based on MATLAB R2021b.
\begin{table}[hbt!]
  \centering
  \begin{tabular}{c|r|r|r|r|r}
    \toprule
    & $t = 0.12$ & $t = 0.24$ & $t = 0.36$ & $t = 0.48$ & $t = 0.6$ \\ \hline
    {\tt UM-ES}~($300\times150$) & 2m10s   & 6m25s    & 9m36s    & 12m20s    & 15m28s   \\
    {\tt MM-ES}~($300\times150$) & 14m42s    & 24m07s   & 32m36s   & 40m51s  & 50m11s      \\
    {\tt UM-ES}~($900\times450$) & 1h09m   & 2h38m  & 3h32m  & 4h27m  & 5h36m \\ 
    \bottomrule
  \end{tabular}
  \caption{Example \ref{ex:2D_WB_Test}. The CPU times at different output times.}\label{tb:Time_Compare}
\end{table}

\begin{figure}[hbt!]
  \centering
  \begin{subfigure}[b]{0.4\textwidth}
    \centering
    \includegraphics[width=1.0\linewidth]{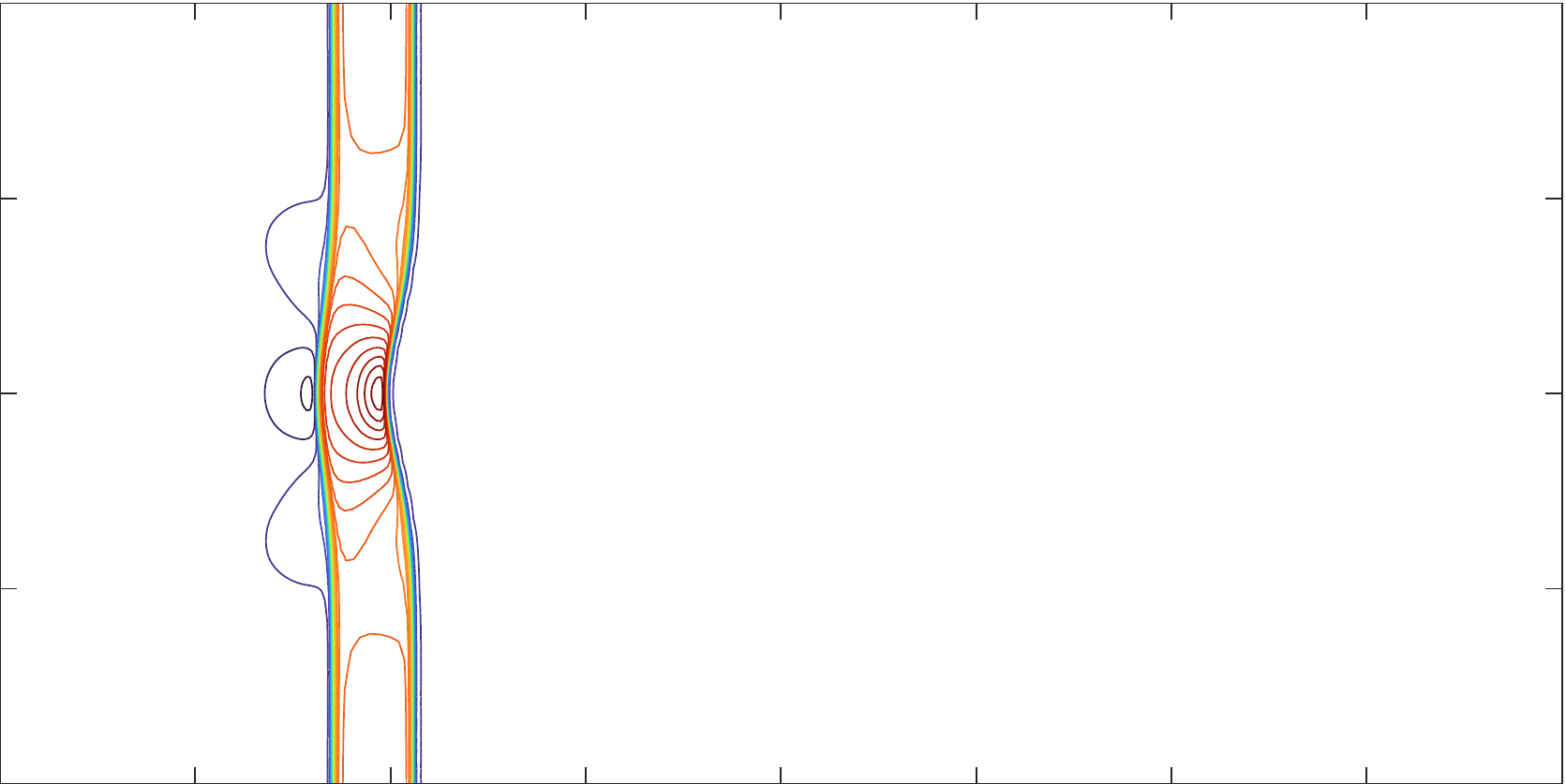}
  \end{subfigure}~
  \begin{subfigure}[b]{0.4\textwidth}
    \centering
    \includegraphics[width=1.0\linewidth]{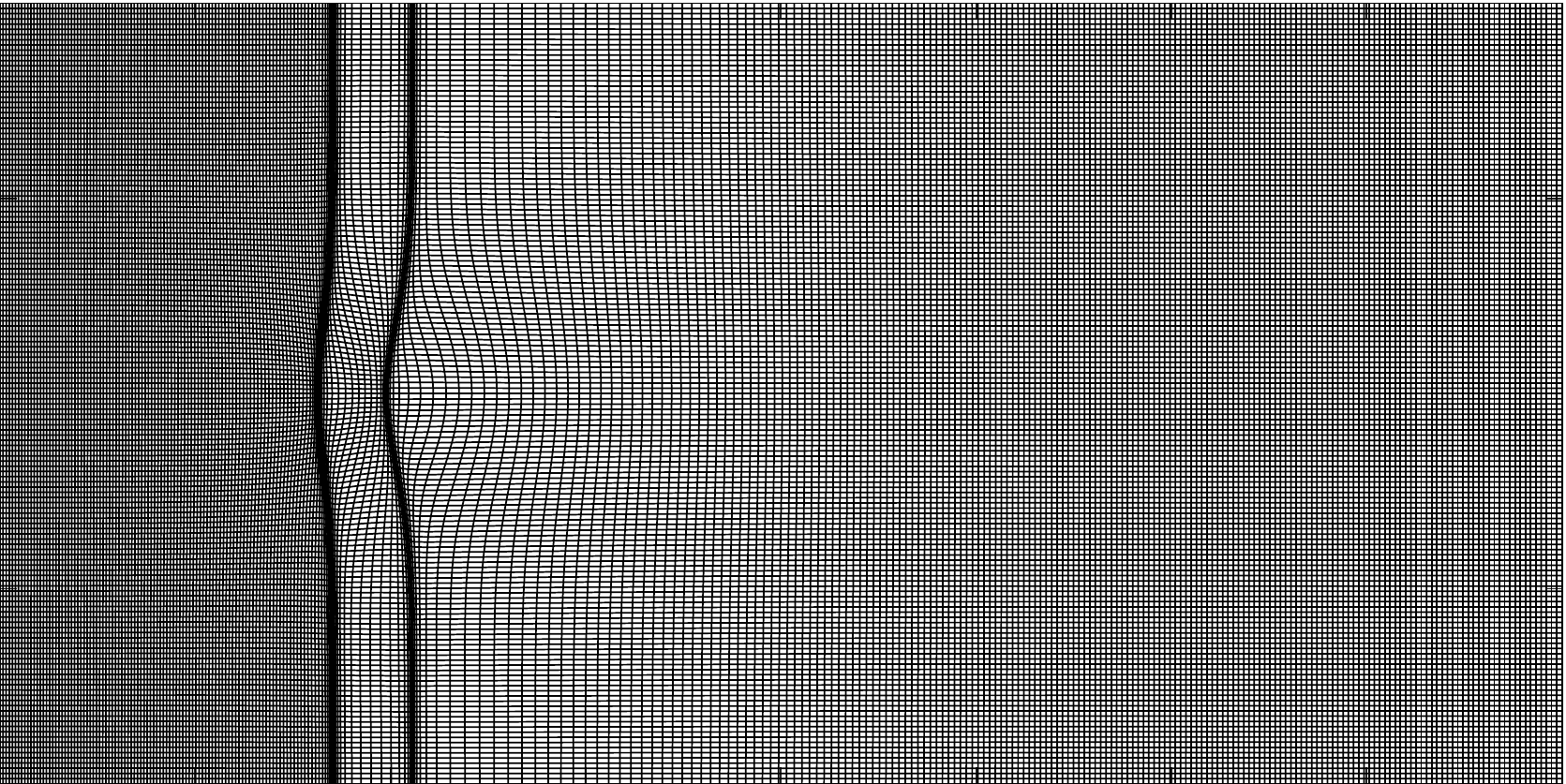}
  \end{subfigure}
  \\
  \begin{subfigure}[b]{0.4\textwidth}
    \centering
    \includegraphics[width=1.0\linewidth]{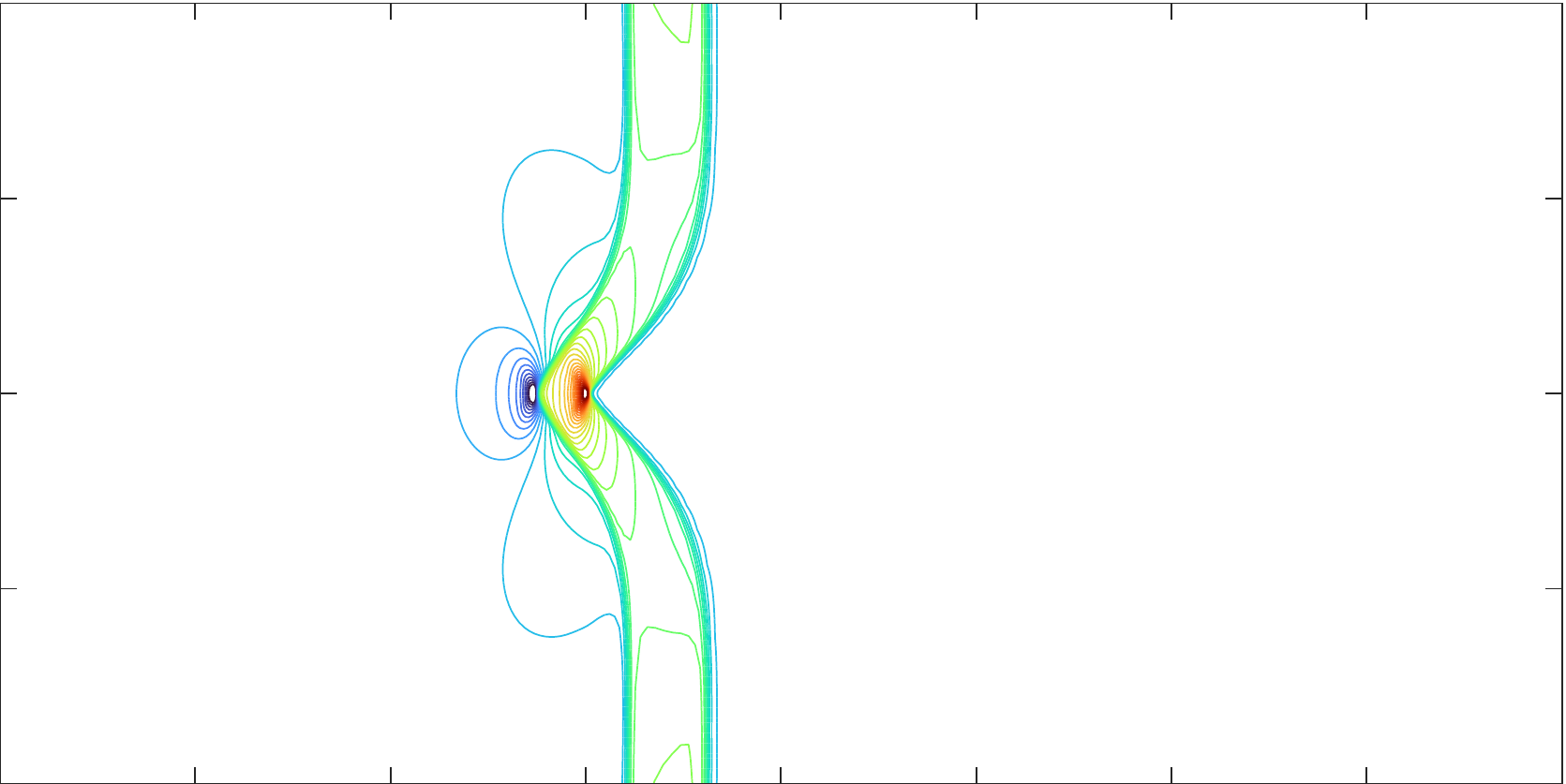}
  \end{subfigure}~
  \begin{subfigure}[b]{0.4\textwidth}
    \centering
    \includegraphics[width=1.0\linewidth]{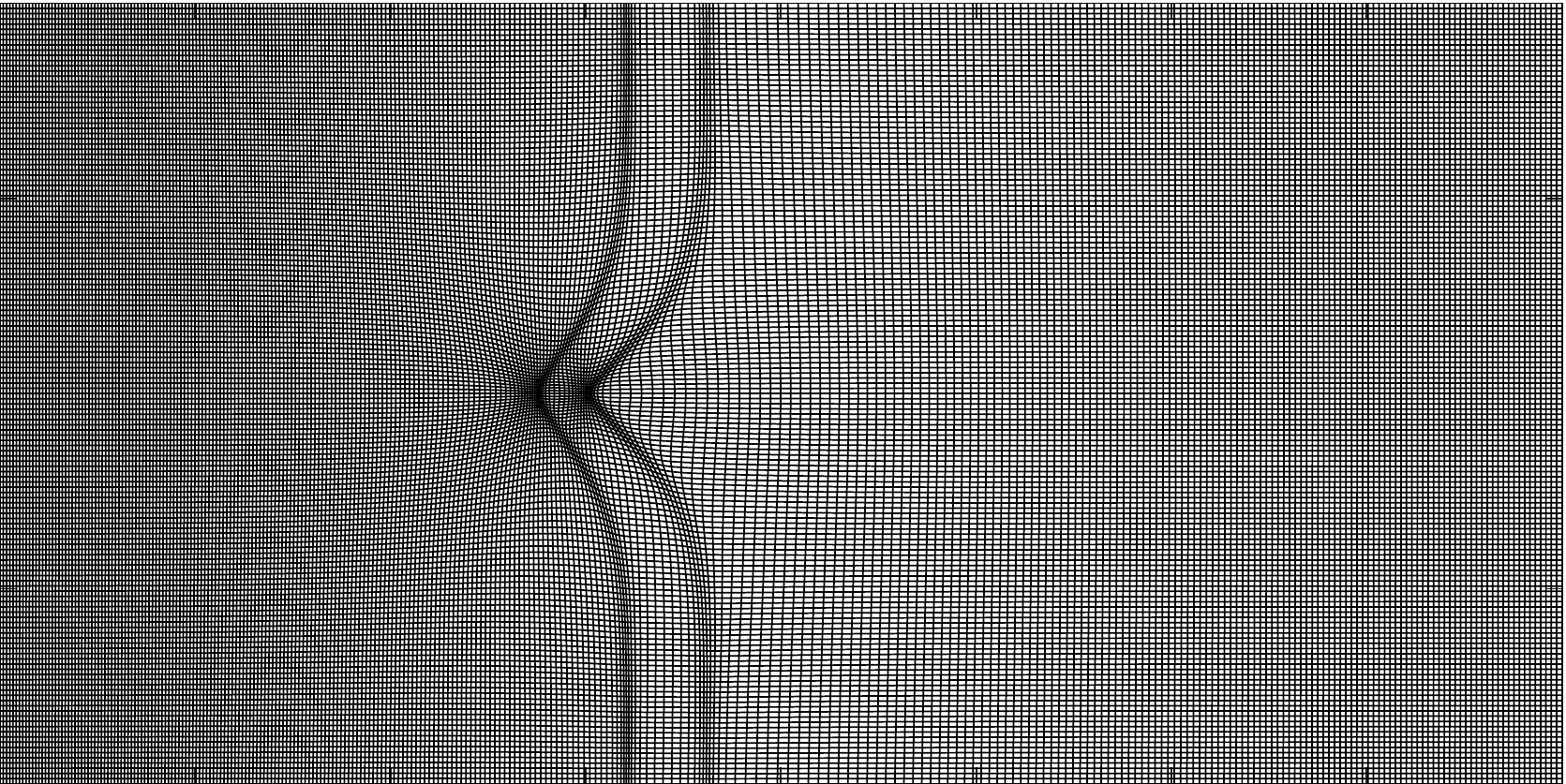}
  \end{subfigure}
  \quad\\
  \begin{subfigure}[b]{0.4\textwidth}
    \centering
    \includegraphics[width=1.0\linewidth]{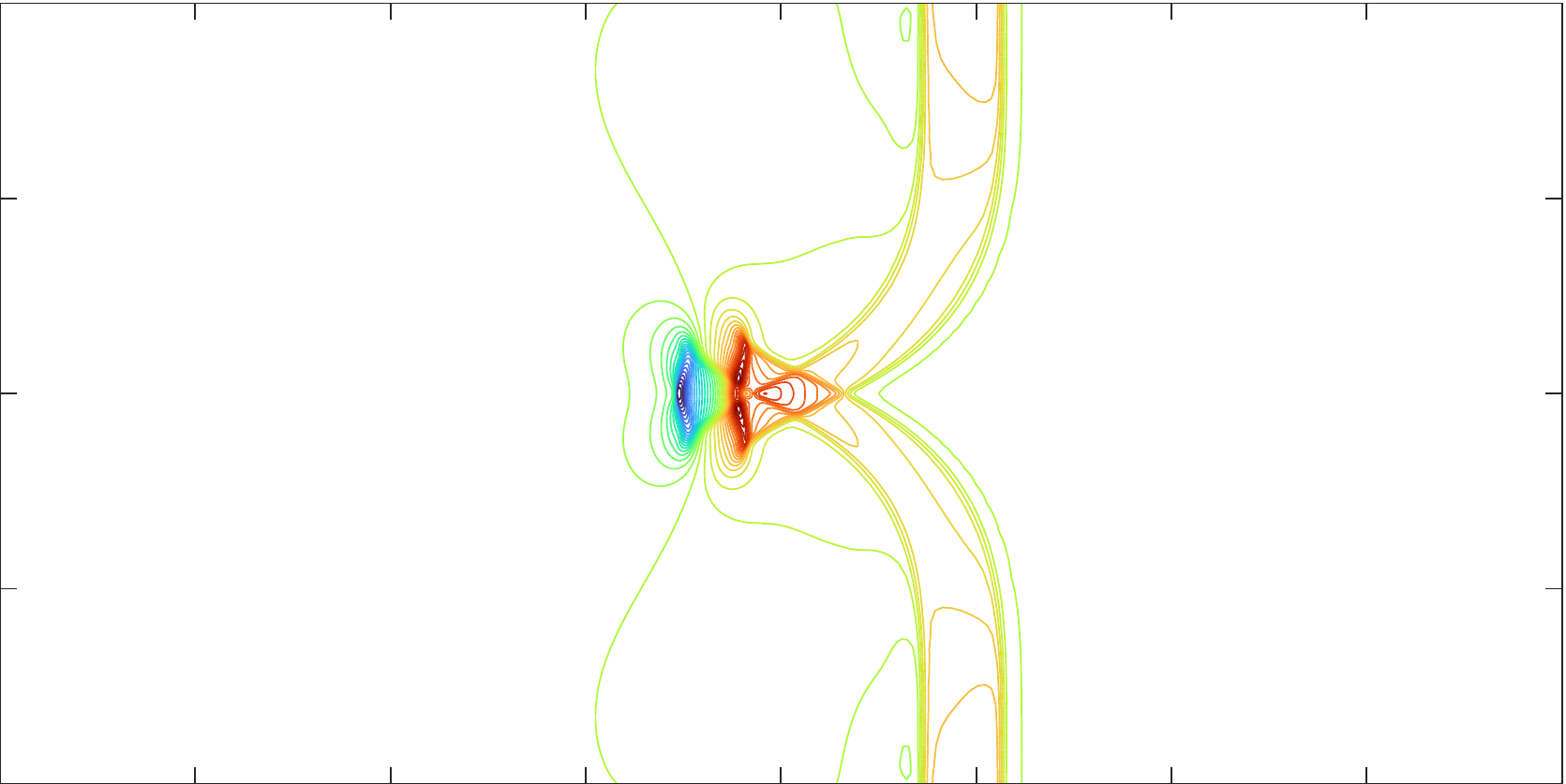}
  \end{subfigure}~
  \begin{subfigure}[b]{0.4\textwidth}
    \centering
    \includegraphics[width=1.0\linewidth]{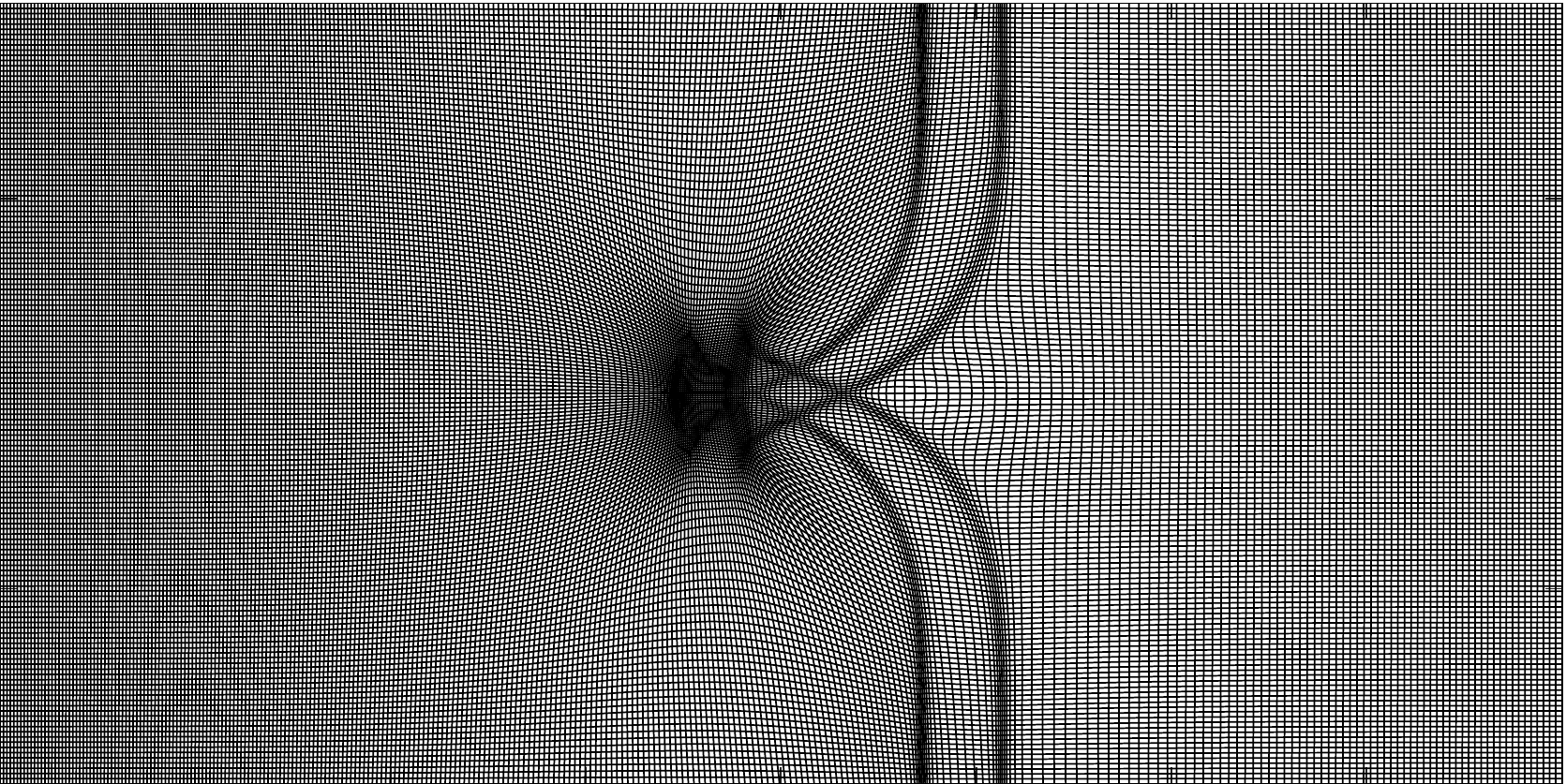}
  \end{subfigure}
  \quad\\
  \begin{subfigure}[b]{0.4\textwidth}
    \centering
    \includegraphics[width=1.0\linewidth]{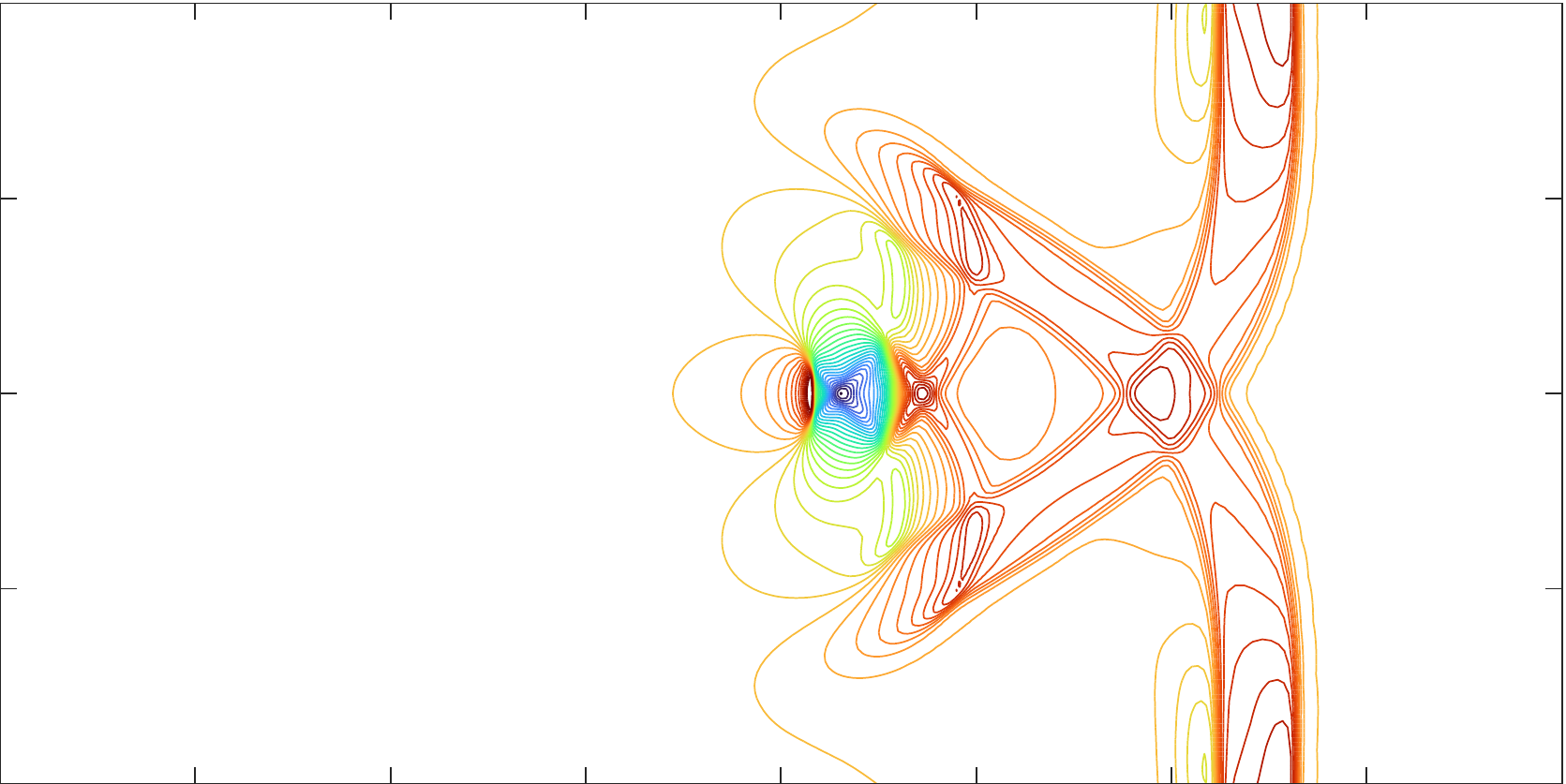}
  \end{subfigure}~
  \begin{subfigure}[b]{0.4\textwidth}
    \centering
    \includegraphics[width=1.0\linewidth]{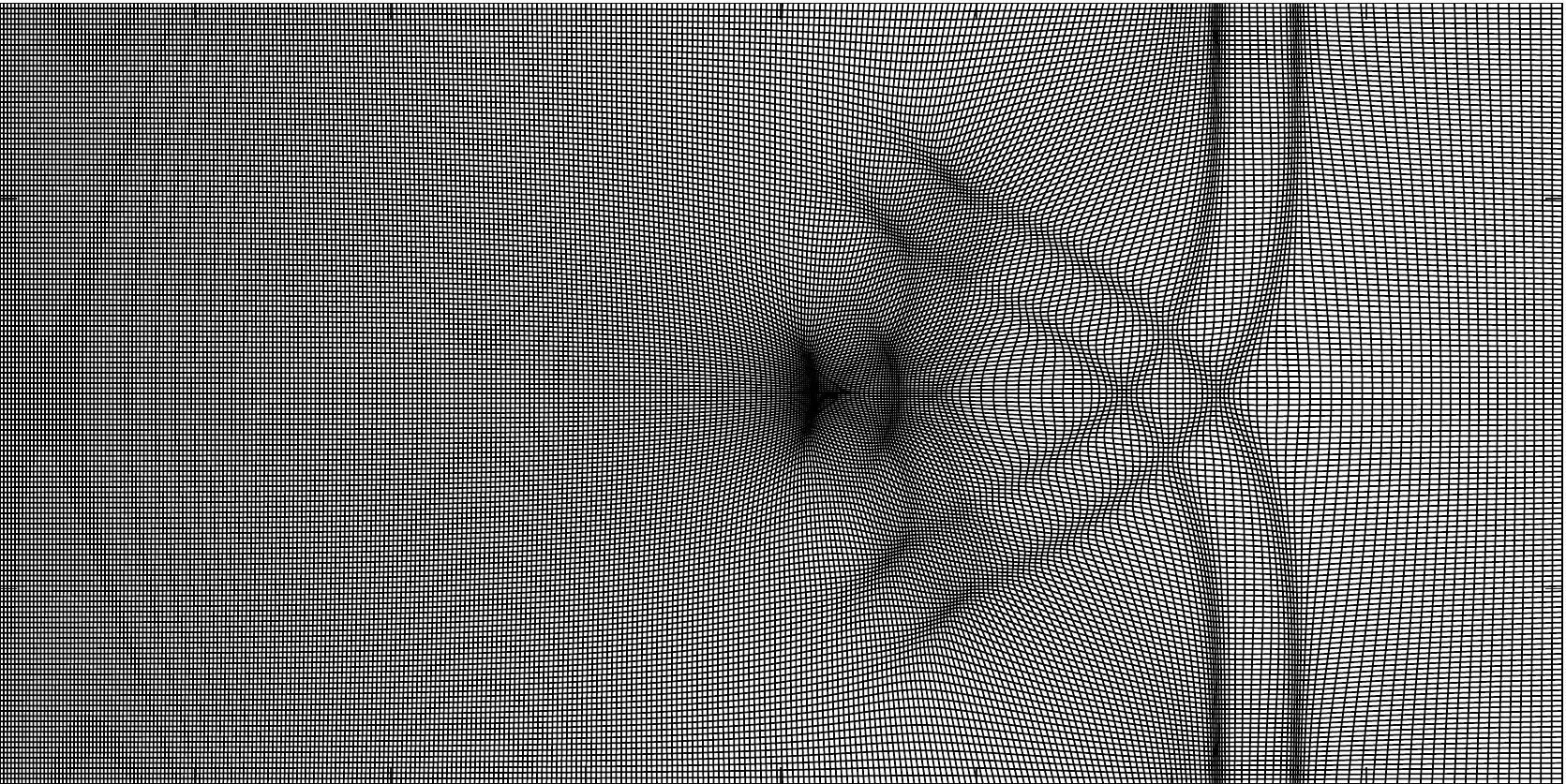}
  \end{subfigure}
  \quad\\
  \begin{subfigure}[b]{0.4\textwidth}
    \centering
    \includegraphics[width=1.0\linewidth]{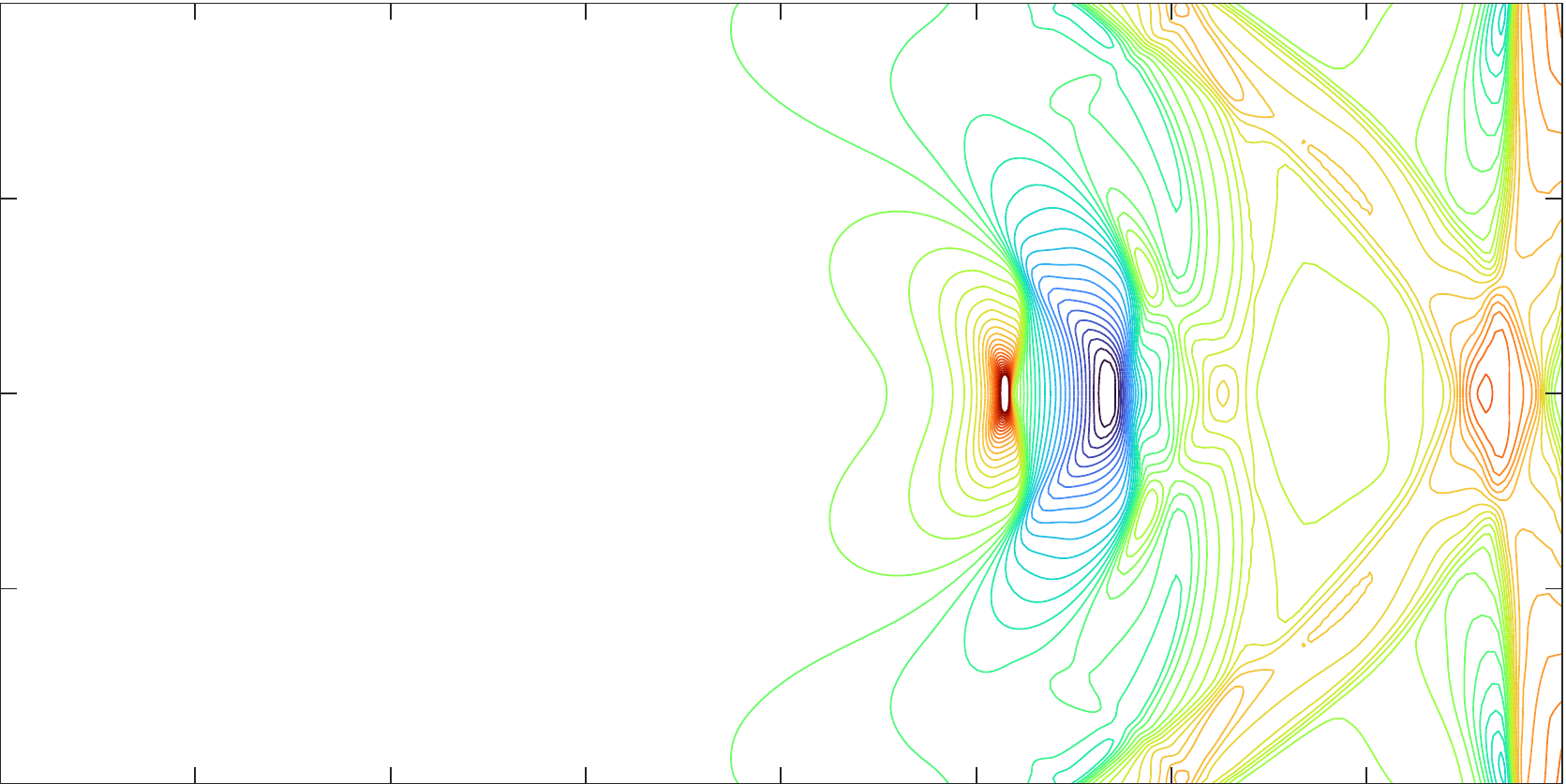}
  \end{subfigure}~
  \begin{subfigure}[b]{0.4\textwidth}
    \centering
    \includegraphics[width=1.0\linewidth]{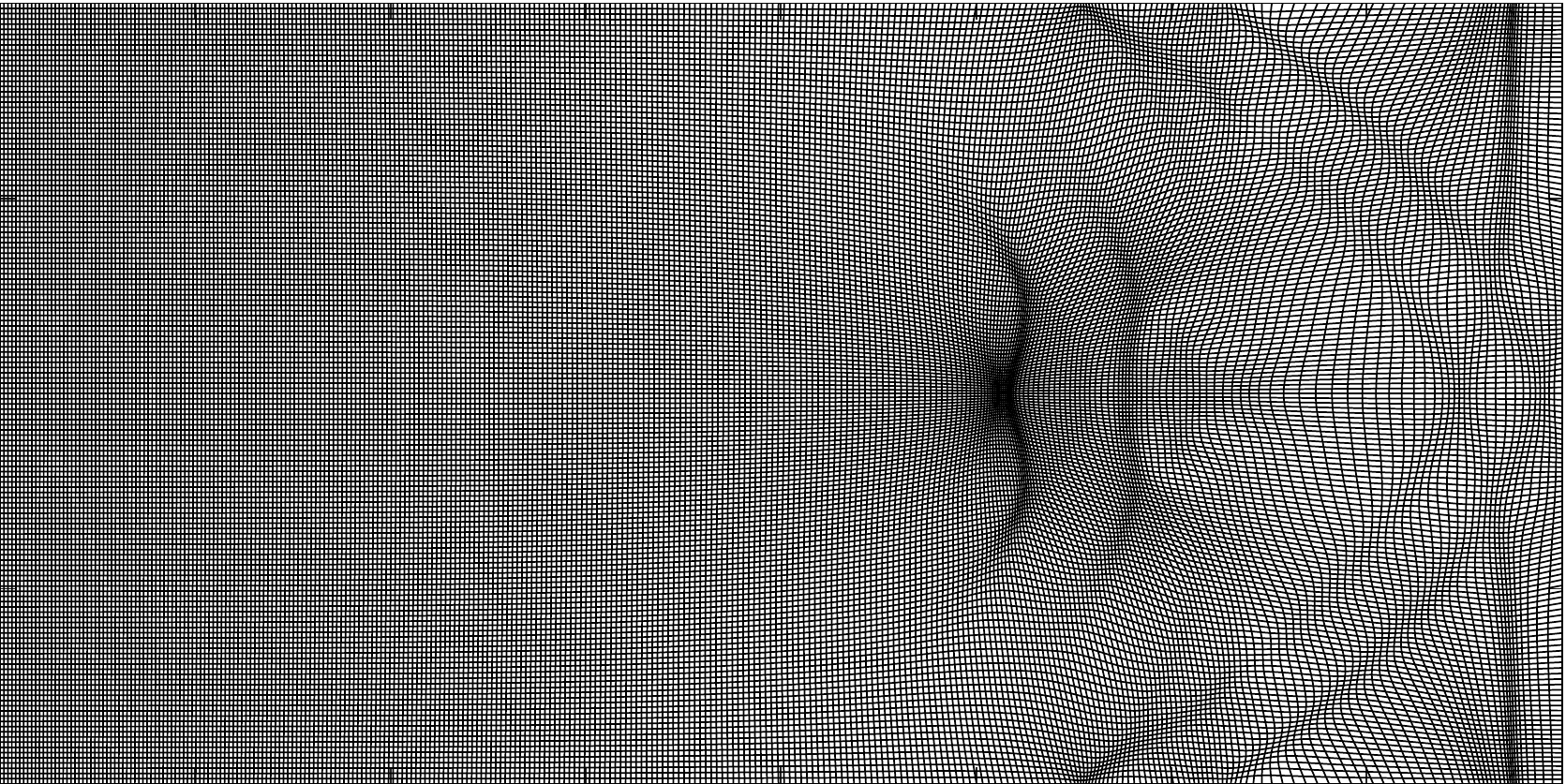}
  \end{subfigure}
  \caption{Example \ref{ex:Pertubation}. The $40$ equally spaced contours of the water surface level $h+b$ and the plots of the adaptive meshes obtained by using the {\tt MM-ES} scheme with $300\times150$ mesh. From top to bottom: $t = 0.12$, $0.24$, $0.36$, $0.48$, $0.6$.}\label{fig:2D_MM_Mesh_Value}
\end{figure}

\begin{figure}[hbt!]
  \centering
  \begin{subfigure}[b]{0.3\textwidth}
    \centering
    \includegraphics[width=1.0\linewidth]{Figures/2DMMPerturbation/2D_h_b_t=0.12.pdf}
  \end{subfigure}~
  \begin{subfigure}[b]{0.3\textwidth}
    \centering
    \includegraphics[width=1.0\linewidth]{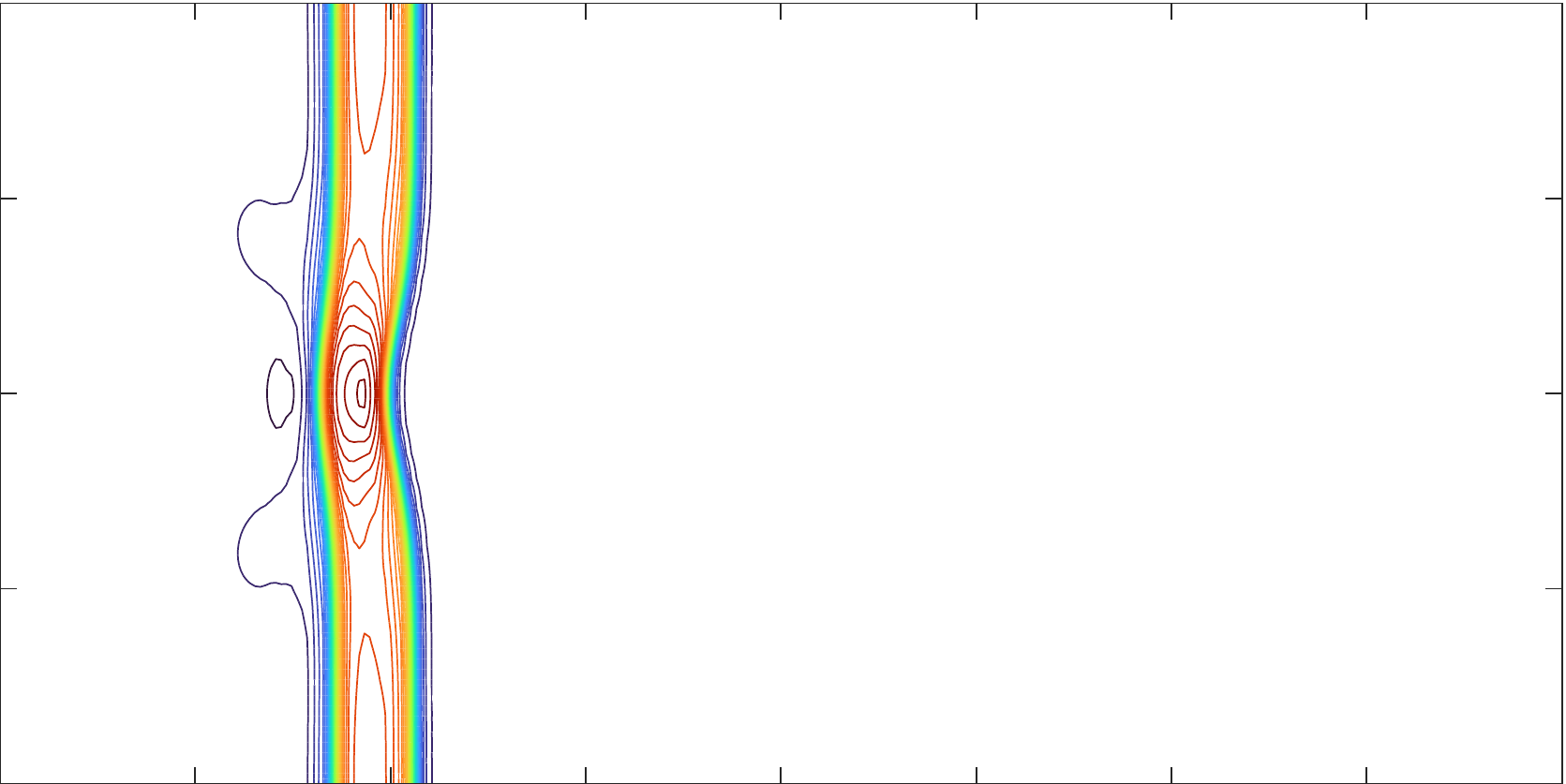}
  \end{subfigure}~
  \begin{subfigure}[b]{0.3\textwidth}
    \centering
    \includegraphics[width=1.0\linewidth]{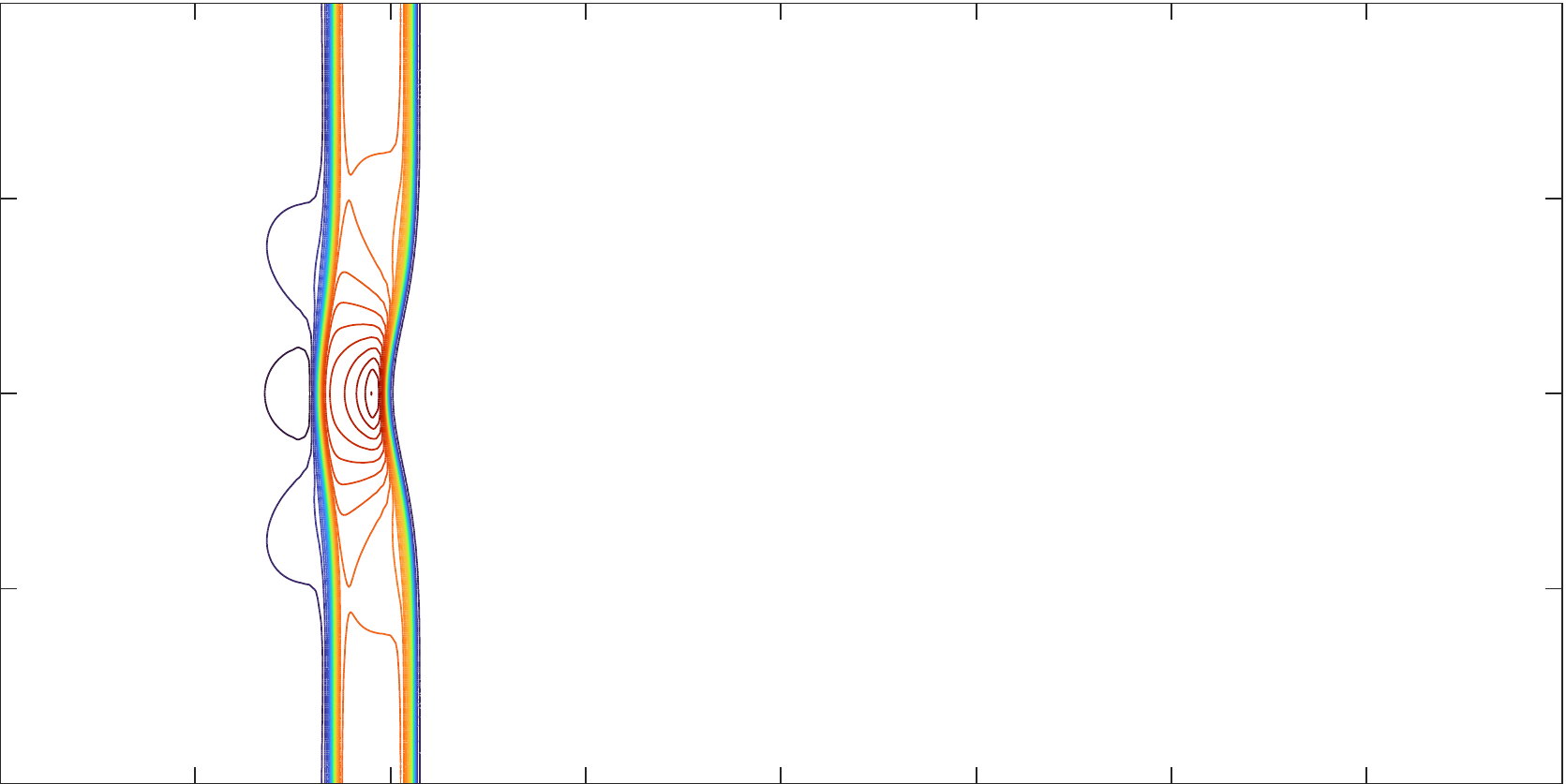}
  \end{subfigure}
  \\
  \begin{subfigure}[b]{0.3\textwidth}
    \centering
    \includegraphics[width=1.0\linewidth]{Figures/2DMMPerturbation/2D_h_b_t=0.24.pdf}
  \end{subfigure}~
  \begin{subfigure}[b]{0.3\textwidth}
    \centering
    \includegraphics[width=1.0\linewidth]{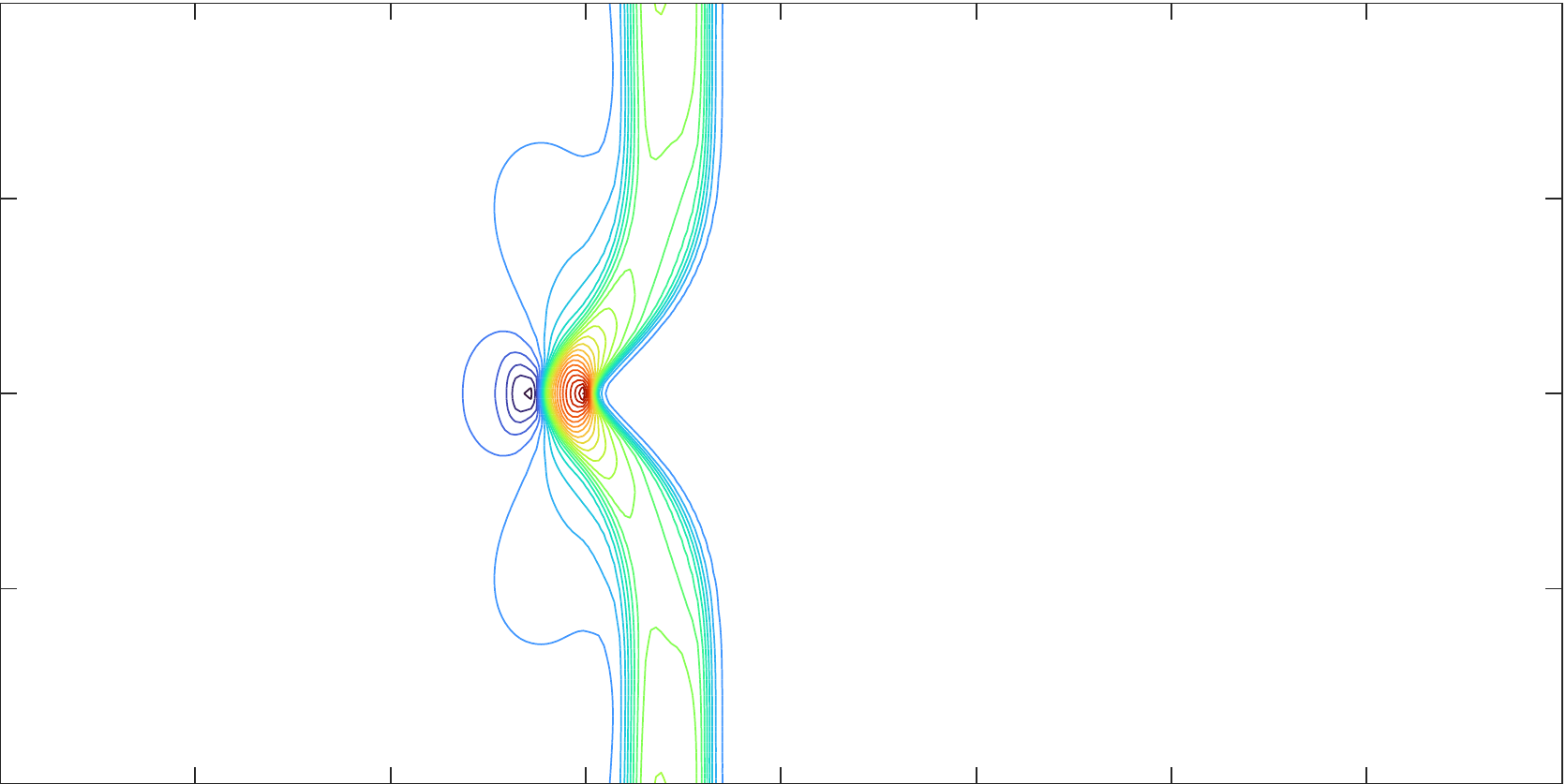}
  \end{subfigure}~
  \begin{subfigure}[b]{0.3\textwidth}
    \centering
    \includegraphics[width=1.0\linewidth]{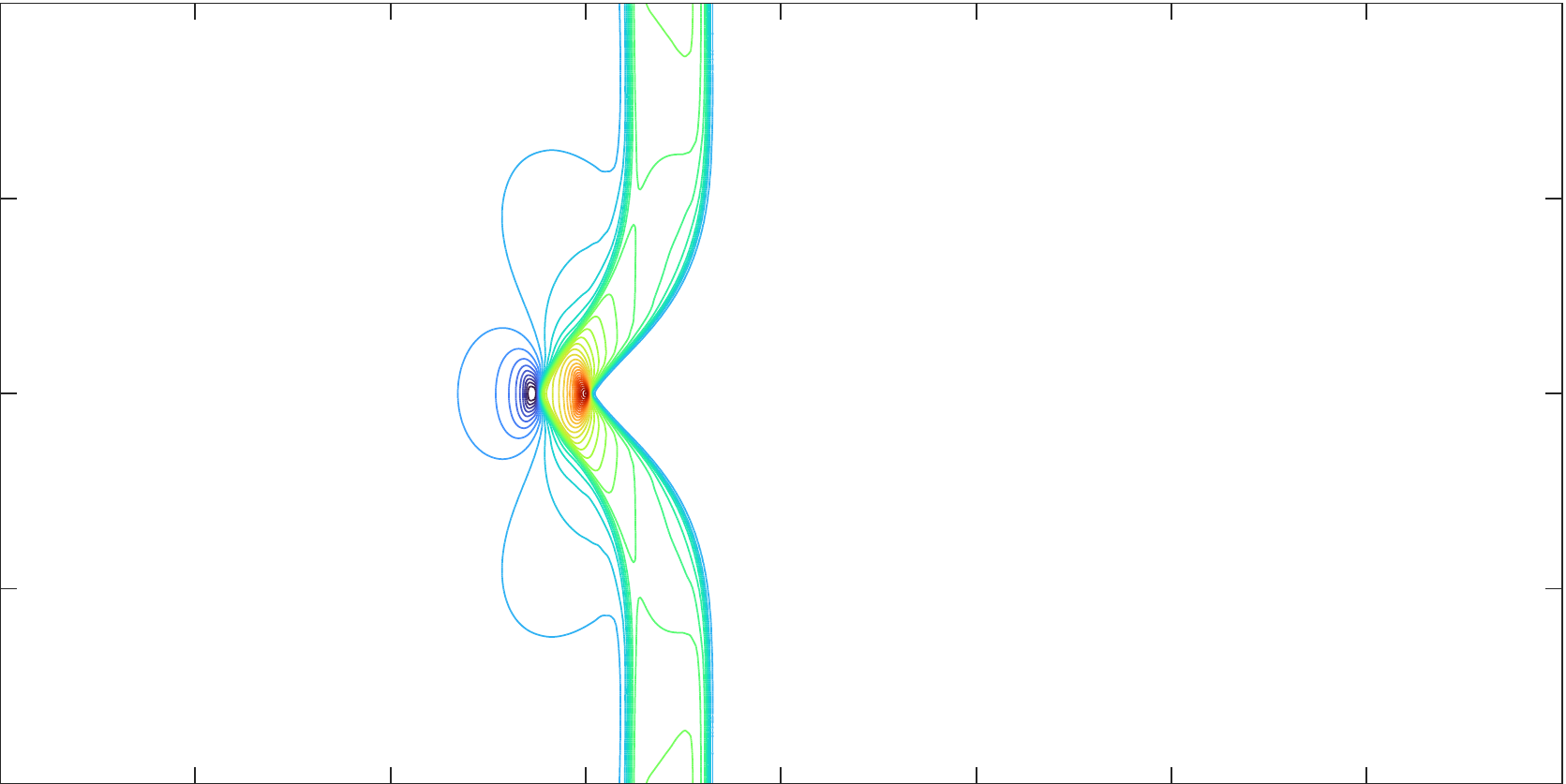}
  \end{subfigure}\\
  \begin{subfigure}[b]{0.3\textwidth}
    \centering
    \includegraphics[width=1.0\linewidth]{Figures/2DMMPerturbation/2D_h_b_t=0.36.pdf}
  \end{subfigure}~
  \begin{subfigure}[b]{0.3\textwidth}
    \centering
    \includegraphics[width=1.0\linewidth]{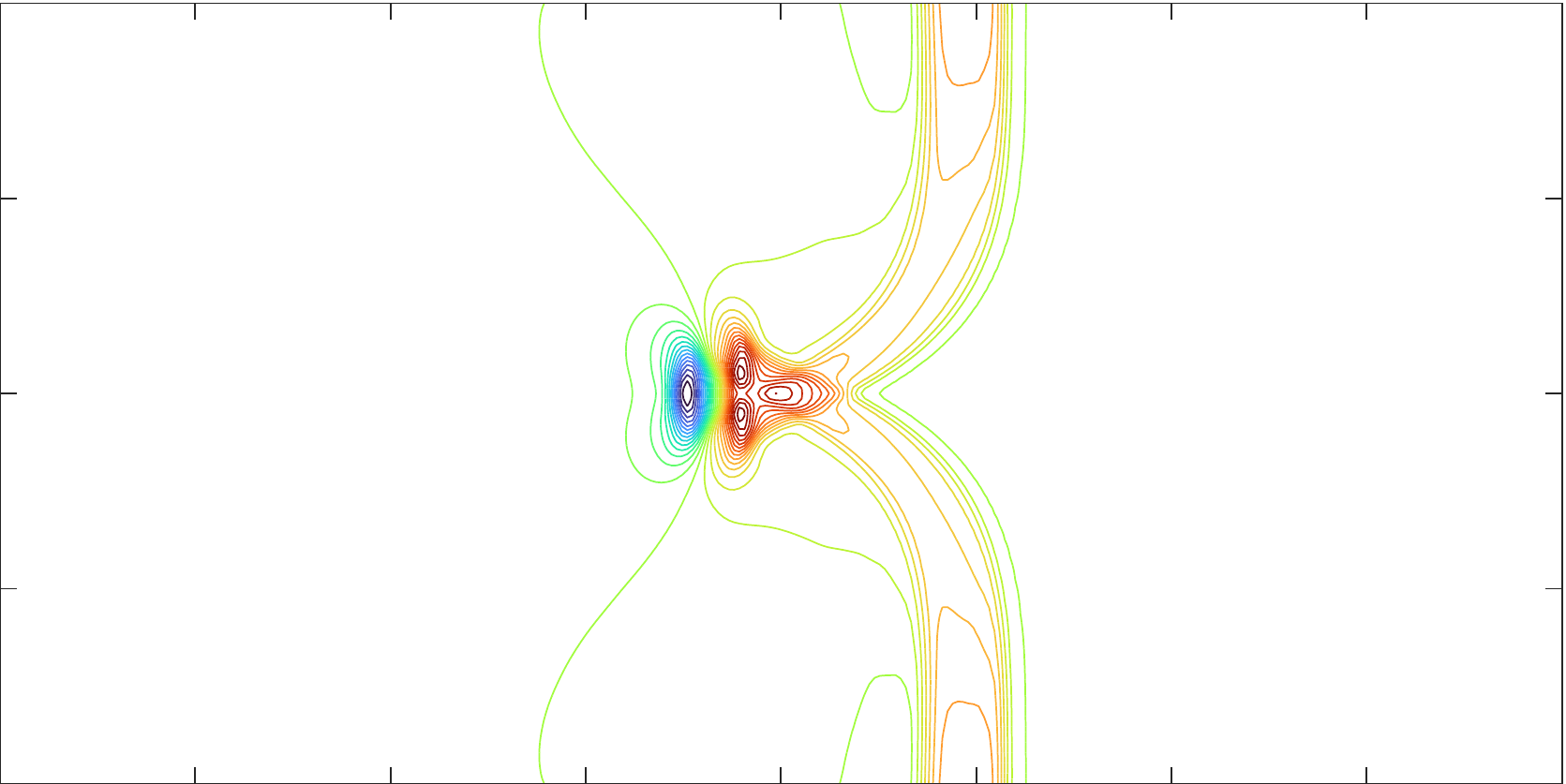}
  \end{subfigure}~
  \begin{subfigure}[b]{0.3\textwidth}
    \centering
    \includegraphics[width=1.0\linewidth]{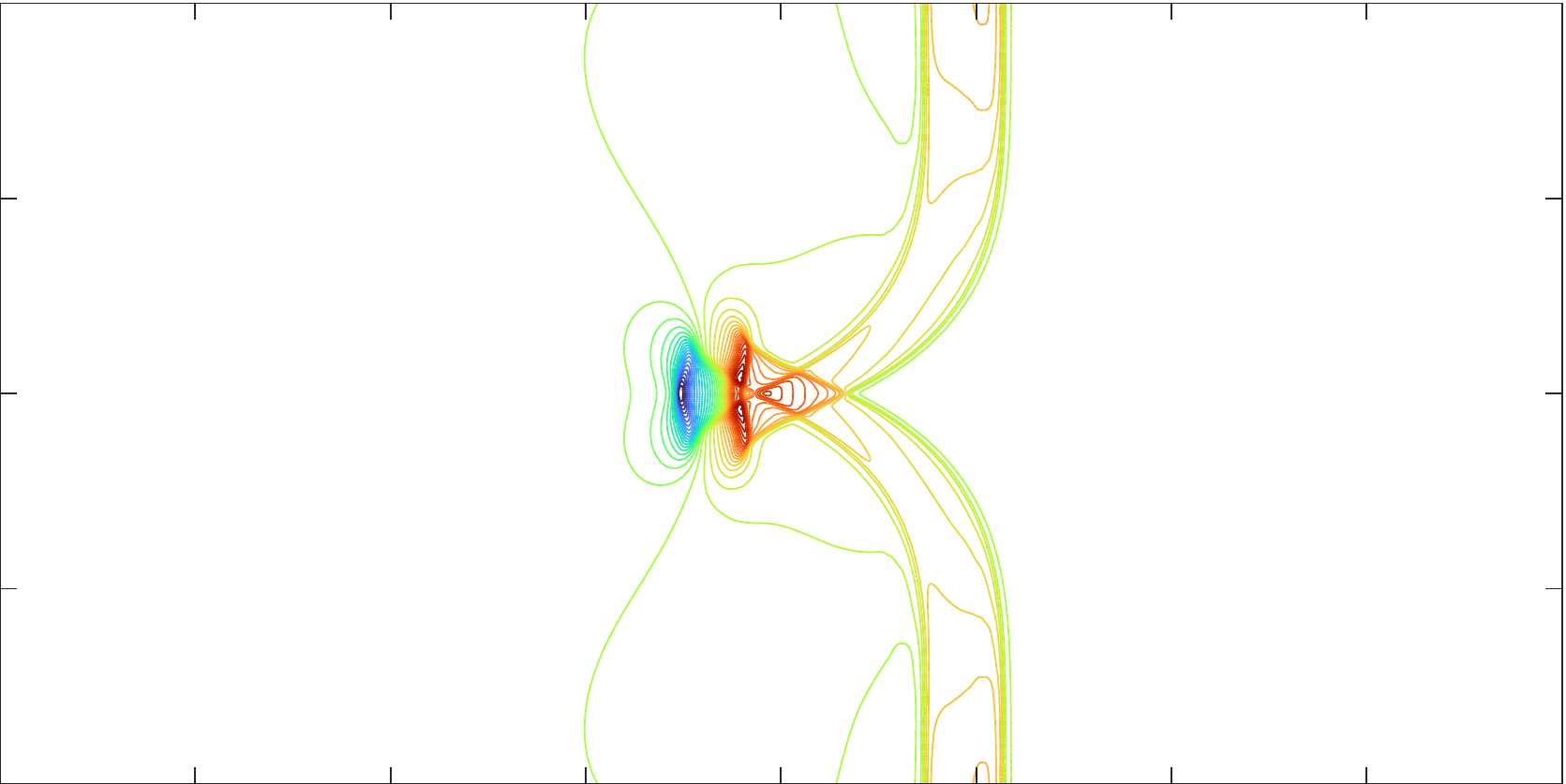}
  \end{subfigure}\\
  \begin{subfigure}[b]{0.3\textwidth}
    \centering
    \includegraphics[width=1.0\linewidth]{Figures/2DMMPerturbation/2D_h_b_t=0.48.pdf}
  \end{subfigure}~
  \begin{subfigure}[b]{0.3\textwidth}
    \centering
    \includegraphics[width=1.0\linewidth]{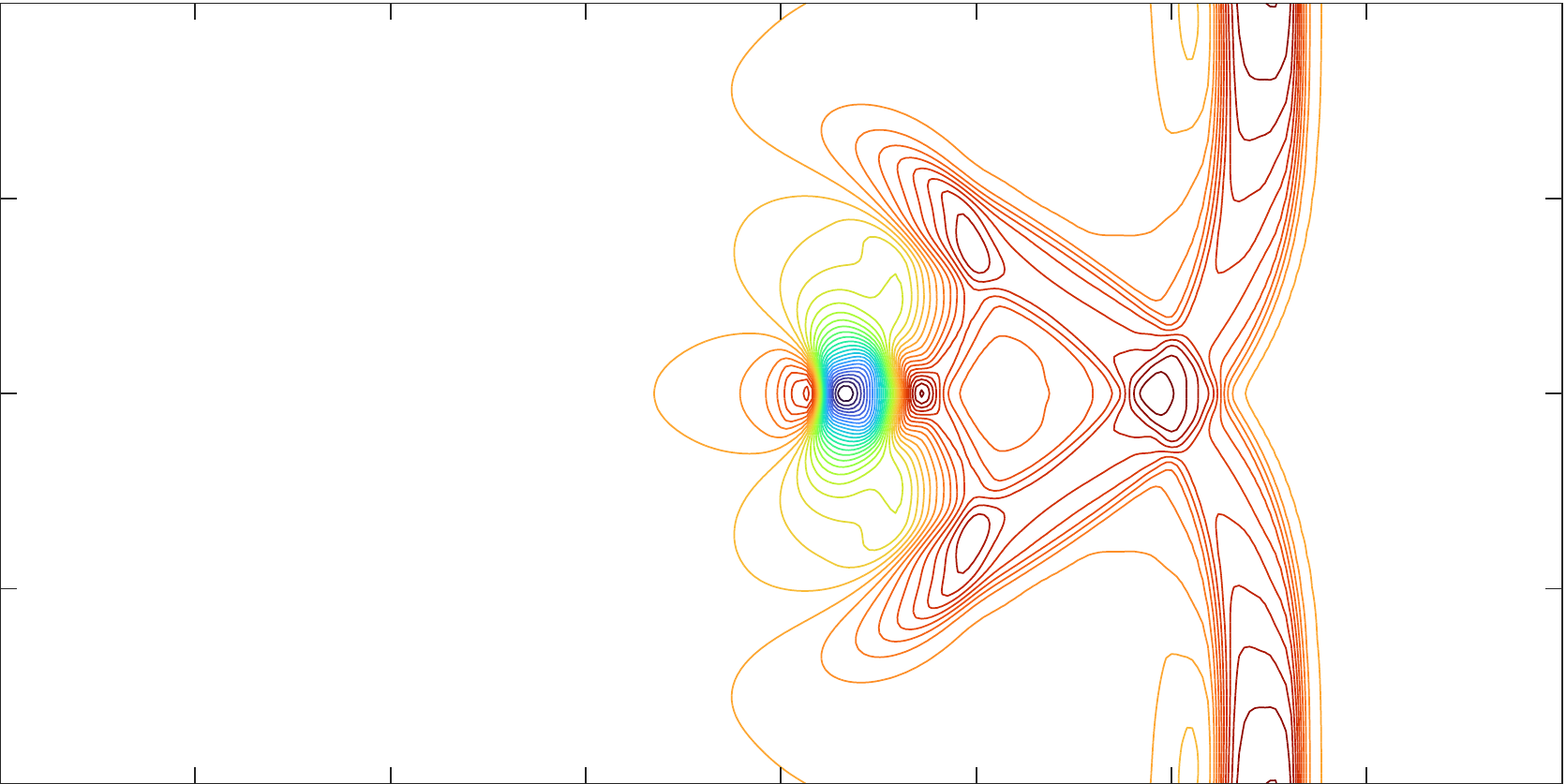}
  \end{subfigure}~
  \begin{subfigure}[b]{0.3\textwidth}
    \centering
    \includegraphics[width=1.0\linewidth]{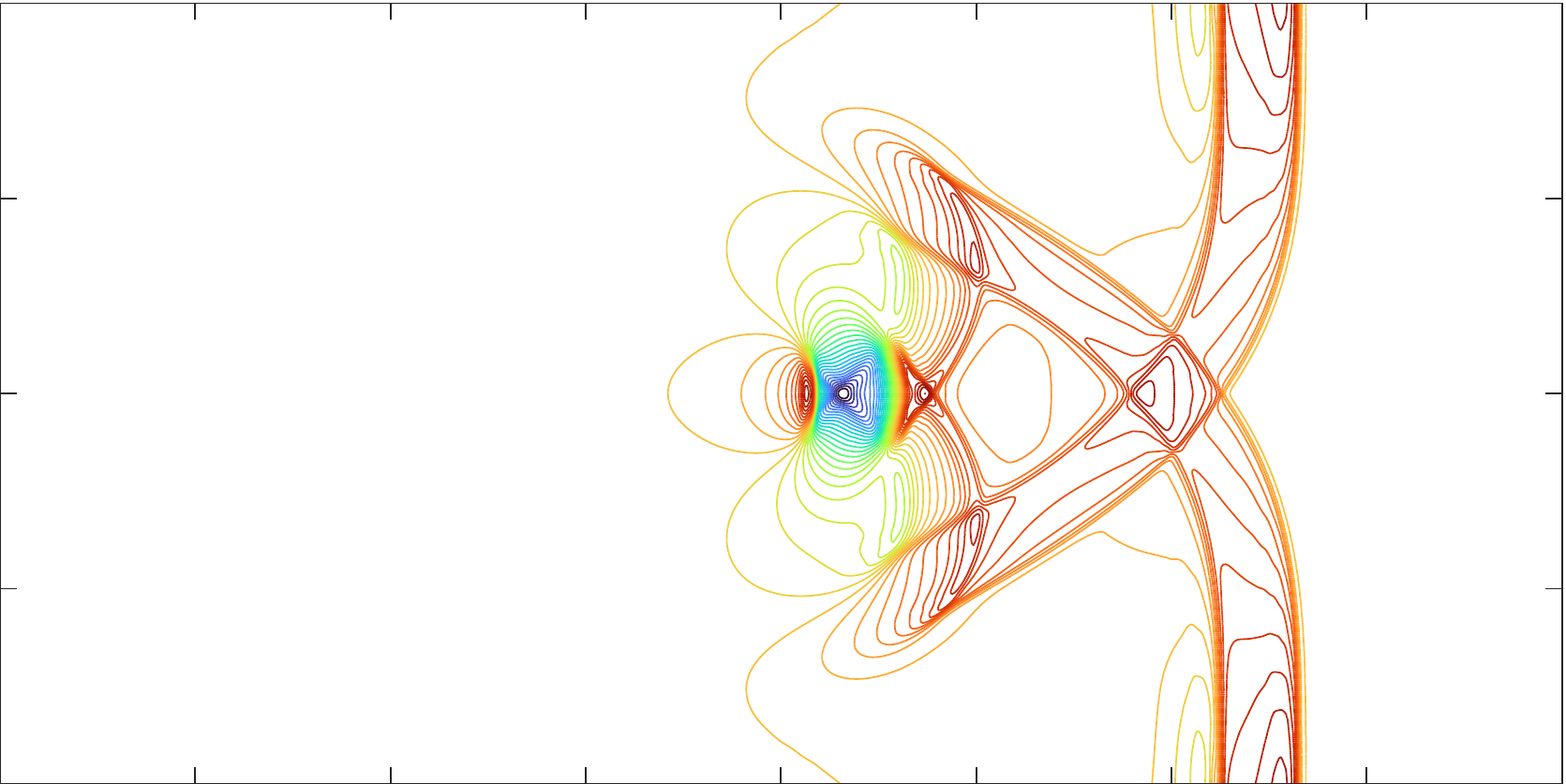}
  \end{subfigure}
  \\
  \begin{subfigure}[b]{0.3\textwidth}
    \centering
    \includegraphics[width=1.0\linewidth]{Figures/2DMMPerturbation/2D_h_b_t=0.6.pdf}
  \end{subfigure}~
  \begin{subfigure}[b]{0.3\textwidth}
    \centering
    \includegraphics[width=1.0\linewidth]{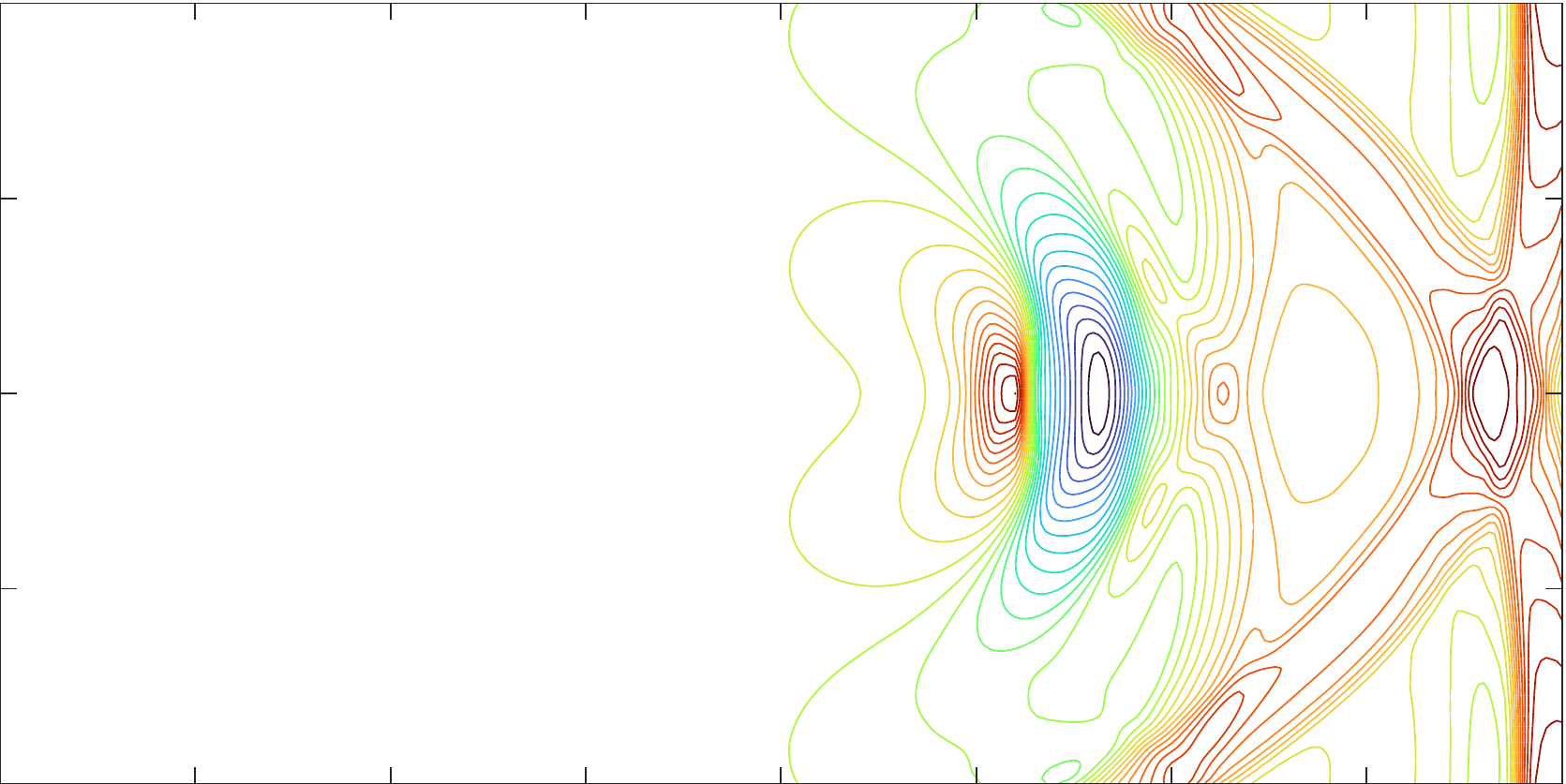}
  \end{subfigure}~
  \begin{subfigure}[b]{0.3\textwidth}
    \centering
    \includegraphics[width=1.0\linewidth]{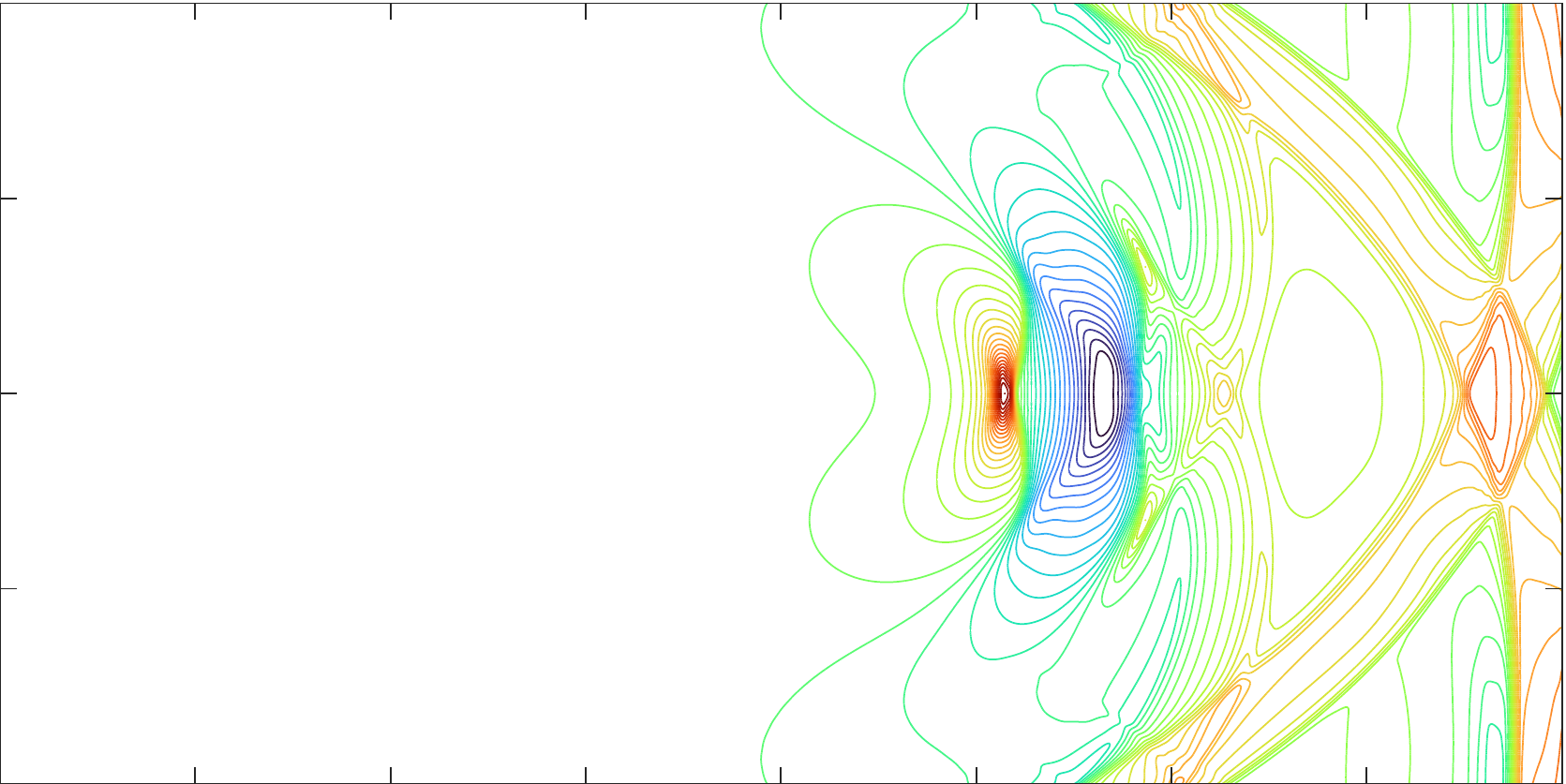}
  \end{subfigure}
  \caption{Example \ref{ex:Pertubation}. The $40$ equally spaced contours of the water surface level $h+b$ obtained by using different schemes. Left: {\tt MM-ES} with $300\times150$ mesh, middle: {\tt UM-ES} with $300\times150$ mesh, right: {\tt UM-ES} with $900\times450$ mesh.
  From top to bottom: $t = 0.12$, $0.24$, $0.36$, $0.48$, $0.6$.}\label{2D_Case_compare}
\end{figure}

\begin{example}[Circular dam break problem]\label{ex:Circle_Dam_flat}\rm
  This example simulates the circular dam break problem with flat bottom \cite{Alcrudo1993High},
  which is used to examine the ability of our schemes to maintain cylindrical symmetry.
  The physical domain is $[0,50]\times[0,50]$ with the outflow boundary conditions,
  and the initial water depth and velocities are
  \begin{equation*}
    \begin{aligned}
      & h= \begin{cases}10, & \text{if} \quad \sqrt{(x_1-25)^2+(x_2-25)^2} \leqslant 11, \\
      1, & \text {otherwise,}\end{cases} \\
      & v_1=v_2=0.
    \end{aligned}
  \end{equation*}
  The output times are $t = 0$, $0.2$, $0.4$, $0.6$, $0.8$, $1.0$, with the gravitational constant $g=9.812$.
  The monitor function is chosen as the same in the last Example.
\end{example}

Figures \ref{fig:Circle_dam_flat}-\ref{fig:Circle_dam_flat_Mesh} present the results obtained by the {\tt MM-ES} scheme with $200\times200$ mesh,
which clearly show that our WB ES adaptive moving mesh scheme can capture the wave structures sharply
and preserve the symmetry well.
\begin{figure}[hbt!]
  \centering
  \begin{subfigure}[b]{0.25\textwidth}
    \centering
    \includegraphics[width=1.0\linewidth]{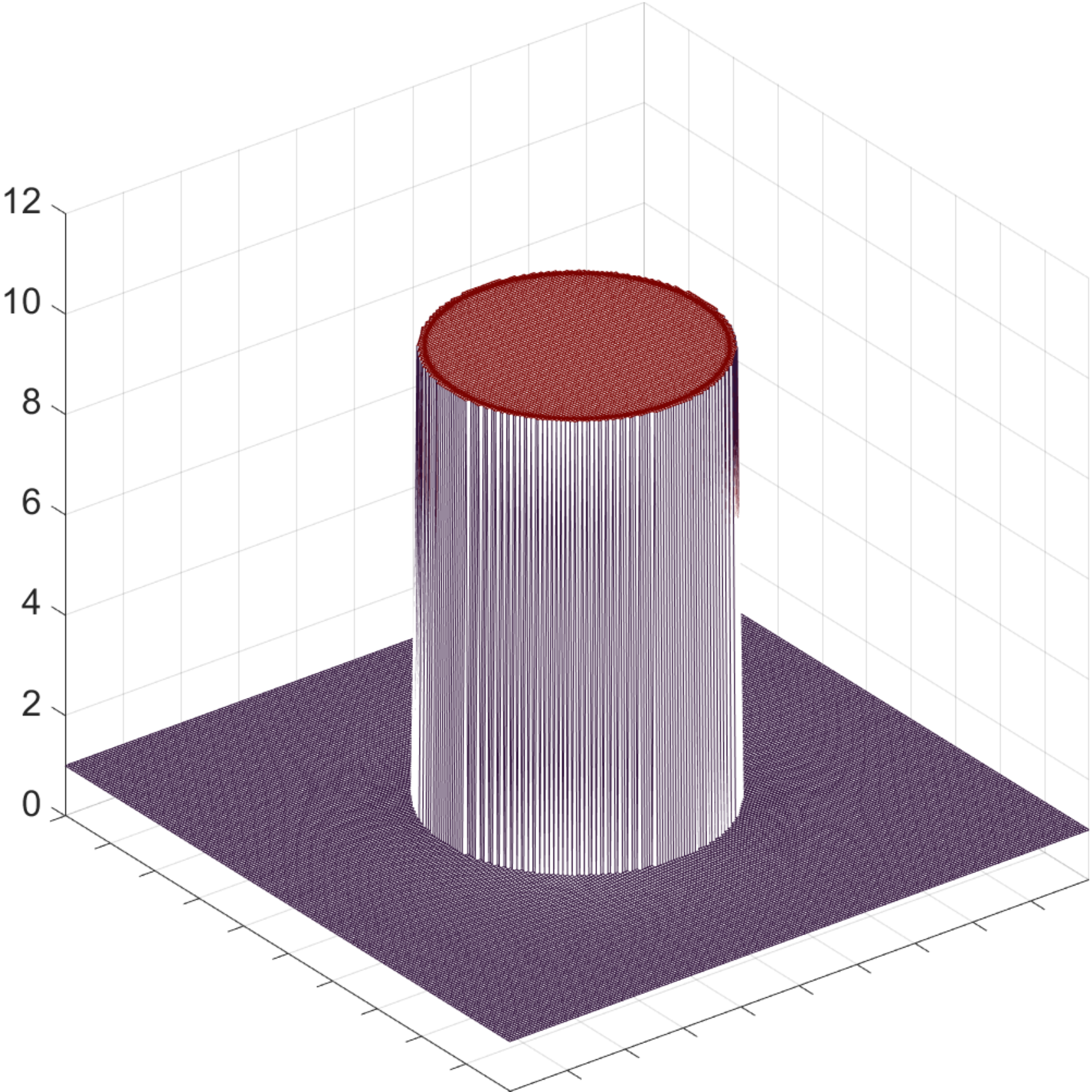}
    \caption{$t=0.0$}
  \end{subfigure}
  \begin{subfigure}[b]{0.25\textwidth}
    \centering
    \includegraphics[width=1.0\linewidth]{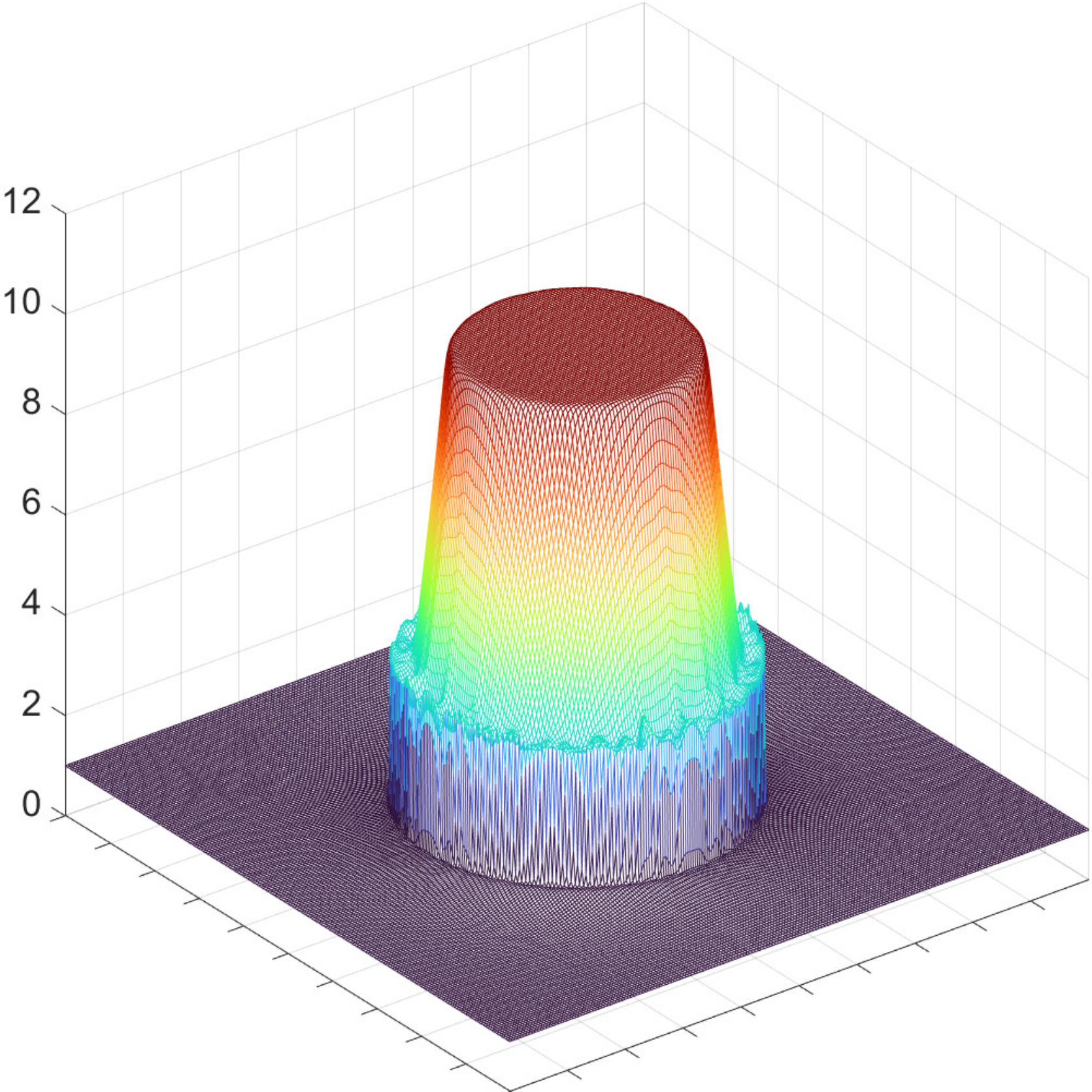}
    \caption{$t=0.2$}
  \end{subfigure}
  \begin{subfigure}[b]{0.25\textwidth}
    \centering
    \includegraphics[width=1.0\linewidth]{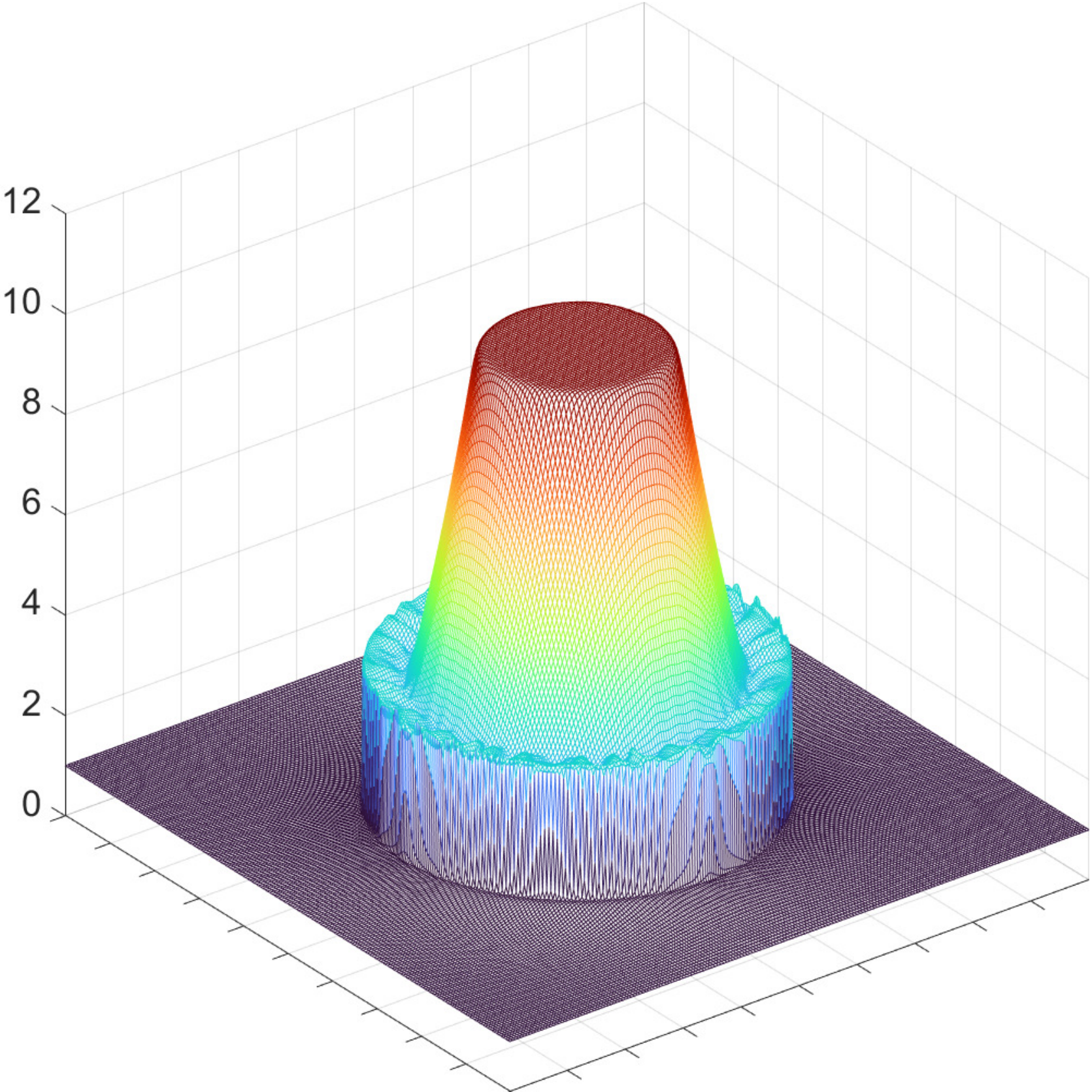}
    \caption{$t=0.4$}
  \end{subfigure}
  \quad\\
  \begin{subfigure}[b]{0.25\textwidth}
    \centering
    \includegraphics[width=1.0\linewidth]{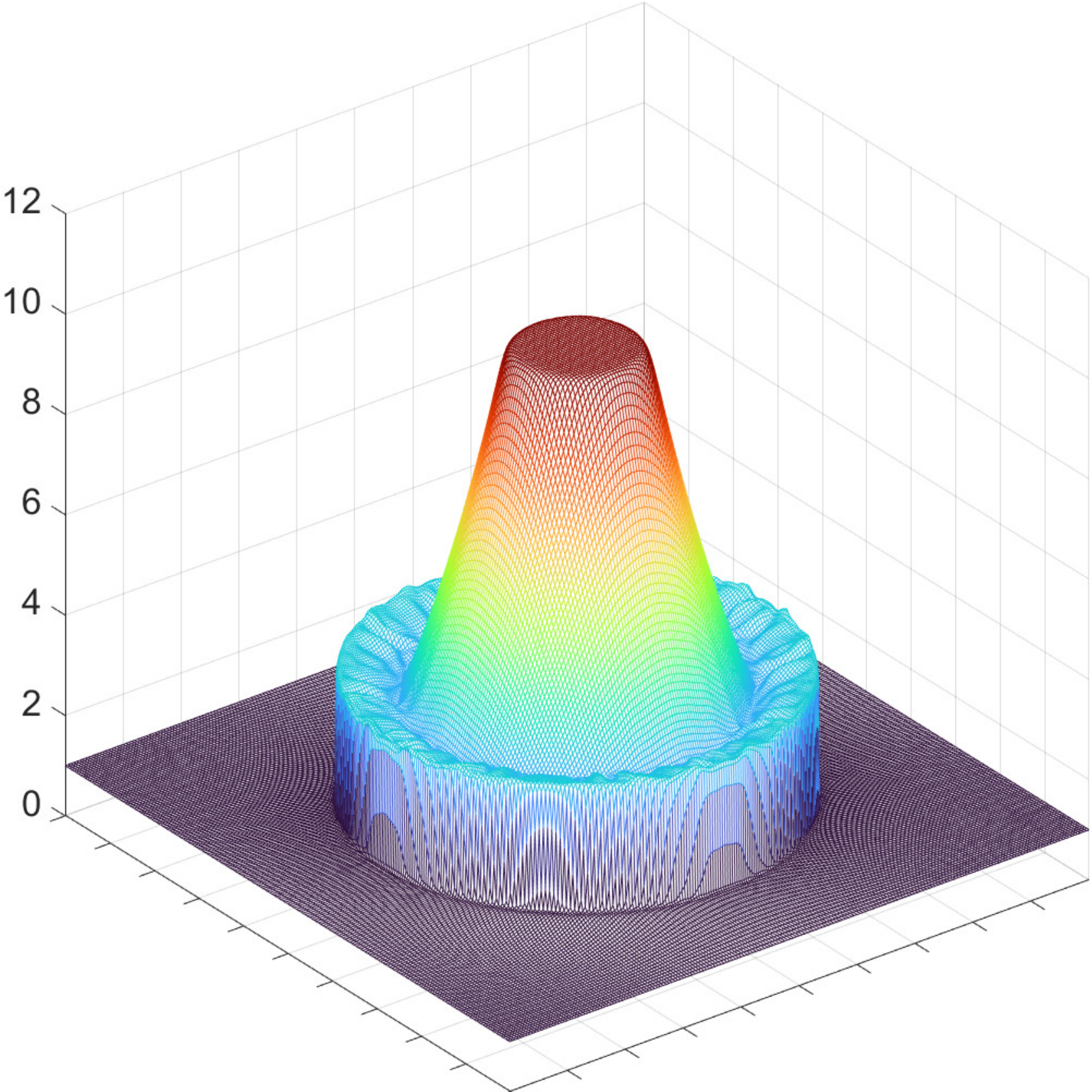}
    \caption{$t=0.6$}
  \end{subfigure}
  \begin{subfigure}[b]{0.25\textwidth}
    \centering
    \includegraphics[width=1.0\linewidth]{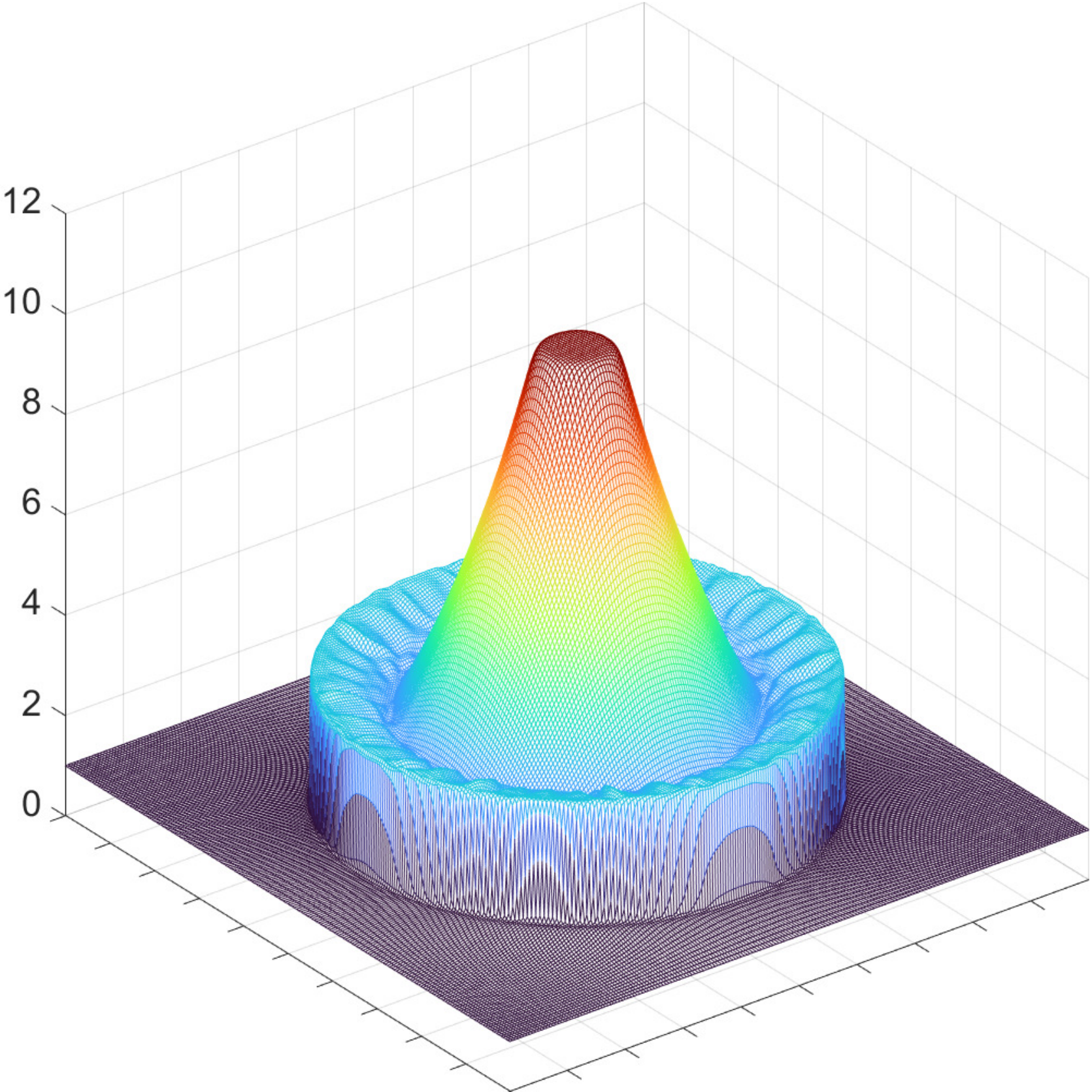}
    \caption{$t=0.8$}
  \end{subfigure}
  \begin{subfigure}[b]{0.25\textwidth}
    \centering
    \includegraphics[width=1.0\linewidth]{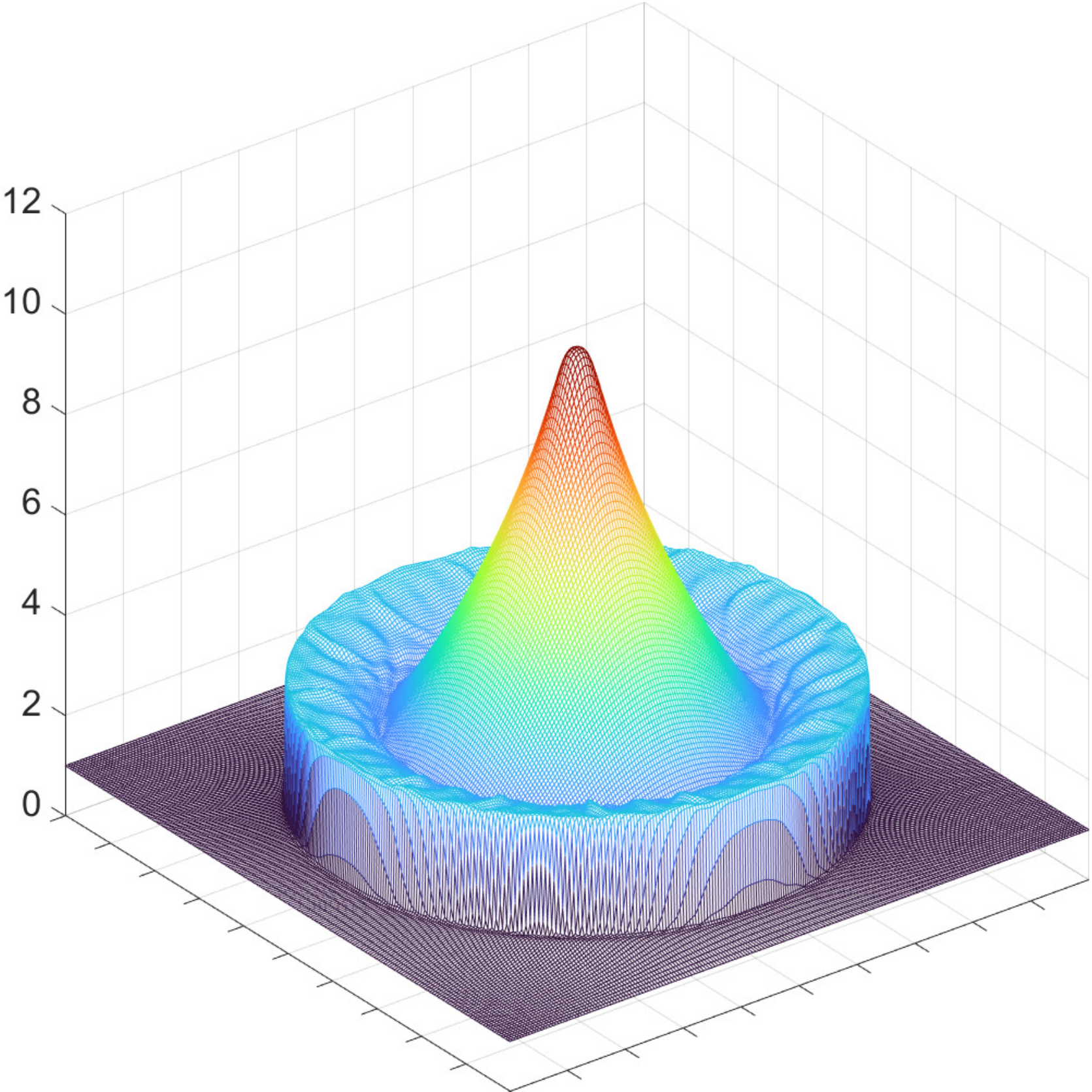}
    \caption{$t=1.0$}
  \end{subfigure}
  \caption{Example \ref{ex:Circle_Dam_flat}. The water surface level $h+b$ obtained by the {\tt MM-ES} scheme with $200\times200$ mesh at different times.}\label{fig:Circle_dam_flat}
\end{figure}

\begin{figure}[hbt!]
  \centering
  \begin{subfigure}[b]{0.25\textwidth}
    \centering
    \includegraphics[width=1.0\linewidth]{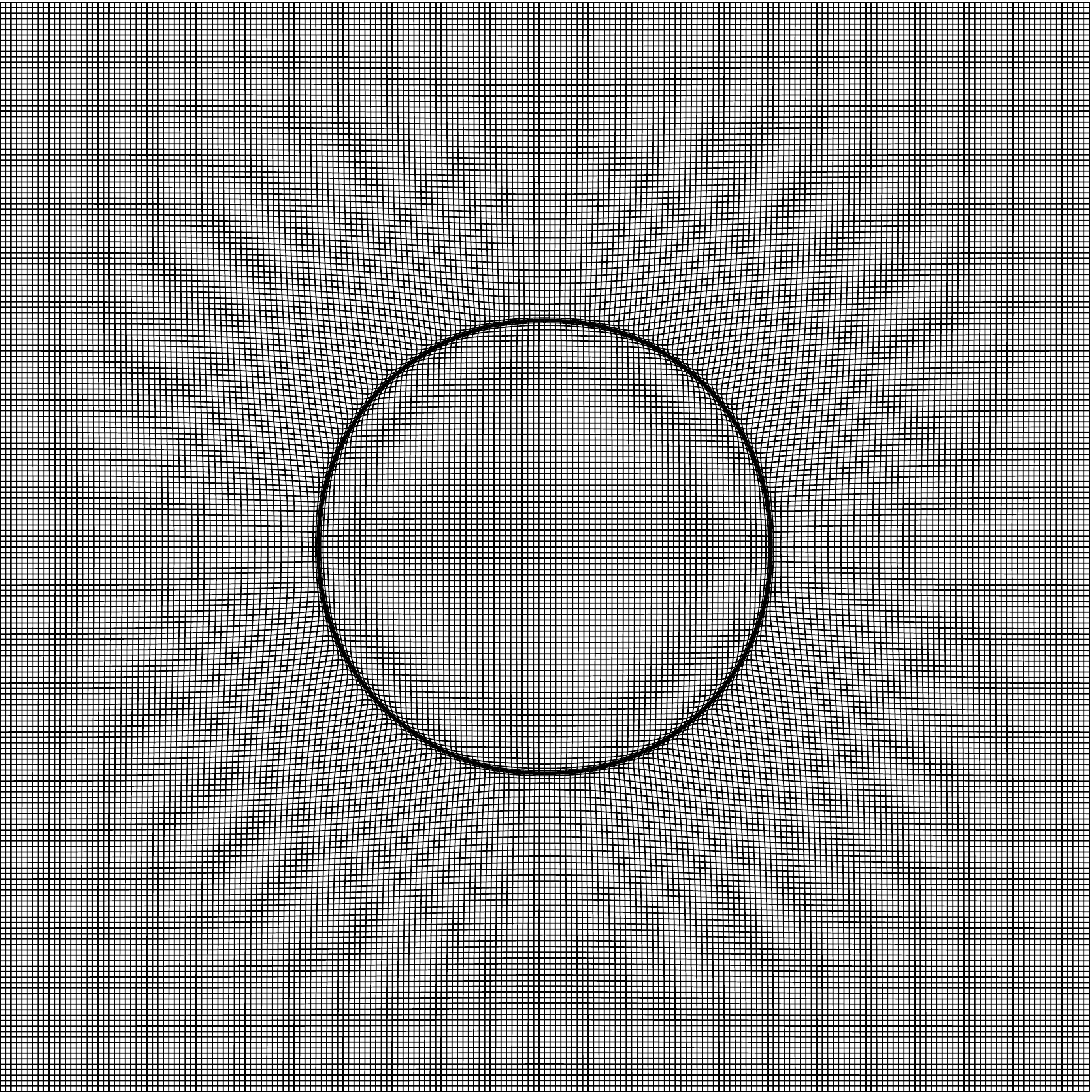}
    \caption{$t=0.0$}
  \end{subfigure}
  \begin{subfigure}[b]{0.25\textwidth}
    \centering
    \includegraphics[width=1.0\linewidth]{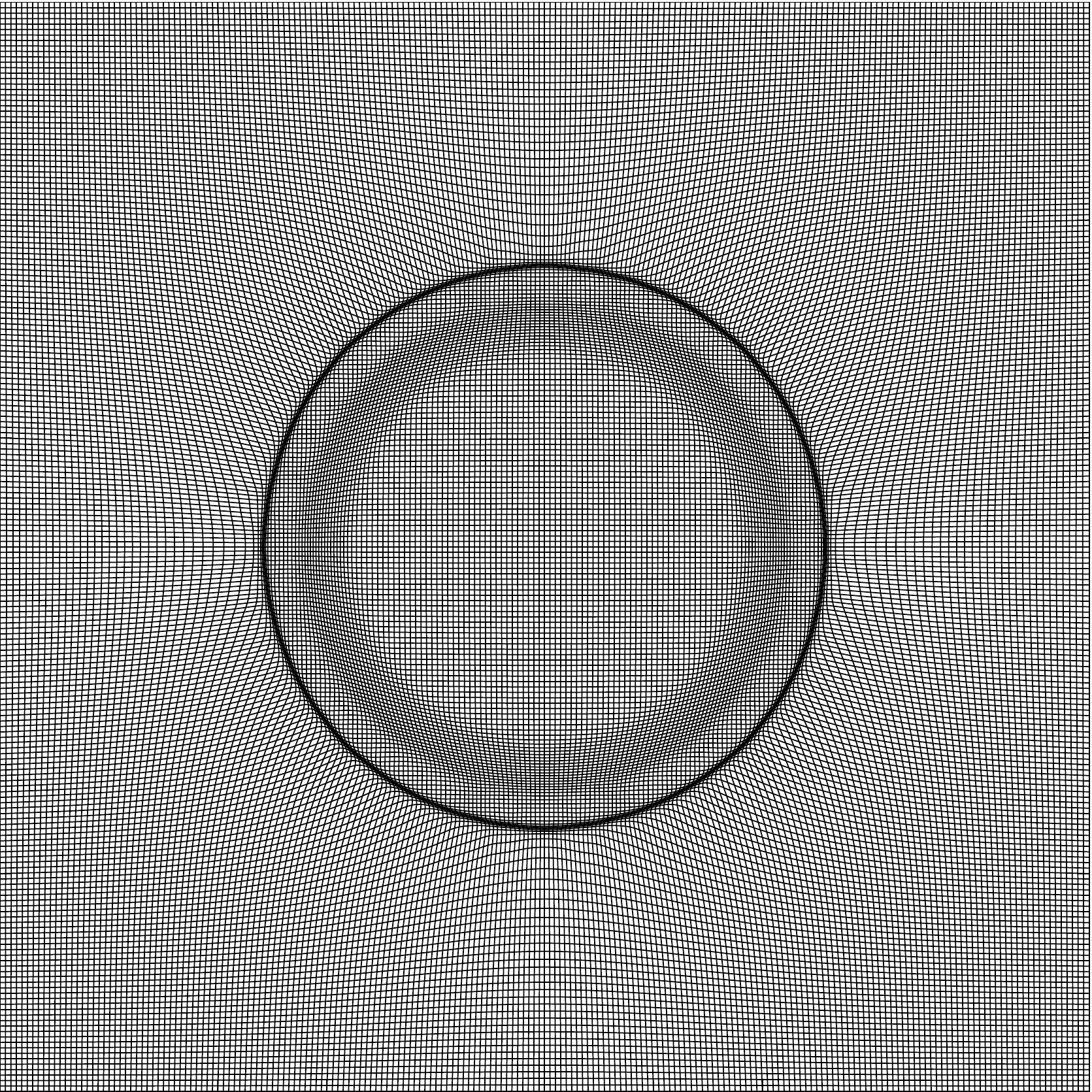}
    \caption{$t=0.2$}
  \end{subfigure}
  \begin{subfigure}[b]{0.25\textwidth}
    \centering
    \includegraphics[width=1.0\linewidth]{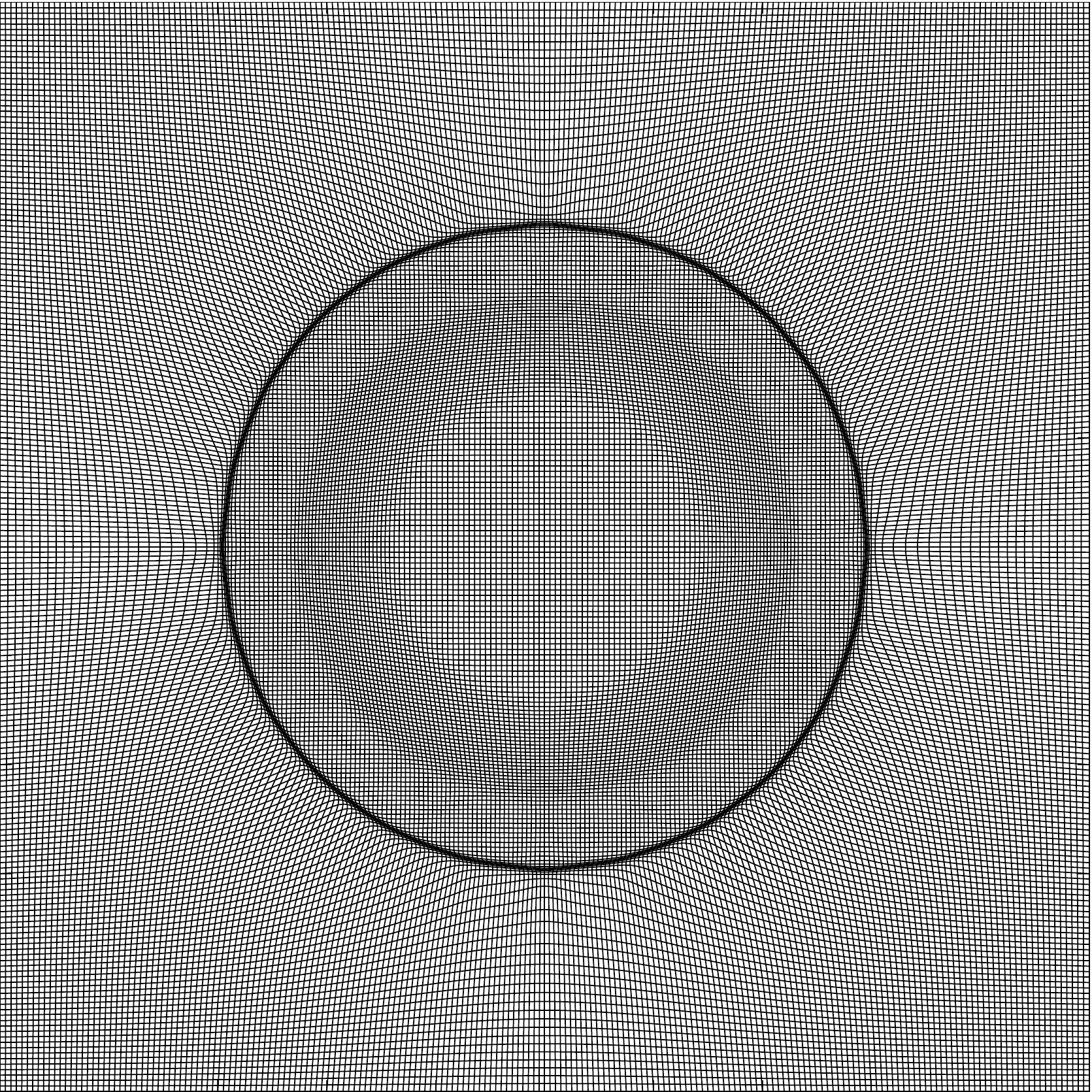}
    \caption{$t=0.4$}
  \end{subfigure}
  \quad\\
  \begin{subfigure}[b]{0.25\textwidth}
    \centering
    \includegraphics[width=1.0\linewidth]{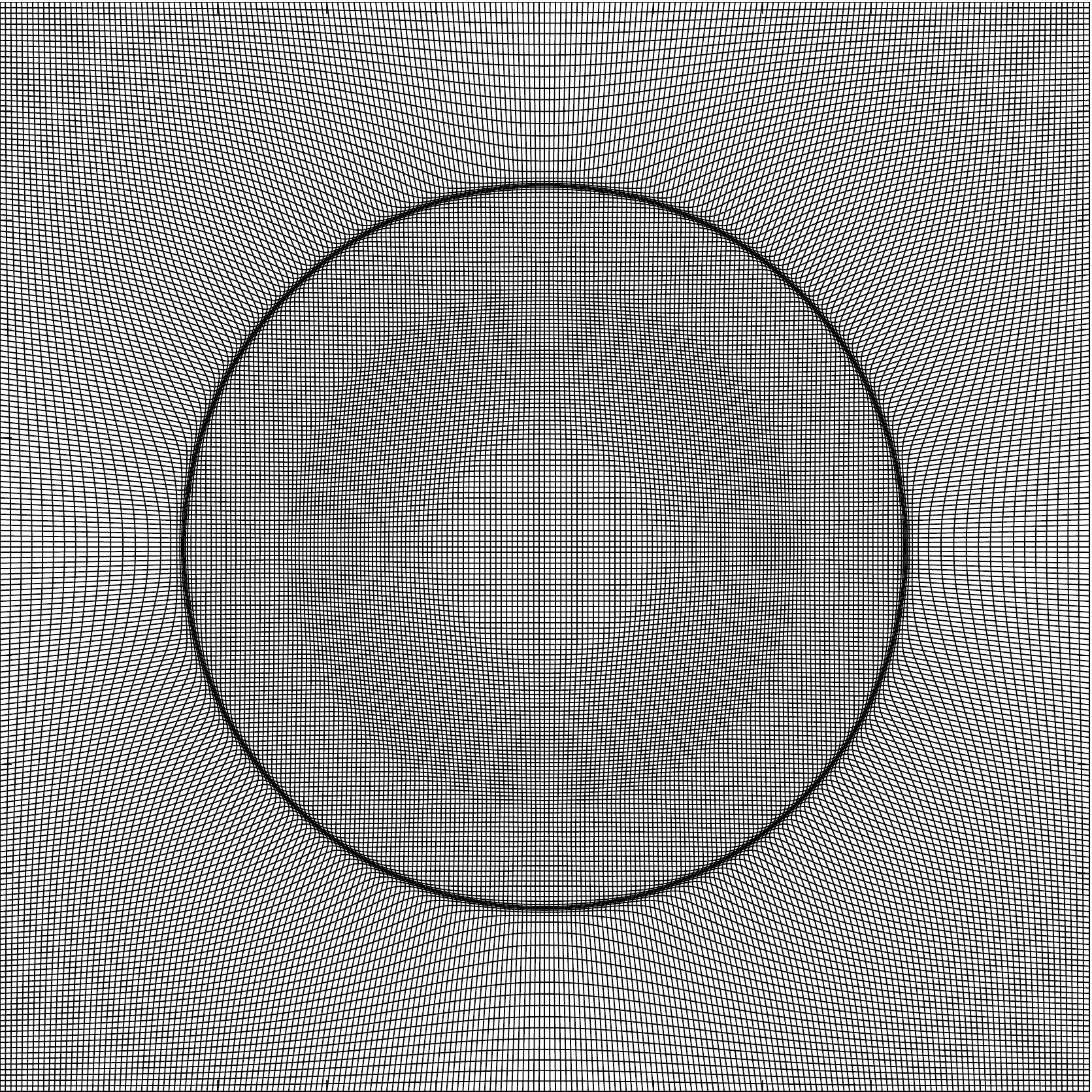}
    \caption{$t=0.6$}
  \end{subfigure}
  \begin{subfigure}[b]{0.25\textwidth}
    \centering
    \includegraphics[width=1.0\linewidth]{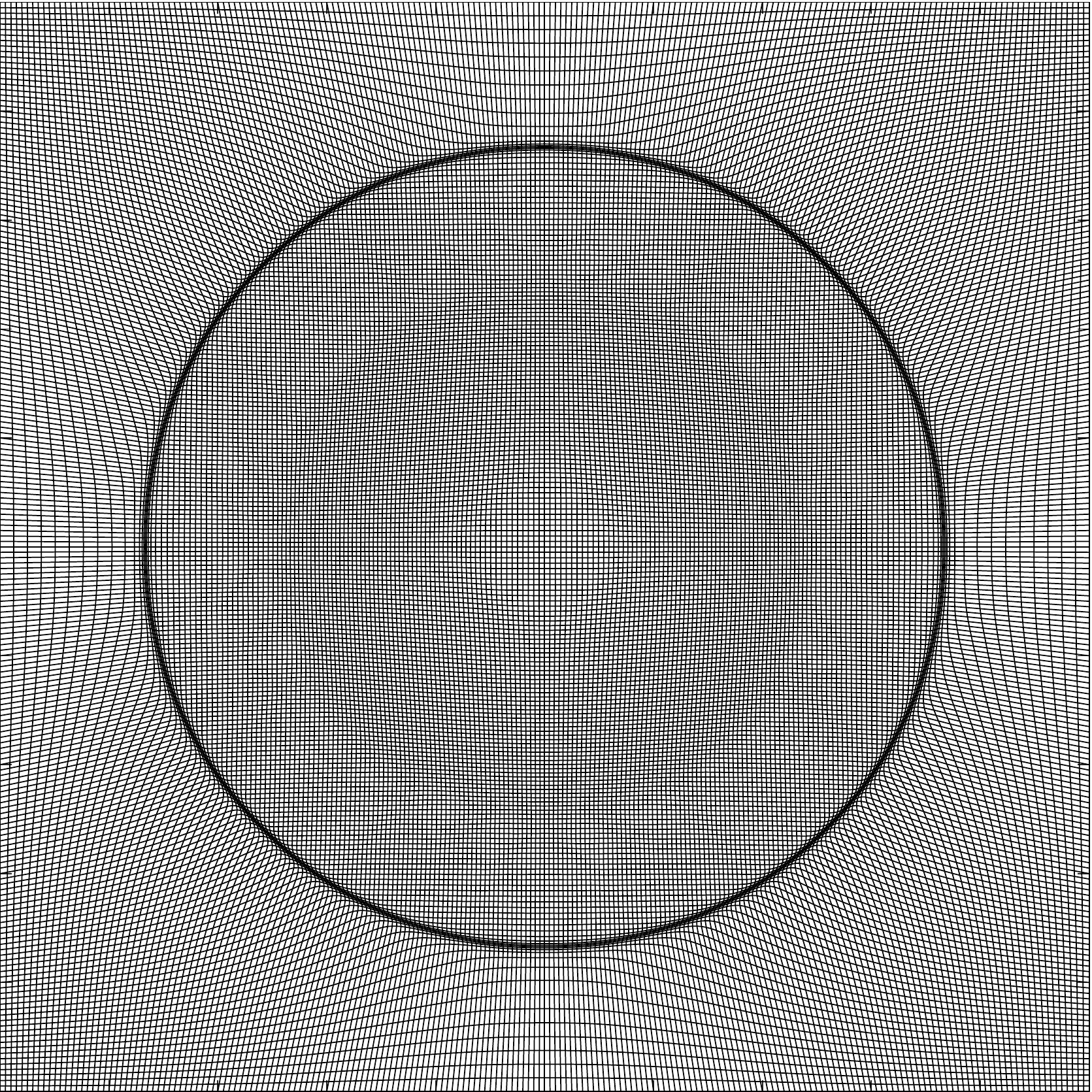}
    \caption{$t=0.8$}
  \end{subfigure}
  \begin{subfigure}[b]{0.25\textwidth}
    \centering
    \includegraphics[width=1.0\linewidth]{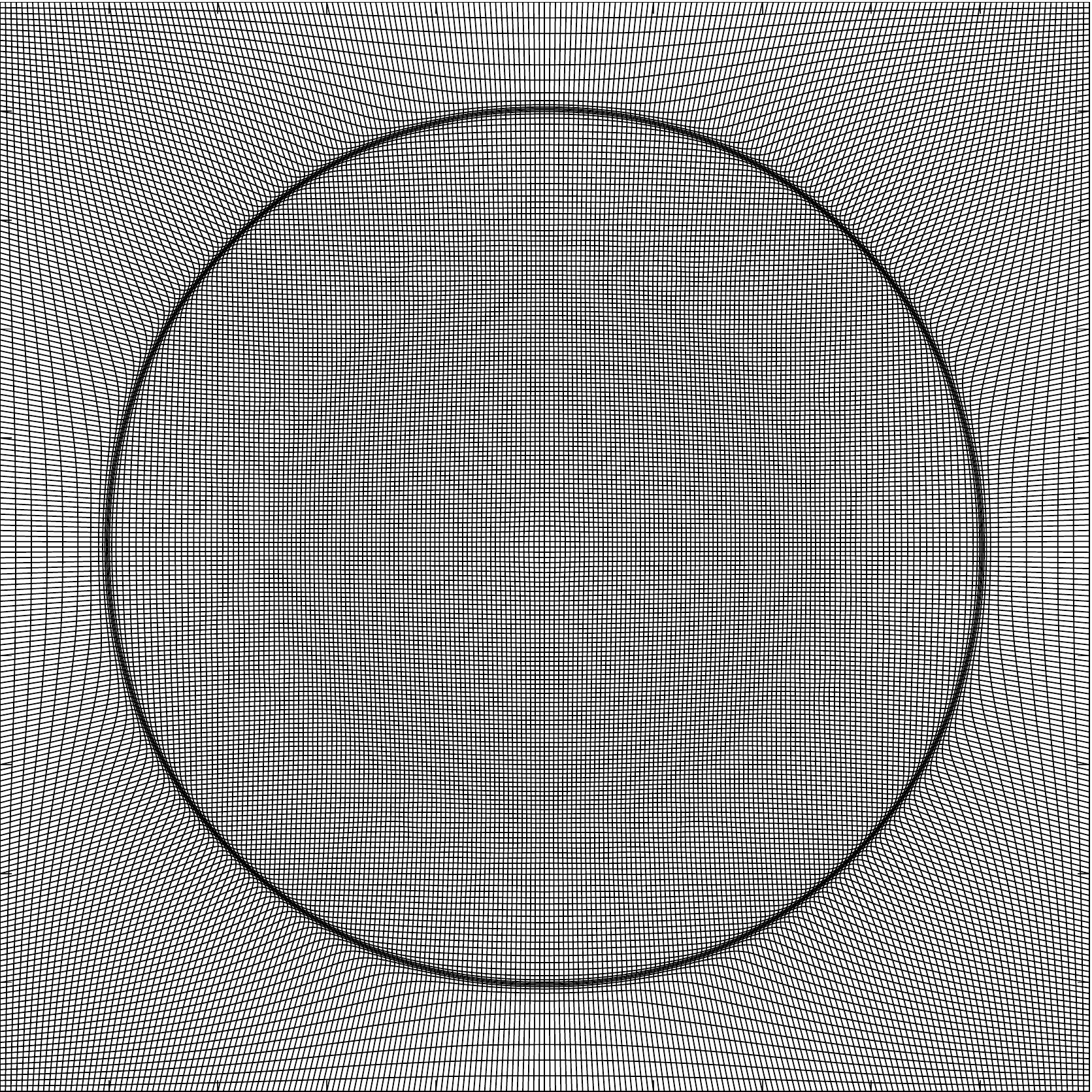}
    \caption{$t=1.0$}
  \end{subfigure}
  \caption{Example \ref{ex:Circle_Dam_flat}. The plots of $200\times200$ adaptive meshes obtained by the {\tt MM-ES} scheme at different times.}\label{fig:Circle_dam_flat_Mesh}
\end{figure}

\begin{example}[Circular dam break on a non-flat river bed]\label{ex:C_D_Break}\rm
  This test solves the circular dam break problem \cite{Capilla2013New,Castro2009High} for the 2D SWEs.
  The physical domain is $[0,2]\times[0,2]$ with the outflow boundary conditions and the bottom topography is defined as
  \begin{equation*}
    b(x_1, x_2) = \begin{cases}\dfrac{1}{8}(\cos (2 \pi(x_1-0.5))+1)(\cos (2 \pi x_2)+1), & \text{if}\quad \sqrt{(x_1-1.5)^2+(x_2-1)^2} \leqslant 0.5, \\ 0, & \text {otherwise}.\end{cases}
  \end{equation*}
  The initial water depth and velocities are
  \begin{equation*}
    \begin{aligned}
      & h = \begin{cases}
        1.1-b, & \text{if}~\sqrt{(x_1-1.25)^2+(x_2-1)^2} \leqslant 0.1, \\
        0.6-b, & \text {otherwise},
      \end{cases} \\
      & v_1=v_2=0.
    \end{aligned}
  \end{equation*}
  The output time is $t=0.15$ with $g = 9.812$.
  The monitor function is the same as that in Example \ref{ex:Pertubation}.
\end{example}

Figure \ref{fig:2D_Circle_Dam} shows the $200\times200$ adaptive mesh,
the water surface level $h+b$ obtained by
using the {\tt MM-ES} scheme,
and the comparison of the cut lines along $x_2 = 1$.
It is clear that the mesh points adaptively concentrate near the shock wave and localized spike in the center to increase the local resolution,
and the {\tt MM-ES} scheme gives sharp results near $x_1\approx 1.26$.

\begin{figure}[hbt!]
  \centering
  \begin{subfigure}[b]{0.3\textwidth}
    \centering
    \includegraphics[width=1.0\linewidth]{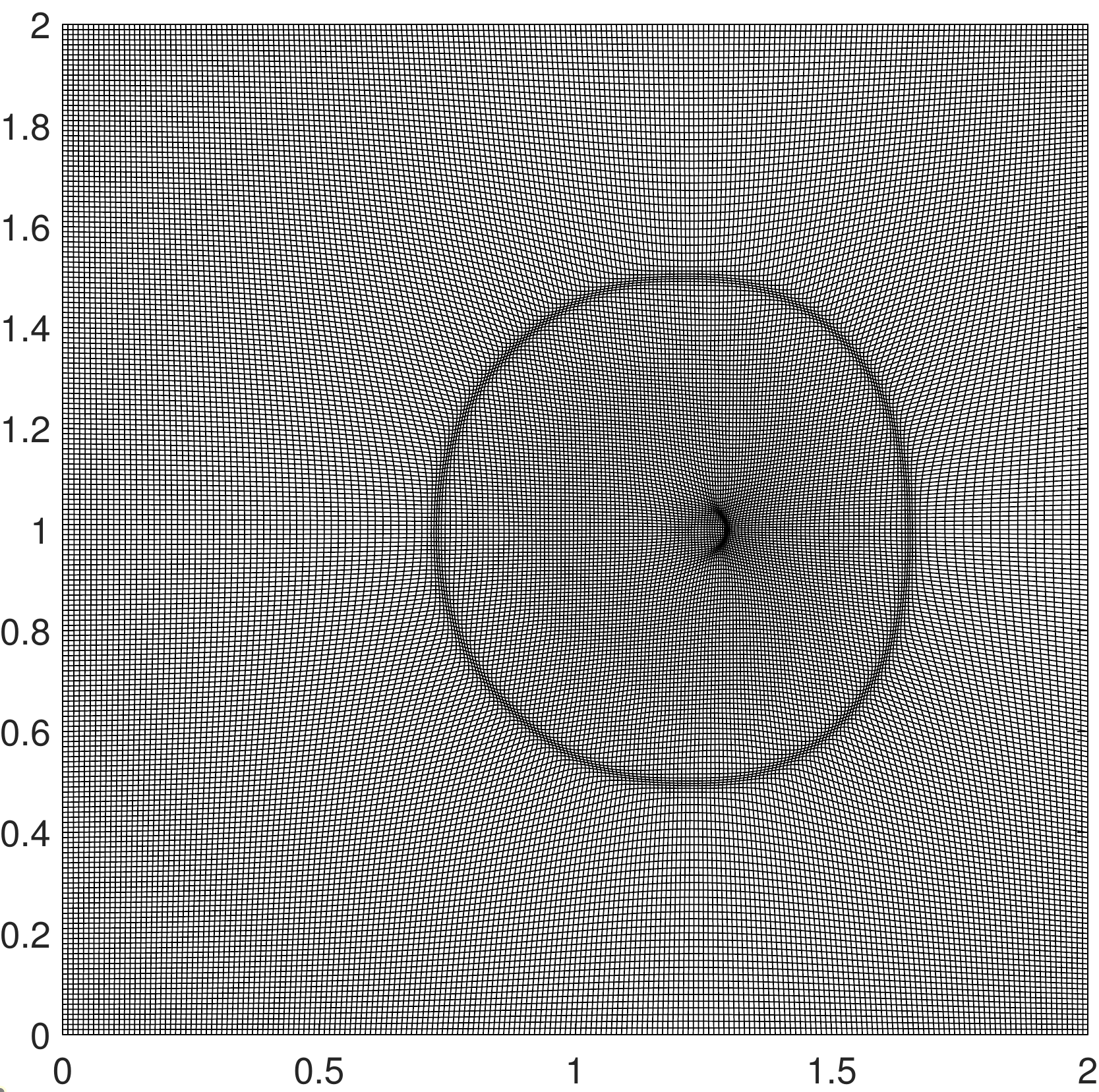}
    \caption{$200\times200$ adaptive mesh, {\tt MM-ES}}
  \end{subfigure}
  \begin{subfigure}[b]{0.3\textwidth}
    \centering
    \includegraphics[width=1.0\linewidth]{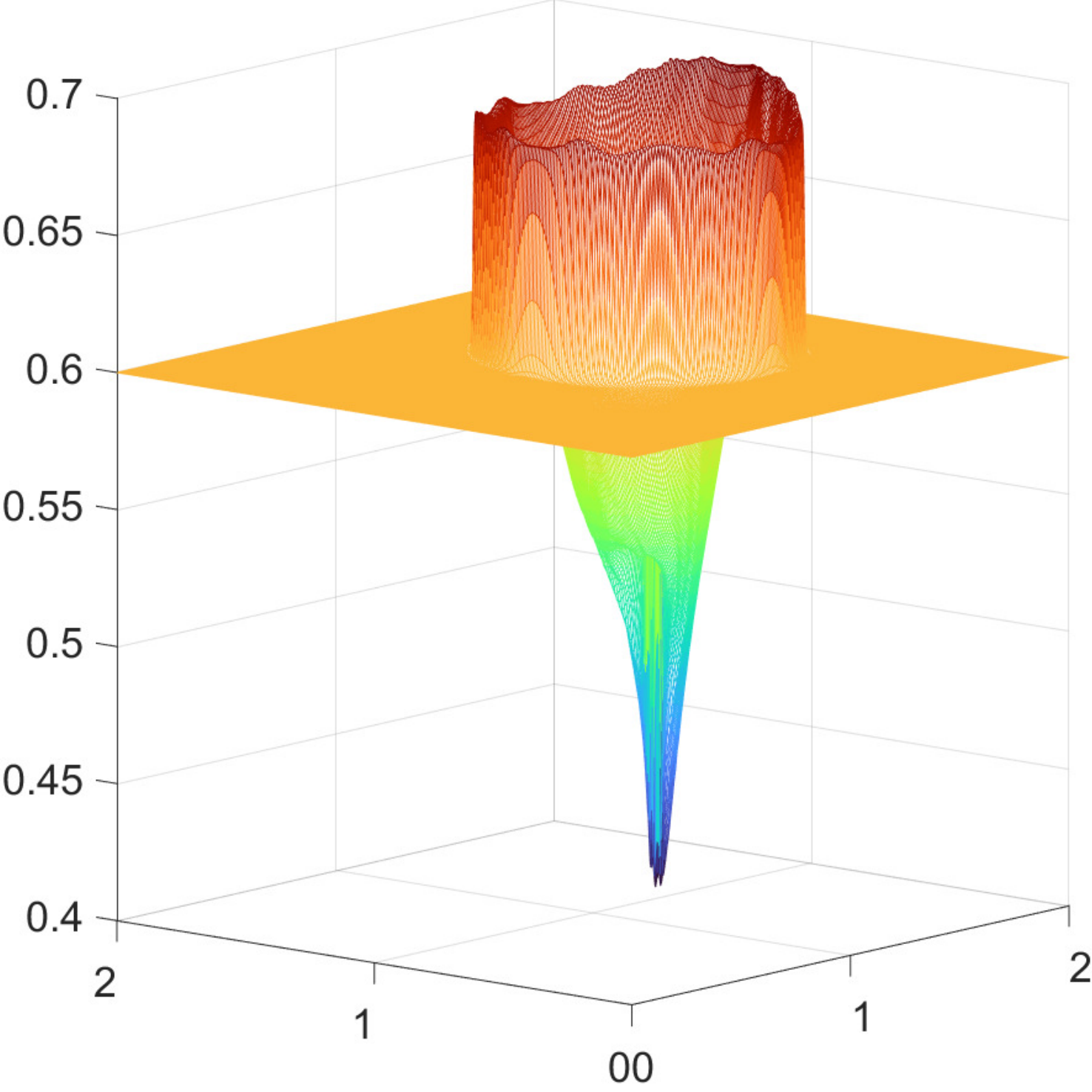}
    \caption{$h+b$, {\tt MM-ES}}
  \end{subfigure}
  \quad
  \begin{subfigure}[b]{0.3\textwidth}
    \centering
    \includegraphics[width=1.0\linewidth]{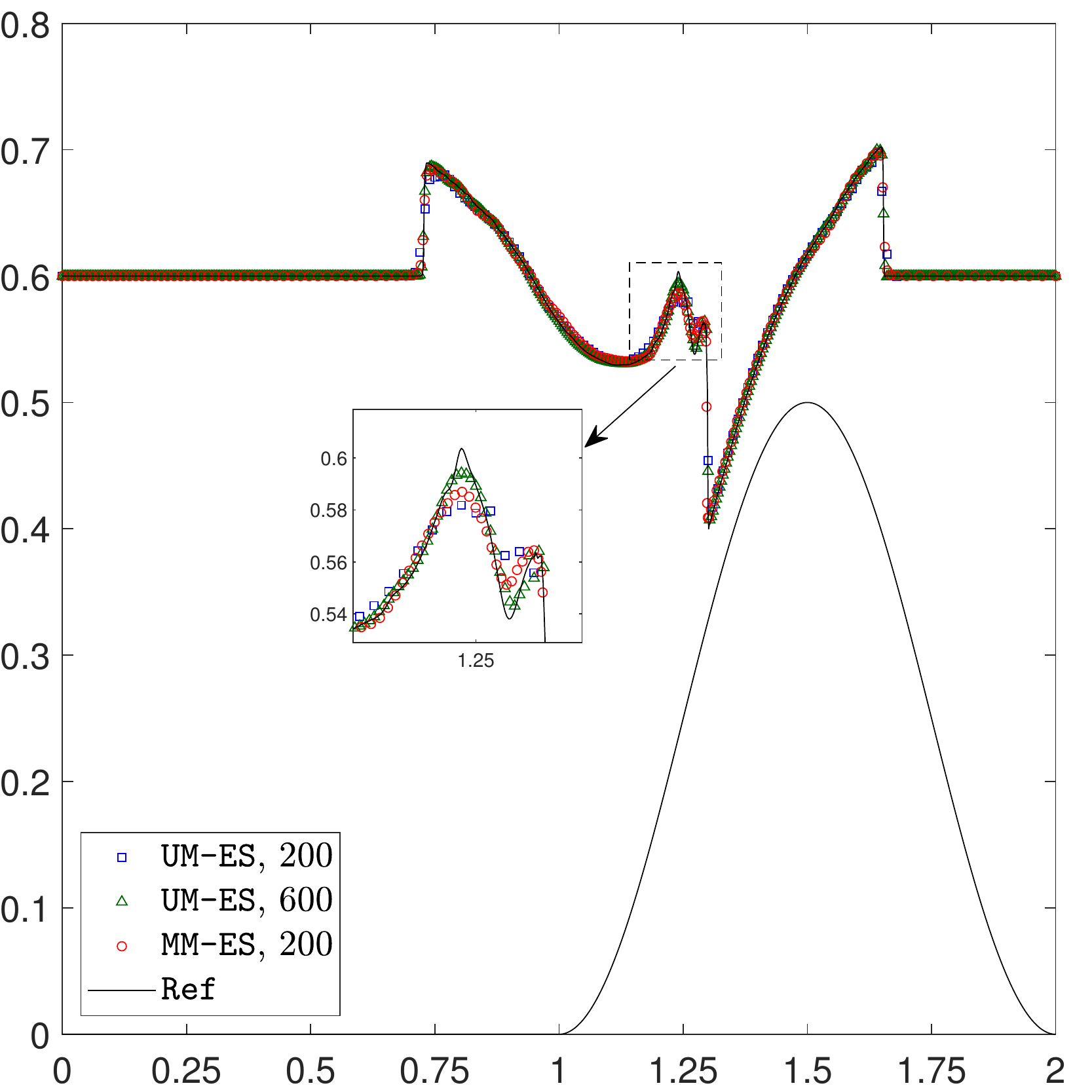}
    \caption{cut lines along $x_2=1$}
  \end{subfigure}
  \caption{Example \ref{ex:C_D_Break}. The results obtained by using the {\tt UM-ES} and {\tt MM-ES} schemes.}\label{fig:2D_Circle_Dam}
\end{figure}

\section{Conclusion}\label{section:Conc}
This paper presented high-order accurate well-balanced (WB) energy stable (ES) adaptive moving mesh finite difference schemes for the SWEs with non-flat bottom topography.
To construct our schemes on moving meshes,
the bottom topography was added as an additional conservative variable that evolved in time to reformulate the SWEs,
and energy inequality based on a modified energy function was derived,
then the reformulated SWEs and energy inequality were transformed into curvilinear coordinates.
The two-point energy conservative (EC) flux was constructed,
and the high-order EC schemes based on such EC flux were proved to preserve the lake at rest.
The newly designed dissipation terms were added to the EC schemes to get the high-order ES schemes,
which were also proved to be WB.
The mesh points were adaptively redistributed by iteratively solving the Euler-Lagrangian equations of a suitable mesh adaptation functional following \cite{Duan2022High,Li2022High}.
The fully-discrete schemes were obtained by using the explicit strong-stability preserving third-order Runge-Kutta method.
The numerical results have validated that our schemes can achieve the designed accuracy, preserve the lake at rest, and capture the wave structures accurately and efficiently.

\section*{Acknowledgments}
The authors were partially supported by
the National Key R\&D Program of China (Project Number 2020YFA0712000),
the National Natural Science Foundation of China (No. 12171227 \& 12288101).

\appendix
\section{1D high-order WB schemes}\label{section:OneDimensionCase}
This appendix presents the 1D semi-discrete WB EC schemes.
The ES schemes can be obtained by adding dissipation terms similar to Section \ref{section:ES} and are omitted here.
The modified 1D SWEs read
\begin{equation}\label{eq:1D_SWEs_Altered}
	\pd{\bU}{t} + \pd{\bF}{x} = - gh \pd{\bm{B}}{x},
\end{equation}
where $\bm{U} = (h,hv_1,b)^{\mathrm{T}}$,
$\bF = (hv_1, hv_1^2+\frac{1}{2}g h^2,0)^{\mathrm{T}}$,
$\bm{B} = (0,b,0)^{\mathrm{T}}$.
The system \eqref{eq:1D_SWEs_Altered} can be rewritten in curvilinear coordinates as
\begin{equation}\label{eq:1D_SWEs_Cur_Coordinate}
    \frac{\partial(J \bm{U})}{\partial \tau}+ \frac{\partial}{\partial \xi}\left(J \frac{\partial \xi}{\partial t} \bm{U}+ \bm{F}\right)=- gh \frac{\partial \bm{B}}{\partial \xi},
\end{equation}
with the VCL
\begin{equation*}
    \frac{\partial J}{\partial \tau}+ \frac{\partial}{\partial \xi}\left(J \frac{\partial \xi}{\partial t}\right)=0.
\end{equation*}
It is worth mentioning that $J \frac{\partial \xi}{\partial x} \equiv 1$,
so that the SCL holds automatically and \eqref{eq:1D_SWEs_Cur_Coordinate} has been simplified.
The two-point EC flux should satisfy the sufficient condition
\begin{align*}
\left(\bm{V}\left(\bU_{R}\right)-\bm{V}\left(\bU_{L}\right)\right)^{\mathrm{T}}\widetilde{\bm{\mathcal{F}}}
    = \ &~\frac{1}{2}\left(\left(J \frac{\partial \xi}{\partial t}\right)_{L}+\left(J \frac{\partial \xi}{\partial t}\right)_{R}\right)\left({\phi}\left(\bU_{R}\right)-{\phi}\left(\bU_{L}\right)\right)\nonumber\\
    &+ \left(\psi\left(\bU_R\right)-\psi\left(\bU_R\right)\right)
    - \frac{g}{2}\left(\left(hv_{1}\right)_{R}-\left(hv_{1}\right)_{L}\right)\left(b_{L}+b_{R}\right),
\end{align*}
with
\begin{equation*}
    \phi = \frac{1}{2}gh^2 + ghb+\gamma gb^2,~
    \psi = \frac{1}{2}gh^2v_1 + ghv_1 b,
\end{equation*}
and one choice is
\begin{equation*}
    \bm{\widetilde{\mathcal{{F}}}}\left(\bU_L,\bU_R,\left(J\frac{\partial \xi}{\partial t}\right)_{L},\left(J\frac{\partial \xi}{\partial t}\right)_{R}\right)
    = \frac12\left(\left(J\frac{\partial \xi}{\partial t}\right)_{L} + \left(J\frac{\partial \xi}{\partial t}\right)_{R}\right)
    \widetilde{\bm{U}} + \bm{\widetilde{F}},
\end{equation*}
with
\begin{equation*}
    \widetilde{\bm{U}}=\begin{pmatrix}
		\mean{h} \\
		\mean{h}\mean{v_1} \\
		\mean{b} \\
	\end{pmatrix},~
      \widetilde{\bm{F}}=\begin{pmatrix}
		\mean{h}\mean{v_1} \\
		\mean{h}\mean{v_1}^2 + \frac{g}{2}\mean{h^2}+g\left(\mean{hb} - \mean{h}\mean{b}\right) \\
		0 \\
	\end{pmatrix}.
\end{equation*}

Assume that a uniform mesh is taken as the computational mesh
$\xi_i = a + i\Delta\xi,~i=0,1,\cdots,N-1,~\Delta\xi = (b-a)/(N-1)$.
The $2p$th-order semi-discrete WB EC schemes can be expressed as
\begin{align*}
    &\frac{\mathrm{d}}{\mathrm{d} t}\bm{\mathcal{U}}_{i}
    =-\frac{1}{\Delta \xi} \left(\bm{\widetilde{\mathcal{{F}}}}_{i+\frac{1}{2}}^{\tt {2pth}} - \bm{\widetilde{\mathcal{{F}}}}_{i-\frac{1}{2}}^{\tt {2pth}}\right) - \dfrac{g{h}_{i}}{\Delta \xi}\left(\widetilde{\bm{B}}_{i+\frac{1}{2}}^{\tt {2pth}}-\widetilde{\bm{B}}_{i-\frac{1}{2}}^{\tt {2pth}}\right),\\
    &\frac{\mathrm{d}}{\mathrm{d} t} J_i=-\frac{1}{\Delta \xi}\left(\left(\widetilde{J\frac{\partial \xi}{\partial t}}\right)_{i+\frac{1}{2}}^{\tt 2pth}-\left(\widetilde{J\frac{\partial \xi}{\partial t}}\right)_{i-\frac{1}{2}}^{\tt 2pth}\right),
\end{align*}
where
\begin{equation*}
    \begin{aligned}
& \widetilde{\bm{\mathcal{F}}}_{i+\frac{1}{2}}^{\tt 2pth}=\sum_{m=1}^p \alpha_{p,m} \sum_{s=0}^{m-1}
\bm{\widetilde{\mathcal{{F}}}}\left(\bU_{i-s},\bU_{i-s+m},\left(J\frac{\partial \xi}{\partial t}\right)_{i-s},\left(J\frac{\partial \xi}{\partial t}\right)_{i-s+m}\right),
\\
& \widetilde{\bm{B}}_{i+\frac{1}{2}}^{\tt 2pth}=\sum_{m=1}^p \alpha_{p,m} \sum_{s=0}^{m-1} \frac{1}{2}\left(\left(\bm{B}\right)_{i-s}+\left(\bm{B}\right)_{i-s+m}\right), \\
& \left(\widetilde{J\frac{\partial \xi}{\partial t}}\right)_{i+\frac{1}{2}}^{\tt 2pth}=\sum_{m=1}^p \alpha_{p,m} \sum_{s=0}^{m-1} \frac{1}{2}\left(\left(J\dfrac{\partial \xi}{\partial t}\right)_{i-s}+\left(J\dfrac{\partial \xi}{\partial t}\right)_{i-s+m}\right).
\end{aligned}
\end{equation*}
Here $J\frac{\partial \xi}{\partial t} = -\dot{x}$,
and $\left(\dot{x}\right)_{i}$ is the mesh velocity at $\xi_{i}$.
The numerical solutions satisfy the semi-discrete energy identities
\begin{equation*}
    \frac{\mathrm{d}}{\mathrm{d} t}{\mathcal{E}}_{i}+\frac{1}{\Delta \xi}\left(\left({\widetilde{{\mathcal{Q}}}}\right)_{i+\frac{1}{2}}^{\tt 2pth}-\left({\widetilde{{\mathcal{Q}}}}\right)_{i-\frac{1}{2}}^{\tt 2pth}\right) = 0,
\end{equation*}
with ${\mathcal{E}}_{i} = J_i\eta(\bU_i)$,
and the numerical energy fluxes
  \begin{align*}
    \left(\widetilde{\mathcal{Q}}\right)_{i+\frac{1}{2}}^{\tt 2pth }=\sum_{m=1}^p \alpha_{p, m} \sum_{s=0}^{m-1} \widetilde{\mathcal{Q}}\left(\bm{U}_{i-s}, \bm{U}_{i-s+m},\left(J \frac{\partial \xi}{\partial t}\right)_{i-s},\left(J \frac{\partial \xi}{\partial t}\right)_{i-s+m}\right),
  \end{align*}
  where
  \begin{align*}
    \widetilde{\mathcal{Q}} =
    \ &\frac{1}{2}\left(\bm{V}(\bU_{L})+\bm{V}(\bU_{R})\right)^{\mathrm{T}}\widetilde{\bm{\mathcal{F}}}
    - \frac{1}{2}\left(\left(J \frac{\partial \xi}{\partial t}\right)_L+\left(J \frac{\partial \xi}{\partial t}\right)_R\right)\left(\phi\left(\bU_{L}\right)+\phi\left(\bU_{R}\right)\right) \nonumber \nonumber\\
    &- \frac{1}{2}\left(\psi\left(\bU_{L}\right)+\psi\left(\bU_{R}\right)\right) + \frac{g}{4} \left(\left(hv_1\right)_{L}+\left(hv_1\right)_{R}\right)\left(b_{L}+b_{R}\right).
  \end{align*}
The time stepsize $\Delta t^{n}$ is given by the CFL condition
\begin{equation*}
    \Delta t^{n} \leqslant \frac{C_{\text{\tiny \tt CFL}}}{\max_{i}\limits\mathcal{\varrho}^{n}_{i}/\Delta \xi},
\end{equation*}
where
\begin{equation*}
    \mathcal{\varrho}_{i}^{n} = \max\left\{\Bigg| J\frac{\partial\xi}{\partial t}+\lambda_{1}(\bm{U})\Bigg|_{i}^n,~
    \Bigg| J\frac{\partial\xi}{\partial t}+\lambda_{2}(\bm{U})\Bigg|_{i}^n\right\}/J_{i}^n,
\end{equation*}
with $\lambda_1= v_1+c,~\lambda_2 = v_1-c,~c = \sqrt{gh}$.
In the accuracy test, the time stepsize is taken as $C_{\text{\tiny \tt CFL}} (\Delta\xi)^{6/3}$ and $C_{\text{\tiny \tt CFL}} (\Delta\xi)^{5/3}$ to make the spatial error dominant for the EC and ES schemes, respectively.



\begin{thebibliography}{10}
	
	\bibitem{Alcrudo1993High}
	F.~Alcrudo and P.~Garcia-Navarro, {A high-resolution Godunov-type scheme in
		finite volumes for the 2D shallow-water equations}, \emph{Int. J. Numer. Meth
		Fluids}, 16 (1993), 489--505.
	
	\bibitem{AuDusse2004fast}
	E.~AuDusse, {Francois Bouchut}, M.O. Bristeau, R.~Klein, and B.~Perthame, {A
		fast and stable well-balanced scheme with hydrostatic reconstruction for
		shallow water flows}, \emph{SIAM J. Sci. Comput.}, 25 (2004), 2050--2065.
	
	\bibitem{Bermudez1994Upwind}
	A.~Bermudez and M.E. Vazquez, Upwind methods for hyperbolic conservation laws
	with source terms, \emph{Computers \& Fluids}, 23 (1994), 1049--1071.
	
	\bibitem{Biswas2018Low}
	B.~Biswas and R.K. Dubey, {Low dissipative entropy stable schemes using third
		order WENO and TVD reconstructions}, \emph{Adv. Comput. Math.}, 44 (2018),
	1153--1181.
	
	\bibitem{Borges2008An}
	R.~Borges, M.~Carmona, B.~Costa, and W.S. Don, {An improved weighted
		essentially non-oscillatory scheme for hyperbolic conservation laws},
	\emph{J. Comput. Phys.}, 227 (2008), 3191--3211.
	
	\bibitem{Brackbill1993An}
	J.U. Brackbill, An adaptive grid with directional control, \emph{J. Comput.
		Phys.}, 108 (1993), 38--50.
	
	\bibitem{Brackbill1982Adaptive}
	J.U. Brackbill and J.S. Saltzman, Adaptive zoning for singular problems in two
	dimensions, \emph{J. Comput. Phys.}, 46 (1982), 342--368.
	
	\bibitem{CAO1999221}
	W.M. Cao, W.Z. Huang, and R.D. Russell, An r-adaptive finite element method
	based upon moving mesh {PDE}s, \emph{J. Comput. Phys.}, 149 (1999), 221--244.
	
	\bibitem{Capilla2013New}
	M.T. Capilla and A.~Balaguer-Beser, A new well-balanced non-oscillatory central
	scheme for the shallow water equations on rectangular meshes, \emph{J.
		Comput. Appl. Math.}, 252 (2013), 62--74.
	
	\bibitem{Castro2009High}
	M.J. Castro, E.D. Fern\'{a}ndez-Nieto, A.M. Ferreiro, J.A.
	Garc\'{\i}a-Rodr\'{\i}guez, and C.~Par\'{e}s, High order extensions of {R}oe
	schemes for two-dimensional nonconservative hyperbolic systems, \emph{J. Sci.
		Comput.}, 39 (2009), 67--114.
	
	\bibitem{CENICEROS2001609}
	H.D. Ceniceros and T.Y. Hou, An efficient dynamically adaptive mesh for
	potentially singular solutions, \emph{J. Comput. Phys.}, 172 (2001),
	609--639.
	
	\bibitem{Davis1982}
	S.F. Davis and J.E. Flaherty, An adaptive finite element method for
	initial-boundary value problems for partial differential equations,
	\emph{SIAM J. Sci. Stat. Comput.}, 3 (1982), 6--27.
	
	\bibitem{Duan2020RMHD}
	J.M. Duan and H.Z. Tang, {High-order accurate entropy stable nodal
		discontinuous Galerkin schemes for the ideal special relativistic
		magnetohydrodynamics}, \emph{J. Comput. Phys.}, 421 (2020), 109731.
	
	\bibitem{DUAN2021109949}
	J.M. Duan and H.Z. Tang, Entropy stable adaptive moving mesh schemes for 2{D}
	and 3{D} special relativistic hydrodynamics, \emph{J. Comput. Phys.}, 426
	(2021), 109949.
	
	\bibitem{Duan2021SWMHD}
	J.M. Duan and H.Z. Tang, High-order accurate entropy stable finite difference
	schemes for the shallow water magnetohydrodynamics, \emph{J. Comput. Phys.},
	431 (2021), 110136.
	
	\bibitem{Duan2022High}
	J.M. Duan and H.Z. Tang, High-order accurate entropy stable adaptive moving
	mesh finite difference schemes for special relativistic
	(magneto)hydrodynamics, \emph{J. Comput. Phys.}, 456 (2022), 111038.
	
	\bibitem{Fjordholm2011Well}
	U.S. Fjordholm, S.~Mishra, and E.~Tadmor, {Well-balanced and energy stable
		schemes for the shallow water equations with discontinuous topography},
	\emph{J. Comput. Phys.}, 230 (2011), 5587--5609.
	
	\bibitem{Godunov1972}
	S.K. Godunov, Symmetric form of the equations of magnetohydrodynamics,
	\emph{Numer. Meth. Mech. Cont. Medium}, 1 (1972), 26--34.
	
	\bibitem{Kuang2017Runge}
	Y.Y. Kuang, K.L. Wu, and H.Z. Tang, Runge-{K}utta discontinuous local evolution
	{G}alerkin methods for the shallow water equations on the cubed-sphere grid,
	\emph{Numer. Math. Theor. Meth. Appl.}, 10 (2017), 373--419.
	
	\bibitem{Kurganov2018Finite}
	A.~Kurganov, {Finite-volume schemes for shallow-water equations}, \emph{Acta
		Numer.}, 27 (2018), 289--351.
	
	\bibitem{Lamby2005Solution}
	P.~Lamby, S.~M\"{u}ller, and Y.~Stiriba, Solution of shallow water equations
	using fully adaptive multiscale schemes, \emph{Internat. J. Numer. Methods
		Fluids}, 49 (2005), 417--437.
	
	\bibitem{Lefloch2002Fully}
	P.G. LeFloch, J.M. Mercier, and C.~Rohde, {Fully discrete entropy conservative
		schemes of arbitraty order}, \emph{SIAM J. Numer. Anal.}, 40 (2002),
	1968--1992.
	
	\bibitem{Leveque1998Balancing}
	R.J. LeVeque, Balancing source terms and flux gradients in high-resolution
	{G}odunov methods: the quasi-steady wave-propagation algorithm, \emph{J.
		Comput. Phys.}, 146 (1998), 346--365.
	
	\bibitem{Li2012Hybrid}
	G.~Li, C.N. Lu, and J.X. Qiu, Hybrid well-balanced {WENO} schemes with
	different indicators for shallow water equations, \emph{J. Sci. Comput.}, 51
	(2012), 527--559.
	
	\bibitem{Li2020High}
	P.~Li, W.S. Don, and Z.~Gao, High order well-balanced finite difference {WENO}
	interpolation-based schemes for shallow water equations, \emph{Comput. \&
		Fluids}, 201 (2020), 104476.
	
	\bibitem{Li2022High}
	S.T. Li, J.M. Duan, and H.Z. Tang, High-order accurate entropy stable adaptive
	moving mesh finite difference schemes for (multi-component) compressible
	{E}uler equations with the stiffened equation of state, \emph{Comput. Methods
		Appl. Mech. Engrg.}, 399 (2022), 115311.
	
	\bibitem{Miller1981}
	K.~Miller, Moving finite elements. {II}, \emph{SIAM J. Numer. Anal.}, 18
	(1981), 1033--1057.
	
	\bibitem{Noelle2006Well}
	S.~Noelle, N.~Pankratz, G.~Puppo, and J.R. Natvig, Well-balanced finite volume
	schemes of arbitrary order of accuracy for shallow water flows, \emph{J.
		Comput. Phys.}, 213 (2006), 474--499.
	
	\bibitem{Noelle2007High}
	S.~Noelle, Y.L. Xing, and C.W. Shu, High-order well-balanced finite volume
	{WENO} schemes for shallow water equation with moving water, \emph{J. Comput.
		Phys.}, 226 (2007), 29--58.
	
	\bibitem{Powell1994}
	K.G. Powell, {An approximate riemann solver for magnetohydrodynamics (that
		works in more than one dimension)}, \emph{ICASE 94-24},  (1994).
	
	\bibitem{Ren2000An}
	W.Q. Ren and X.P. Wang, An iterative grid redistribution method for singular
	problems in multiple dimensions, \emph{J. Comput. Phys.}, 159 (2000),
	246--273.
	
	\bibitem{Stockie2001}
	J.M. Stockie, J.A. Mackenzie, and R.D. Russell, A moving mesh method for
	one-dimensional hyperbolic conservation laws, \emph{SIAM J. Sci. Comput.}, 22
	(2001), 1791--1813.
	
	\bibitem{Tang2004Solution}
	H.Z. Tang, Solution of the shallow-water equations using an adaptive moving
	mesh method, \emph{Internat. J. Numer. Methods Fluids}, 44 (2004), 789--810.
	
	\bibitem{Tang2003Adaptive}
	H.Z. Tang and T.~Tang, Adaptive mesh methods for one- and two-dimensional
	hyperbolic conservation laws, \emph{SIAM J. Numer. Anal.}, 41 (2003),
	487--515.
	
	\bibitem{Tang2004Gas}
	H.Z. Tang, T.~Tang, and K.~Xu, A gas-kinetic scheme for shallow-water equations
	with source terms, \emph{Z. Angew. Math. Phys.}, 55 (2004), 365--382.
	
	\bibitem{Vukovic2002ENO}
	S.~Vukovic and L.~Sopta, E{NO} and {WENO} schemes with the exact conservation
	property for one-dimensional shallow water equations, \emph{J. Comput.
		Phys.}, 179 (2002), 593--621.
	
	\bibitem{Wang2004A}
	D.S. Wang and X.P. Wang, A three-dimensional adaptive method based on the
	iterative grid redistribution, \emph{J. Comput. Phys.}, 199 (2004), 423--436.
	
	\bibitem{Winslow1967Numerical}
	A.M. Winslow, Numerical solution of the quasilinear {P}oisson equation in a
	nonuniform triangle mesh, \emph{J. Comput. Phys.}, 1 (1967), 149--172.
	
	\bibitem{Wu2020Entropy}
	K.L. Wu and C.W. Shu, Entropy symmetrization and high-order accurate entropy
	stable numerical schemes for relativistic {MHD} equations, \emph{SIAM J. Sci.
		Comput.}, 42 (2020), A2230--A2261.
	
	\bibitem{Wu2016Newton}
	K.L. Wu and H.Z. Tang, {A Newton multigrid method for steady-state shallow
		water equations with topography and dry areas}, \emph{Appl. Math. Mech.}, 37
	(2016), 1441--1466.
	
	\bibitem{Xing2014Exactly}
	Y.L. Xing, Exactly well-balanced discontinuous {G}alerkin methods for the
	shallow water equations with moving water equilibrium, \emph{J. Comput.
		Phys.}, 257 (2014), 536--553.
	
	\bibitem{Xing2017Numerical}
	Y.L. Xing, Numerical methods for the nonlinear shallow water equations, in
	\emph{Handbook of numerical methods for hyperbolic problems}, vol.~18 of
	\emph{Handb. Numer. Anal.}, pages 361--384, Elsevier/North-Holland,
	Amsterdam, 2017.
	
	\bibitem{Xing2005High}
	Y.L. Xing and C.W. Shu, {High order finite difference WENO schemes with the
		exact conservation property for the shallow water equations}, \emph{J.
		Comput. Phys.}, 208 (2005), 206--227.
	
	\bibitem{Xing2013Positivity}
	Y.L. Xing and X.X. Zhang, Positivity-preserving well-balanced discontinuous
	{G}alerkin methods for the shallow water equations on unstructured triangular
	meshes, \emph{J. Sci. Comput.}, 57 (2013), 19--41.
	
	\bibitem{Zhang2021High}
	M.~Zhang, W.Z. Huang, and J.X. Qiu, {A high-order well-balanced
		positivity-preserving moving mesh DG method for the shallow water equations
		with non-flat bottom topography}, \emph{J. Sci. Comput.}, 87 (2021), 1--43.
	
	\bibitem{Zhang2022AWell}
	M.~Zhang, W.Z. Huang, and J.X. Qiu, A well-balanced positivity-preserving
	quasi-{L}agrange moving mesh {DG} method for the shallow water equations,
	\emph{Commun. Comput. Phys.}, 31 (2022), 94--130.
	
	\bibitem{Zhao2022Well}
	Z.~Zhao and M.~Zhang, Well-balanced fifth-order finite difference {H}ermite
	{WENO} scheme for the shallow water equations, \emph{J. Comput. Phys.}, 475
	(2023), 111860.
	
\end{thebibliography}


\end{document}